\documentclass[12pt,french]{amsart}
\usepackage{graphics}
\usepackage{cite}
\usepackage{amsmath}
\usepackage{amsfonts}
\usepackage{amssymb}
\usepackage{float}
\usepackage{epic}
\usepackage{eepic}
\usepackage{color}
\usepackage{ae}
\usepackage{mathrsfs,array,graphicx}
\usepackage[all,ps]{xy}
\usepackage{epstopdf}
\usepackage{a4wide}
\usepackage{enumerate}
\usepackage{hyperref} \hypersetup{colorlinks,allcolors=[rgb]{0,0,0}}
\usepackage{arydshln}
\usepackage[a4paper,margin=1in]{geometry}
\usepackage{stmaryrd}
\usepackage{enumitem}
\usepackage{mathtools}
\usepackage[french]{babel}
\usepackage{mathtools}
\usepackage{mathdots}
\usepackage{tikz-cd}

\theoremstyle{theorem}
\newtheorem{thmx}{Théorème}

\newtheorem{theorem}{Théorème}
\newtheorem{proposition}[theorem]{Proposition}

\newtheorem{corollary}[theorem]{Corollaire}
\newtheorem{lemma}[theorem]{Lemme}
\theoremstyle{definition}
\newtheorem{definition}[theorem]{Définition}
\theoremstyle{remark}
\newtheorem{remark}[theorem]{Remarque}
\newtheorem{example}[theorem]{\textbf{Exemple}}
\numberwithin{equation}{section}
\numberwithin{theorem}{section}

\def\hat{\widehat}
\let\li\overline

\newcommand{\tu}[1]{\textup{#1}}

\providecommand{\lim}{\mathop{\mathrm{lim}}\limits}
\newcommand{\Ad}{\tu{Ad}}
\newcommand{\Aut}{\tu{Aut}}
\newcommand{\A}{\mathbb A}

\newcommand{\C}{\mathbb C}

\newcommand{\GL}{{\tu{GL}}}

\newcommand{\GSpin}{\tu{GSpin}}
\newcommand{\GSp}{\tu{GSp}}
\newcommand{\Gal}{{\tu{Gal}}}

\newcommand{\Gm}{\mathbb{G}_{\tu{m}}}
\newcommand{\G}{\mathbb G}

\newcommand{\Hom}{\tu{Hom}}
\newcommand{\Ext}{\tu{Ext}}
\newcommand{\RHom}{\tu{RHom}}
\newcommand{\Tot}{\tu{Tot}}

\newcommand{\Ind}{\tu{Ind}}

\newcommand{\LG}{{}^L G}

\newcommand{\Lie}{\tu{Lie}\,}

\newcommand{\PGL}{\tu{PGL}}

\newcommand{\Q}{\mathbb Q}
\newcommand{\Qbar}{\overline{\mathbb{Q}}}
\newcommand{\Res}{\tu{Res}}
\newcommand{\R}{\mathbb R}

\newcommand{\SL}{\tu{SL}}
\newcommand{\SO}{{\tu{SO}}}
\newcommand{\SU}{{\tu{SU}}}

\newcommand{\Sp}{\tu{Sp}}

\newcommand{\Sym}{{\tu{Sym}}}

\newcommand{\Tr}{\tu{Tr}\,}

\newcommand{\Zhat}{\widehat{\mathbb{Z}}}

\newcommand{\Z}{\mathbb Z}
\newcommand{\Zp}{\mathbb{Z}_p}
\newcommand{\ad}{\tu{ad}}

\newcommand{\cH}{\mathcal H}
\newcommand{\cL}{\mathcal L}
\newcommand{\cN}{\mathcal N}
\newcommand{\cO}{\mathcal O}
\newcommand{\cR}{\mathcal{R}}
\newcommand{\cS}{\mathcal S}

\newcommand{\cusp}{\tu{cusp}}
\newcommand{\der}{\tu{der}}
\newcommand{\sico}{\tu{sc}}
\newcommand{\diag}{{\tu{diag}}}

\newcommand{\eps}{\varepsilon}

\newcommand{\afr}{\mathfrak{a}}
\newcommand{\gfr}{\mathfrak{g}}
\newcommand{\kfr}{\mathfrak{k}}
\newcommand{\lfr}{\mathfrak{l}}
\newcommand{\mfr}{\mathfrak{m}}
\newcommand{\nfr}{\mathfrak{n}}
\newcommand{\pfr}{\mathfrak{p}}
\newcommand{\qfr}{\mathfrak{q}}
\newcommand{\ufr}{\mathfrak{u}}

\newcommand{\iS}{\mathfrak S}
\newcommand{\ig}{\mathfrak g}

\newcommand{\inv}{^{-1}}
\newcommand{\isomto}{\overset \sim \to}

\newcommand{\lbr}{\left\lbrace}

\newcommand{\lhk}{\left(}

\newcommand{\ol}{\overline}
\newcommand{\ul}{\underline}
\newcommand{\one}{\textbf{\tu{1}}}

\newcommand{\rbr}{\right\rbrace}

\newcommand{\rhk}{\right)}

\newcommand{\std}{\tu{std}}

\newcommand{\surjects}{\twoheadrightarrow}
\newcommand{\til}[1]{\tilde{#1}}

\newcommand{\uH}{\tu{H}}
\newcommand{\uM}{\tu{M}}

\newcommand{\vierkant}[4]{{\lhk \begin{smallmatrix} #1 & #2 \cr #3 & #4 \end{smallmatrix} \rhk } }
\newcommand{\vol}{\tu{vol}}
\newcommand{\wh}[1]{\widehat{#1}}

\newcommand{\wt}{\widetilde}

\newcommand{\myotimes}[1]{\mathbin{\mathop{\otimes}\displaylimits_{#1}}}

\newcommand{\Std}{\mathrm{Std}}
\newcommand{\LLC}{\mathrm{LL}}
\newcommand{\lng}{\operatorname{lg}}
\newcommand{\res}{\operatorname{res}}

\usepackage[continuous, page]{pagenote}
\makepagenote
\definecolor{green}{rgb}{0, 0.7, 0}

\newcommand{\disc}{\tu{disc}}
\renewcommand{\sp}{\textup{sp}}
\newcommand{\ind}{\tu{ind}}
\newcommand{\ff}{\textup{f}}
\newcommand{\uL}{\textup{L}}
\newcommand{\ct}{\tu{ct}}
\newcommand{\cT}{{\mathcal T}}
\newcommand{\cB}{{\mathcal B}}
\newcommand{\cA}{{\mathcal A}}
\newcommand{\cZ}{{\mathcal Z}}
\newcommand{\cF}{{\mathcal{F}}}
\newcommand{\cG}{{\mathcal{G}}}
\newcommand{\cX}{{\mathcal{X}}}
\newcommand{\id}{\textup{id}}
\newcommand{\LM}{ {}^L M}
\newcommand{\LH}{ {}^L H}

\newcommand{\ih}{{\mathfrak h}}
\newcommand{\nn}{{\mathfrak n}}
\newcommand{\ti}[1]{\ut #1 \inv}
\newcommand{\autom}[2]{ \Phi_{#1} \ti{#2} \Phi_{#1}\inv}
\newcommand{\LL}{ {}^L }

\newcommand{\ord}{{\tu{ord}}}
\newcommand{\loccit}{\textit{loc. cit.}}
\newcommand{\vierkantdrie}[9]{{\lhk \begin{smallmatrix} #1 & #2 & #3 \cr #4 & #5 & #6 \cr #7 & #8 & #9 \end{smallmatrix}\rhk}}

\makeatletter
\let\@wraptoccontribs\wraptoccontribs
\def\@@and{et}
\makeatother

\begin{document}

\author{Laurent Clozel}
\author{Arno Kret}
\author{Olivier Taïbi}

\contrib[appendice par]{Olivier Taïbi}
\contrib[]{Jean-Loup Waldspurger}

\title{Invariance galoisienne des z\'eros centraux de fonctions L}
\maketitle

\begin{abstract}
  Nous démontrons l'invariance Galoisienne de la propriété d'annulation en \(1/2\) des fonctions \(L\) standard ou de Rankin pour certaines représentations automorphes cuspidales algébriques régulières autoduales ou autoduales conjuguées de groupes linéaires sur un corps de nombres arbitraire.
  Il s'agit de l'analogue côté automorphe d'une conjecture de Deligne portant sur les fonctions \(L\) de motifs purs sur les corps de nombres.
  La démonstration repose sur l'utilisation de la cohomologie pondérée de Goresky-Harder-MacPherson
  et sur la construction de certaines représentations automorphes discrètes pour les groupes classiques comme résidus de séries d'Eisenstein, mais l'abandon de l'hypothèse ``\(F\) totalement réel'' introduit de nouvelles difficultés concernant certains opérateurs d'entrelacement.
  Celles-ci sont résolues grâce à l'appendice, rédigé par J.-L. Waldspurger et l'un d'entre nous, démontrant l'holomorphie et la non-annulation de certains opérateurs d'entrelacement normalisés.
  Nous démontrons également l'invariance Galoisienne des constantes des équations fonctionnelles (``root numbers'') correspondantes, impliquant l'invariance Galoisienne de la parité de l'ordre d'annulation en \(1/2\) de ces fonctions \(L\).
\end{abstract}

\tableofcontents

\section*{Introduction}

Les valeurs des fonctions $L$ aux entiers critiques sont des objets centraux en géométrie arithmétique moderne. Elles sont conjecturalement liées aux invariants arithmétiques globaux des motifs, tels que les régulateurs et l'ordre des groupes de Shafarevich-Tate. Dans son article fondateur de 1979 \cite{Deligne1979}, Deligne a formulé une conjecture précise concernant la nature algébrique de ces valeurs.

Soit $M$ un motif pur sur un corps de nombres $F$ à coefficients dans un corps de nombres $E$.
On peut (au moins conjecturalement) supposer que \(M\) est pur de poids \(w \in \Z\).
Fixons un plongement $\sigma \colon E \hookrightarrow \C$. À $M$, on associe une fonction $L$ relative à $\sigma$, notée $L(s, M, \sigma)$. Deligne a identifié un ensemble d'entiers critiques $m$ pour $L(s, M, \sigma)$, basé sur la structure de Hodge de $M$. Il a également défini des périodes $c^\pm(M, \sigma)$ via l'isomorphisme de comparaison entre les cohomologies de Betti et de de Rham. La conjecture stipule que pour tout entier critique $m$, il existe un entier $k \in \Z$ et un signe $\pm$ appropriés tels que la valeur normalisée
$$
\alpha(M, m, \sigma) = \frac{L(m, M, \sigma)}{(2\pi i)^k c^\pm(M, \sigma)}
$$
appartient à $\sigma(E) \subset \C$.

De plus, Deligne a formulé une propriété d'équivariance Galoisienne \cite[Conjectures 2.7 et 2.8]{Deligne1979}.
Soit $a \in \Aut(\C)$ est un automorphisme du corps des nombres complexes (seule sa restriction à la clôture algébrique de \(\Q\) jouera un rôle).
La conjecture prédit entre autres que la formation de ces valeurs spéciales est équivariante sous cette action, c'est-à-dire qu'on s'attend à avoir $a (\alpha(M, m, \sigma)) = \alpha(M, m, a \circ \sigma)$.
Puisque les périodes et les puissances de $2\pi i$ sont non nulles, une conséquence fondamentale de cette conjecture est l'invariance par le groupe de Galois de l'annulation des valeurs critiques:
\[ L(m, M, \sigma) = 0 \iff L(m, M, a \circ \sigma) = 0. \]
Deligne conjecture que pour \(m\) critique \(L(s, M, \sigma)\) n'a pas de pôle en \(s=m\), et ne peut s'annuler que si \(w=2m-1\) (\(m=(w+1)/2\) est alors le ``centre'' de l'équation fonctionnelle).
Lorsque le poids \(w\) est impair Deligne conjecture en outre, suivant Gross, l'invariance de l'ordre d'annulation:
\[ \ord_{s=(w+1)/2} L(s, M, \sigma) = \ord_{s=(w+1)/2} L(s, M, a \circ \sigma). \]

Il devrait exister une famille finie \((\pi_i)_{i \in I}\), où \(\pi_i\) est une représentation automorphe cuspidale pour \(\GL_{N_i,F}\) (à caractère central unitaire) satisfaisant
\[ L(s, M, \sigma) = \prod_{i \in I} L(s-\tfrac{w}{2},\pi_i). \]
De plus chaque \(\pi_i[(w+N_i-1)/2]\) devrait être algébrique (au sens de \cite[Définition 1.8]{Clozel_AA}), où l'on note \(\pi[s]\) la tordue de \(\pi\) par \(|\det|^s\).
On suppose dans la suite \(w\) impair (donc \(\pi_i[N_i/2]\) algébrique).
On s'attend à avoir
\[ L(s, M, a \circ \sigma) = \prod_{i \in I} L(s-\tfrac{w}{2}, a(\pi_i[N_i/2])[-N_i/2]) \]
où pour une représentation automorphe cuspidale algébrique \(\pi\) pour \(\GL_{N,F}\) sa conjuguée par Galois \(a(\pi)\), dont l'existence est conjecturale en général, est caractérisée par sa partie finie $a(\pi)_{\ff} \simeq \pi_{\ff} \otimes_{\C, a} \C$ (voir \cite[\S\S 3-4]{Clozel_AA}, qui décrit également la partie Archimédienne).
(L'existence de \(a(\pi)\) satisfaisant ces conditions est connue sous des hypothèses de régularité aux places Archimédiennes.)
Ainsi la conjecture de Deligne portant sur les ordres d'annulation se traduit du côté automorphe comme suit.
Pour \(\pi\) une représentation automorphe cuspidale (à caractère centrale unitaire) pour \(\GL_{N,F}\) telle que \(\pi[N/2]\) soit algébrique on devrait avoir
\[ \ord_{s=1/2} L(s, \pi) = \ord_{s=1/2} L(s, a(\pi[N/2])[-N/2]). \]

Dans cet article, nous démontrons l'invariance sous $\Aut(\C)$ de l'annulation et de la parité de l'ordre d'annulation au point central $s = \tfrac12$ des fonctions $L$ standard des représentations cuspidales autoduales, et des fonctions $L$ de Rankin attachées à des paires de représentations cuspidales autoduales ou autoduales conjuguées de groupes linéaires généraux, sous des hypothèses d'algébricité, de régularité et de signe (alternative symplectique/orthogonal).
Sous ces hypothèses nous montrons que les propriétés
$$
L (\tfrac12, \pi\times\rho)\neq 0, \qquad \ord_{s = 1/2}L(s, \pi\times\rho)\equiv 0\pmod 2
$$
sont préservées lorsque $(\pi, \rho)$ est remplacé par $(\tilde{a}(\pi), \tilde{a}(\rho))$, où \(\tilde{a}(\pi) = a(\pi)\) si \(\pi\) est algébrique (resp.\ \(\tilde{a}(\pi) = a(\pi[1/2])[-1/2]\) si \(\pi[1/2]\) est algébrique).
Nous dirons que \(\pi\) est \emph{demi-algébrique} si \(\pi[1/2]\) est algébrique.
Sans entrer dans les détails ici, notre hypothèse d'algébricité implique notamment que le motif pur à coefficients dans \(E \subset \C\) (suffisamment grand) correspondant conjecturalement à \(L(s, \pi \times \rho)\) (à torsion de Tate près) est de poids impair.

Récemment, deux d'entre nous ont appliqué dans \cite{ClozelKret} la méthode de la cohomologie d'Eisenstein pour étudier l'invariance par $\Aut(\C)$ des valeurs centrales. On part d'une représentation cuspidale algébrique superrégulière $\pi$ de $\GL_{2n}(\A_F)$, avec $F$ totalement réel, et l'on suppose $\pi$ symplectique, i.e. $L(s,\pi,\wedge^2)$ a un pôle en $s=1$. On considère alors des séries d'Eisenstein sur $\Sp_{4n}$ induites depuis le parabolique de Siegel (Levi $\GL_{2n}$) : les facteurs de normalisation du terme constant y font apparaître $L(2s,\pi,\wedge^2)$, qui a un pôle en $s=\tfrac12$ sous l'hypothèse symplectique. L'analyse du résidu relie l'apparition d'un pôle résiduel (et de la représentation résiduelle de carré intégrable correspondante) à la non-annulation $L(\tfrac12,\pi)\neq 0$ (à des hypothèses auxiliaires près). Dans \cite{ClozelKret}, l'hypothèse ``$F$ totalement réel'' permet en outre d'utiliser la conjecture de Zucker \cite{Looijenga_Zucker} pour identifier la cohomologie $L^2$ à la cohomologie d'intersection de la compactification de Baily-Borel, fournissant ainsi une $\Q$-structure canonique sur les classes pertinentes. Combinée à la classification d'Arthur \cite{ArthurBook}, cette structure permet de transporter l'existence de classes à $a(\pi)$ et de montrer qu'elles proviennent bien de résidus de séries d'Eisenstein, d'où $L(\tfrac12,a(\pi))\neq 0$ (et, sous des hypothèses supplémentaires, des résultats analogues pour certaines fonctions $L$ de Rankin).

Le présent article généralise de manière significative les méthodes et résultats de \cite{ClozelKret}. Le changement décisif est l'abandon du cadre des variétés de Shimura, ce qui nous affranchit des hypothèses ``corps totalement réel/CM'' et nous permet de traiter des situations non hermitiennes beaucoup plus générales. Sur le plan cohomologique, nous remplaçons la cohomologie d'intersection des compactifications de Baily--Borel par la cohomologie pondérée de \cite{GoreskyHarderMacPherson}, qui est précisément le réceptacle naturel des contributions résiduelles issues des pôles des séries d'Eisenstein et donc des mécanismes de détection des non-annulations de valeurs $L$. Les résultats de Nair et Rai \cite{NairRai}, établissant une structure rationnelle sur la cohomologie pondérée ainsi qu'une décomposition spectrale compatible, fournissent alors le cadre arithmétique permettant de suivre l'action de $\Aut(\C)$ au-delà du monde hermitien.

Ce passage hors du domaine des variétés de Shimura introduit toutefois de nouvelles difficultés. D'une part, on ne dispose plus, dans la généralité visée, des outils géométriques qui rigidifient les degrés cohomologiques et les contributions spectrales. D'autre part, certaines estimations analytiques usuelles (souvent motivées par des avatars de Ramanujan) ne sont plus automatiquement disponibles pour contrôler a priori les phénomènes de croissance et de normalisation intervenant dans les séries d'Eisenstein. Il devient alors indispensable de rendre explicite et robuste la comparaison des normalisations, en particulier via une étude fine des opérateurs d'entrelacement locaux qui gouvernent les termes constants et les résidus. C'est précisément l'objet de l'appendice, où nous analysons ces opérateurs (et leurs facteurs de normalisation) de façon suffisamment précise pour assurer, dans ce contexte non hermitien, la compatibilité rationnelle requise et le transfert des propriétés de non-annulation sous l'action de $\Aut(\C)$.

\bigskip

Nous énonçons maintenant nos résultats principaux. Nous supposons que $F$ est un corps de nombres. Notre premier résultat principal concerne la fonction $L$ standard. Soit $\pi_v$ une représentation admissible de $\GL_{2n}(\C)$ ou $\GL_{2n}(\R)$. Nous disons que $\pi_v$ est \emph{superrégulière} si, à l'action du groupe de Weyl près, son caractère infinitésimal est de la forme .
$$
z \mapsto ((z/\li z)^{p_1}, \ldots (z/\li z)^{p_m}, (z/\li z)^{-p_m}, \ldots, (z/\li z)^{-p_1})
\quad 
\begin{cases} \textup{$p_i \geq p_{i+1} + 2$ ($i < m$)} \cr \textup{$p_m \geq \tfrac 32$} 
\end{cases}
$$ 

\begin{thmx}[Théorème~\ref{thm:Clozel-Standard}]\label{thm:A}
Soit $\pi$ une représentation cuspidale algébrique régulière, autoduale et symplectique de $\GL_{2n}(\A_F)$, telle que $\pi_v$ soit superrégulière pour tout $v | \infty$. Alors pour tout automorphisme $a \in \Aut(\C)$ nous avons
$$
L(1/2, \pi) = 0 \iff L(1/2, a(\pi)) = 0.
$$
\end{thmx}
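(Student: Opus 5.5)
La stratégie suit celle de \cite{ClozelKret}, en remplaçant la cohomologie d'intersection par la cohomologie pondérée de \cite{GoreskyHarderMacPherson}. Par symétrie entre \(a\) et \(a^{-1}\), il suffit de montrer que \(L(1/2,\pi)\neq 0\) entraîne \(L(1/2,a(\pi))\neq 0\). Rappelons que \(a(\pi)\) est encore cuspidale, algébrique régulière, autoduale, symplectique et superrégulière ; les quatre premières propriétés proviennent de la description de \(a(\pi)_\ff\) et de \(a(\pi)_\infty\) dans \cite{Clozel_AA}.

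On considère le groupe \(G=\Sp_{4n}\) sur \(F\), son parabolique de Siegel \(P=MN\) avec \(M\simeq\GL_{2n}\), et les séries d'Eisenstein \(E(\phi,s)\) induites de \(\pi|\det|^s\). Le terme constant fait apparaître l'opérateur d'entrelacement \(M(s)=r(s)N(s)\), où
\[ r(s)=\frac{L(s,\pi)L(2s,\pi,\wedge^2)}{L(s+1,\pi)L(2s+1,\pi,\wedge^2)\,\eps(\cdots)}, \]
et \(N(s)\) est l'opérateur normalisé. Comme \(\pi\) est symplectique, \(L(2s,\pi,\wedge^2)\) a un pôle simple en \(s=1/2\). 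Si l'on sait que \(N(s)\) est holomorphe et non nul en \(s=1/2\), et que les facteurs archimédiens et non ramifiés se contrôlent, on obtient l'équivalence suivante : \(E(\phi,s)\) a un pôle en \(s=1/2\) si et seulement si \(L(1/2,\pi)\neq 0\). Dans ce cas, les résidus engendrent une représentation discrète \(\cR(\pi)\) de \(G(\A_F)\), de paramètre d'Arthur \(\pi\boxtimes\nu(2)\). L'holomorphie et la non-annulation de \(N(s)\) en \(s=1/2\), y compris aux places complexes et aux places ramifiées, sont exactement ce que fournit l'appendice. C'est le point que \(F\) non totalement réel rend délicat.

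On étape ensuite à la cohomologie. Grâce à la superrégularité de \(\pi_\infty\), on choisit un système de coefficients algébrique \(V\) de \(G\) tel que la composante archimédienne de \(\cR(\pi)\) soit cohomologique relativement à \(V\) et contribue à la cohomologie \(L^2\) en un degré déterminé. On vérifie que, pour un profil de poids convenable, cette contribution s'injecte dans la cohomologie pondérée \(W^{\mu}H^*(S_K,V)\). Plus précisément, on montre que la composante isotypique de Hecke hors d'un ensemble fini \(S\), associée aux paramètres de Satake de \(\pi\boxtimes\nu(2)\), est non nulle si et seulement si \(L(1/2,\pi)\neq 0\). Pour la réciproque, il faut voir, par la classification d'Arthur \cite{ArthurBook} et l'analyse des autres termes de la décomposition spectrale de \cite{NairRai}, que toute classe de cette composante isotypique provient du spectre résiduel attaché à \(\pi\boxtimes\nu(2)\), donc de résidus de séries d'Eisenstein, donc de la non-annulation de \(L(1/2,\pi)\). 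C'est ici que la superrégularité exclut les contributions parasites (cuspidales ou d'autres paramètres) dans le degré et le poids considérés.

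Enfin, la cohomologie pondérée possède une \(\Q\)-structure compatible aux correspondances de Hecke \cite{NairRai}. La composante isotypique associée à \(\pi\) pour \(V\) est donc transportée par \(a\) sur la composante isotypique associée à \(a(\pi)\) pour \({}^aV\), avec le même degré et le même profil de poids. Le profil de poids reste convenable, car il ne dépend que de données combinatoires préservées par \(a\). On applique alors le critère de l'étape précédente à \(a(\pi)\), ce qui donne \(L(1/2,a(\pi))\neq 0\).

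L'obstacle principal est double :
\begin{itemize}
\item l'analyse locale des opérateurs normalisés, qui garantit que le pôle de \(E(\phi,s)\) est exactement détecté par \(L(1/2,\pi)\) ; je m'appuierais entièrement sur l'appendice ;
\item l'identification précise du degré et du poids où \(\cR(\pi)\) apparaît dans la cohomologie pondérée, et l'exclusion des autres contributions.
\end{itemize}
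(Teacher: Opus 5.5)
\textbf{Gap: cuspidal members of the packet.} The criterion you want to transport by $a$ is false. You claim that the Hecke-isotypic component of weighted cohomology attached to the Satake parameters of $\psi$ is nonzero if and only if $L(1/2,\pi)\neq 0$. (For $\Sp_{4n}$ one needs $\psi=\pi\otimes\sp(2)\oplus\one$; your $\pi\boxtimes\nu(2)$ has the wrong dimension.)

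By Théorème~\ref{thm:Nair_Rai}, $\C\otimes W^{\pm}H^\bullet$ computes the $(\gfr,K)$-cohomology of all of $\cA^2(G)$, cusp forms included. The space $\cH_\psi$ contains cuspidal members in general. For example, if $\eps(1/2,\pi)=-1$ then $L(1/2,\pi)=0$. Arthur's multiplicity formula nevertheless typically gives a nonzero $\cH_\psi$, necessarily cuspidal (Saito--Kurokawa type lifts). Its members can have the same archimedean components as the Langlands quotients $\cL(\pi_v,1/2)$, the sign being carried at a finite place, so they contribute in exactly the same degrees.

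Superregularity excludes none of this: it is precisely what makes \emph{every} unitary member of the packet cohomological. Choosing a weight profile does not help either, since cuspidal classes lie in interior cohomology and survive in every $W^pH$. So nonvanishing of that isotypic component does not witness $L(1/2,\pi)\neq 0$, and transporting it by $a$ tells you nothing about $L(1/2,a(\pi))$.

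\textbf{Missing idea: restriction to the Borel--Serre boundary.} You need a $\Q$-rational way to separate residual classes from cuspidal ones, and this restriction is it. The paper works in the minimal degree $q$. There, the Rohlfs--Speh theorem (Théorème~\ref{thm:RohlfsSpeh}) combines with the factorisation $C^G_{\ol{P}}\circ\beta=j_{\ol{P},1/2}\circ N$ and diagram~\eqref{eq:diag_BSres_cstterm}. Together they show that the residual class has nonzero restriction to the stratum of $\ol{P}$ (as in Lemme~\ref{lem:trad_piGf_res_bord}), whereas cuspidal classes restrict to zero. Since this restriction and unnormalised parabolic induction are defined over $\Q$, $a(\pi_{G,\ff})$ is the finite part of a discrete \emph{non-cuspidal} representation $\pi'_G$.

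You must then still pin down the cuspidal support of $\pi'_G$:
\begin{enumerate}
\item By compatibility of the (twisted) Satake parameters with $a$, its Arthur parameter is $a(\pi)\otimes\sp(2)\oplus\one$.
\item Jacquet--Shalika applied to the Hecke matrices, with the normalisation of central characters on $A_M(\R)^0$, forces $M=\GL_{2n}$, $\pi_M=a(\pi)$ and $s=1/2$.
\item Only then does the pole at $s=1/2$ of the Siegel Eisenstein series for $a(\pi)$ give $L(1/2,a(\pi))\neq 0$. This uses holomorphy of the normalised operators at finite places for $\Re s\geq 1/2$, and holomorphy of the archimedean operators for $\Re s>0$ (temperedness of $a(\pi)_\infty$).
\end{enumerate}

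\textbf{Minor points.} The appendix is purely non-archimedean, so it says nothing about complex places. In the Siegel case $\pi$ is generic, so \cite{Cogdell_etal_IHES_2004} already suffices at the finite places. Symplecticity of $a(\pi)$ over a totally imaginary $F$ does not follow from \cite{Clozel_AA}, but it is not needed: it comes out of the pole argument.
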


Notre second résultat principal concerne les fonctions $L$ de Rankin. Soient $\pi$ et $\rho$ des représentations cuspidales algébriques régulières et autoduales de $\GL_{r}(\A_F)$ et $\GL_{t}(\A_F)$ respectivement, avec $r \geq 2$ pair et $t \geq 3$ impair.
Nous supposons que \(\pi\) est symplectique et \(\rho\) est orthogonale (rappelons \cite[Proposition 1.1]{ClozelKret} que cette condition est automatiquement vérifiée si \(F\) n'est pas totalement complexe).
Nous disons que $\pi, \rho$ sont \emph{disjointes} en $v | \infty$ si $p_i \pm 1/2 \neq q_j$ où $p = (p_1 > \ldots -p_1)$ et $q = (q_1 > \ldots > q_m > 0 > \ldots > -q_1)$ sont les caractères infinitésimaux de $\pi$ et $\rho$ respectivement.

\begin{thmx}[Théorème~\ref{thm:Clozel-Rankin}]\label{thm:B}
Soient $\pi$ et $\rho$ comme ci-dessus. Supposons qu'elles soient disjointes pour tout $v | \infty$, et que $\pi_v$ soit superrégulière. Alors pour tout $a \in \Aut(\C)$ nous avons
$$
L(1/2, \pi \times \rho) = 0 \iff L(1/2, a (\pi) \times a(\rho)) = 0.
$$
\end{thmx}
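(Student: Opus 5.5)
The strategy follows that of Théorème~\ref{thm:A}, replacing the Siegel parabolic of a symplectic group by a maximal parabolic of an orthogonal group.

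\emph{Choice of group and Eisenstein series.} Since $\pi$ is symplectic of dimension $r$ and $\rho$ is orthogonal of odd dimension $t$, Arthur's classification \cite{ArthurBook} attaches to $\rho$ a generic discrete parameter for the split group $\Sp_{t-1}$, whose dual group is $\SO_t$. We consider $G=\Sp_{2r+t-1}$ and the maximal parabolic $P=MN$ with Levi $M \simeq \GL_r \times \Sp_{t-1}$. Let $\sigma$ be a globally generic cuspidal representation of $\Sp_{t-1}$ in the packet of $\rho$. We form the Eisenstein series $E(s,\phi)$ induced from $\pi|\det|^s \otimes \sigma$.

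\emph{Residue and the central value.} By Langlands--Shahidi, the normalising factor of the constant term is
\[
\frac{L(s,\pi\times\rho)\,L(2s,\pi,\wedge^2)}{L(s+1,\pi\times\rho)\,L(2s+1,\pi,\wedge^2)}.
\]
The symplectic hypothesis gives a pole of $L(2s,\pi,\wedge^2)$ at $s=\tfrac12$. The factor $L(1,\pi\times\rho)$ in the denominator does not vanish. The series therefore has a pole at $s=\tfrac12$ if and only if $L(\tfrac12,\pi\times\rho)\neq 0$, provided the normalised intertwining operators are holomorphic and nonzero at $s=\tfrac12$. This is exactly where the appendix (Waldspurger--Taïbi) is invoked at the archimedean places, which need not be real. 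When the pole exists, the residue produces a discrete automorphic representation whose Arthur parameter is $\pi\boxtimes \nu(2) \oplus \rho$.

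\emph{Cohomological realisation and Galois action.} The hypotheses fix the archimedean component of the residue. Superregularity of $\pi_v$, together with disjointness of $\pi_v$ and $\rho_v$ ($p_i\pm\tfrac12\ne q_j$), ensure that the infinitesimal character of $\pi\boxtimes\nu(2)\oplus\rho$ is regular and that its archimedean component is cohomological for an algebraic coefficient system $V$. One then has to show the following.
\begin{enumerate}
\item The residual classes contribute to the weighted cohomology of \cite{GoreskyHarderMacPherson} for the appropriate weight profile.
\item The corresponding $\pi\boxtimes\nu(2)\oplus\rho$-isotypic part of that weighted cohomology is exactly the residual spectrum, and it is nonzero if and only if $L(\tfrac12,\pi\times\rho)\ne0$. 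Cuspidal, and more generally non-residual, contributions are excluded by comparing Arthur parameters through their Hecke eigenvalues away from a finite set.
\end{enumerate}
Nair--Rai \cite{NairRai} provide a $\Q$-structure on weighted cohomology that is compatible with Hecke operators. Applying $a\in\Aut(\C)$ therefore sends the $(\pi,\rho)$-isotypic part to the $(a(\pi),a(\rho))$-isotypic part, with coefficients $a(V)$ and Hecke eigenvalues transported by $a$. The hypotheses are stable under $a$: self-duality, the symplectic/orthogonal types, regularity, superregularity and disjointness. The pair $(a(\pi),a(\rho))$ is thus in the same situation, and nonvanishing transfers.

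\emph{Main obstacle.} The most delicate step is the precise identification in point 2, in both directions. One must show that a nonzero class in the isotypic part of weighted cohomology actually forces a residue, hence the nonvanishing of $L(\tfrac12)$. This requires controlling the constant terms of the Eisenstein series on the boundary strata and the exact archimedean $(\gfr,K)$-cohomology of the residual component. Disjointness is what rules out other parabolics that could produce classes with the same Hecke eigenvalues. I would treat this point first, via Franke's filtration and a degree computation comparing with the truncation weights.
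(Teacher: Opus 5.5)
Your architecture is the paper's: the residue at $s=1/2$ of the Eisenstein series on $\Sp_{2r+t-1}$ induced from $\pi|\det|^s\otimes\sigma$ on $\GL_r\times\Sp_{t-1}$, cohomologicality from superregularity and disjointness, and Nair--Rai rationality. However, your point~2 is false, and it is exactly where the proof lives.

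The part of the discrete spectrum with parameter $\psi=\pi\otimes\sp(2)\oplus\rho$ is not the residual spectrum. Arthur's multiplicity formula for $\cH_\psi$ only sees the root number $\epsilon(1/2,\pi\times\rho)$, not the value $L(1/2,\pi\times\rho)$. So $\cH_\psi$ can be nonzero while $L(1/2,\pi\times\rho)=0$, e.g.\ if $\epsilon(1/2,\pi\times\rho)=-1$, or if $L$ vanishes to even order $\geq 2$. Its members are then cuspidal (CAP). They are still cohomological, being unitary with regular integral infinitesimal character.

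These members have parameter $\psi$, so no comparison of Hecke eigenvalues removes them. No degree count removes them either, since their archimedean components can be those of the residual representation. Hence ``the $\psi$-isotypic part is nonzero'' is not equivalent to $L(1/2,\pi\times\rho)\neq0$, and transporting it by $a$ proves nothing.

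The paper instead transports a property that is both $\Q$-rational and detects non-cuspidality. In the minimal degree, the Rohlfs--Speh theorem (Théorème~\ref{thm:RohlfsSpeh}, \cite[Thm.~4.3]{ClozelKret}) together with the constant-term formula shows that the classes of the residual $\pi_G$ restrict nontrivially to the Borel--Serre boundary. This restriction is defined over $\Q$, so the $a(\pi_{G,\ff})$-isotypic classes also restrict nontrivially. Hence $a(\pi_{G,\ff})$ is the finite part of a \emph{non-cuspidal} discrete representation.

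Equivariance of Satake parameters gives that representation the parameter $a(\pi)\otimes\sp(2)\oplus a(\rho)$. Jacquet--Shalika (three cuspidal constituents versus $2I+\sum_j m_j$) then forces its cuspidal support to be $\GL_r\times\Sp_{t-1}$ with datum $a(\pi)|\cdot|^{1/2}\otimes\sigma'$ (Théorème~\ref{thm:Clozel6_7}). Disjointness plays no role in excluding other parabolics; it only gives regularity of the infinitesimal character.

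Second, you have misplaced the appendix: it is purely non-archimedean. At archimedean places $\pi_v\otimes\sigma_v$ is tempered, and the unnormalised $M_v(s)$ is holomorphic for $\Re s>0$.

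The appendix is needed in the converse direction, which your ``$(a(\pi),a(\rho))$ is in the same situation'' glosses over. The $\sigma'$ just produced is cuspidal with parameter $a(\rho)$, but it is not known to be generic. So \cite[Thm.~11.1]{Cogdell_etal_IHES_2004}, used in the forward direction with the generic member of the packet, does not apply. Moreover, the local components are only quasi-tempered, since Ramanujan is not known over a general $F$.

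Proposition~\ref{pro:entrelac_hol_nonnul} gives holomorphy of $N_v(s)$ for $\Re s\geq 1/2$ under exactly these hypotheses. This is what lets you attribute the pole at $s=1/2$ of the Eisenstein series from $a(\pi)|\det|^s\otimes\sigma'$ to $L(s,a(\pi)\times a(\rho))L(2s,a(\pi),\wedge^2)$, whence $L(1/2,a(\pi)\times a(\rho))\neq0$. By symmetry in $a\leftrightarrow a^{-1}$, only this implication is needed.
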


Dans la Section~\ref{sec:FunctionRankin}, nous généralisons le Théorème~\ref{thm:B}.
Nous commençons ici avec des entiers positifs $r, t$ (sans borne inférieure, ni restriction sur la parité) et soient $\pi, \rho$ des représentations automorphes cuspidales autoduales pour $\GL_r(\A_F)$ et $\GL_t(\A_F)$ respectivement. De nouveau, nous supposons que parmi $\pi$ et $\rho$, l'une est orthogonale et l'autre est symplectique. Pour la représentation symplectique, nous supposons qu'elle est algébrique régulière, et l'orthogonale est algébrique régulière (resp.\ demi-algébrique régulière\footnote{Notre hypothèse de régularité dans ce cas est en fait légèrement plus faible: voir la Définition \ref{def:SO_reg}.}) si elle est de dimension impaire (resp. de dimension paire).


\begin{thmx}[Théorème~\ref{thm:equiv_Sp_SO}]\label{thm:C}
Soient $\pi$ et $\rho$ comme ci-dessus. Supposons qu'elles soient disjointes pour tout $v | \infty$, et que $\pi_v$ soit superrégulière. Alors pour tout $a \in \Aut(\C)$, 
$$
L(1/2, \pi \times \rho) = 0 \iff L(1/2, \tilde a (\pi) \times \tilde a(\rho)) = 0.
$$
\end{thmx}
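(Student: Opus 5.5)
Le plan suit la stratégie de \cite{ClozelKret} : on transforme la non-annulation de \(L(1/2,\pi\times\rho)\) en l'existence d'une classe de cohomologie pondérée, puis on transporte cette existence par \(a\). Supposons \(\pi\) symplectique de dimension \(r\) et \(\rho\) orthogonale de dimension \(t\) ; le cas inverse est symétrique, en échangeant les rôles. Par Arthur \cite{ArthurBook}, \(\rho\) provient d'une représentation cuspidale générique \(\sigma\) d'un groupe classique \(H\). On prend \(H=\Sp_{t-1}\) si \(t\) est impair, et \(H=\SO_t\) quasi-déployé si \(t\) est pair ; c'est là qu'intervient la demi-algébricité. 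On forme alors le groupe classique \(G\) de rang \(r+\mathrm{rg}(H)\), contenant le parabolique \(P\) de Levi \(\GL_r\times H\), et la série d'Eisenstein \(E(s,\pi\otimes\sigma)\).

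Par Langlands–Shahidi, le terme constant le long de \(P\) fait intervenir le quotient
\[ \frac{L(s,\pi\times\rho)\,L(2s,\pi,\wedge^2)}{L(s+1,\pi\times\rho)\,L(2s+1,\pi,\wedge^2)} \]
multiplié par un opérateur normalisé \(N(s)\). Comme \(\pi\) est symplectique, \(L(2s,\pi,\wedge^2)\) a un pôle simple en \(s=1/2\). Le résidu en \(s=1/2\) est donc non nul si et seulement si \(L(1/2,\pi\times\rho)\neq0\), à condition que \(N(s)\) soit holomorphe et non nul en \(s=1/2\) en toutes les places. C'est le résultat de l'appendice (Waldspurger–Taïbi), qu'on invoque aux places archimédiennes complexes ou non tempérées, là où les estimations de type Ramanujan manquent. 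On obtient ainsi une représentation résiduelle de carré intégrable \(\Pi\) de \(G\), de paramètre d'Arthur \(\pi\boxtimes S_2\oplus\rho\). Grâce à la superrégularité de \(\pi_\infty\) et à la disjonction, \(\Pi_\infty\) est un module \(A_\qfr(\lambda)\) cohomologique. Ce module est isolé, et c'est le seul à contribuer dans son système de paramètres.

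On caractérise ensuite \(\Pi_{\ff}\) via la cohomologie pondérée de \cite{GoreskyHarderMacPherson}, munie de la \(\Q\)-structure et de la décomposition spectrale de \cite{NairRai}. La composante isotypique de Hecke (hors d'un ensemble fini de places) associée au paramètre \(\pi\boxtimes S_2\oplus\rho\), dans le bon degré, est non nulle si et seulement si le résidu existe, donc si et seulement si \(L(1/2,\pi\times\rho)\neq0\). Pour établir cette équivalence, on montre que les autres contributions de même paramètre de Hecke sont nulles : le cuspidal est exclu par Ramanujan-faible / Arthur, et les autres termes d'Eisenstein sont exclus par disjonction et superrégularité, qui empêchent les poids de tomber dans la bonne fenêtre de troncature. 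La cohomologie pondérée est définie sur \(\Q\) et son système de coefficients est défini sur un corps de nombres. L'action de \(a\) envoie donc cette composante isotypique sur celle du paramètre \(\tilde a(\pi)\boxtimes S_2\oplus\tilde a(\rho)\). Ce paramètre vérifie encore les hypothèses : conjugaison des caractères infinitésimaux par \(a\), préservation du type symplectique/orthogonal, de la superrégularité et de la disjonction. On en déduit l'implication dans un sens, et la réciproque s'obtient en appliquant le résultat à \(a^{-1}\).

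L'obstacle principal sera l'identification exacte de la contribution spectrale dans la cohomologie pondérée, c'est-à-dire montrer qu'elle ne provient que du résidu. Cela demande de contrôler les pôles des opérateurs d'entrelacement pour tous les paraboliques associés et de vérifier la condition de poids de Franke/Nair–Rai. Pour \(t\) pair, il faut aussi gérer la normalisation demi-algébrique et les torsions \([\pm1/2]\) dans la définition de \(\tilde a\). Je commencerais par établir cette pureté de la contribution pour le parabolique maximal, en m'appuyant sur l'appendice et sur la Définition~\ref{def:SO_reg}.
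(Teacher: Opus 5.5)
\textbf{L'étape qui échoue.} Vous affirmez que la composante isotypique de Hecke attachée à $\psi=\pi\otimes\mathrm{sp}(2)\oplus\rho$ dans la cohomologie pondérée est non nulle si et seulement si le résidu existe. Le sens ``$L(1/2,\pi\times\rho)\neq 0 \Rightarrow$ composante non nulle'' est correct. La réciproque est fausse en général. Le paquet d'Arthur de $\psi$ peut contenir des représentations \emph{cuspidales} de type CAP. Un argument de type Ramanujan ne les exclut pas, puisqu'elles le violent par nature, et Arthur non plus, puisque sa formule de multiplicité les prédit.

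Supposons par exemple $\epsilon(1/2,\pi\times\rho)=-1$. Alors $L(1/2,\pi\times\rho)=0$, et $\epsilon_\psi$ est non trivial : il vaut $\epsilon(1/2,\pi\times\rho)$ sur l'élément attaché à $\pi\otimes\mathrm{sp}(2)$. Le résidu n'existe donc pas. En revanche, les éléments du paquet de caractère $\epsilon_\psi$ sont dans le spectre discret et sont cuspidaux. Ils peuvent avoir la même composante archimédienne que le résidu, en ne modifiant qu'une composante locale finie, et contribuer dans le même degré. De même, quand $L(1/2,\pi\times\rho)\neq0$, des membres cuspidaux peuvent coexister avec le résidu. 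Ni les valeurs propres de Hecke ni le degré ne séparent le résiduel du cuspidal. Transporter la composante isotypique par $a$ ne dit donc rien sur $L(1/2,\tilde a(\pi)\times\tilde a(\rho))$.

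\textbf{L'idée qui manque.} Il faut une opération définie sur $\Q$ qui annule les classes cuspidales et lit le support cuspidal. C'est la restriction aux strates de bord de la compactification de Borel--Serre réductive, compatible au terme constant \eqref{eq:diag_BSres_cstterm}. Le déroulement est le suivant.
\begin{enumerate}
\item Le théorème de Rohlfs--Speh (Théorème~\ref{thm:RohlfsSpeh}, ici à coefficients quelconques) garantit qu'en degré minimal $q$ la composante $\pi_{G,\ff}$-isotypique a une restriction non nulle au bord le long de $\ol{P}$. Cette restriction tombe dans l'espace propre généralisé $\chi(\delta_{\ol{P},\ff}^{1/2}\otimes(\pi_{\ff}[1/2]\otimes\sigma_{\ff}))$ (Lemme~\ref{lem:trad_piGf_res_bord}).
\item La restriction au bord est rationnelle, donc la non-nullité se transporte à $a(\pi_{G,\ff})$.
\item L'espace propre conjugué est cuspidal pour $M$, avec $\sigma'$ de paramètre $\tilde a(\rho)$ (Lemmes~\ref{lem:a_delta_pi_sigma} à \ref{lem:cA_geneigen_subset_cusp}). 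Cela repose sur Arthur pour $G'$ seulement et sur Jacquet--Shalika, et non sur une ``fenêtre de poids''.
\item On en déduit le support cuspidal $[M,\tilde a(\pi)[1/2]\otimes\sigma']$ d'un $\pi'_G$ discret tel que $\pi'_{G,\ff}\simeq a(\pi_{G,\ff})$.
\item On conclut par la construction de Langlands du spectre résiduel et le Lemme~\ref{lem:alt_Lhalf_res}.
\end{enumerate}

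\textbf{Deux points secondaires.} D'abord, le cas $\pi$ orthogonale, $\rho$ symplectique ne s'obtient pas en échangeant les rôles, car la superrégularité ne porte que sur $\pi$. Il faut garder $\pi$ sur le facteur $\GL_r$ et prendre $G=\SO_{2n+1}$ avec $L(2s,\pi,\Sym^2)$. Il faut aussi gérer la normalisation de Satake tordue quand $\pi$ est demi-algébrique. Ensuite, l'appendice sert aux places \emph{finies} de $S$, là où Ramanujan manque. Aux places archimédiennes, $\pi_v\otimes\sigma_v$ est tempérée et l'opérateur d'entrelacement converge pour $\Re(s)>0$.
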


Dans la Section~\ref{sec:Eps}, nous prouvons un résultat concernant l'invariance des facteurs epsilon au point de symétrie $s = 1/2$. Soient $r, t$ des entiers positifs. Soient $\pi, \rho$ des représentations automorphes cuspidales autoduales pour $\GL_r(\A_F)$ et $\GL_t(\A_F)$ respectivement, avec $\pi, \rho$ de types opposés comme précédemment (orthogonale/symplectique). Nous faisons les mêmes hypothèses de régularité et d'algébricité sur $\pi, \rho$ que pour le Théorème~\ref{thm:B}.  

\begin{thmx}[Théorème~\ref{thm:inv_epsilon_symp}]\label{thm:D}
Soient $\pi$ et $\rho$ comme ci-dessus. Alors pour tout $a \in \Aut(\C)$ nous avons
$$
\eps(1/2, a(\pi) \times a(\rho)) = a \lhk \eps(1/2, \pi \times \rho) \rhk.
$$
\end{thmx}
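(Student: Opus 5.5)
Pour démontrer le Théorème~\ref{thm:D}, nous procédons place par place en factorisant la constante globale:
\[ \eps(s, \pi\times\rho) = \prod_v \eps(s, \pi_v\times\rho_v, \psi_v), \]
où \(\psi = \otimes_v \psi_v\) est un caractère additif non trivial de \(\A_F/F\). Le produit est indépendant de \(\psi\). Nous séparons les places finies des places archimédiennes. Comme \(\pi\) et \(\rho\) sont autoduales de types opposés, \(\pi\times\rho\) est symplectique. Chaque facteur local \(\eps(1/2, \pi_v\times\rho_v,\psi_v)\) est alors indépendant de \(\psi_v\) et vaut \(\pm 1\), ou du moins ne dépend de \(\psi_v\) qu'à travers un caractère central qui se compense globalement. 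Le produit global vaut donc \(\pm1\), et l'énoncé revient à dire que \(\eps(1/2, a(\pi)\times a(\rho)) = \eps(1/2,\pi\times\rho)\).

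\textbf{Places finies.} Par la correspondance de Langlands locale, \(\eps(s,\pi_v\times\rho_v,\psi_v)\) est le facteur epsilon de Deligne--Langlands du produit tensoriel des paramètres. On utilise la compatibilité de la correspondance locale à \(\Aut(\C)\) après la torsion de Tate appropriée, \(\rec(\pi_v\otimes_{\C,a}\C)\simeq a(\rec(\pi_v))\) à \(|\cdot|^{(1-N)/2}\) près. On utilise aussi la formule \(a(\eps(s,\sigma,\psi)) = \eps(s,a\sigma, a\psi)\) pour \(s\) demi-entier convenablement normalisé, la conjugaison par \(a\) produisant des racines carrées de \(q_v\). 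C'est ici que les torsions \(\pi[N/2]\) et le caractère \(\tilde a\) interviennent. On obtient ainsi, pour \(v\) finie,
\[ \eps(\tfrac12, a(\pi)_v\times a(\rho)_v,\psi_v) = \chi_v(a)\, a\bigl(\eps(\tfrac12,\pi_v\times\rho_v,\psi_v)\bigr), \]
où \(\chi_v(a)\in\{\pm1\}\) provient de \(a(\sqrt{q_v})^{\deg}\) et du changement \(\psi_v \mapsto a\psi_v\). Pour les représentations symplectiques, le facteur epsilon est insensible au changement de \(\psi_v\). Il reste un signe venant de \(a(\sqrt{q_v})/\sqrt{q_v}\) élevé au conducteur d'Artin. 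Il faut vérifier que ces signes se compensent globalement: le conducteur total de \(\pi\times\rho\) est pair par autodualité symplectique, ou bien le produit de ces signes est un caractère quadratique global évalué de façon triviale, via la formule du produit.

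\textbf{Places archimédiennes.} Les hypothèses de régularité et de disjonction déterminent complètement les paramètres de \(\pi_v\) et \(\rho_v\). Ce sont des sommes de caractères \(z\mapsto (z/\bar z)^{p}\) pour \(v\) complexe, et des inductions de tels caractères pour \(v\) réelle. La composante archimédienne de \(a(\pi)\) est décrite dans \cite[\S\S 3-4]{Clozel_AA} comme celle de \(\pi\) en une place permutée \(a\circ v\). Le produit des facteurs archimédiens est donc simplement permuté entre places de même type, et \(\prod_{v|\infty}\eps_v\) est inchangé. Chaque facteur \(\eps(1/2,\cdot)\) est une puissance de \(i\) explicite; une fois ces permutations réalisées, la formule est immédiate.

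\textbf{Obstacle principal.} La difficulté principale sera la comptabilité exacte des signes \(a(\sqrt{q_v})/\sqrt{q_v}\) et des racines carrées issues des torsions demi-entières, notamment dans le cas demi-algébrique en dimension paire. Nous comptons la régler en remplaçant \(\eps(1/2)\) par \(\eps(0)\) des paramètres algébriques tordus, là où l'équivariance galoisienne est propre. Nous contrôlerons ensuite le quotient par un calcul global des conducteurs, à l'aide de l'autodualité et de la formule du produit.
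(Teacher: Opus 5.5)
\textbf{The gap is at the finite places.} You expect a residual sign $(a(\sqrt{q_v})/\sqrt{q_v})^{a(V_v)}$ to survive at each finite $v$ after Tate's coefficient-equivariance, and you plan to cancel it globally. No such sign exists. The reason is a hypothesis you never use: the weights of $\pi_v$ lie in $\tfrac12+\Z$ and those of $\rho_v$ in $\Z$ (or the reverse). This means one of $\pi,\rho$ is L-algebraic and the other half-L-algebraic. For the L-algebraic one, say $\rho$, one has $a(\LLC(\rho_v))=\LLC(\tilde{a}(\rho)_v)$. For the half-L-algebraic one, $a(|\cdot|_v^{1/2}\LLC(\pi_v))=|\cdot|_v^{1/2}\LLC(\tilde{a}(\pi)_v)$. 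Hence, on the nose,
\[ a\left(|\cdot|_v^{1/2}\LLC(\pi_v)\otimes\LLC(\rho_v)\right)\simeq|\cdot|_v^{1/2}\LLC(\tilde{a}(\pi)_v)\otimes\LLC(\tilde{a}(\rho)_v), \]
which is Lemma~\ref{lem:equiv_LL_tens}.

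Now apply $a(\eps(W,\psi_v,dx_v))=\eps(a(W),a(\psi_v),dx_v)$ directly to $W=|\cdot|_v^{1/2}\LLC(\pi_v)\otimes\LLC(\rho_v)$, whose $\eps$-factor is $\eps(1/2,\pi_v\times\rho_v,\psi_v,dx_v)$. Here $dx_v$ gives $\cO_{F_v}$ volume $1$, and the global correction $|d_F|^{-rt/2}$ is rational since $rt$ is even. Replacing $a(\psi_v)=\psi_v(a_p\,\cdot)$ by $\psi_v$ costs $\det W(a_p)=1$, because $W$ is symplectic. So local equivariance holds exactly, with no correction; this is the paper's argument.

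Your residual sign comes from comparing $a(\LLC(\pi_v)\otimes\LLC(\rho_v))$ with the untwisted $\LLC(\tilde{a}(\pi)_v)\otimes\LLC(\tilde{a}(\rho)_v)$. These differ by the unramified quadratic character $\chi_v$ with $\chi_v(\varpi_v)=a(\sqrt{q_v})/\sqrt{q_v}$. The power of $\chi_v(\varpi_v)$ that this twist contributes to the $\eps$-factor cancels exactly the one obtained by applying $a$ to $q_v^{-(a(V_v)+n(\psi_v)\dim V_v)/2}$. Your last paragraph points in this direction, but once the twist is exactly $|\cdot|_v^{1/2}$ on the tensor product, nothing is left to control globally.

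\textbf{The proposed global compensation cannot work.} A symplectic Weil--Deligne representation can have odd conductor exponent: $\mathrm{sp}(2)$, i.e.\ Steinberg for $\GL_2$, has exponent $1$. Moreover, what you would need is not parity of a total but that $\prod_v q_v^{a(V_v)}$ be a square. Take $F=\Q$, $\rho$ the trivial character of $\GL_1$, and $\pi$ the unitarily normalized representation attached to an elliptic curve of conductor $11$. All hypotheses hold and $a(\pi)=\pi$. Yet your local formulas would give $\eps(1/2,\pi)=\frac{a(\sqrt{11})}{\sqrt{11}}\eps(1/2,\pi)$, which is false whenever $a(\sqrt{11})=-\sqrt{11}$.

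\textbf{A smaller problem at infinity.} If $F$ has both real and complex places, $\iota\mapsto a^{-1}\circ\iota$ can send a real embedding to a complex one. So the archimedean factors are not simply permuted among places of the same type. The paper handles this as follows. A real place contributes $\prod_{p_i+q_j>0}(-1)^{p_i+q_j+1/2}$. A complex place contributes $(-1)^{rt/2}$ times the product of this same expression over its two embeddings, since exactly $rt/2$ of the $p_i+q_j$ are positive. Hence
\[ \prod_{v|\infty}\eps(1/2,\pi_v\times\rho_v,\psi_v,dx_v)=(-1)^{c\,rt/2}\prod_{\iota:F\hookrightarrow\C}\ \prod_{p_{\iota,i}+q_{\iota,j}>0}(-1)^{p_{\iota,i}+q_{\iota,j}+1/2}, \]
where $c$ is the number of complex places. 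This expression is visibly invariant under $\iota\mapsto a^{-1}\circ\iota$.
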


Dans le Corollaire~\ref{cor:inv_pari_ord_sympl}, nous déduisons l'invariance de la parité de l'ordre d'annulation des fonctions \(L\).

Enfin, nous appliquons ces méthodes aux représentations autoduales conjuguées, que nous étudions à l'aide des groupes unitaires. Soit $E/F$ une extension quadratique (nous insistons, toute extension quadratique est permise). On note \(\{1,c\} = \Gal(E/F)\). Soient $n,r$ des entiers positifs. Soient $\pi$ et $\rho$ des représentations automorphes cuspidales autoduales conjuguées (\(\pi^\vee \simeq \pi^c\) et \(\rho^\vee \simeq \rho^c\)) régulières de $\Res_{E/F} \GL_n$ et $\Res_{E/F} \GL_r$ respectivement.
Si $n+r$ est impair, nous supposons que $\pi$ et $\rho$ sont algébriques et si $n+r$ est pair, nous supposons que $\pi[1/2]$ et $\rho$ sont algébriques. Nous supposons que $\pi$ est superrégulière et que les caractères infinitésimaux du couple $\pi, \rho$ sont disjoints pour chaque place archimédienne (Équation~\eqref{eq:SuperregularityDisjointness}).
Enfin nous supposons l'analogue de l'hypothèse dans le cas autodual que $\pi, \rho$ sont de types (orthogonal/symplectique) différents, à savoir que $\eta(\pi) = (-1)^r$ et $\eta(\rho) = (-1)^{r-1}$ (voir Définition~\ref{def:Parite} et Équation~\eqref{eq:SignCondition}).

\begin{thmx}[Théorème~\ref{thm:TheoremeUnitaire}]\label{thm:E}
Soient $\pi$ et $\rho$ comme ci-dessus. Alors pour tout $a \in \Aut(\C)$ nous avons 
$$
L(1/2, \pi \times \rho) \neq 0 \iff L(1/2, \tilde a(\pi) \times a(\rho)) \neq 0.
$$
\end{thmx}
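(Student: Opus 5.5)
Nous suivrons la stratégie des résidus d'Eisenstein pour les groupes unitaires, parallèle à celle esquissée pour les Théorèmes~\ref{thm:A}--\ref{thm:C}.

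\textbf{Choix du groupe et de la série.} On choisit un groupe unitaire quasi-déployé \(G = U(V)\) relatif à \(E/F\), de rang \(2n+r\), dont un sous-groupe de Levi maximal est \(M \simeq \Res_{E/F}\GL_n \times U(V_0)\), avec \(\dim V_0 = r\). Par changement de base d'Arthur/Mok pour les groupes unitaires, la condition de signe \(\eta(\rho) = (-1)^{r-1}\) garantit que \(\rho\) descend en une représentation discrète générique \(\sigma\) de \(U(V_0)\). On considère la série d'Eisenstein \(E(s, \pi \otimes \sigma)\) induite de \(P = MN\). Le terme constant fait intervenir le quotient
\[
\frac{L(s, \pi \times \rho)\, L(2s, \pi, \mathrm{As}^{\pm})}{L(s+1, \pi \times \rho)\, L(2s+1, \pi, \mathrm{As}^{\pm})},
\]
où le signe de la fonction \(L\) d'Asai est imposé par \(\eta(\pi) = (-1)^r\), de sorte que \(L(2s,\pi,\mathrm{As}^{\pm})\) a un pôle simple en \(s = 1/2\). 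Le point central du normalisateur est alors \(s = 1/2\) : la série a un pôle résiduel en \(s=1/2\) si et seulement si \(L(1/2, \pi \times \rho) \neq 0\). Il faut pour cela que les opérateurs d'entrelacement normalisés locaux soient holomorphes et non nuls en \(s=1/2\), aux places finies comme archimédiennes ; c'est exactement ce que fournit l'appendice. On en déduit le critère suivant : \(L(1/2,\pi\times\rho)\neq 0\) si et seulement si la représentation résiduelle \(\mathcal{R}(\pi,\sigma)\) apparaît dans le spectre discret de \(G\).

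\textbf{Passage à la cohomologie.} Les hypothèses d'algébricité, adaptées à la parité de \(n+r\) via le décalage \(\pi[1/2]\), font que le point \(s=1/2\) correspond à un système local algébrique \(V_\lambda\) sur \(G\). La superrégularité de \(\pi\) et la disjonction des caractères infinitésimaux assurent que la composante archimédienne du résidu est un paquet d'Adams--Johnson, ou un module \(A_\qfr(\lambda)\), de cohomologie non nulle. Elles assurent aussi que, dans la cohomologie pondérée de Goresky--Harder--MacPherson pour le poids convenable, la contribution de paramètre d'Arthur \(\psi = \pi\boxtimes \nu(2) \boxplus \rho\) (dans son type de caractère infinitésimal) provient uniquement du spectre discret, et plus précisément de ce résidu. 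On utilise ensuite la \(\Q\)-structure et la décomposition spectrale de Nair--Rai : \(a\in\Aut(\C)\) permute les composantes isotypiques pour l'algèbre de Hecke hors d'un ensemble fini. La composante associée au paramètre de \((\pi,\rho)\) est donc envoyée sur celle de \((\tilde a(\pi), a(\rho))\), ce qui préserve la non-nullité. Les hypothèses (signes \(\eta\), régularité, disjonction) sont stables par \(a\) puisque \(\tilde a\) préserve l'autodualité conjuguée et \(\eta\).

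\textbf{Obstacle principal.} La difficulté est d'identifier la classe transportée. Il faut montrer que toute classe de la cohomologie pondérée ayant le paramètre de Hecke de \((\tilde a(\pi), a(\rho))\) provient du résidu de la série d'Eisenstein correspondante, et non d'une représentation cuspidale ou d'une autre contribution de bord. Pour cela on invoquera la classification de Mok/Kaletha--Minguez--Shin--White du spectre discret unitaire : le paramètre \(\psi' = \tilde a(\pi)\boxtimes\nu(2)\boxplus a(\rho)\) est imposé par les valeurs propres de Hecke. Par le critère de Mœglin, un membre de carré intégrable de paramètre \(\psi'\) n'existe qu'à travers ce résidu, donc seulement si \(L(1/2, \tilde a(\pi)\times a(\rho))\neq0\). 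Les contributions au bord de la cohomologie pondérée seront exclues par un argument de poids, utilisant la superrégularité. L'énoncé étant symétrique en \(a\) et \(a^{-1}\), on obtient l'équivalence.
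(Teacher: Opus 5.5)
Your outline (residue at $s=1/2$ of the Eisenstein series on $U_{2n+r}$ induced from $\pi[s]\otimes\sigma$, the Nair--Rai rational structure, transport by $a$, Mok's classification) follows the paper's skeleton. However, the step meant to recover non-vanishing for $(\tilde a(\pi),a(\rho))$ is wrong as stated.

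You assert that, by M\oe{}glin's criterion, a square-integrable representation with parameter $\psi'=\tilde a(\pi)\otimes\sp(2)\oplus a(\rho)$ exists only through the residue, hence only if $L(1/2,\tilde a(\pi)\times a(\rho))\neq 0$. You make the same claim for the $\psi$-part of the weighted cohomology. This is false. Whether a given representation lies in $L^2_{\disc}$ is decided by the Arthur--Mok multiplicity formula, whose sign character $\eps_{\psi'}$ is built from root numbers, not from central values (and the paper shows such root numbers are Galois invariant, cf. Theorem~\ref{thm:inv_epsilon_conjautodual}).

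If $\eps(1/2)=+1$ but $L(1/2)=0$, the representation that would have been the residue still occurs in the discrete spectrum, but cuspidally, as a CAP representation (as for Saito--Kurokawa lifts). By Lemma~\ref{lem:EstCohomologique} it is cohomological, so it contributes to the same weighted cohomology. So what the $\Q$-structure plus Mok give you is a discrete $\pi'_G$ with $\pi'_{G,\ff}\simeq a(\pi_{G,\ff})$ and parameter $\psi'$. That is perfectly compatible with $L(1/2,\tilde a(\pi)\times a(\rho))=0$. M\oe{}glin's description of the residual spectrum only applies once you know $\pi'_G$ is not cuspidal.

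The missing idea is a proof of that non-cuspidality. It goes in the opposite direction from your planned weight argument excluding boundary contributions: you must show the transported class has a nonzero boundary contribution. The paper does this as follows.

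\begin{enumerate}
\item By its generalization of Rohlfs--Speh (Theorem~\ref{thm:RohlfsSpeh}), in the minimal degree $q$ the $(\gfr,K)$-cohomology of the Langlands quotient $\pi_{G,\infty}$ injects into that of the anti-standard induced module.
\item Combined with $C_{\ol P}\circ\beta=j_{\ol P,1/2}\circ N$ with $N\neq 0$, the $\pi_{G,\ff}$-isotypic part of $W^{\pm}H^q$ therefore has nonzero restriction to the stratum of $\ol P$. This restriction lies in the generalized Hecke eigenspace of $\delta_{\ol P}^{1/2}\otimes\pi_{\ff}[1/2]\otimes\sigma_{\ff}$.
\item Boundary restriction, unnormalized parabolic induction and these eigenspace decompositions are all defined over $\Q$. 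Hence the $a(\pi_{G,\ff})$-isotypic part also has nonzero constant term along $\ol P$, in the conjugate eigenspace.
\item The twisted Satake computation (Lemma~\ref{lem:PsiPrime}, which is where $\tilde a$ rather than $a$ appears) identifies this eigenspace with that of $\tilde a(\pi)[1/2]\otimes\sigma''$, where $\sigma''$ is cuspidal with base change $a(\rho)$. This forces $\pi'_G$ to be residual.
\item Only then do the Jacquet--Shalika count (Theorem~\ref{thm:IsEisenstein}) and the constant term formula give $L(1/2,\tilde a(\pi)\times a(\rho))\neq 0$.
\end{enumerate}

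Two minor points. The appendix treats only non-archimedean places; the archimedean operators are holomorphic and nonzero for $\Re s>0$ by temperedness. The generic descent $\sigma$ comes from Soudry and Cogdell--Piatetski-Shapiro--Shahidi, not directly from Mok.
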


Soit \(E/F\) une extension quadratique de corps de nombres. Soient \(\pi\) et \(\rho\) des représentations automorphes cuspidales conjuguées autoduales pour \(\GL_{r,E}\) et \(\GL_{t,E}\). Supposons que \(\pi\) et \(\rho\) sont de types opposés, qu'elles sont algébriques ou demi-algébriques régulières et qu'en toute place archimédienne \(v\) de \(E\) les poids de \(\LLC(\pi_v) \otimes \LLC(\rho_v)\) sont demi-entiers.

\begin{thmx}[Théorème~\ref{thm:inv_epsilon_conjautodual}]\label{thm:F}
  Alors pour tout \(a \in \Aut(\C)\) on a
  \[ \epsilon(1/2, \tilde{a}(\pi) \times \tilde{a}(\rho)^\vee) = \epsilon(1/2, \pi \times \rho^\vee). \]
\end{thmx}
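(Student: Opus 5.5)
Le plan est de factoriser la constante globale en un produit de constantes locales, puis de montrer séparément l'invariance par $a$ du produit des facteurs non archimédiens et celle des facteurs archimédiens. On écrit
\[ \epsilon(1/2, \pi \times \rho^\vee) = \prod_{v} \epsilon(1/2, \pi_v \times \rho_v^\vee, \psi_v) \]
pour un caractère additif $\psi = \prod_v \psi_v$ de $\A_E/E$ fixé, choisi comme composé de la trace $\A_E \to \A_F$ et d'un caractère de $\A_F/F$. Le produit ne dépend pas de $\psi$, et chaque facteur local ne change que par un caractère central évalué en un scalaire. Comme $\pi$ et $\rho$ sont conjuguées autoduales de types opposés, la paire $\pi \times \rho^\vee$ est conjuguée-orthogonale ou conjuguée-symplectique de signe tel que la constante globale vaut $\pm 1$. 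Il suffit donc de montrer que le produit des facteurs finis est transformé par $a$ et que le produit archimédien est invariant, en tenant compte de la torsion par $|\det|^{1/2}$ qui définit $\tilde a$.

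\textbf{Places finies.} On utilise la correspondance de Langlands locale $\LLC$ et son équivariance galoisienne : pour $\sigma_w$ irréductible admissible de $\GL_m(E_w)$ on a $\LLC(\sigma_w \otimes_{\C,a} \C) = a \circ \LLC(\sigma_w)$ à la torsion près par $|\cdot|^{(m-1)/2}$, comme chez Henniart et Clozel. On applique cela aux parties finies $a(\pi)_\ff \simeq \pi_\ff \otimes_{\C,a}\C$ et, dans le cas demi-algébrique, à $\pi[1/2]$. On obtient alors
\[ \epsilon(1/2, \tilde a(\pi)_w \times \tilde a(\rho)_w^\vee, a\circ\psi_w) = a\bigl(\epsilon(1/2,\pi_w\times\rho_w^\vee,\psi_w)\bigr) \]
à des facteurs explicites près de la forme $|\cdot|^{\text{demi-entier}}$ et $\sqrt{q_w}^{\,k}$. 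Ces facteurs sont les seules sources de non-équivariance. La condition que les poids de $\LLC(\pi_v)\otimes\LLC(\rho_v)$ sont demi-entiers, jointe à la parité de $rt$ et au décalage $1/2$ du point central, garantit qu'ils se compensent globalement : le produit des signes de Galois $a(\sqrt{q})/\sqrt{q}$ se recolle en un caractère quadratique global trivial par la formule du produit. Pour éliminer $a\circ\psi_w$, on remplace $a\circ\psi_w$ par $\psi_w(u\,\cdot)$ avec $u \in \Zhat^\times$ provenant du caractère cyclotomique. Le facteur $\omega_{\pi_w\times\rho_w^\vee}(u)\,|u|^{\dots}$ qui en résulte disparaît dans le produit global, car le caractère central de $\pi\times\rho^\vee$ est trivial sur $\A_F^\times$ par autodualité conjuguée.

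\textbf{Places archimédiennes.} Par le lemme de Clozel, $a(\pi)_\infty$ s'obtient de $\pi_\infty$ en permutant les places archimédiennes selon $a$. Le multi-ensemble des paramètres $\LLC(\pi_v)\otimes\LLC(\rho_v)^\vee$ est donc seulement permuté, et il suffit de calculer $\epsilon(1/2, z^{p}\bar z^{-p})$ pour un demi-entier $p$ (places complexes de $E$), puis de prendre le produit. On obtient $i^{\sum 2|p|}$ à une puissance connue près, et ce produit ne dépend que du multi-ensemble.

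\textbf{Obstacle principal.} La difficulté attendue est la comptabilité des signes dans la comparaison locale non archimédienne : la normalisation de $\LLC$ sur les demi-algébriques, les racines carrées de $q_w$ et leur recollement global en un caractère trivial. Je commencerais par traiter le cas où $\pi$ et $\rho$ sont algébriques et $rt$ est pair, où ces facteurs disparaissent directement. Le cas demi-algébrique s'en déduirait par torsion, en suivant la même méthode que pour le Théorème~\ref{thm:D}.
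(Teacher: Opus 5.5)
\paragraph{Verdict: genuine gap when $rt$ is odd.} Your skeleton is the paper's: factor the root number into local constants, use Galois equivariance of $\LLC$ after the $|\cdot|^{1/2}$ twist, replace $a\circ\psi_w$ by $\psi_w(u\,\cdot)$ with $u\in\Zhat^\times$ via Tate's formula, and observe that at infinity the parameters are only permuted. This proves the statement when $rt$ is even. The gap is the case $rt$ odd. The hypotheses allow it: it forces $E$ totally complex, and an example is $E$ imaginary quadratic, $r=t=1$, $\rho=1$, $\pi$ a unitary conjugate-symplectic Hecke character. This case cannot be reached from the even one by twisting.

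\paragraph{Where the argument fails.} Your key claim is that the Tate correction disappears ``car le caractère central de $\pi\times\rho^\vee$ est trivial sur $\A_F^\times$''. This is false when $rt$ is odd.
\begin{itemize}
\item The relevant character is $\chi=\omega_{\tilde a(\pi)}^t\omega_{\tilde a(\rho)}^{-r}$. Conjugate self-duality only gives $\chi|_{\A_F^\times}\in\{1,\eta_{E/F}\}$.
\item Since $\omega_\pi$ has sign $\eta(\pi)^r$ and $\eta(\pi)\eta(\rho)=-1$, the character $\chi$ has sign $(-1)^{rt}$. So for $rt$ odd, $\chi|_{\A_F^\times}=\eta_{E/F}$.
\item Its value at the idele of $\Zhat^\times$ attached to $a|_{\Q^{\mathrm{ab}}}$ is then $a(\sqrt{d_E})/\sqrt{d_E}$, in general $\neq 1$. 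The reason is that the restriction of $\eta_{E/F}$ to $\A^\times/\Q^\times$ is the character of $\Q(\sqrt{d_E})/\Q$ (by the transfer and Deligne's formula for the determinant of an induced representation).
\item The archimedean product is indeed unchanged under $(\pi,\rho)\mapsto(\tilde a(\pi),\tilde a(\rho))$, as you show. But its value is $\pm i^{d_\C rt}$, where $d_\C$ is the number of complex places of $E$, and $a$ need not fix it.
\item The $\sqrt{q_w}$-factors coming from the measures multiply to $|d_E|^{\pm rt/2}$. No product formula removes their Galois ratio $a(\sqrt{|d_E|})/\sqrt{|d_E|}$.
\end{itemize}
So neither ``finite part $a$-equivariant'' nor ``archimedean part $a$-fixed'' holds on its own.

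\paragraph{What is missing.} You need the global bookkeeping of three signs. With the standard measures they are:
\begin{itemize}
\item $(a(\sqrt{d_E})/\sqrt{d_E})^{rt}$ from the finite places;
\item $(a(i)/i)^{d_\C rt}$ from infinity;
\item $(a(\sqrt{|d_E|})/\sqrt{|d_E|})^{-rt}$ from the normalizing factor $|d_E|^{-rt/2}$.
\end{itemize}
Their product is $(a(\sqrt{s})/\sqrt{s})^{rt}$ with $s=\mathrm{sgn}(d_E)(-1)^{d_\C}$. This equals $1$ by Brill's theorem $\mathrm{sgn}(d_E)=(-1)^{d_\C}$. Your plan uses neither the identification of $\eta_{E/F}|_{\A^\times}$ with the quadratic character of $\Q(\sqrt{d_E})$ nor the sign of the discriminant, and without them the odd case does not close.
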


Dans le corollaire~\ref{cor:inv_pari_ord_sympl}, nous déduisons l'invariance de la parité de l'ordre d'annulation, dans le cas conjugué auto-dual.

\bigskip

La stratégie des preuves repose sur l'interaction entre la théorie analytique des séries d'Eisenstein et la théorie arithmétique de la cohomologie des groupes arithmétiques. Nous construisons un groupe réductif $G$ (par exemple $\Sp_{2N}$) qui contient un sous-groupe de Levi $M$ isomorphe \`a $\GL_r \times G'$, où $G'$ est un groupe classique plus petit. Les représentations $\pi$ (sur $\GL_r$) et $\sigma$ (sur $G'$, associée à $\rho$ via l'endoscopie) donnent lieu à une représentation sur $M$. Nous considérons les séries d'Eisenstein induites $E(f, s)$ attachées à la représentation $\pi \otimes \sigma$ sur $M$. Le comportement analytique de $E(f, s)$ en $s = 1/2$ est contrôlé par la formule du terme constant de Langlands. Le terme constant implique des rapports de fonctions $L$. Dans l'exemple mentionné où $G = \Sp_{2N}$, l'opérateur de terme constant $M(s)$ agit essentiellement comme la multiplication par un facteur impliquant :
$$
\frac{L(s, \pi \times \rho)}{L(s+1, \pi \times \rho)} \frac{L(2s, \pi, \Lambda^2)}{L(2s+1, \pi, \Lambda^2)}. 
$$
En supposant que le pôle approprié existe pour le facteur auxiliaire (par exemple, $L(s, \pi, \Lambda^2)$ ayant un pôle en $s = 1$, ce qui correspond à $2s$ en $s = 1/2$), la non-annulation de la valeur centrale $L(1/2, \pi \times \rho)$ détermine si la série d'Eisenstein a un pôle en $s = 1/2$. Plus précisément, si $L(1/2, \pi \times \rho) \neq 0$, l'opérateur normalisé $M(s)$, figurant dans l'équation fonctionnelle, a un pôle, et la série d'Eisenstein $E(f,s)$ admet un résidu. Ce résidu définit une forme automorphe non nulle de carré intégrable appartenant au spectre résiduel de $G$.

Si la série d'Eisenstein a un pôle, la représentation résiduelle $\Pi$ associ\'ee au r\'esidu $\tu{Res}_{s = 1/2} E(f, s)$ engendre une classe non triviale dans la cohomologie de l'espace localement symétrique attaché à $G$, à coefficients dans un système local $V_\lambda$. C'est ici qu'intervient les conditions de ``superrégularité" (et ``disjonction'' dans le cas Rankin): elles assurent que $\Pi_\infty$ possède le caractère infinitésimal correct pour avoir une $(\mathfrak{g}, K)$-cohomologie non nulle.
L'apport arithmétique clé est le résultat de Nair et Rai \cite{NairRai}, s'appuyant sur les travaux de Franke. Ils ont prouvé que la cohomologie pondérée
$$
W^\pm H^\bullet(G, \mathcal{X}; V) \cong H^\bullet(\mathfrak{g}, K; \mathcal{A}^2(G) \otimes V)
$$
admet une structure $\Q$-rationnelle. L'espace $\mathcal{A}^2(G)$ inclut à la fois les formes cuspidales et résiduelles. L'action de $a \in \Aut(\C)$ sur la partie finie de la cohomologie permute les espaces propres de Hecke. Spécifiquement, si une représentation $\Pi_{\ff}$ apparaît dans la cohomologie, alors sa conjuguée $a(\Pi_{\ff})$ doit également apparaître.

Par conséquent, la chaîne logique est :
\begin{enumerate}
\item $L(1/2, \pi \times \rho) \neq 0 \implies$ La série d'Eisenstein a un pôle.
\item Pôle $\implies$ Existence d'une représentation résiduelle $\Pi$ dans $L^2_{disc}(G)$.
\item $\Pi$ est cohomologique $\implies \Pi_{\ff}$ contribue à $H^\bullet(\mathfrak{g}, K; \mathcal{A}^2(G) \otimes V)$.
\item Rationalité de la cohomologie $\implies a(\Pi_{\ff})$ contribue à la cohomologie (avec coefficients conjugués).
\item $a (\Pi_{\ff})$ doit être la partie finie d'une certaine représentation automorphe $\Pi'$ dans le spectre discret de $G$.
\end{enumerate}

L'étape finale consiste à inverser l'implication pour la représentation conjuguée. Nous savons que $a( \Pi_{\ff})$ existe dans le spectre discret. Nous devons montrer que cette représentation $\Pi'$ correspond à un résidu d'une série d'Eisenstein induite depuis $a(\pi)$ et $a(\rho)$. Cela nécessite d'exclure la possibilité que $\Pi'$ soit cuspidale ou provienne d'un sous-groupe parabolique différent. Nous utilisons la classification endoscopique d'Arthur du spectre discret. La représentation $\Pi$ appartient à un paquet d'Arthur global déterminé par un paramètre $\Psi$. Puisque $\Pi$ est un résidu d'une série d'Eisenstein induite de $\pi \otimes \sigma$, son paramètre d'Arthur est construit à partir des paramètres de $\pi$ et $\sigma$ (impliquant spécifiquement $\pi \otimes \tu{sp}(2)$). Nous soutenons que le paramètre d'Arthur de la représentation conjuguée $\Pi'$ doit être le conjugué de Galois du paramètre de $\Pi$. Cela implique que $\Pi'$ ne peut pas être cuspidale (car son paramètre implique un facteur $\tu{SL}_2$ \'egale \`a $ \tu{sp}(2)$ correspondant au résidu) et doit être supportée dans le sous-groupe parabolique correspondant à $a(\pi)$ et $a(\rho)$. Par conséquent, $\Pi'$ est un résidu de la série d'Eisenstein conjuguée. L'existence de ce pôle implique la non-annulation des facteurs dans le terme constant pour la représentation conjuguée, donnant $L(1/2, a (\pi) \times a(\rho)) \neq 0$.

Enfin, l'Appendice \ref{sec:appendice}, rédigée par l'un d'entre nous et J.-L. Waldspurger, est consacrée aux propriétés analytiques des opérateurs d'entrelacement locaux qui apparaissent dans la formule du terme constant. Pour établir l'équivalence précise entre la non-annulation de la valeur centrale $L(1/2, \pi \times \rho)$ et l'existence d'un pôle pour la série d'Eisenstein, il est impératif de s'assurer que les contributions locales n'introduisent ni zéros ni pôles parasites. Nous démontrons dans cette annexe que, pour les données induisantes considérées, les opérateurs d'entrelacement normalisés sont holomorphes et non nuls au point critique $s=1/2$. Cette régularité locale est indispensable pour garantir que l'existence d'une classe de cohomologie résiduelle est strictement régie par le comportement de la fonction $L$ globale.

\noindent \textbf{Remarque.} Nous avons inclus dans cet article les Sections~1 et~2, qui permettent de démontrer le Thm.~2.3. Celui-ci était nécessaire (et omis à tort) aux démonstrations de notre article précédent~\cite{ClozelKret}.

\bigskip

\noindent \textbf{Mise en garde.}
Nos théorèmes principaux (Thm.~\ref{thm:A}--Thm.~\ref{thm:F}) reposent de manière cruciale sur la classification endoscopique du spectre discret pour les groupes classiques, telle qu'établie dans la monographie d'Arthur~\cite{ArthurBook} et, pour les groupes unitaires quasi-déployés (Théorème~\ref{thm:E}), dans les travaux de Mok~\cite{Mok}. Ces textes fondateurs, parus vers 2013 et 2015, dépendaient de plusieurs résultats anticipés (référencés comme [A24--A27] dans~\cite{ArthurBook}) qui sont à ce moment encore non publiés. De plus, la classification est conditionnelle à la preuve du Lemme Fondamental Pondéré Tordu (LFPT) (voir Conjectures~3.6 et 3.7 dans~\cite{Waldspurger_lemma_fond_pond_tordu}). Des progrès significatifs ont été réalisés pour traiter ces dépendances. Dans une prépublication récente, Atobe, Gan, Ichino, Kaletha, Mínguez et Shin \cite{AtobeGanIchinoKalethaMinguezShin} fournissent des arguments alternatifs pour les références manquantes [A24--A27], établissant ainsi les résultats principaux de~\cite{ArthurBook} et~\cite{Mok} conditionnels seulement à la validité du LFPT.

\section{Caractères infinitésimaux}

Faute de référence, nous rappelons tout d'abord l'expression du caractère infinitésimal d'une représentation en fonction de son paramètre de Langlands. Fixons une clôture algébrique $\C$ de $\R$. Soit $k = \R$ ou $\C$ et $G$ un groupe réductif  connexe défini sur $k$. Si $G$ est un groupe réel ou complexe, soit $\Lie_0 G$ son algèbre de Lie réelle et $\Lie G = \Lie_0 G \otimes_\R \C$. Soit $\cZ(G)$ le centre de l'algèbre enveloppante de $\Lie G$. Dans le cas réel, $\cZ(G) \cong S(\Lie T)^{W}$ où $T$ est un tore maximal réel de $G$ et $W$ le groupe de Weyl complexe. Dans le cas complexe, $T$ est un tore complexe, $\Lie T = \Lie_0 T \times \Lie_0 T$ et $\cZ(G) \cong S(\Lie_0T \times \Lie_0 T)^{W \times W}$, l'ordre sur $\Lie_0 T \times \Lie_0 T$ étant défini par le choix de deux isomorphismes $\sigma, \sigma' \colon k \to \C$. On suppose que $\sigma = \id$ et $\sigma' = c$, la conjugaison complexe.

Soit $\hat G$ le groupe dual (complexe) de $G$; on identifie $\li k$ à $\C$. Soit $(\cB, \cT)$ une paire de Borel dans $\hat G$. On a $X_*T \cong X^* \cT$, donc $\Lie T \cong (\Lie \cT)^*$ et un caractère infinitésimal s'identifie à un élément de $\Lie \cT /W$ (cas réel), de $(\Lie \cT \times \Lie \cT)/W \times W$ (cas complexe).

Soit $W_k$ le groupe de Weil de $k$, $W_k^0 = \C^\times$ sa composante neutre ($= W_k$ dans le cas complexe), $\pi$ une représentation irréductible admissible de $G(k)$ et $\varphi \colon W_k^0 \to \hat G$ la restriction à $W_k^0$ de son paramètre de Langlands. On peut supposer que $\varphi(W_k^0) \subset \cT$. Dans le cas réel, on écrit $\varphi(z) = z^\tau (\li z)^{\tau'}$, $\tau, \tau' \in X_*\cT \otimes \C = \Lie \cT$, $\tau - \tau' \in X_* \cT$.

Dans le cas complexe, on écrit de même
$$
\varphi(z) = z^\tau (\li z)^{\tau'}
$$
avec les mêmes conditions. Noter que la conjugaison complexe opère sur $\hat G$, et à conjugaison près préserve $\cT$; en particulier elle opère sur les exposants $\tau, \tau'$.

\begin{proposition}\label{prop:Clozel1_1}
\begin{enumerate}
\item Si $k = \R$, le caractère infinitésimal de $\pi$ est la $W$-orbite de $\tau$ ou de $c \tau'$.
\item Si $k = \C$, le caractère infinitésimal de $\pi$ est la $W \times W$-orbite de $(\tau, \tau')$.
\end{enumerate}
\end{proposition}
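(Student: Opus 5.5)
Le plan consiste à ramener l'énoncé à deux ingrédients standard : la compatibilité de la correspondance de Langlands réelle/complexe avec l'induction parabolique, et le calcul explicite pour les tores. On vérifie d'abord que les deux descriptions en (1) coïncident. En effet, le paramètre $\varphi$ s'étend à $W_\R$, et l'élément $j\in W_\R$ (avec $jzj^{-1}=\li z$) agit par conjugaison sur $\varphi(W_\R^0)\subset\cT$ via un élément $\sigma_\varphi = \Ad(\varphi(j))\circ c$ normalisant $\cT$. On en déduit $\varphi(\li z)=\sigma_\varphi(\varphi(z))$, c'est-à-dire $\tau'=\sigma_\varphi\tau$ (et de même $\tau = \sigma_\varphi\tau'$). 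Or $\sigma_\varphi$ et $c$ ne diffèrent que par un élément de $W$ (l'action de $c$ sur $\cT$ étant définie à conjugaison près). Donc $c\tau'$ et $\tau$ sont dans la même $W$-orbite.

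Pour le cœur de l'énoncé, on utilise la classification de Langlands : toute $\pi$ est un sous-quotient d'une induite $\Ind_P^G(\sigma\otimes\chi)$, avec $\sigma$ tempérée (essentiellement de carré intégrable) d'un Levi $M$, et le paramètre de $\pi$ est le composé de celui de $\sigma\otimes\chi$ avec ${}^LM\hookrightarrow{}^LG$. Comme le caractère infinitésimal est constant sur les sous-quotients d'une induite, et est donné par l'image via l'homomorphisme de Harish-Chandra $\cZ(G)\to\cZ(M)$ (translaté par $\rho$, ce qui est compatible avec les identifications choisies), on se ramène au cas de $M$. En itérant, notamment pour les séries discrètes réalisées par Shelstad/Langlands comme ayant paramètres à valeurs dans le normalisateur d'un tore fondamental, on se ramène au cas d'un tore $T$.

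Pour un tore, on est dans la situation de la correspondance de Langlands pour les tores. Un caractère $\chi$ de $T(\R)$, restreint à la composante neutre, a pour différentielle un élément de $\Lie T^*\cong\Lie\cT$, et l'on vérifie sur $\Gm$ et $\Res_{\C/\R}\Gm$ (puis par fonctorialité pour un tore quelconque, déployé sur $\C$) que cette différentielle est $\tau$. Par exemple, pour $\chi(x)=|x|^s$ sur $\R^\times$, on a $\varphi(z)=(z\li z)^s$, d'où $\tau=s$. Dans le cas complexe, le caractère $z^\tau\li z^{\tau'}$ de $T(\C)$ a pour différentielle le couple $(\tau,\tau')$ relativement aux deux plongements $\sigma=\id$, $\sigma'=c$, ce qui donne (2).

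L'obstacle principal est le contrôle des conventions lors de la réduction au Levi : il faut s'assurer que le décalage par $\rho_M$ versus $\rho_G$ dans l'homomorphisme de Harish-Chandra s'annule exactement avec la normalisation unitaire de l'induction. Il faut aussi vérifier que, pour les séries discrètes (cas non réductible à un tore par induction), le paramètre de Langlands, qui se factorise par $N(\cT)$, donne bien $\tau=\lambda$ (paramètre de Harish-Chandra). Je vérifierais ce point directement sur la forme explicite $\varphi(z)=z^{\lambda}\li z^{w\lambda}$ donnée par Langlands.
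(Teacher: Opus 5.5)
Votre démonstration est correcte dans ses grandes lignes. Elle suit le texte sur trois points : le cas complexe (sous-quotient d'une série principale $\ind_B^G\chi$, dont le caractère a pour différentielle $(\tau,\tau')$), la relation $\tau\in W\cdot c\tau'$, et la réduction, par la classification de Langlands et la compatibilité de l'induction normalisée avec l'homomorphisme de Harish-Chandra, au cas d'un Levi $M$ portant un $L$-paquet de séries discrètes modulo le centre. Les routes divergent sur ce dernier cas.

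Vous le traitez directement : le paramètre d'une série discrète est construit par Langlands (ou par Shelstad, via un $L$-plongement ${}^L T\to\LM$ pour un tore fondamental $T$) à partir du paramètre de Harish-Chandra $\lambda$, de sorte que $\tau=\lambda$ se lit sur la définition. Le texte, lui, n'analyse jamais les séries discrètes. Il compare le paquet $\Pi_M$ à la représentation $\pi_{M,\C}$ de $M(\C)$ définie par $\varphi|_{\C^\times}$, et utilise l'isomorphisme $\cZ(\ig_\C)\simeq\cZ(\ig)\otimes\cZ(\ig)$ induit par $x\mapsto(jx,cjx)$. Il invoque ensuite l'identité $\omega_\C=\omega\circ N$ de la thèse de Clozel (Prop.~8.6), puis applique à $\pi_{M,\C}$ le cas complexe déjà établi.

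Votre route est plus élémentaire et n'utilise que la construction de la correspondance. Mais tout son contenu réside dans les vérifications de conventions que vous laissez en suspens :
\begin{itemize}
\item l'identification du tore fondamental à $\cT$ et des systèmes positifs ;
\item les décalages par $\rho$ et les données $\chi$ dans le plongement ${}^LT\to\LM$ ;
\item la partie centrale pour les séries discrètes seulement essentiellement de carré intégrable ;
\item le choix des isomorphismes $\C\to\C$, qui tranche entre $\tau$ et $c\tau$ (cf.\ la note de bas de page du texte).
\end{itemize}
La route du texte n'exige aucune connaissance explicite des paramètres des séries discrètes, au prix d'une identité de caractères de changement de base plus profonde. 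Elle fait aussi apparaître naturellement l'alternative ``$\tau$ ou $c\tau'$'' à partir des deux facteurs de $\cZ(\ig_\C)$.

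Deux imprécisions mineures. D'abord, dans votre premier paragraphe, si $\varphi(j)=g\rtimes j$, l'automorphisme pertinent est $\Ad(g)\circ c$ et non $\Ad(\varphi(j))\circ c$. Il faut en outre conjuguer au préalable $\varphi$ par un élément du centralisateur de $\varphi(\C^\times)$ pour que cet automorphisme préserve $\cT$. Ensuite, dans le cas réel, la ``réduction à un tore'' ne passe pas par l'induction parabolique (pensez à $G(\R)$ compact), mais seulement par le $L$-plongement, comme vous le reconnaissez à la fin.
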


Supposons d'abord $k = \C$. D'après Langlands, $\pi$ est un sous-quotient de l'induite unitaire
$$
\ind_B^G \chi,
$$
o\`{u} $B \subset G$ est un sous-groupe de Borel, $T$ le tore quotient de $B$, et le caractère $\chi$ de $T$ est donné par $t \mapsto t^\tau(t^{\tau'})^-$, $\tau, \tau' \in X_* \cT \otimes \C$. L'induction unitaire étant compatible avec l'isomorphisme d'Harish-Chandra, on en déduit que le caractère infinitésimal est $(\tau, \tau') \in (\Lie \cT)^2$, modulo $W \times W$.

Le cas réel est plus compliqu\'e. Noter que, $\varphi$ s'étendant à $W_\R$, un calcul simple montre que $\tau$ est conjugué par $W$ à $c\tau'$ \footnote{Le fait que le caractère infinitésimal est donné par $\tau$ et non $c \tau$ résulte du choix systématique d'isomorphismes $\C \to \C$.}. Nous allons donner un argument simple de changement de base. Changeant de notation, notons $\ig^0$, $\ig_\C^0$ les algèbres de Lie (réelles) de $G(\R)$, $G(\C)$, et $\ig$, $\ig_{\C}$ leur complexifiées. Il existe un isomorphisme $\C$-linéaire $j \colon \ig_{\C}^0 \to \ig$; alors l'application
\begin{equation}\label{eq:Clozel_1}
\ig_{\C} \to \ig \times \ig,  \quad x \mapsto (jx, cjx)
\end{equation}
définit un isomorphisme. Il en est de même pour un tore réel, d'où un isomorphisme
\begin{equation}\label{eq:Clozel_2}
\cZ(\ig_{\C}) \to \cZ(\ig) \otimes \cZ(\ig).
\end{equation}

Soit $\Pi$ le $L$-paquet de représentations de $G(\R)$ associé à $\varphi$, et $\pi_{\C}$ la représentation de $G(\C)$ associée à $\varphi|_{\C^\times}$. Le paramètre $\varphi$ passe par un sous-groupe $\LM$ de $\LG$ associé à un parabolique réel $P = MN$ de $G$, et définit un $L$-paquet de séries discrètes (modulo le centre) $\pi_M$ pour $M(\R)$~; les éléments de $\Pi$ sont sous-quotients d'induites unitaires des $\pi_M \in \Pi_M$. Par compatibilité avec l'homomorphisme d'Harish-Chandra, il suffit de vérifier le lemme pour $M$.

Le paramètre $\varphi|_{\C^\times}$ définit une représentation $\pi_{M, \C}$ de $M(\C)$. D'après \eqref{eq:Clozel_2} on a un isomorphisme
$$
\cZ(M) \isomto (S(\Lie T) \otimes S(\Lie T))^{W_M \times W_M},
$$
le membre de gauche étant bien sûr défini par le groupe complexe. On en déduit une application
$$
N \colon \cZ(M) \to S(\Lie T), \quad u \otimes v \mapsto uv.
$$

D'après \cite[Proposition 8.6]{ClozelThese}, les caractères infinitésimaux $\omega, \omega_{\C}$ de $\Pi_M$, $\pi_{M, \C}$ sont associés par $\omega_{\C} = \omega \circ N$. En reprenant les isomorphismes ``évidents'' et à l'aide de l'expression \eqref{eq:Clozel_1}, on en déduit enfin que le caractère infinitésimal de $\pi$ est donné par $\tau$ ou $c \tau'$.

Le corollaire suivant ne sera pas utilisé par la suite, mais il est d'un intérêt propre. Il nous a été indiqué par Erez Lapid. Soit $\xi \colon \LG \to \LH$ un homomorphisme de $L$-groupes (sur $k$). Soit $\hat \ig$, $\hat \ih$ les algèbres de Lie des groupes duaux. Dans le cas réel, $\cZ(G) = \C[\Lie \cT]^W$ (polynômes sur $\Lie \cT$) $= \C[\hat \ig]^{\hat G}$ d'après un théorème de Chevalley. De même $\cZ(H) = \C[\hat \ih]^{\hat H}$. On vérifie aisément qu'il y a une application naturelle de restriction $\C[\hat \ih]^{\hat H} \to \C[\hat \ig]^{\hat G}$; par composition on obtient une application $\xi_*$ entre caractères infinitésimaux pour $G$ et $H$.

La même construction, dupliquée, s'applique au cas complexe. On a alors~:

\begin{corollary}
Soit $\xi \colon \LG \to \LH$ un homomorphisme de $L$-groupes sur $k = \R$ ou $\C$. Si $\varphi \colon W_k \to \LG$ définit une représentation $\pi_G$ de $G(k)$ et si $\varphi_H = \xi \circ \varphi$ définit une représentation $\pi_H$, les caractères infinitésimaux, $\omega_G$, $\omega_H$ de $\pi_G$, $\pi_H$ sont associés par $\omega_H = \xi_* \omega_G$.
\end{corollary}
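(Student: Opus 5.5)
Le plan est de déduire le corollaire directement de la Proposition~\ref{prop:Clozel1_1}, en se ramenant à une vérification sur les tores maximaux des groupes duaux. Je traite d'abord le cas $k=\C$. On peut conjuguer $\varphi$ pour que $\varphi(\C^\times)\subset \cT_G$, puis conjuguer $\xi$ dans $\hat H$ pour que $\xi(\cT_G)\subset \cT_H$. On a $\varphi(z)=z^{\tau}\li z^{\tau'}$, et donc $\xi\circ\varphi(z) = z^{d\xi(\tau)}\li z^{d\xi(\tau')}$, où $d\xi\colon \Lie\cT_G\to\Lie\cT_H$ est la différentielle. D'après la Proposition~\ref{prop:Clozel1_1}(2), $\omega_G$ est la classe de $(\tau,\tau')$ modulo $W_G\times W_G$, et $\omega_H$ celle de $(d\xi(\tau),d\xi(\tau'))$ modulo $W_H\times W_H$.

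Il reste à identifier $\xi_*$ avec l'application induite par $d\xi$ sur les quotients. Par Chevalley, un caractère infinitésimal, vu comme caractère de $\C[\hat\ig]^{\hat G}=\C[\Lie\cT_G]^{W_G}$, est l'évaluation en un point $x\in\Lie\cT_G$, bien défini modulo $W_G$. La restriction $\C[\hat\ih]^{\hat H}\to\C[\hat\ig]^{\hat G}$ est le tiré en arrière par $d\xi\colon\hat\ig\to\hat\ih$. Le composé de l'évaluation en $x$ avec cette restriction est donc l'évaluation en $d\xi(x)$, restreinte à $\C[\hat\ih]^{\hat H}$. Comme $d\xi(x)$ est semi-simple et, après conjugaison, appartient à $\Lie\cT_H$, ce caractère correspond via Chevalley à la $W_H$-orbite de $d\xi(x)$. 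On en conclut $\xi_*[x]=[d\xi(x)]$. En dupliquant l'argument composante par composante, on obtient $\omega_H=\xi_*\omega_G$ dans le cas complexe.

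Pour $k=\R$, on applique la Proposition~\ref{prop:Clozel1_1}(1) : $\omega_G$ est la $W_G$-orbite de $\tau$, où $\varphi|_{\C^\times}(z)=z^\tau\li z^{\tau'}$. Comme $\xi\circ\varphi|_{\C^\times}$ a pour exposants $(d\xi(\tau),d\xi(\tau'))$, on obtient $\omega_H=[d\xi(\tau)]=\xi_*[\tau]$ par le même calcul. On n'a besoin que de la restriction de $\xi$ à la composante neutre $\hat G\to\hat H$, qui est un morphisme algébrique ; le fait que $\xi$ soit un $L$-morphisme garantit seulement que $\xi\circ\varphi$ est encore un paramètre admissible.

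Le point délicat, quoique mineur, est la compatibilité des normalisations. Il faut s'assurer que l'identification $\Lie T\cong(\Lie\cT)^*$ et l'isomorphisme de Harish-Chandra sont les mêmes que ceux utilisés dans la Proposition~\ref{prop:Clozel1_1}. Il faut aussi vérifier que l'application $\xi_*$ définie via Chevalley correspond exactement à l'évaluation. En particulier, on doit contrôler qu'aucun décalage par $\rho$ n'intervient, ce qui est le cas puisque la proposition est énoncée directement en termes d'exposants. Il faut enfin vérifier que le choix entre $\tau$ et $c\tau'$ dans le cas réel est respecté par $\xi$ : c'est le cas, car $\xi$ commute à l'action de $W_\R$ et donc à $c$ sur les exposants.
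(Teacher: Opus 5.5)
Your proposal is correct and follows the paper's route. The paper only says that the corollary follows immediately from Proposition~\ref{prop:Clozel1_1} together with the Chevalley identification $\cZ(G)=\C[\Lie\cT]^W=\C[\hat{\ig}]^{\hat G}$, where $\xi_*$ is pullback along $d\xi$; you have written out the torus bookkeeping: conjugate so that $\xi(\cT_G)\subset\cT_H$, the exponents transform by $d\xi$, and $\xi_*[x]=[d\xi(x)]$.

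One caveat. The formula $\xi\circ\varphi(z)=z^{d\xi(\tau)}\li z^{d\xi(\tau')}$, and hence your claim that only $\xi|_{\hat G}$ matters, tacitly assumes $\xi(1\rtimes w)=1\rtimes w$ for $w\in W_k^0$. A general $L$-homomorphism may be twisted there by a homomorphism $W_k^0\to Z_{\hat H}(\xi(\hat G))$, for instance by a character as in Mok's $\xi_\kappa$, and this shifts $\omega_H$ by its exponents. The same convention is implicit in the statement itself, since $\xi_*$ only sees $d\xi$, so you should state it explicitly.
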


Ceci se déduit aussitôt des identifications précédentes.

\section{Induites paraboliques cohomologiques}

Soit $F$ un corps de nombres, $\li F$ une clôture algébrique de $F$, $\Gamma_F = \Gal(\li F/F)$. Pour un groupe $H$ sur $F$ on note $X^*(H)$ le $\Gamma$-module des homomorphismes $H \to \Gm/\li F$. Soit $G$ un groupe semi-simple connexe sur $F$, $M$ un sous-groupe de Levi de $G$ (sur $F$), $A_M$ son tore déployé central maximal. Donc $A_M \cong \Gm^r /F$, $r$ étant le rang parabolique.

On identifie $X^*(M)^{\Gamma_F} \otimes_\Z \C$ à un groupe de caractères de $M(\A_F)$ par
$$
(\lambda \otimes s)(m) = |\lambda(m)|_F^s.
$$
Soit $A_M(\R) \subset A_M(F_\infty)^\circ$ le sous-groupe donné, sur chaque facteur $\Gm(F_\infty)$, par le plongement diagonal $\R^\times_+ \subset F_\infty^\times$. 
On consid\`ere ici $G$ comme un $\Q$-groupe par restriction des scalaires.
Il est isomorphe à $\R^r$. Le morphisme de restriction $X^*(M)^{\Gamma_F} \otimes \C \to \Hom_\ct(A_M(\R)^\circ, \C^\times)$ est un isomorphisme.

Soit $\pi_M$ une représentation automorphe cuspidale de \(M(\A_{\ff})\).
Elle s'écrit de façon unique $\pi_M \cong \pi_{M, 0} \otimes \chi$, où $\chi \in X^*(M)^{\Gamma_F} \otimes \C$ et $\pi_{M, 0}$, de caractère central trivial sur $A_M(\R)^\circ$, est cuspidale unitaire.

\begin{lemma} \label{lem:chi_reel}
  Supposons que pour toute place archimédienne \(v\) de \(F\) le caractère infinitésimal de \(\pi_{M,v}\) soit réel.
  Alors \(\chi\) est réel, i.e.\ \(\chi \in X^*(M)^{\Gamma_F} \otimes \R\).
\end{lemma}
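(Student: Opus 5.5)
The plan is to read \(\chi\) off from the central character of \(\pi_M\) on \(A_M(\R)^\circ\), and to compute that central character place by place at infinity using the infinitesimal characters. By definition \(\pi_{M,0}\) has trivial central character on \(A_M(\R)^\circ\). Hence the central character of \(\pi_M = \pi_{M,0}\otimes\chi\), restricted to \(A_M(\R)^\circ\), is exactly \(\chi|_{A_M(\R)^\circ}\). Since the restriction map \(X^*(M)^{\Gamma_F}\otimes\C \to \Hom_\ct(A_M(\R)^\circ,\C^\times)\) is an isomorphism, and it visibly carries \(X^*(M)^{\Gamma_F}\otimes\R\) onto the positive real characters, it suffices to prove one thing: the central character \(\omega_{\pi_M}\) is real positive on \(A_M(\R)^\circ\), i.e.\ of the form \(a\mapsto e^{\langle \mu, \log a\rangle}\) with \(\mu\) real.

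The subgroup \(A_M(\R)^\circ\) sits diagonally inside \(\prod_{v\mid\infty} A_M(F_v)^\circ\). So \(\omega_{\pi_M}|_{A_M(\R)^\circ}\) is the product of the restrictions of the local central characters \(\omega_{\pi_{M,v}}\) to \(A_M(F_v)^\circ\). Since a product of positive real characters is positive real, I reduce to a local statement at each archimedean \(v\). I claim that the restriction of \(\omega_{\pi_{M,v}}\) to the split connected part \(A_M(F_v)^\circ\), or at least to its image of \(\R_+^\times\) under the diagonal embedding, is a real positive character.

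For the local claim I use that the central subalgebra \(\Lie A_M \otimes \C\) of \(\Lie M\) acts on \(\pi_{M,v}\) by the scalar \(d\omega_{\pi_{M,v}}\). On the Harish-Chandra side, this scalar is the projection of the infinitesimal character \(\lambda_v\in \Lie\cT/W_M\) onto the \(W_M\)-invariant part \((\Lie A_M)^*\otimes\C\). That projection is well defined, because the orthogonal projection onto the \(W_M\)-fixed space kills the \(W_M\)-ambiguity. In the complex case one must project both components \((\tau,\tau')\), as in Proposition~\ref{prop:Clozel1_1}, and the diagonal \(\R_+^\times\) sees their sum. By hypothesis \(\lambda_v\) is real, so the projection is real. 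A character of the connected group \(A_M(F_v)^\circ\cong(\R_+^\times)^{r}\), or \(\C^{\times}\) factors restricted to \(\R_+^\times\), whose differential is real is of the form \(a\mapsto a^{\mu}\) with \(\mu\) real, hence positive real. This proves the local claim, and taking the product over \(v\mid\infty\) shows that \(\chi|_{A_M(\R)^\circ}\) is real, so \(\chi\in X^*(M)^{\Gamma_F}\otimes\R\).

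The main obstacle is the bookkeeping in the identification of the central part of the infinitesimal character with \(d\omega\). One has to check that the normalization of the Harish-Chandra isomorphism (the \(\rho\)-shift) does not affect the central component; it does not, since \(\rho_M\) is orthogonal to the center. In the complex-place case one must also check that the relevant quantity for the diagonal \(\R_+^\times\) is \(\tau+\tau'\) projected, which is real when both \(\tau\) and \(\tau'\) are real. I would verify these conventions using Proposition~\ref{prop:Clozel1_1} and the explicit form \(\varphi(z)=z^\tau\bar z^{\tau'}\).
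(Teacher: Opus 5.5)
Your proposal is correct and follows essentially the same route as the paper. Both arguments identify \(\chi|_{A_M(\R)^\circ}\) with the central character of \(\pi_M\) there, write it as a product over \(v\mid\infty\) of the local central characters, and conclude that each factor is real because the infinitesimal character is real. You additionally spell out the local step that the paper asserts in one line: the differential of the central character is the central part of the infinitesimal character, and at complex places the relevant quantity is \(\tau+\tau'\).
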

\begin{proof}
  L'hypothèse implique que la restriction du caractère central de \(\pi_{M,v}\) à \(A_M(\R)^0 \subset S_M(F_v)\) est réelle.
  L'image de \(\chi\) par l'isomorphisme de restriction \(X^*(M)^{\Gamma_F} \otimes \C \to \Hom_\ct(A_M(\R)^\circ, \C^\times)\) est égale au produit (sur toutes les places archimédiennes \(v\)) de ces restrictions, donc elle est également réelle.
\end{proof}

On suppose maintenant que pour toute place archimédienne \(v\) de \(F\) une induite parabolique de $M$ à $G$ (normalisée, pour un des sous-groupes paraboliques de \(G\) de facteur de Lévi \(M\)) de $\pi_{M,v}$ admet un sous-quotient irréductible $\pi_v$ qui est cohomologique en $v$, au sens suivant~:
\begin{itemize}
\item[$\bullet$] Si $v$ est réelle, on suppose qu'il existe une représentation algébrique irréductible $V^\lambda$ de $G(F_v \otimes \C)$ de plus haut poids $\lambda$ telle que $\uH^\bullet(\ig(F_v) \otimes \C, K_v; \pi_v \otimes V^\lambda) \neq 0$, où \(K_v\) est un sous-groupe compact connexe maximal de \(G(F_v)\).
\item[$\bullet$] Si $v$ est complexe, on fait la même hypothèse pour une représentation algébrique de $G(F_v \otimes \C)$ de la forme $V^{\lambda_1} \otimes V^{\lambda_c}$ où $(1, c)$ désignent les deux isomorphismes $\C \to \C$.
\end{itemize}
On va déduire de cette hypothèse une contrainte sur $\chi$. On introduit $\hat G$ et $\LG$ (sur $F$ ou $F_v$), et on utilise la description précédente. L'hypothèse cohomologique implique alors~:
\begin{itemize}
\item[$\bullet$] Si $v$ est réelle, le caractère infinitésimal $\tau$ de $\pi_v$ est l'orbite sous $W$ de $-(\lambda + \rho) \in \Lie \cT$.
\item[$\bullet$] Si $v$ est complexe, il est égal à l'orbite sous $W \times W$ de $(-\lambda_1 + \rho, -\lambda_c + \rho) \in (\Lie \cT)^2$.
\end{itemize}
Par compatibilité du caractère infinitésimal avec l'induction parabolique, la $W$-orbite (resp. $W \times W$-orbite) de l'image du caractère infinitésimal de $\pi_{M,v}$ est égale à cette expression.
Notons $C$ le tore $M/M_\der$, de sorte que \(X^*(C) = X^*(M)\) et son dual $\hat C$ s'identifie à $Z(\hat M)^0$.
On a donc
\[ \chi \in X^*(C)^{\Gamma_F} \otimes \C \subset X^*(C) \otimes \C = X_*(\hat C) \otimes \C, \]
et $\hat C \subset \cT$ ce qui nous permet de voir \(\chi\) comme un élément de \(X_*(\cT) \otimes \C\) invariant par \(W_M\).
Pour plus de clarté, notons, dans le cas complexe, $\tau_1, \tau_c$ les exposants $\tau, \tau' $ de la Proposition 1.1.
Alors
\begin{itemize}
\item[$\bullet$] Si $v$ est réelle, le caractère infinitésimal de $\pi_{M, v}$ est
\begin{equation}\label{eq:Clozel1}
\tau = \chi + \tau(\pi_{M, 0, v})\ (\tu{mod } W_M).
\end{equation}
\item[$\bullet$] Si $v$ est complexe, c'est, modulo $W_M \times W_M$~:
\begin{equation}\label{eq:Clozel2}
(\tau_1, \tau_c) = (\chi + \tau_1(\pi_{M, 0, v}), \chi + \tau_c(\pi_{M,0, v})).
\end{equation}
\end{itemize}
Utilisons maintenant l'unitarité de $\pi_{M, 0}$.

\begin{lemma}\label{lem:Clozel1_3}
Soit $k = \R$ ou $\C$ et $\pi$ une représentation irréductible unitaire de $M(k)$.
\begin{enumerate}[label=(\roman*)]
\item Si $k = \R$, le caractère infinitésimal $\tau$ de $\pi$ vérifie $\tau = -\li{c(\tau)} (\tu{mod } W_M)$.
\item Si $k = \C$, le caractère infinitésimal $(\tau_1, \tau_c)$ de $\pi$ vérifie $(\tau_1, \tau_c) = (-\li \tau_c, - \li \tau_1) (\tu{mod } W_M^2)$.
\end{enumerate}
\end{lemma}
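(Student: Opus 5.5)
La preuve proposée repose sur un fait standard : si \(\pi\) est unitaire, alors \(\pi\) est isomorphe à sa contragrédiente hermitienne \(\bar\pi^\vee\), c'est-à-dire \(\li\pi \simeq \pi^\vee\). Il suffit donc de calculer le caractère infinitésimal de \(\pi^\vee\) et celui de \(\li\pi\), puis d'égaler les deux. C'est le calcul qui produit l'identité voulue.

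\emph{Contragrédiente.} Le caractère infinitésimal de \(\pi^\vee\) s'obtient en composant celui de \(\pi\) avec l'antipode \(X \mapsto -X\) de l'algèbre enveloppante. Via l'isomorphisme d'Harish-Chandra, cela correspond à \(\tau \mapsto -w_0\tau\), qui est conjugué à \(-\tau\) modulo \(W_M\). Dans le cas complexe, on obtient de même \((\tau_1,\tau_c)\mapsto(-\tau_1,-\tau_c)\).

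\emph{Conjuguée complexe.} La représentation \(\li\pi\) est le même espace muni de la structure conjuguée. L'action de \(\cZ(M)\) est alors tordue par la conjugaison de \(\Lie M = \Lie_0 M\otimes_\R\C\) relative à la forme réelle \(\Lie_0 M\). Le caractère infinitésimal devient \(z \mapsto \li{\omega(\li z)}\).

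Dans le cas réel, il faut traduire cette conjugaison sur \(\Lie T\), où \(T\) est un tore maximal réel. Le tore \(T\) n'étant pas forcément déployé, la conjugaison de \(\Lie T\) par rapport à \(\Lie_0 T\) agit sur \(X_*(\cT)\otimes\C\) par \(\tau \mapsto c(\li\tau)\). Ici \(c\) désigne l'action de Galois (la conjugaison complexe opérant sur \(\hat G\), à conjugaison près, comme dans la Proposition~\ref{prop:Clozel1_1}) et la barre est la conjugaison des coefficients complexes. On obtient ainsi \(\li{c(\tau)}\), et l'égalité des deux calculs donne \(-\tau \equiv \li{c(\tau)}\), soit \(\tau=-\li{c(\tau)}\) modulo \(W_M\).

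Dans le cas complexe, \(\Lie M = \Lie_0 M\times \Lie_0 M\) via les deux plongements \(\sigma=\id\) et \(\sigma'=c\). La conjugaison relative à la forme réelle \(\Lie_0 M\) échange les deux facteurs et conjugue les coefficients. On trouve donc \((\tau_1,\tau_c)\mapsto(\li\tau_c,\li\tau_1)\), et l'on conclut de même que \((\tau_1,\tau_c)=(-\li\tau_c,-\li\tau_1)\) modulo \(W_M^2\).

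Le point délicat est la vérification soigneuse, dans le cas réel, que la conjugaison relative à la forme réelle se lit bien comme \(\tau\mapsto \li{c(\tau)}\) sur l'identification \(\Lie T\cong \Lie\cT\) fixée au §1. Le signe et le choix de \(c\) dépendent des conventions déjà signalées dans la note de bas de page. Pour s'en assurer, on contrôlera la formule sur un tore, cas auquel on se ramène : le caractère infinitésimal d'une série discrète de Levi, ou plus généralement d'un sous-quotient d'induite, se lit sur un tore fondamental. Sur un tore, on vérifiera directement qu'un caractère unitaire \(z\mapsto z^\tau\li z^{\tau'}\) satisfait la relation, via l'écriture du paramètre et la Proposition~\ref{prop:Clozel1_1}, en utilisant que \(\tau\) est conjugué à \(c\tau'\).
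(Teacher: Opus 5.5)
Your proof is correct. In the complex case it is essentially the paper's argument. The paper also starts from $\pi\cong\bar{\tilde\pi}$, but it reads off the infinitesimal character by realizing $\pi$ as a subquotient of a principal series $\ind_B^M(\chi)$, so that $\bar{\tilde\pi}$ is a subquotient of $\ind_B^M(\li\chi^{-1})$. You instead conjugate the Harish-Chandra isomorphism through the swap of the two factors.

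In the real case the routes genuinely differ. The paper does not use the hermitian dual there; it goes through structure theory:
\begin{itemize}
\item for a discrete series of a semisimple group, the infinitesimal character is real and $c$ acts by $-w_0$;
\item tori are handled by Langlands' duality;
\item groups with discrete series modulo the centre are handled via $Z^0\times H_{\der}\to H$;
\item the identity is then transported by parabolic induction.
\end{itemize}
You instead get $\li\lambda\equiv-\lambda$ modulo $W_M$ directly from $\li\pi\simeq\pi^\vee$, where the bar is conjugation relative to $\Lie T(\R)$ for a real Cartan $T$. You then translate this conjugation into $\tau\mapsto\li{c(\tau)}$.

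Your route is uniform and needs no classification. It applies verbatim to every irreducible hermitian representation, in particular to non-tempered unitary ones (complementary series, unitary Langlands quotients). Such representations can occur as archimedean components of the cuspidal $\pi_{M,0}$ in Theorem~\ref{thm:Clozel2_3}, since Ramanujan is not assumed. The paper's induction step transports the identity from inducing data that already satisfy it (unitary essentially discrete series), which gives the tempered case directly. For a non-tempered unitary $\pi$, whose Langlands data are not unitary, one needs in addition exactly the hermitian symmetry you use. What the paper's route buys is a direct link with the Langlands-parameter description of Proposition~\ref{prop:Clozel1_1}.

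To settle the point you flag as delicate, two facts suffice:
\begin{itemize}
\item The Harish-Chandra isomorphism is independent of the choice of positive system. Hence it intertwines the conjugation of $\cZ(M)$ with the conjugation of $S(\Lie T)$ relative to $\Lie T(\R)$.
\item The Galois action on $X^*(T)$ for your real Cartan differs from the splitting-preserving action $c$ defining ${}^L M$ by an element of $W_M$, and $c$ normalizes $W_M$. So $\tau\mapsto\li{c(\tau)}$ is well defined on $W_M$-orbits, and the identity holds there.
\end{itemize}
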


Supposons $k = \C$, et que $\pi$ est un sous-quotient de $\ind_B^M(\chi)$, $\chi(t) = t^{\tau_1} (t^{\tau_c})^-$. On a $\pi \cong \bar {\tilde \pi}$, la conjugaison complexe agissant sur l'espace $V$ de $\pi$. Or $\tilde \pi$ est un sous-quotient de $\ind_B^M(\chi\inv)$, et $\bar{\tilde \pi}$ un sous-quotient de $\ind_B^M(\li \chi\inv)$~; le résultat s'en déduit.

Dans le cas réel, rappelons que la conjugaison complexe $c$ opère sur $X_* \cT$~; on note $\bar{\phantom{x}}$ l'action de la conjugaison complexe sur le second facteur de $X_*(\cT) \otimes \C$.

Soit $H/\R$ un groupe semi-simple ayant une série discrète, et $\pi_H$ appartenant à celle-ci. On rappelle que l'action de $c$ définissant le $L$-groupe fixe un épinglage. Alors $c$ opère par $(-w_0)$ sur $X_*(\cT_H)$ (notation évidente), où $w_0$ est l'élément de plus grande longueur du groupe de Weyl, et le caractère infinitésimal est réel; l'assertion (i) s'en déduit. Si $H$ est un tore réel, l'assertion se déduit de la dualité pour les tores (Langlands~\cite[Lemma 2.8]{LanglandsReal}).

Soit $H$ ayant une série discrète modulo le centre, et $Z^0$ le centre connexe de $H$. On a un morphisme $Z^0 \times H_\der \to H$~; $\pi_H$ (unitaire) se restreint en une somme finie des représentations de $Z^0(\R) \times H_\der(\R)$, et ceci est compatible avec la functorialit\'e (cf. Langlands~\cite[\S 3]{LanglandsReal}). Or avec des notations évidentes, $\cT_H \otimes \C$ est égal à ($\cT_{Z^0} \otimes \C) \times (\cT_{H_\der} \otimes \C)$~; le résultat pour $H$ (et les représentations essentiellement de carré intégrable) s'en déduit. Enfin, on vérifie que le Lemme~\ref{lem:Clozel1_3}(i) est compatible avec l'induction parabolique.

D'après~\eqref{eq:Clozel1} et~\eqref{eq:Clozel2} et les Lemmes~\ref{lem:chi_reel} et~\ref{lem:Clozel1_3}, on obtient alors
\begin{itemize}
\item[$\bullet$] $- \overline{ c(\tau(\pi_{M, v}))} = \tau(\pi_{M, v}) - 2\chi$ si \(F_v \simeq \R\),
\item[$\bullet$] $- \overline{\tau_c(\pi_{M, v})} = \tau_1(\pi_{M, v}) - 2 \chi$ si \(F_v \simeq \C\).
\end{itemize}
Puisque les exposants $\tau$ intervenant dans ces formules sont dans $\rho + X_*(\cT)$, on en déduit enfin~:
\begin{equation*}
2 \chi \in X_*(\cT) \cap (X_*(\hat C)^{\Gamma_F} \otimes \C) \subset X_*(\cT) \cap X_*(\hat C) \otimes \C
\end{equation*}
Or $X_*(\hat C) \subset X_*(\cT)$ est facteur direct (puisque c'est l'orthogonal des racines de $\cT$ dans $\hat M$), d'où enfin
\begin{equation}
2 \chi \in X_*(\hat C)^{\Gamma_F} = X^*(M)^{\Gamma_F}.
\end{equation}
Ainsi~:

\begin{theorem}\label{thm:Clozel2_3}
  Soit $\pi = \pi_0 \otimes \chi$ une représentation cuspidale de $M(\A_F)$, $\pi_0$ étant de caractère central trivial sur $A_M(\R)^\circ$, et $\chi \in X^*(M)^{\Gamma_F} \otimes \C$.
  Supposons que pour toute place archimédienne \(v\) de \(F\) il existe un sous-quotient irréductible de l'induite normalisée $\ind_{P(F_v)}^{G(F_v)} \pi_v$ soit cohomologique.
  Alors $\chi \in \tfrac 12 X^*(M)^{\Gamma_F}$.
\end{theorem}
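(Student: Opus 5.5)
La démonstration consiste essentiellement à rassembler les étapes déjà esquissées avant l'énoncé. On fixe une place archimédienne \(v\) et un sous-quotient irréductible cohomologique \(\pi_v\) de \(\ind_{P(F_v)}^{G(F_v)} \pi_{M,v}\).

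\emph{Première étape.} Par l'isomorphisme d'Harish-Chandra, le caractère infinitésimal de \(\pi_v\) est celui de \(V^\lambda\) décalé par \(\rho\), via la Proposition~\ref{prop:Clozel1_1}. C'est donc la \(W\)-orbite de \(-(\lambda+\rho)\) si \(v\) est réelle, et la \(W\times W\)-orbite de \((-\lambda_1+\rho,-\lambda_c+\rho)\) si \(v\) est complexe. Dans les deux cas, ce caractère est réel et entier modulo \(\rho\). La compatibilité de l'induction parabolique normalisée avec l'homomorphisme d'Harish-Chandra montre que le caractère infinitésimal de \(\pi_{M,v}\), vu dans \(\Lie \cT\) modulo \(W_M\), a une image dans \(\Lie\cT/W\) égale à cette orbite. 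Ses représentants sont donc réels et appartiennent à \(\rho + X_*(\cT)\). On applique alors le Lemme~\ref{lem:chi_reel} : comme le caractère infinitésimal de \(\pi_{M,v}\) est réel pour tout \(v\), on obtient \(\chi\in X^*(M)^{\Gamma_F}\otimes\R\). En particulier \(\bar\chi=\chi\).

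\emph{Deuxième étape.} On écrit \(\pi_{M,v}=\pi_{M,0,v}\otimes\chi\), ce qui donne \eqref{eq:Clozel1} et \eqref{eq:Clozel2}. On applique ensuite le Lemme~\ref{lem:Clozel1_3} à la représentation unitaire \(\pi_{M,0,v}\). Dans le cas réel, on utilise de plus que \(c\) fixe \(\chi\). Ce point vient de ce que \(\chi\) provient de \(X^*(M)^{\Gamma_F}\), donc de caractères définis sur \(F_v\) : l'action de \(c\) sur \(X_*(\hat C)\) correspond à l'action galoisienne, à la normalisation \(-w_0\) près, qu'il faudra vérifier. On obtient ainsi les identités \(-\overline{c(\tau(\pi_{M,v}))}=\tau(\pi_{M,v})-2\chi\) et \(-\overline{\tau_c(\pi_{M,v})}=\tau_1(\pi_{M,v})-2\chi\), modulo \(W_M\).

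\emph{Troisième étape.} Les membres de gauche et de droite sont dans \(\rho+X_*(\cT)\), à un élément de \(W_M\) près. Leur différence montre donc que \(2\chi\) appartient à \(X_*(\cT)\), modulo l'action de \(W_M\). Comme \(\chi\) est \(W_M\)-invariant et que \(\rho - w\rho\in X_*(\cT)\) pour \(w \in W_M\), l'ambiguïté est absorbée. On conclut que \(2\chi\in X_*(\cT)\cap (X_*(\hat C)\otimes\C)\). Enfin \(X_*(\hat C)\) est facteur direct de \(X_*(\cT)\) : c'est l'orthogonal des racines de \(\hat M\), et il est saturé. Donc \(2\chi\in X_*(\hat C)\), et par \(\Gamma_F\)-invariance \(2\chi\in X^*(M)^{\Gamma_F}\).

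\emph{Obstacle principal.} L'étape délicate est la deuxième dans le cas réel. Il faut suivre soigneusement l'action de \(c\) sur \(\Lie\cT\), c'est-à-dire l'action galoisienne tordue par \(-w_0\), et vérifier qu'elle fixe \(\chi\), ou du moins que \(c(\chi)\) et \(\chi\) coïncident modulo \(X_*(\hat C)\). Il faut aussi contrôler que les égalités « modulo \(W_M\) » fournissent bien une égalité entre représentants convenables. Pour cela, je ferai d'abord le calcul sur une série discrète modulo le centre, puis je passerai à l'induction comme dans la preuve du Lemme~\ref{lem:Clozel1_3}.
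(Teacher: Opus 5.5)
Your proposal is correct and follows the paper's proof essentially step by step. The paper likewise obtains the reality of $\chi$ from the cohomological constraint via Lemme~\ref{lem:chi_reel}. It then combines \eqref{eq:Clozel1}--\eqref{eq:Clozel2} with Lemme~\ref{lem:Clozel1_3} to relate $\tau(\pi_{M,v})$ to $2\chi$ modulo $W_M$, and concludes from $\tau \in \rho + X_*(\cT)$ and the fact that $X_*(\hat C)$ is a direct factor of $X_*(\cT)$.

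The point you flag as the main obstacle is immediate, and the paper uses it silently. The twist in the action of $c$ is by an element of the Weyl group of $\hat M$, which acts trivially on $X_*(\hat C)\otimes\C$, so $c(\chi)=\chi$ follows directly from the $\Gamma_F$-invariance of $\chi$.
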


Dans le cas où $G$ est un groupe semi-simple classique déployé, \textit{i.e.} $\Sp_{2n}$, $\SO_{2n}$ ou $\SO_{2n+1}$ sur $F$, de sorte que $M = \GL_{n_1} \times \ldots \times \GL{n_r} \times G'$ où $G'$ est semi-simple du même type que $G$, on peut écrire $\chi = s = (s_1, \ldots, s_r)$, $\pi = \pi_1 \otimes \ldots \otimes \pi_r \otimes \pi'$, $\pi \otimes \chi = \pi_{1, 0} |\det|^{s_1} \otimes \ldots \otimes \pi_{r, 0} |\det|^{s_r}$ et le Théorème affirme que $s_i \in \tfrac 12 \Z$~: en particulier ceci généralise le Théorème 4.1 de Grbac et Schwermer (Theorem 3.2 dans~\cite{GrbacEisensteinCohAutLfunc}).

\begin{remark}
Cette démonstration semble être essentiellement celle suggérée par Harder (avec des hypothèses plus restrictives) dans~\cite[\S 4.4]{GrbacSchwermerForum}.
\end{remark}

\section{Cohomologie pondérée et \((\gfr,K)\)-cohomologie du spectre automorphe discret}

On aura besoin de la cohomologie pondérée de Goresky, Harder et MacPherson \cite{GoreskyHarderMacPherson}, rappelons brièvement leur construction.
Soit \(G\) un groupe réductif connexe sur \(\Q\), \(A_G\) son centre déployé, et \(K\) un sous-groupe ouvert d'un sous-groupe compact maximal de \(G(\R)\) (autrement dit \(K_\mathrm{max}^0 \subset K \subset K_\mathrm{max}\) où \(K_\mathrm{max}\) est un sous-groupe compact maximal de \(G(\R)\), uniquement déterminé par \(K\)).
On considère l'espace symétrique \(\cX := G(\R)/K A_G(\R)^0\).
On s'intéresse aux variantes adéliques des espaces localement symétriques, les variétés différentielles
\[ X_{K_{\ff}} := G(\Q) \backslash \left( \cX \times G(\A_{\ff})/K_{\ff} \right) \]
pour \(K_{\ff}\) un sous-groupe compact ouvert net de \(G(\A_{\ff})\).
On a des compactifications de Borel-Serre réductives (\S 6 \loccit)
\[ j: X_{K_{\ff}} \hookrightarrow \ol{X}_{K_{\ff}} \]
vivant sous les compactifications de Borel-Serre \cite{BorelSerre_corners}.

Soient \(Q\) un parabolique maximal de \(G\), \(N_Q\) son radical unipotent, \(M_Q := Q/N_Q\) son quotient réductif et \(A_Q\) le centre déployé de \(M_Q\).
Le tore \(A_Q/A_G\) est de dimension \(1\) qui a une unique identification avec \(\GL_1\) telle que les caractères de \(A_Q\) apparaissant dans \(\Lie N_Q\) soient strictement positifs.
Fixons un tore déployé maximal de \(G\) et un ordre sur les racines correspondantes (cela revient à choisir un parabolique minimal \(Q_0\) de \(G\) et un relèvement de \(A_{Q_0}\) le long de \(Q_0 \to M_{Q_0}\)), et notons \(\Delta\) l'ensemble des racines simples.
Pour \(\alpha \in \Delta\) on a un sous-groupe parabolique standard propre maximal \(Q_\alpha\) de \(G\).
Soit \(2 \rho\) la somme des racines positives (pour \(G\)).
Pour \(Q\) un sous-groupe parabolique de \(G\) on note \(\rho_Q\) la restriction de \(\rho\) à \(A_Q/A_G\), vue comme un élément de \(\frac{1}{2} \Z\) via l'identification ci-dessus.
Un profil de poids est une fonction \(\Delta \to \frac{1}{2} + \Z\).
On s'intéresse particulièrement aux profils de poids ``milieu'' \(p_+\) et \(p_-\):
\[ p_+(\alpha) = -\rho_{Q_\alpha} \ \text{ou} \ -\rho_{Q_\alpha} + 1/2 \]
\[ p_-(\alpha) = -\rho_{Q_\alpha} \ \text{ou} \ -\rho_{Q_\alpha} - 1/2 \]
(choisir l'option qui est dans \(\frac{1}{2}+\Z\)).
On s'intéresse aussi au profil de poids ``\(-\infty\)'': \(p(\alpha)\) très négatif (peu importe la valeur précise).

Nous allons considérer des systèmes locaux sur \(X_{K_{\ff}}\) associés à des représentations algébriques (définies sur \(\Q\)) de \(G\).
On peut se ramener au cas d'une représentation irréductible.
L'ensemble des classes d'isomorphismes de représentations irréductibles de \(G\) est décrit par \cite[Théorème 7.2]{Tits_repr_corps_qqc}.
Pour \(G\) quasi-déployé, ce qui sera toujours le cas pour nous, cette description est un peu plus simple grâce au Théorème 3.3 \loccit: si \(\lambda\) est un poids dominant pour \(G_{\Qbar}\), de stabilisateur \(\Gamma_E\) dans \(\Gamma_{\Q}\), il existe une représentation absolument irréductible \(V_\lambda\) de \(G_E\) (définie sur \(E\)) de plus haut poids \(\lambda\), unique à isomorphisme près (l'algèbre des endomorphismes de \(V_\lambda\) étant simplement \(E\)) et on en déduit par restriction des scalaires une représentation de \(G\) sur \(V_\lambda\) vu comme \(\Q\)-espace vectoriel, d'algèbre d'endomorphismes \(E\).
Cette représentation est irréductible et on a \(V_\lambda \simeq V_{\lambda'}\) (comme représentations de \(G\) sur \(\Q\)) si et seulement si \(\lambda\) et \(\lambda'\) sont dans la même orbite sous \(\Gamma_{\Q}\).
Toute représentation irréductible de \(G\) est isomorphe à une telle représentation \(V_\lambda\).
Énonçons une conséquence directe de cette classification sous la forme qui nous sera utile dans la suite.

\begin{proposition} \label{pro:irr_alg_rep_qs}
  Soit \(G\) un groupe réductif connexe quasi-déployé sur \(\Q\).
  Soit \(W\) une représentation irréductible de \(G_\C\).
  Il existe une représentation irréductible \(V\) de \(G\) (sur \(\Q\)), d'algèbre d'endomorphismes une extension finie (de corps commutatifs) \(E/\Q\), et un plongement \(\iota_0: E \hookrightarrow \C\) tels que \(W\) soit isomorphe à \(\C \otimes_{\iota_0,E} V\).
  La paire \((V,\iota_0)\) est unique à isomorphisme près.
\end{proposition}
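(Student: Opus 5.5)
Le plan est de déduire la proposition de la classification de Tits rappelée juste avant, en passant par le plus haut poids. Comme \(G_\C\) est réductif connexe, \(W\) a un plus haut poids dominant \(\lambda\) relativement à une paire de Borel fixée de \(G\), définie sur \(\Q\) puisque \(G\) est quasi-déployé. Soit \(\Gamma_{E_0} \subset \Gamma_\Q\) le stabilisateur de \(\lambda\) pour l'action de Galois (action \(*\) tordue qui préserve la chambre dominante) ; \(E_0 \subset \Qbar \subset \C\) est alors un corps de nombres muni d'un plongement tautologique. D'après le Théorème 3.3 de \cite{Tits_repr_corps_qqc}, il existe une représentation absolument irréductible \(V_\lambda\) de \(G_{E_0}\), définie sur \(E_0\), de plus haut poids \(\lambda\). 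On prend pour \(V\) la représentation \(V_\lambda\) vue comme \(\Q\)-espace vectoriel, de sorte que \(E := \mathrm{End}_G(V) = E_0\), et pour \(\iota_0\) le plongement \(E_0 \subset \C\).

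Pour l'existence, j'écrirai \(\C \otimes_\Q V \simeq \bigoplus_{\iota : E \hookrightarrow \C} \C \otimes_{\iota,E} V_\lambda\), où le facteur d'indice \(\iota\) est absolument irréductible de plus haut poids \(\iota(\lambda)\), c'est-à-dire \(\lambda\) transporté par un prolongement de \(\iota\) à \(\Qbar\). Ce poids est bien défini grâce au choix du stabilisateur. Pour \(\iota = \iota_0\) on obtient un poids égal à \(\lambda\), donc \(\C \otimes_{\iota_0,E} V \simeq W\).

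Pour l'unicité, on part d'une paire \((V',\iota_0')\), avec \(E' = \mathrm{End}(V')\), vérifiant \(\C \otimes_{\iota'_0,E'} V' \simeq W\). Toute représentation irréductible sur \(\Q\) est isomorphe à un \(V_{\lambda'}\) par Tits, avec un isomorphisme \(E' \simeq E_{\lambda'}\) compatible aux actions. La décomposition précédente appliquée à \(V'\) montre que \(W\) est un facteur de \(\C \otimes V'\), donc que \(\lambda\) est un conjugué galoisien de \(\lambda'\). Le critère d'isomorphisme de Tits (\(V_\lambda \simeq V_{\lambda'}\) si et seulement si \(\lambda\) et \(\lambda'\) sont \(\Gamma_\Q\)-conjugués) fournit alors \(f : V \isomto V'\). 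Cet \(f\) induit \(E \simeq E'\), et il reste à voir qu'on peut le choisir de sorte que \(\iota_0' \circ f_* = \iota_0\).

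C'est ce dernier point que je considère comme le seul délicat. Les facteurs de \(\C \otimes V\) sont deux à deux non isomorphes, car les \(\iota(\lambda)\) sont distincts pour des \(\iota\) distincts, par définition du stabilisateur. Le plongement \(\iota\) est donc déterminé par la classe d'isomorphisme du facteur, via le caractère par lequel \(E\) agit sur le facteur de plus haut poids \(\lambda\). Ainsi \(\iota'_0 \circ f_*\) est le plongement de \(E\) correspondant au facteur isomorphe à \(W\), c'est-à-dire \(\iota_0\). Ceci établit l'unicité de la paire à isomorphisme près, c'est-à-dire à isomorphisme \(V \simeq V'\) transportant \(\iota_0\) sur \(\iota_0'\).
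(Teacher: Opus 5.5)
Your argument is correct and is essentially the paper's: the paper gives no separate proof and presents the proposition as a direct consequence of the Tits classification recalled just before. That classification consists of restriction of scalars of \(V_\lambda\) from the field of definition of \(\lambda\), endomorphism algebra equal to that field, and \(V_\lambda \simeq V_{\lambda'}\) if and only if \(\lambda, \lambda'\) are \(\Gamma_\Q\)-conjugate. You make that deduction explicit, and your identification of \(\iota_0\) through the action of \(E\) on the \(W\)-isotypic component of \(\C \otimes_\Q V\) (a single factor, since the \(\C \otimes_{\iota,E} V\) are pairwise non-isomorphic) is exactly what the uniqueness of the pair requires.
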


Soit \(V\) une représentation algébrique (de dimension finie, définie sur \(\Q\)) irréductible de \(G\).
À \(V\) est associé un système local \(\ul{V}\) sur \(X_{K_{\ff}}\).
Goresky, Harder et MacPherson définissent (27.9 \loccit) un complexe \(I^\bullet V\) quasi-isomorphe à \(\ul{V}\).
À un profil de poids \(p\) ils associent un sous-complexe \(W^p I^\bullet V\) de \(j_* I^\bullet V\).
La fonction \(p \mapsto W^p I^\bullet V\) est décroissante, en particulier
\[ W^{p_+} I^\bullet V \,\subseteq\, W^{p_-} I^\bullet V \,\subseteq\, j_* I^\bullet V. \]
Les correspondances de Hecke s'étendent aux compactifications \(\ol{X}_{K_{\ff}}\) (voir \cite[\S 7.7]{GKM_DS_char_Lef_formula}), en particulier on a des \(\Q[G(\A_{\ff})]\)-modules admissibles
\[ W^p H^i(G, \cX; V) := \varinjlim_{K_{\ff}} H^i(\ol{X}_{K_{\ff}}, W^p I^\bullet V) \]
Dans le cas \(p = -\infty\) on a un morphisme naturel \(j_* I^\bullet V \to Rj_* \ul{V}\), qui est un quasi-isomorphisme d'après le Corollaire 32.10 \loccit, donc \(W^{-\infty} H^\bullet(G, \cX; V)\) est la cohomologie ordinaire.
Notons \(W^\pm H^i(G, \cX; V)\) l'image de
\[ W^{p_+} H^i(G, \cX; V) \longrightarrow W^{p_-} H^i(G, \cX; V), \]
c'est donc un \(\Q[G(\A_{\ff})]\)-module admissible et on a un morphisme naturel \(G(\A_{\ff})\)-équivariant
\[ W^\pm H^i(G, \cX; V) \to H^i(G, \cX; V). \]

Notons \(V_{\C} = \C \otimes_{\Q} V\).
Concrètement, pour \(G\) quasi-déployé si (la classe d'isomorphisme de) \(V\) correspond à une \(\Gamma_\Q\)-orbite \(o\) de poids dominants alors \(\Qbar \otimes_{\Q} V \simeq \bigoplus_{\lambda \in o} V_\lambda\).
Le théorème de de Rham se traduit par un isomorphisme de \(\C[G(\A_{\ff})]\)-modules
\begin{equation} \label{eq:deRham}
  \C \otimes_{\Q} H^i(G, \cX; V) \simeq H^i(\gfr,K; C^\infty(G(\Q) \backslash G(\A)) \otimes_{\C} V_{\C})
\end{equation}
où \(C^\infty(G(\Q) \backslash G(\A)) := \varinjlim_{K_{\ff}} C^\infty(G(\Q) \backslash G(\A) / K_{\ff})\).
Franke a démontré la conjecture de Borel \cite[Theorem 18]{Franke_weighted} affirmant que l'inclusion de l'espace des formes automorphes \(\cA(G) = \cA(G(\Q) \backslash G(\A))\) pour \(G\) dans \(C^\infty(G(\Q) \backslash G(\A))\) induit un isomorphisme en \((\gfr,K)\)-cohomologie, et on a donc un isomorphisme \(G(\A_{\ff})\)-équivariant
\begin{equation} \label{eq:deRham_plus_Franke}
  \C \otimes_{\Q} H^i(G, \cX; V) \simeq H^i(\gfr,K; \cA(G) \otimes_{\C} V_{\C}).
\end{equation}
La formulation de Franke est légèrement différente, expliquons la différence.
On a un morphisme continu surjectif
\[ m_G: G(\A) \to \afr_{G,0} := \Lie A_G(\R) = \R \otimes_{\Z} X_*(A_G) \]
défini par la propriété \(|\chi(g)| = \langle m_G(g), \chi \rangle\) pour tout \(\chi \in X^*(G)^{\Gamma_{\Q}}\).
On a une décomposition \cite[\S I.3.2]{MoeglinWaldspurger_bookspec}
\[ \cA(G) = \bigoplus_{\chi \in X^*(G)^{\Gamma_\Q} \otimes \C} \Sym^\bullet \afr_G^* \otimes_{\C} \cA(G, \chi) \]
où
\[ \cA(G,\chi) := \cA(G(\Q) \backslash G(\A) /A_G(\R)^0)(\chi) := \{ \chi f \,|\, f \in \cA(G(\Q) \backslash G(\A) /A_G(\R)^0) \} \]
et \(\Sym^\bullet \afr_G^*\) n'est rien d'autre que l'espace des fonctions \(G(\A) \to \C\) de la forme \(P \circ m_P\) où \(P\) est une fonction polynômiale (complexe) sur \(\afr_{G,0}\).
On a un isomorphisme (voir bas de la p.256 dans \cite{Franke_weighted})
\[ H^i(\gfr,K; \cA(G) \otimes_{\C} V_{\C}) \simeq \bigoplus_{\chi} H^i \left( \gfr/\afr_G,K;  (\cA(G, \chi) \otimes V_{\C})^{A_G(\R)^0} \right) \]
et si on décompose \(V_{\C}\) en somme de représentations irréductibles \(V_\lambda\), le terme correspondant à \(\lambda\) est nul sauf pour \(\chi^{-1} = \lambda|_{A_G(\R)^0}\).

Nair et Rai \cite[Appendix A]{NairRai} ont démontré un analogue de la conjecture de Borel pour la cohomologie pondérée et les formes automorphes de carré intégrable, que nous rappelons maintenant.
Soit \(\cA^2(G)\) l'espace des formes automorphes pour \(G\) dont la restriction à \(G(\Q) \backslash G(\A)^1\) est de carré intégrable, qui se décrit donc aussi comme
\[ \cA^2(G) = \bigoplus_{\chi \in X^*(G)^{\Gamma_\Q} \otimes \C} \Sym^\bullet \afr_G^* \otimes_{\C} \cA^2(G,\chi) \]
où
\[ \cA^2(G,\chi) := \{ \chi f \,|\, f \in \cA^2(G(\Q) \backslash G(\A) /A_G(\R)^0) \}. \]

Nair et Rai ont démontré le théorème suivant (\cite[Corollary A]{Nair_weighted} et \cite[Proposition A.3]{NairRai} dans le cas d'un groupe \(G\) semi-simple, la même preuve fonctionne dans le cas général).
\begin{theorem} \label{thm:Nair_Rai}
  On a un isomorphisme de \(\C[G(\A_{\ff})]\)-modules
  \begin{equation} \label{eq:Nair_Rai}
    \C \otimes_{\Q} W^\pm H^i(G, \cX; V) \simeq H^i(\gfr,K; \cA^2(G) \otimes_{\C} V_{\C})
  \end{equation}
  où la représentation algébrique \(V_{\C}\) de \(G_{\C}\) est vue comme un \((\gfr,K)\)-module (de dimension finie).
  En particulier le membre de droite est muni d'une structure rationnelle.
  Quand \(V\) varie ces isomorphismes sont fonctoriels en \(V\).
\end{theorem}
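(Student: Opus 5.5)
The strategy is not to redo the analysis of Nair and Rai but to reduce the general reductive case to the semisimple one, which is \cite[Corollary A]{Nair_weighted} and \cite[Proposition A.3]{NairRai}. The point is that both sides of \eqref{eq:Nair_Rai} only involve \(A_G\) in a very mild way. On the geometric side, \(\cX = G(\R)/KA_G(\R)^0\) has already been divided by \(A_G(\R)^0\). Moreover, the weight profiles only involve the tori \(A_{Q_\alpha}/A_G\), so the definition of \(W^pI^\bullet V\) is insensitive to the centre. On the automorphic side, I will use the decomposition recalled above,
\[ \cA^2(G) = \bigoplus_{\chi} \Sym^\bullet \afr_G^* \otimes_{\C} \cA^2(G,\chi), \]
together with the analogue for \(\cA^2\) of Franke's formula (bottom of p.~256 of \cite{Franke_weighted}). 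Its proof is purely formal: a Künneth argument for the \(\afr_G\)-part, where \(H^\bullet(\afr_G; \Sym^\bullet\afr_G^*)\) is \(\C\) in degree \(0\). It gives
\[ H^i(\gfr,K;\cA^2(G)\otimes V_{\C}) \simeq \bigoplus_\chi H^i\bigl(\gfr/\afr_G,K;(\cA^2(G,\chi)\otimes V_{\C})^{A_G(\R)^0}\bigr). \]
If \(V_{\C}=\bigoplus V_\lambda\), only \(\chi^{-1}=\lambda|_{A_G(\R)^0}\) contributes. After twisting by \(\chi\), we are reduced to square-integrable forms on \(G(\Q)\backslash G(\A)/A_G(\R)^0\), with coefficients trivial on \(A_G(\R)^0\).

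Next I will rewrite both sides in terms of \(G(\A)^1\) and the group \(G/A_G\), or better \(G_\der\). The space \(X_{K_\ff}\) is a finite disjoint union of quotients \(\Gamma_j\backslash \cX\), with \(\Gamma_j\) arithmetic in \(G(\Q)\). The action of such \(\Gamma_j\) on \(\cX\) factors through its image in \(G/Z_G\) modulo a finite central part, because the anisotropic part of \(Z_G(\R)\) lies in \(K\) up to a compact factor. Rather than reduce literally to a semisimple group, I think it is cleaner to check that Nair's proof goes through verbatim. It has three ingredients. First, Franke's theorem identifies the weighted cohomology \(W^pH^\bullet\) with the \((\gfr,K)\)-cohomology of the space of automorphic forms satisfying the growth/exponent conditions \(p\) along each parabolic. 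Second, Franke's spectral sequence and filtration by cuspidal support compute these spaces. Third, for the middle profiles \(p_\pm\), the image of \(p_+\) in \(p_-\) consists exactly of the square-integrable part. Here \(p_+\) corresponds to exponents in the strictly negative open chamber and \(p_-\) to the closed one, and the square-integrability criterion of Langlands and Mœglin--Waldspurger \cite{MoeglinWaldspurger_bookspec} is formulated relative to \(\afr_Q/\afr_G\). Each of these inputs is stated for reductive groups modulo \(A_G(\R)^0\), so the proof goes through once everything has been projected onto \(\afr_Q/\afr_G\).

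Finally, the \(\C[G(\A_\ff)]\)-equivariance comes from the fact that every construction is compatible with the Hecke correspondences on \(\ol X_{K_\ff}\) and with the passage to the limit over \(K_\ff\). Functoriality in \(V\) follows from the naturality of the complexes \(W^pI^\bullet V\) and of the de Rham comparison \eqref{eq:deRham}. The rational structure is then simply transported from \(W^\pm H^i(G,\cX;V)\).

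The main obstacle is the identification, in the reductive case, of the weight truncations with the exponent conditions on constant terms. One has to check that the normalisation of \(\rho_{Q_\alpha}\) and of the identification \(A_Q/A_G\simeq \GL_1\) is compatible with the weight of \(A_{Q}\) acting on \(V\). That weight now has an additional central component, cancelled by the twist \(\chi\) above. I would first treat \(V=V_\lambda\) irreducible, verify that the twist by \(\lambda|_{A_G(\R)^0}\) exactly shifts the exponents by an element of \(\afr_G^*\), and then observe that this element is invisible in \(\afr_Q^*/\afr_G^*\), where the profile conditions live.
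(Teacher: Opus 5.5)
Your proposal follows the same route as the paper. The paper does nothing more than cite \cite[Corollary A]{Nair_weighted} and \cite[Proposition A.3]{NairRai} for semisimple \(G\) and assert that the same proof works for reductive \(G\). Your bookkeeping of the central twist by \(\chi\), which becomes invisible once everything is projected to \(\afr_Q^*/\afr_G^*\), is a reasonable way to back up that assertion.

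One caution. You remark in passing that an arithmetic subgroup acts on \(\cX\) through \(G/Z_G\) up to a finite group. This is false whenever the \(\Q\)-anisotropic part of \(Z_G\) is not \(\R\)-anisotropic. An example is the unitary groups \(U_N\) of Theorem~\ref{thm:TheoremeUnitaire} for a non-CM extension \(E/F\), where infinitely many norm-one central units act on \(\cX\) by translations. So you were right not to rely on a literal reduction to the semisimple case.
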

Ce résultat de Nair et Rai utilise également le travail de Franke, et on a un diagramme commutatif de représentations admissibles de \(G(\A_{\ff})\)
\[
  \begin{tikzcd}
    \C \otimes_{\Q} W^\pm H^i(G, \cX; V) \ar[d] \ar[r, "{\sim}"] & H^i(\gfr,K; \cA^2(G) \otimes_{\C} V_{\C}) \ar[d] \\
    \C \otimes_{\Q} H^i(G,\cX; V) \ar[r, "{\sim}"] & H^i(\gfr,K; \cA(G) \otimes_{\C} V_{\C})
  \end{tikzcd}
\]

Rappelons maintenant les morphismes de restrictions aux strates de bord et leur interprétation en termes de formes automorphes.
À chaque sous-groupe parabolique \(Q\) de \(G\) (défini sur \(\Q\)) est associée une composante de bord \(e_Q := \cX / A_Q(\R)^0\) (action géodésique), et Borel et Serre définissent une topologie sur \(\tilde{\cX} := \bigsqcup_Q e_Q\), et une action à gauche de \(G(\Q)\) sur \(\tilde{\cX}\) étendant l'action naturelle de \(G(\Q)\) sur \(\cX\).
Cette action satisfait \(\gamma \cdot e_Q = e_{\gamma Q \gamma^{-1}}\).
On a une stratification (union disjointe sur les classes de \(G(\Q)\)-conjugaison de sous-groupes paraboliques de \(G\))
\[ G(\Q) \backslash (\tilde{\cX} \times G(\A_{\ff})/K_{\ff}) = \bigsqcup_{[Q]} Q(\Q) \backslash (e_Q \times G(\A_{\ff})/K_{\ff}). \]
Goresky, Harder et MacPherson \cite[\S 6]{GoreskyHarderMacPherson} considèrent les quotients
\[ \cX_Q := N_Q(\R) \backslash e_Q \]
et le quotient (topologique) \(\ol{\cX}\) de \(\cX^*\) obtenu en contractant les fibres de chaque \(e_Q \to \cX_Q\).
La paire \((M_Q,\cX_Q)\) satisfait les mêmes conditions que celles qu'on a mises sur \((G,\cX)\) (en notant \(K_x\) le stabilisateur de \(x \in \cX\), le stabilisateur de son image dans \(\cX_Q\) dans \(M_Q(\R)/A_Q(\R)^0\) est l'image de \(K_x \cap Q(\R)\)).
La compactification de Borel-Serre réductive \(\ol{X}_{K_{\ff}} = G(\Q) \backslash (\ol{\cX} \times G(\A_{\ff})/K_{\ff})\) a donc une stratification
\[ \ol{X}_{K_{\ff}} = \bigsqcup_{[Q]} Q(\Q) \backslash (\cX_Q \times G(\A_{\ff})/K_{\ff}). \]
Grâce au fait que \(N_Q(\Q)\) est dense dans \(N_Q(\A_{\ff})\) on a, pour tout \(g \in G(\A_{\ff})\) et en notant \(K(gK_{\ff},Q)\) l'image de \(Q(\A_{\ff}) \cap gK_{\ff}g^{-1}\) dans \(M_Q(\A_{\ff})\), une bijection
\begin{align*}
  N_Q(\Q) \backslash Q(\A_{\ff}) g K_{\ff} / K_{\ff} & \longrightarrow M_Q(\A_{\ff})/K(gK_{\ff},Q) \\
  [qg] & \longmapsto [\ol{q}].
\end{align*}
On obtient pour \(S_{Q,K_{\ff}} := Q(\Q) \backslash (\cX_Q \times G(\A_{\ff})/K_{\ff})\)
\begin{align*}
  S_{Q,K_{\ff}}
  &\simeq \bigsqcup_{[g] \in Q(\A_{\ff}) \backslash G(\A_{\ff})/ K_{\ff}} M_Q(\Q) \backslash (\cX_Q \times M_Q(\A_{\ff}) / K(gK_{\ff},Q)) \\
  &\simeq \bigsqcup_{[g] \in Q(\A_{\ff}) \backslash G(\A_{\ff})/ K_{\ff}} X_{Q,K(gK_{\ff},Q)}
\end{align*}
On considère le morphisme de restrictions aux strates correspondant à \(Q\)
\[ H^i(G, \cX; V) = \varinjlim_{K_{\ff}} H^i(\ol{X}_{K_{\ff}}, Rj_* \ul{V}) \xrightarrow{\mathrm{res}} \varinjlim_{K_{\ff}} H^i(S_{Q,K_{\ff}}, i^* Rj_* \ul{V}) \]
où on note \(i: S_{Q,K_{\ff}} \hookrightarrow \ol{X}_{K_{\ff}}\).
Choisissons un facteur de Lévi de \(Q\), autrement dit une section \(s\) de \(Q \to M_Q\).
Soit \(C^\bullet(\Lie N_Q,V) = \Hom(\wedge^\bullet \Lie N_Q, V)\) le complexe de Chevalley-Eilenberg calculant la cohomologie de l'algèbre de Lie rationnelle \(\Lie N_Q\), qui est un complexe de représentations algébriques de dimensions finies de \(M_Q\) grâce à la section \(s\).
(On peut vérifier qu'un autre choix de section donne un complexe homotope.)
\begin{lemma}
  Le complexe \(i^* Rj_* \ul{V}\) est quasi-isomorphe à \(\ul{C^\bullet(\Lie N_Q,V)}\).
\end{lemma}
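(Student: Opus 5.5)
Le plan est de calculer \(i^* Rj_* \ul{V}\) fibre à fibre, en reproduisant l'argument classique de Borel–Serre et de Goresky–Harder–MacPherson \cite[\S 7]{GoreskyHarderMacPherson}, puis de globaliser sur \(S_{Q,K_{\ff}}\). La question étant locale sur \(S_{Q,K_{\ff}}\), on commence par décrire un voisinage d'un point de la strate. Grâce à la décomposition de \(S_{Q,K_{\ff}}\) comme réunion disjointe de \(X_{Q,K(gK_{\ff},Q)}\) rappelée ci-dessus, on peut fixer \(g\) et remplacer \(K_{\ff}\) par \(gK_{\ff}g^{-1}\), ce qui permet de supposer \(g=1\). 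On pose \(\Gamma_N := N_Q(\Q) \cap K_{\ff}\) et \(\Gamma_M\) l'image correspondante dans \(M_Q\) (net puisque \(K_{\ff}\) l'est).

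\emph{Étape 1 (géométrie locale).} On utilise le voisinage standard de la strate dans la compactification de Borel–Serre. Pour \(t\) assez grand, il existe un voisinage \(U\) de \(e_Q\) dans \(\tilde{\cX}\) de la forme \(e_Q \times (t,\infty]^{\Delta_Q}\) (coordonnées géodésiques). Par réduction de Borel, l'application \(Q(\Q)\backslash U \times K_{\ff}/K_{\ff} \to X_{K_{\ff}}\) est injective pour \(t \gg 0\). Dans \(\ol{X}_{K_{\ff}}\), un voisinage de la strate s'identifie donc à un fibré au-dessus de \(X_{Q,K(K_{\ff},Q)} = \Gamma_M\backslash\cX_Q\). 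Ce fibré a pour fibre le cône ouvert sur la nilvariété \(\Gamma_N\backslash N_Q(\R)\), sa pointe correspondant à la strate. L'intersection de ce voisinage avec \(X_{K_{\ff}}\) est homotopiquement équivalente au fibré en nilvariétés \(\Gamma_N \backslash e_Q \to \Gamma_M\backslash\cX_Q\).

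\emph{Étape 2 (calcul des tiges).} On obtient ainsi \(i^*Rj_*\ul V \simeq R\pi_*(\ul V|_{\Gamma_N\backslash e_Q})\), où \(\pi\) est la projection en nilvariétés. Les faisceaux de cohomologie de ce complexe sont les systèmes locaux associés aux \(\Gamma_M\)-modules \(H^\bullet(\Gamma_N, V)\). Le théorème de van Est–Nomizu, dans sa version avec coefficients unipotents, fournit un isomorphisme \(H^\bullet(\Gamma_N,V) \simeq H^\bullet(\Lie N_Q, V)\) équivariant pour \(\Gamma_M\), et même pour \(M_Q(\Q)\) via la section \(s\).

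\emph{Étape 3 (quasi-isomorphisme au niveau des complexes).} C'est l'étape principale : il faut passer d'une identification des faisceaux de cohomologie à un quasi-isomorphisme de complexes. Je procéderais par le modèle de de Rham. On représente \(R\pi_*\) par les formes \(N_Q(\R)\)-invariantes le long des fibres, autrement dit par le complexe de de Rham relatif des formes invariantes. Par Nomizu, l'inclusion de ces formes invariantes dans le complexe de de Rham relatif est un quasi-isomorphisme, et ce complexe invariant s'identifie canoniquement, comme complexe de systèmes locaux sur \(\Gamma_M\backslash\cX_Q\), à \(\ul{C^\bullet(\Lie N_Q,V)}\), grâce à la section \(s\) qui trivialise l'action de \(M_Q\). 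La compatibilité avec les transitions entre niveaux \(K_{\ff}\), et donc avec les correspondances de Hecke, découle de la naturalité de toutes ces constructions. L'indépendance vis-à-vis de \(s\) à homotopie près a déjà été remarquée plus haut.
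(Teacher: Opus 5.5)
Your Steps 1 and 2 follow the same geometric reduction as the paper. The paper phrases it as proper base change along the Cartesian square relating the Borel--Serre compactification \(\cX^*_{K_{\ff}}\) to the reductive one. It then uses a geodesic neighbourhood to show that \(I^*RJ_*\ul{V}\) is the local system \(\ul{\Res^G_Q V}\) on the Borel--Serre face. Incidentally, the nilmanifold bundle is \(\Gamma_Q\backslash e_Q \to \Gamma_M\backslash\cX_Q\), not \(\Gamma_N\backslash e_Q\).

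The genuine gap is in Step 3, and it concerns the coefficient field. The lemma is a statement about complexes of sheaves of \(\Q\)-vector spaces: \(V\) is a \(\Q\)-rational representation, \(\ul V\) a \(\Q\)-local system, and \(C^\bullet(\Lie N_Q,V)\) a complex of \(\Q\)-rational \(M_Q\)-representations. Smooth forms and Nomizu's theorem only resolve \(\ul V\otimes_{\Q}\R\). So your argument yields only \(i^*Rj_*\ul V\otimes_{\Q}\R\simeq\ul{C^\bullet(\Lie N_Q,V)}\otimes_{\Q}\R\), and the van Est--Nomizu isomorphism you invoke in Step 2 is likewise a real statement.

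This matters here. The lemma's purpose is to identify \(\varinjlim_{K_{\ff}}H^i(S_{Q,K_{\ff}},i^*Rj_*\ul V)\) with \(\Ind_{Q(\A_{\ff})}^{G(\A_{\ff})}\bigl(\bigoplus_{a+b=i}H^a(M_Q,\cX_Q;H^b(\Lie N_Q,V))\bigr)\), \(G(\A_{\ff})\)-equivariantly and \emph{over \(\Q\)}. That rationality is what lets \(a\in\Aut(\C)\) act compatibly on both sides in Lemma~\ref{lem:trad_a_piGf_res_bord}. An abstract descent from \(\R\) to \(\Q\) (Noether--Deuring on the cohomology local systems plus a formality argument) would give at best a non-canonical quasi-isomorphism, with no control on Hecke equivariance.

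The missing idea is to replace smooth forms by the algebraic de Rham complex \(\Omega^\bullet_{\mathrm{dR,alg}}(N_Q)\) of polynomial forms on the \(\Q\)-variety \(N_Q\simeq\A^d_{\Q}\).
\begin{itemize}
\item It is exact in positive degrees. Hence \(\Res^G_Q V\to\Omega^\bullet_{\mathrm{dR,alg}}(N_Q)\otimes_{\Q}\Res^G_Q V\) is a resolution by \(\Q\)-local systems on the face, equivariant for the twisted action \(q\cdot n=q\,n\,s(\ol q)^{-1}\) of \(Q\) on \(N_Q\).
\item Each term has vanishing higher cohomology for the arithmetic subgroups \(U\subset N_Q(\Q)\) such that the fibres of \(p\) are \(U\backslash N_Q(\R)\). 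This is \cite[Proposition 24.3 (b)]{GoreskyHarderMacPherson}, since \(\Omega^k_{\mathrm{dR,alg}}(N_Q)\) is a free \(\cO(N_Q)\)-module of finite rank.
\item Therefore \(Rp_*\) is computed termwise by \(U\)-invariants. By Zariski density these are the \(N_Q\)-invariants, and \((\Omega^\bullet_{\mathrm{dR,alg}}(N_Q)\otimes_{\Q}\Res^G_Q V)^{N_Q}=C^\bullet(\Lie N_Q,V)\).
\end{itemize}
This gives a canonical \(M_Q\)-equivariant quasi-isomorphism defined over \(\Q\). It also yields the rational van Est isomorphism for free and avoids Nomizu entirely.

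Even over \(\R\), Step 3 would need sharpening. The relative de Rham complex along the fibres resolves \(\pi^{-1}\mathcal{C}^\infty\otimes\ul V\), not \(\ul V\). You would have to use the full de Rham complex on the total space, together with the product decomposition \(e_Q\simeq N_Q(\R)\times\cX_Q\) furnished by \(s\).
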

\begin{proof}
  Notons \(\cX^*_{K_{\ff}} = G(\Q) \backslash (\cX^* \times G(\A_{\ff})/K_{\ff})\) et \(J: \cX_{K_{\ff}} \hookrightarrow \cX^*_{K_{\ff}}\) l'immersion ouverte.
  On a un diagramme cartésien
  \[
    \begin{tikzcd}
      Q(\Q) \backslash (e_Q \times G(\A_{\ff})/K_{\ff}) \ar[r, hook, "{I}"] \ar[d, "{p}"] & \cX^*_{K_{\ff}} \ar[d] \\
      S_{Q,K_{\ff}} \ar[r, hook, "{i}"] & \ol{\cX}_{K_{\ff}}
    \end{tikzcd}
  \]
  Le théorème du changement de base propre identifie \(i^* Rj_* \ul{V}\) à \(R p_* I^* RJ_* \ul{V}\).
  On voit à l'aide d'un voisinage géodésique de \(e_Q\) que \(I^* RJ_* \ul{V}\) s'identifie au système local \(\ul{\Res^G_Q V}\) sur \(Q(\Q) \backslash (e_Q \times G(\A_{\ff})/K_{\ff})\).
  Soit \(\Omega_{\mathrm{dR}, \mathrm{alg}}^\bullet(N_Q)\) le complexe des sections globales du complexe de de Rham algébrique sur \(N_Q\).
  Comme variété \(N_Q\) est isomorphe à \(\A_\Q^d\), donc ce complexe est exact en degré \(>0\).
  Chaque \(\Omega_{\mathrm{dR}, \mathrm{alg}}^k(N_Q)\) est une représentation algébrique de \(M_Q\) grâce à la section \(s\) qui permet de faire agir \(Q\) sur \(N_Q\): pour \(q \in Q\) d'image \(\ol{q}\) dans \(M_Q\) et \(n \in N_Q\) on pose \(q \cdot n = q n s(\ol{q})^{-1}\).
  On dispose ainsi d'une résolution \(\Res^G_Q V \to \Omega_{\mathrm{dR}, \mathrm{alg}}^\bullet(N_Q) \otimes_\Q \Res^G_Q V\).
  L'application \(p\) est propre et ses fibres s'identifient à \(U \backslash N_Q(\R)\) pour des sous-groupes arithmétiques \(U\) de \(N_Q(\Q)\).
  Comme \(N_Q(\R)\) est contractile pour \(x \in S_{Q,K_{\ff}}\) on a une identification
  \[ R^i p_* \ul{\Omega_{\mathrm{dR}, \mathrm{alg}}^k(N_Q) \otimes_\Q \Res^G_Q V} \simeq H^i(U, \Omega_{\mathrm{dR}, \mathrm{alg}}^k(N_Q) \otimes_\Q \Res^G_Q V) \]
  et pour \(i>0\) le terme de droite est nul d'après \cite[Proposition 24.3 (b)]{GoreskyHarderMacPherson}, car en notant \(\operatorname{Spec} N_Q = \cO(N_Q)\) le \(\cO(N_Q)\)-module \(\Omega_{\mathrm{dR}, \mathrm{alg}}^k(N_Q)\) est libre de rang fini.
  On en déduit
  \[ R p_* \ul{\Res^G_Q V} \simeq p_* \ul{\Omega_{\mathrm{dR}, \mathrm{alg}}^\bullet(N_Q) \otimes_\Q \Res^G_Q V} = \ul{\left( \Omega_{\mathrm{dR}, \mathrm{alg}}^\bullet(N_Q) \otimes_\Q \Res^G_Q V \right)^{N_Q}} \]
  et on conclut grâce à l'identification classique
  \[ \left( \Omega_{\mathrm{dR}, \mathrm{alg}}^\bullet(N_Q) \otimes_\Q \Res^G_Q V \right)^{N_Q} \simeq C^\bullet(\Lie N_Q, V). \]
\end{proof}

La cohomologie de \(C^\bullet(\Lie N_Q,V)\) a été calculée (comme représentation de \(M_Q\)) par Kostant \cite{Kostant_Liecoh}.
Comme la catégorie des représentations algébriques d'un groupe réductif (sur un corps de caractéristique zéro) est semi-simple, ce complexe est quasi-isomorphe à une somme directe de complexes concentrés en un degré.
La preuve de Kostant montre en fait (en utilisant l'hypothèse que \(V\) est irréductible) que chaque composante \(M_Q\)-isotypique de \(C^\bullet(\Lie N_Q,V)\) contribuant à la cohomologie est concentrée en un degré, fournissant un quasi-isomorphisme canonique entre \(C^\bullet(\Lie N_Q,V)\) et une somme de complexes concentrés en un degré.
Explicitant l'action des opérateurs de Hecke on a enfin une identification
\[ \varinjlim_{K_{\ff}} H^i(S_{Q,K_{\ff}}, i^* Rj_* V) \simeq \Ind_{Q(\A_{\ff})}^{G(\A_{\ff})} \left( \bigoplus_{a+b=i} H^a(M_Q, \cX_Q; H^b(\Lie N_Q, V)) \right) \]
où \(\Ind_{Q(\A_{\ff})}^{G(\A_{\ff})}\) désigne l'induction parabolique non normalisée.

Ces identifications ont des analogues du côté de la cohomologie de de Rham via les morphismes ``terme constant''
\begin{align*}
  C_Q^G: \cA(G) & \longrightarrow \cA(Q(\Q) N_Q(\A) \backslash G(\A)) \\
  h & \longmapsto \left(C_Q^G h: g \mapsto \int_{N_Q(\Q) \backslash N_Q(\A)} \varphi(ng) dn \right).
\end{align*}
Soit \(K_Q = K \cap Q(\R)\) (on l'identifiera aussi à son image dans \(M_Q(\R)\)).
L'espace d'arrivée s'identifie à \(\Ind_{Q(\A)}^{G(\A)} \cA(M_Q)\) où l'induction est l'induction parabolique lisse aux places finies et l'induction parabolique (toujours non normalisée) \(\Ind_{\qfr,K_Q}^{\gfr,K}\) pour les \((\gfr,K)\)-modules à la place réelle (rappelée en \cite[p.208]{Franke_weighted}).
Cette dernière est l'adjoint à droite du foncteur de restriction (des \((\gfr,K)\)-modules vers les \((\qfr,K_Q)\)-modules), d'où une identification\footnote{Cette identification provient plus concrètement d'un isomorphisme de complexes de Chevalley-Eilenberg.}
\[ H^i \left( \gfr,K; \Ind_{P(\A)}^{G(\A)} \cA(M_Q) \otimes_{\C} V_{\C} \right) \simeq \Ind_{P(\A_{\ff})}^{G(\A_{\ff})} H^i(\qfr,K_Q; \cA(M_Q) \otimes_{\C} V_{\C}). \]
La sous-algèbre de Lie \(\nfr_Q\) de \(\qfr\) agit trivialement sur \(\cA(M_Q)\) et on a un isomorphisme de complexes de Chevalley-Eilenberg
\[ C^\bullet(\qfr,K_Q; \cA(M_Q) \otimes_{\C} V_{\C}) \simeq \mathrm{Tot}^\bullet \left( C^\bullet(\mfr_Q,K_Q; \cA(M_Q) \otimes_{\C} C^\bullet(\nfr_Q; V_{\C})) \right) \]
où à droite \(\mathrm{Tot}^\bullet\) désigne le complexe total associé à un complexe double.
La cohomologie de \(C^\bullet(\nfr_Q; V_{\C})\), comme représentation algébrique de \(M_{Q,\R}\)\footnote{identifié au facteur de Lévi \(Q_{\R} \cap \theta_K(Q_{\R})\) de \(Q_{\R}\), où \(\theta_K\) est l'involution de Cartan de \(G_{\R}\) pour laquelle \(K \subset G(\R)^{\theta_K}\)} a été calculée par Kostant comme rappelé plus haut, et comme dans le cas rationnel on en déduit la simplification
\[ H^i \left( \gfr,K; \Ind_{P(\A)}^{G(\A)} \cA(M_Q) \otimes_{\C} V_{\C} \right) \simeq \bigoplus_{a+b=i} \Ind_{P(\A_{\ff})}^{G(\A_{\ff})} H^a \left( \mfr_Q,K_Q; \cA(M_Q) \otimes_{\C} H^b(\nfr_Q; V_{\C}) \right). \]

Ces deux constructions sont compatibles: le diagramme suivant est commutatif.
\begin{equation} \label{eq:diag_BSres_cstterm}
  \begin{tikzcd}
    \C \otimes_{\Q} H^i(G,\cX; V) \ar[d, dash, "{\sim}"] \ar[r] & \C \otimes_{\Q} \Ind_{P(\A_{\ff})}^{G(\A_{\ff})} \bigoplus\limits_{a+b=i} H^a(M_P, \cX_P; H^b(\Lie N_P, V)) \ar[d, dash, "{\sim}"] \\
     H^i(\gfr,K; \cA(G) \otimes_{\C} V_{\C}) \ar[r] & \Ind_{P(\A_{\ff})}^{G(\A_{\ff})} \bigoplus\limits_{a+b=i} H^a(\mfr_P, K_P; \cA(M_P) \otimes_{\C} H^b(\nfr_P,V_{\C}))
  \end{tikzcd}
\end{equation}

\begin{remark}
  Notons provisoirement \(\Qbar\) la clôture algébrique de \(\Q\) dans \(\C\).
  La représentation admissible \(\Qbar \otimes_\Q H^i(G, \cX; V)\) (resp.\ \(\Qbar \otimes_\Q H^a(M_P, \cX_P; H^b(\Lie N_P, V))\)) admet une décomposition canonique en somme directe d'espaces propres généralisés pour l'action du centre de Bernstein de \(G(\A_{\ff})\) (resp.\ \(M_P(\A_{\ff})\)).
  Ces décompositions sont compatibles entre elles via l'application de restriction au bord.
  Elles sont aussi compatibles via les isomorphismes verticaux avec la décomposition de \(\cA(G)\) par support cuspidal \cite[\S III.2.6]{MoeglinWaldspurger_bookspec}, qui est plus fine en général.
\end{remark}

\section{Fonctions $L$ standard}

\subsection{} Nous étendons d'abord à un corps de nombres arbitraire le Théorème 2.5 de~\cite{ClozelKret}. Considérons une représentation cuspidale, régulière algébrique et auto-duale $\pi$ de $\GL_{2m}(\A_F)$. Nous supposons $\pi$ superrégulière. En une place réelle $v$, la définition est celle de~\cite[Def. 2.1]{ClozelKret}. En une place complexe, la même définition s'applique, le paramètre de Langlands de $\pi_v$ étant
\begin{equation*}
z \mapsto ((z/\li z)^{p_1}, \ldots, (z/\li z)^{p_m}, (z/\li z)^{-p_m}, \ldots, (z/\li z)^{-p_1}).
\end{equation*}

\begin{theorem}\label{thm:Clozel-Standard}
($\pi$ cuspidale, autoduale, algébrique, superrégulière). Pour tout $a \in \Aut(\C/\Q)$,
$$
L(1/2, \pi) = 0 \Leftrightarrow L(1/2, a(\pi)) = 0.
$$
\end{theorem}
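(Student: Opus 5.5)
Par symétrie (appliquer le résultat à $a^{-1}$ et à $a(\pi)$, qui vérifie les mêmes hypothèses : cuspidale, autoduale, symplectique, superrégulière aux places archimédiennes permutées), il suffit de montrer que $L(1/2,\pi)\neq 0$ entraîne $L(1/2,a(\pi))\neq 0$. On suit la stratégie de \cite[Th. 2.5]{ClozelKret}, en remplaçant la cohomologie d'intersection par la cohomologie pondérée.

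Soit $G=\Res_{F/\Q}\Sp_{4m}$ et $P=MN$ le parabolique de Siegel, $M\simeq \Res_{F/\Q}\GL_{2m}$. On considère la série d'Eisenstein $E(f,s)$ induite de $\pi|\det|^s$. Le terme constant fait intervenir
\[ \frac{L(s,\pi)}{L(1+s,\pi)}\,\frac{L(2s,\pi,\wedge^2)}{L(1+2s,\pi,\wedge^2)} \]
fois un opérateur d'entrelacement normalisé. Comme $\pi$ est symplectique, $L(2s,\pi,\wedge^2)$ a un pôle simple en $s=1/2$, et $L(3/2,\pi)\,L(2,\pi,\wedge^2)\neq 0$. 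Si $L(1/2,\pi)\neq0$ et si l'opérateur normalisé est holomorphe et non nul en $s=1/2$ (résultat de l'appendice aux places complexes, \cite{ClozelKret} ailleurs), $E(f,s)$ a un pôle en $s=1/2$. Son résidu engendre une représentation $\Pi$ de $\cA^2(G)$. Son paramètre d'Arthur est $\psi=\pi\boxtimes\sp(2)$ (vu comme paramètre de $\SO_{4m+1}$), et $\Pi_v$ est le quotient de Langlands de $\ind(\pi_v|\det|^{1/2})$.

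\textbf{Cohomologicité.} La superrégularité ($p_i\ge p_{i+1}+2$, $p_m\ge 3/2$) garantit que le caractère infinitésimal de $\pi_v|\det|^{1/2}$, de composantes $\pm p_i\pm 1/2$, est régulier entier. Il est donc celui d'une $V_\lambda$. Le quotient de Langlands $\Pi_v$ est alors cohomologique : on l'identifie à un $A_\qfr(\lambda)$, ou bien on vérifie directement sur le paramètre d'Arthur que le paquet archimédien d'Adams--Johnson contient $\Pi_v$. Ainsi $\Pi_{\ff}$ apparaît dans $H^\bullet(\gfr,K;\cA^2(G)\otimes V_\C)$ pour $V$ la représentation rationnelle associée par la Proposition~\ref{pro:irr_alg_rep_qs}.

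\textbf{Conjugaison.} Par le Théorème~\ref{thm:Nair_Rai}, ce module possède une $\Q$-structure $G(\A_{\ff})$-équivariante. Donc $a(\Pi_{\ff})=\Pi_{\ff}\otimes_{\C,a}\C$ apparaît dans la même cohomologie, avec coefficients $V_\C\otimes_{a}\C$. Il existe ainsi $\Pi'\subset\cA^2(G)$ discrète, de partie finie $a(\Pi_{\ff})$. Hors d'un ensemble fini $S$, $\Pi'_v$ est non ramifiée, de paramètre de Satake $a$ appliqué à celui de $\psi_v$, c'est-à-dire celui de $a(\pi)\boxtimes\sp(2)$. Par la classification d'Arthur et le théorème de multiplicité forte de Jacquet--Shalika, le paramètre de $\Pi'$ est $\psi'=a(\pi)\boxtimes\sp(2)$, où $a(\pi)$ est encore cuspidale, symplectique et superrégulière. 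Il faut ici décaler les caractères infinitésimaux de $a(\pi)$ via \cite[\S3]{Clozel_AA}. Le paramètre $\psi'$ n'est pas générique, donc $\Pi'$ n'est pas cuspidale. Elle est dans le spectre résiduel de support cuspidal $(P,a(\pi)|\det|^{1/2})$ : le Théorème~\ref{thm:Clozel2_3} force l'exposant dans $\tfrac12\Z$, et le paramètre le fixe à $1/2$. Les seuls résidus de ce support sont ceux de $E(f,s)$ pour $a(\pi)$ en $s=1/2$. Une telle série n'a de pôle que si le quotient ci-dessus en a un. Comme $L(2s,a(\pi),\wedge^2)$ ne fournit qu'un pôle simple et que les opérateurs normalisés sont holomorphes, l'existence de ce pôle force $L(1/2,a(\pi))\neq0$.

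\textbf{Obstacle principal.} Je m'attends à ce que la difficulté principale soit le contrôle local. Il faut que les opérateurs d'entrelacement normalisés soient holomorphes et non nuls en $s=1/2$ pour toutes les composantes, y compris ramifiées et complexes (c'est l'appendice). Il faut aussi que la représentation conjuguée reste résiduelle de ce support, ce qui suppose une identification soigneuse de $\psi'$ aux places archimédiennes via la Proposition~\ref{prop:Clozel1_1}.
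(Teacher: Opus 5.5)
\textbf{Gap: the conjugate's non-cuspidality.} The step ``le paramètre $\psi'$ n'est pas générique, donc $\Pi'$ n'est pas cuspidale'' is false, and it is the heart of the theorem. A non-tempered Arthur parameter does not prevent cuspidality. CAP representations (Saito--Kurokawa, Howe--Piatetski-Shapiro) are cuspidal with a non-trivial $\SL_2$-part, and Wallach's criterion only goes the other way (tempered at infinity implies cuspidal). In your situation, suppose $L(1/2,a(\pi))=0$. Then the Siegel Eisenstein series for $a(\pi)$ has no pole at $s=1/2$, so every representation in $\cH_{\psi'}$ would be cuspidal. Arthur's multiplicity formula does not exclude such members: it only involves signs such as $\epsilon(1/2,a(\pi))$, not $L(1/2,a(\pi))$. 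Hence knowing that $\Pi'\subset\cA^2(G)$ has parameter $a(\pi)\otimes\sp(2)\oplus\one$ is compatible with $L(1/2,a(\pi))=0$. The residual/cuspidal dichotomy is exactly what encodes the non-vanishing, so your argument assumes what is to be proved.

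What is missing is the cohomological non-cuspidality argument of the paper.
\begin{itemize}
\item Work in the minimal degree $q=\sum_{v\mid\infty}q_v$.
\item By the Rohlfs--Speh theorem (Theorem~\ref{thm:RohlfsSpeh}, cf.\ \cite[Thm.~4.3]{ClozelKret}), the lowest-degree cohomology of $\Pi_\infty$ injects into that of the anti-standard module in which it embeds.
\item Combine this with the factorization of the constant term of the residue through the intertwining operator, and with the diagram \eqref{eq:diag_BSres_cstterm}. This shows that the $\Pi_{\ff}$-isotypic classes have non-zero restriction to the Borel--Serre boundary stratum of the Siegel parabolic.
\item This restriction, and Kostant's decomposition, are defined over $\Q$. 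So the $a(\Pi_{\ff})$-isotypic component, with $a$-conjugated coefficients, also restricts non-trivially.
\item Cuspidal classes restrict to zero, so some $\Pi'$ with $\Pi'_{\ff}\simeq a(\Pi_{\ff})$ is residual.
\end{itemize}
Only then do the parameter, Jacquet--Shalika (three cuspidal constituents against $2r+\sum_j m_j$) and Theorem~\ref{thm:Clozel2_3} pin down the cuspidal support as $(\GL_{2m},a(\pi)|\det|^{1/2})$. After that, your final step goes through.

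Minor points:
\begin{itemize}
\item The parameter must be $\pi\otimes\sp(2)\oplus\one$, of dimension $4m+1$; the counting uses the summand $\one$.
\item The appendix concerns non-archimedean places. At archimedean places, holomorphy for $\Re s>0$ comes from temperedness of $\pi_v$. At finite places the paper also cites \cite[Thm.~11.1]{Cogdell_etal_IHES_2004}.
\item You must justify that $\pi$ is symplectic. The paper does this via a real place and \cite[Prop.~1.1]{ClozelKret}, and refers the totally imaginary case to Moeglin.
\end{itemize}
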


Si $F$ est totalement imaginaire, ceci est un résultat de Moeglin. Nous pourrons donc supposer que $F$ a une place réelle. Alors $\pi$ est symplectique~\cite[Prop. 1.1]{ClozelKret}, \textit{i.e.} , $L(s, \pi, \wedge^2)$ a un pôle en $s = 1$. Désormais nous supposons donc $\pi$ symplectique.

Nous considérons le $F$-groupe $G = \Sp_n/F$ ($n = 2m$) et son groupe dual $\hat G = \SO_{2n+1}(\C)$. Il existe un paramètre d'Arthur $\psi = \pi \otimes \sp(2) \oplus \one$ (cf.~\cite[\S 2, 2.1]{ClozelKret}). (On rappelle que $\sp(r)$ désigne la représention de degré $r$ de $\SL(2, \C)$. ) Il définit un espace $\cH_\psi \subset L_\disc^2(G(F) \backslash G(\A))$, $\A = \A_F$.

Le groupe $G$ a un $F$-parabolique $P=MN$ de sous-groupe de Levi $M \cong \GL_n$. Soit $I_s$ la représentation unitairement induite (adélique)
\begin{equation*}
I_s = \ind_{MN}^G(\pi \otimes |\det|^s).
\end{equation*}

Pour $s = 1/2$, la représentation $I_{s, \infty}$ de $G(F_\infty)$ a un caractère infinitésimal entier et régulier~\cite[\S 2.2]{ClozelKret}. Si $\pi_G$ est une représentation unitaire de $G(F_v)$ ($v |\infty$) ayant cette propriété, $\pi_G$ est cohomologique.
(Pour les places complexes, ceci est d\`{u} à Enright~\cite{EnrightDuke79}, pour les places réelles à Salamanca-Riba~\cite{SalamancaRiba} et Vogan-Zuckerman~\cite{VoganZuckerman}).
Pour la theorie des series d'Eisenstein, nous suivons Kim~\cite[Chap.~5]{cogdell2004lectures} ansi que ses notations.

 Pour $\pi_G \subset \cH_\psi$ , $\pi_{G,\infty}$ est donc cohomologique.

Considérons les séries d'Eisenstein, définies pour $\Re(s) \gg 0$~:
\begin{equation*}
E(f,s) \in \cA(G(F)\backslash G(\A)) \quad \tu{pour} \quad f \in I_s.
\end{equation*}
Supposons $\pi$ non-ramifiée hors d'un ensemble fini $S \supset S_\infty$. On considère $f = \bigotimes f_v$ décomposée dans $I_s$, $f_v$ non ramifiée pour $v \notin S$. Le terme constant est
\begin{equation}\label{eq:Clozel4_1}
E(f,s)_P = f + M_S(s) f_S \otimes \frac {L^S(s,\pi)L^S(2s,\pi, \wedge^2)}{L^S(s+1, \pi)L^S(2s+1,\pi,\wedge^2)} f^S
\end{equation}
le second terme étant dans $I_{-s}$. Ici $M_S(s)$ est le produit des op\'erateurs d'entrelacement locaux~\cite{cogdell2004lectures}.

Notons ici que $L(s, \pi, \Lambda^2)$ est la fonction $L$ définie par Shahidi.
On sait \cite{Henniart_LSymAlt} que ses facteurs locaux sont aussi donnés par les paramètres de Langlands des représentations $\pi_v$; il en est de  même pour la fonction $L(s, \pi, S^2)$.
En particulier on a l'identité
\[ L(s, \pi \times \pi) = L(s, \pi, \Lambda^2)  L(s, \pi, S^2). \]

Rappelons aussi que ces fonctions $L$ (complétées par leurs facteurs archimédiens) sont holomorphes dans tout le plan complexe, à l'exception d'un pôle simple en s=1, s=0 quand $\pi$ est symplectique, resp. orthogonale.
Voir par exemple \cite{Grbac_resispec}.

Pour $v \in S$ finie, soit
$$
Q_v(s) = \frac {L_v(s, \pi) L_v(2s, \pi, \wedge^2)}{L_v(s+1, \pi) L_v(2s+1, \pi, \wedge^2)} \eps(s, \pi_v) \eps(2s, \pi_v, \wedge^2)
$$
et $N_v(s) = Q_v(s)\inv M_v(s)$. Soit $S= S_{\infty} \cup S_{\ff}.$

Le second terme de \eqref{eq:Clozel4_1} se réécrit alors
\begin{equation}\label{eq:Clozel4_2}
M_{S_{\infty}} f_{S_{\infty}} \otimes N_{S_{\ff}}(s)f_{S_{\ff}} \otimes f^S \frac {L(s, \pi) L(2s, \pi, \wedge^2)}{L(s+1, \pi) L(2s+1, \pi, \wedge^2)} \prod_{v \in S} \eps(s, \pi_v)\inv \eps(2s, \pi_v, \wedge^2)\inv.
\end{equation}
On peut négliger les facteurs $\eps$ dans l'argument suivant, puisqu'il sont holomorphes et non nuls. D'après \cite[Thm. 11.1]{Cogdell_etal_IHES_2004}, $N_{S_{\ff}(s)}$ est holomorphe non-nul pour $\Re(s) \geq 0$. Supposons maintenant que $L(1/2, \pi) \neq 0$. Puisque $M_v(s)$ est holomorphe et non nul pour $\Re(s) > 0$ et $v | \infty$, on en déduit alors que $E(f, s)$ présente, pour $f$ convenablement choisie, un résidu en $s = 1/2$.

\subsection{} On peut donc suivre pas à pas les arguments de~\cite[\S 2.4]{ClozelKret}. Supposons  pour simplifier que $\pi_G$ soit cohomologique pour le système de coefficients constant.

Soit
\begin{equation*}
R(1/2) \colon I(1/2) \to L^2_\disc(G(F) \backslash G(\A))
\end{equation*}
l'opérateur résidu. Alors l'image de $R(1/2)$ définit une représentation $\pi_G$ de $G(\A)$, unitaire, et qui appartient à $\cH_\psi$ \footnote{Pour $v \in S$, il n'est pas évident que l'image de $M_v(1/2)$ est irréductible. Ceci n'a pas d'importance pour l'argument qui suit.}. On obtient par ailleurs, donc une application
\begin{equation}\label{eq:Clozel4_3}
\bigotimes_{v | \infty} \uH^\bullet (\ig_v, K_v; \cL(\pi_v, 1/2)) \otimes \bigotimes_{v \tu{ finie}} I_{v, 1/2} \to \uH^\bullet(\ig, K_\infty; L^2_\disc(G(F)\backslash G(\A)),
\end{equation}
où $\ig = \prod_{v | \infty} \ig_v$, $K_\infty = \prod_{v | \infty} K_v$. L'espace de droite, d'après Nair et Rai, est muni d'une $\Q$-structure. Si $a \in \Aut(\C)$, $a(\pi_{G, \ff})$ apparaît donc dans le membre de droite de~\eqref{eq:Clozel4_3}. En fait si le degré $q_v$ de la cohomologie ($v | \infty$) est minimal pour tout $v$, la restriction de ces classes de cohomologie au bord de la compactification de Borel-Serre est non nulle~\cite[Thm. 4.3]{ClozelKret} voir aussi le \S~\ref{sec:RohlfsSpeh} et donc $a(\pi_{G, \ff})$ est résiduelle. Enfin, $a(\pi_{G, \ff})$ provient d'une représentation $\pi_G'$ de $G(\A)$ qui appartient à $\cH_{\psi'}$ où $\psi' = a(\pi) \otimes \sp(2) \oplus \one$~\cite[Lemma 2.2]{ClozelKret}. (Rappelons que si $\pi$ est une repésentation cuspidale cohomologique de $GL_n(\A_{\ff})$, il existe une représentation  cuspidale cohomologique $a(\pi)$  de composante finie $a(\pi_{\ff})$: cf. \cite[Thm. 3.13 et §3.3]{Clozel_AA}.
Noter que le type à l'infini de $a(\pi)$ est déterminé. En particulier, $a(\pi)_{\infty}$ est tempérée.)

Pour conclure, nous devons montrer que $\pi_G'$ provient, par la formations de résidus, de $a(\pi)$, \textit{i.e.}, étendre à notre situation (où $\pi$, et $a(\pi)$, ne vérifient pas la conjecture de Ramanujan) le Théorème~2.3 de~\cite{ClozelKret}. On peut remplacer $\pi$ par $a(\pi)$, $\pi'_G$ est alors donc remplacée par $\pi_G \subset \cH_{\psi}$, $\psi = \pi \otimes \sp(2) \oplus \one$; $\pi_G$ n'est pas cuspidale.

Les sous-groupes de Levi $M$ de $G$ sont de la forme $M = \GL_{n_1} \times \cdots \times \GL_{n_r} \times G'$, $G' = \Sp_{n'}$, $n' = n - n_1 - \ldots - n_r$. Les séries d'Eisenstein sur $G$ sont obtenues à partir de représentations cuspidales $(\pi_1, \ldots, \pi_r, \sigma)$. On considère l’induite adélique unitaire
\begin{equation*}
I = \ind_{MN}^G(\pi_1|\ |^{s_1} \otimes \cdots \otimes \pi_r |\ |^{s_r} \otimes \sigma).
\end{equation*}
où $\omega_{\pi_i} = 1$ sur $\R^\times_+ \cong \afr^\circ_{\GL_n(F)} \subset F_\infty^\times$.
Si les résidus et values principales de celles-ci donnent une représentation cohomologique $\pi_G$ de $G(\A)$, on a, supposant que le caractère central de $\pi_i$ est trivial sur $(t, \ldots, t) \in \A_{F, \infty}^\times$ ($t > 0$), $s_i \in \tfrac 12 \Z$ (Théorème~\ref{thm:Clozel2_3}).

Les matrices de Hecke de $I$, aux places non-ramifiées, sont données en une place $v$ par l'expression ($t_i = t_{\pi_i}$)~:
\begin{equation}\label{eq:Clozel4_4}
t_1q^{s_1} \oplus \cdots \oplus t_r q^{s_r} \oplus t_1^{-1} q^{-s_1} \oplus \cdots \oplus t_r^{-1} q^{-s_r} \oplus t_\sigma,
\end{equation}
où $t_\sigma \in \SO_{2n' + 1}(\C)$. La matrice de Hecke de $\pi_G \subset \cH_{\psi}$ est égale à
\begin{equation}\label{eq:Clozel4_5}
t_\pi q^{1/2} \oplus t_\pi q^{-1/2} \oplus \one
\end{equation}
($\pi$, ou $a(\pi)$, est autoduale). Enfin
\begin{equation}\label{eq:Clozel4_6}
t_\sigma = \bigoplus_j t_{\sigma_j} \otimes \sp(m_j),
\end{equation}
cf.~\cite[2.3]{ClozelKret}.

On peut simplifier l'argument de~\cite{ClozelKret}. En effet, l'expression~\eqref{eq:Clozel4_5} contient trois matrices de Hecke cuspidales, or~\eqref{eq:Clozel4_4} en contient $2r + \sum_j m_j$. D'après les résultats de Jacquet-Shalika, on a donc les possibilités suivantes~:
\begin{equation}\label{eq:Clozel4_7}
r = 0, \sum m_j = 3
\end{equation}
\begin{equation}\label{eq:Clozel4_8}
r = 1, \sum m_j = 1.
\end{equation}
Sous l'hypothèse~\eqref{eq:Clozel4_7}, $G = G'$ et $\pi_G$ est cuspidale. Sous l'hypothèse~\eqref{eq:Clozel4_8}, $j = 1$, $\sigma$ est une représentation cuspidale associée à une représentation cuspidale autoduale $\sigma_1$ de $\GL_{2n' + 1}$, et
\begin{equation}
T_v = T = t_1 q^{s_1} \oplus t_1\inv q^{-s_1} \oplus t_{\sigma_1} = t_\pi q^{1/2} \oplus t_\pi q^{-1/2} \oplus 1.
\end{equation}
Formant le produit eulérien associé aux matrices $T_v$, on obtient
\begin{align*}
L^S(s+s_1, \pi_1) &L^S(s - s_1, \tilde \pi_1) L^S(s, \sigma_1) = \cr
&=L^S(s+1/2, \pi) L^S(s-1/2, \pi) \zeta^S_F(s).
\end{align*}
Il en résulte d'après Jacquet-Shalika que
\begin{equation*}
\{\pi_1|\ |^{s_1}, \pi_1|\ |^{-s_1}, \sigma_1 \} = \{ \pi |\ |^{1/2}, \pi |\ |^{-1/2}, \one \}.
\end{equation*}
Si $\pi_1|\ |^{s_1} = \one$, ceci implique $s_1 = 0$ (car $\omega_{\pi_1} = 1$ sur $A_F \cong \R^\times_+ \subset F_\infty^\times \subset \GL_n(F_\infty)$,) soit $\{\one, \one, \sigma_1\} = \{\pi |\ |^{1/2}, \pi |\ |^{-1/2}, \one \}$ ce qui est absurde. Donc (par exemple) $\pi_1 |\ |^{s_1} = \pi |\ |^{1/2}$ ce qui implique par le même argument que $s_1 = 1/2$ et $\pi_1 = \pi$. En conclusion, $M = \GL_n$, $G' = \{1\}$ et $\pi'_G$ provient, par formation de résidus, de $I_{\pi, 1/2}$.

Rappelons que ceci s'applique à $a(\pi)$. Il en résulte donc que $E(f, s)$ présente un résidu en $s = 1/2$, pour une fonction $f \in \ind_{MN}^G(a(\pi)|\ |^{1/2})$. Le terme constant est de nouveau donné par~\eqref{eq:Clozel4_1}, appliqué à $a(\pi)$. On écrit de nouveau son second terme sous la forme~\eqref{eq:Clozel4_2}. L'opérateur $N_{S_{\ff}}(s)$ est holomorphe pour $\Re(s) \geq 1/2$; $M_{\infty} (s)$ l'est pour $\Re(s)>0$; on voit que
$$
\frac {L(s, a(\pi)) L(2s, a(\pi), \wedge^2)} {L(s+1, a(\pi)) L(2s +1, a(\pi), \wedge^2)}
$$
doit présenter un résidu en $s = 1/2$, et on en déduit que $L(1/2, a(\pi)) \neq 0$. D'où le Théorème~\ref{thm:Clozel-Standard}.

Enfin, si $\pi_G$ est cohomologique pour un système de coefficients non-constant, on termine à l'aide des arguments de~\cite[\S~2.5]{ClozelKret}.

\section{Fonction $L$ de Rankin}

\subsection{} Comme dans~\cite[\S~3.2]{ClozelKret}, nous considérons maintenant $r \geq 2$ pair, $t \geq 3$ impair et des représentations régulières algébrique autoduales $\pi, \rho$ de $\GL_r(\A_F)$ et $\GL_t(\A_F)$. Nous supposons $\pi$ superrégulière~\cite[Def.~2.1]{ClozelKret} et $\pi, \rho$ disjointes (Def.~5.4 =~\cite[Def. 3.1]{ClozelKret}). Comme dans le Chapitre 4, ces définitions s'étendent aux places complexes.

\begin{theorem}[$\pi$, superrégulière, $\pi$ et $\rho$ disjointes]\label{thm:Clozel-Rankin} Pour tout $a \in \Aut(\C)$,
\begin{equation*}
L(1/2, \pi \times \rho) = 0 \Leftrightarrow L(1/2, a(\pi) \times a(\rho)) = 0.
\end{equation*}
\end{theorem}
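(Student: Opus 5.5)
Nous suivrons le schéma de la démonstration du Théorème~\ref{thm:Clozel-Standard}, en remplaçant \(\Sp_n\) par \(G = \Sp_{2N}\), \(2N = 2r + t - 1\), et le parabolique de Siegel par un parabolique maximal \(P = MN\) de Levi \(M \simeq \GL_r \times \Sp_{t-1}\). Comme \(\rho\) est orthogonale de dimension impaire \(t\), elle est (Arthur) le paramètre d'une représentation cuspidale générique \(\sigma\) de \(\Sp_{t-1}(\A_F)\), de groupe dual \(\SO_t(\C)\). La superrégularité de \(\pi\) et la disjonction de \(\pi,\rho\) en toute place archimédienne (réelle ou complexe) assurent que l'induite \(I_s = \ind_{MN}^G(\pi|\det|^s \otimes \sigma)\) a, pour \(s=1/2\), un caractère infinitésimal entier régulier. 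Toute représentation unitaire de \(G(F_v)\) de ce caractère infinitésimal est donc cohomologique, par Enright aux places complexes et par Salamanca-Riba et Vogan--Zuckerman aux places réelles.

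\emph{Première étape.} On établit le sens direct. Le terme constant de \(E(f,s)\) le long de \(P\) fait intervenir, aux places non ramifiées, le quotient
\[ \frac{L^S(s,\pi\times\rho)\,L^S(2s,\pi,\wedge^2)}{L^S(s+1,\pi\times\rho)\,L^S(2s+1,\pi,\wedge^2)} . \]
Comme \(\pi\) est symplectique, \(L(2s,\pi,\wedge^2)\) a un pôle simple en \(s=1/2\). Le dénominateur ne s'annule pas en \(s=1/2\), puisqu'on est au bord de la région de convergence absolue et qu'on peut invoquer la non-annulation de Shahidi sur \(\Re s = 1\). Il faut aussi savoir que les opérateurs locaux normalisés \(N_v(s)\) aux places finies de \(S\) sont holomorphes et non nuls en \(s=1/2\), et que \(M_v(s)\) l'est aux places archimédiennes. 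Pour les places finies, \(\pi_v\) et \(\sigma_v\) n'étant pas supposées tempérées, la référence \cite[Thm. 11.1]{Cogdell_etal_IHES_2004} ne suffit plus en général : on invoquera l'appendice. On obtient ainsi que \(L(1/2,\pi\times\rho)\neq 0\) entraîne l'existence d'un résidu non nul en \(s=1/2\). Ce résidu engendre une représentation \(\pi_G\) dans le paquet \(\cH_\psi\), avec \(\psi = \pi\otimes\sp(2)\oplus\rho\).

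\emph{Deuxième étape.} On transfère par \(a\). Par le Théorème~\ref{thm:Nair_Rai}, \(a(\pi_{G,\ff})\) intervient dans \(H^\bullet(\gfr,K;\cA^2(G)\otimes V)\) pour les coefficients conjugués. En degré minimal, la restriction au bord est non nulle (\cite[Thm. 4.3]{ClozelKret}, cf.\ \S\ref{sec:RohlfsSpeh}), donc \(a(\pi_{G,\ff})\) est résiduelle. Par l'argument de \cite[Lemma 2.2]{ClozelKret}, elle provient d'un \(\pi'_G\) dans \(\cH_{\psi'}\), avec \(\psi' = a(\pi)\otimes\sp(2)\oplus a(\rho)\). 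On utilise ici que \(a(\pi)\) et \(a(\rho)\) existent, sont cuspidales, autoduales de même type, et vérifient encore les hypothèses archimédiennes.

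\emph{Troisième étape.} C'est l'obstacle principal : montrer que \(\pi'_G\) est un résidu de l'Eisenstein attachée à \((a(\pi),a(\rho))\), sans utiliser Ramanujan. On renomme \((a(\pi),a(\rho))\) en \((\pi,\rho)\). On considère un support cuspidal \((\pi_1,\dots,\pi_k,\sigma')\), avec \(\sigma'\) de paramètre \(\bigoplus_j \sigma_j\otimes\sp(m_j)\), et des exposants \(s_i\in\frac12\Z\) d'après le Théorème~\ref{thm:Clozel2_3}. On compare ses matrices de Hecke à
\[ t_\pi q^{1/2}\oplus t_\pi q^{-1/2}\oplus t_\rho . \]
Le théorème de Jacquet--Shalika, appliqué aux produits eulériens comme dans la preuve du Théorème~\ref{thm:Clozel-Standard}, donne \(2k+\sum_j m_j = 3\). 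Le cas \(k=0\) (\(\pi'_G\) cuspidale) est exclu par la non-annulation de la restriction au bord. Dans le cas \(k=1\), on a
\[ \{\pi_1|\cdot|^{s_1},\ \pi_1|\cdot|^{-s_1},\ \sigma_1\} = \{\pi|\cdot|^{1/2},\ \pi|\cdot|^{-1/2},\ \rho\}. \]
L'égalité \(\pi_1|\cdot|^{s_1} = \rho\) est impossible : elle forcerait \(s_1 = 0\), puis \(\pi_1 = \rho\), ce qu'interdisent les dimensions (\(r\) pair, \(t\) impair). On conclut donc \(\pi_1=\pi\), \(s_1=1/2\), \(\sigma_1=\rho\).

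Il reste à lire le résidu en sens inverse. Comme \(E(f,s)\) a un pôle en \(s=1/2\) et que les opérateurs normalisés sont holomorphes et non nuls (appendice), le quotient ci-dessus doit avoir un pôle en \(s=1/2\). Le pôle simple de \(L(2s,\pi,\wedge^2)\) est déjà présent ; un zéro de \(L(1/2,\pi\times\rho)\) le compenserait. On en déduit \(L(1/2,a(\pi)\times a(\rho))\neq 0\). Pour des coefficients non constants, on conclut comme dans \cite[\S 2.5]{ClozelKret}.
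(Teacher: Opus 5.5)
Your proposal is correct and follows the paper's own proof essentially step for step: the residue of the Eisenstein series via the constant term, Nair--Rai rationality plus non-vanishing boundary restriction to make $a(\pi_{G,\ff})$ residual with parameter $a(\pi)\otimes\sp(2)\oplus a(\rho)$, the Jacquet--Shalika count to pin down the cuspidal support (Theorem~\ref{thm:Clozel6_7}), and Waldspurger's holomorphy result (Theorem~\ref{thm:Clozel6_8}) to read the pole backwards.

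Two small corrections. First, at $s=1/2$ the denominator is $L(3/2,\pi\times\rho)\,L(2,\pi,\wedge^2)$, which lies inside the region of absolute convergence, so no boundary argument is needed. Second, in the forward direction \cite[Thm.~11.1]{Cogdell_etal_IHES_2004} already suffices because you chose $\sigma$ generic. The appendix is really needed in the converse direction, because the cuspidal representation $\sigma'$ attached to $a(\rho)$ that the argument produces is not known to be generic; non-temperedness is not the obstacle.
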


Comme dans le Chapitre 4, nous pouvons supposer que $F$ a une place réelle~; $\pi$ est donc symplectique et $\rho$ orthogonale. On pose $t = 2m+1$, $n = r+ m$, $G = \Sp_n/F$, $G'=\Sp_{n-r}$, de sorte que $\GL_r \times G'$ est un sous-groupe de Levi de $G$. La représentation $\rho$ est un paramètre d'Arthur $\psi' \in \Psi_{\tu{sim}}(G')$, cf. Arthur~\cite[p. 37]{ArthurBook}\footnote{On écrit $\Psi$ plutôt que $\wt \Psi$, la notation $\wt \Psi$ concernant uniquement les groupes orthogonaux. cf. \cite[p.~31]{ArthurBook}.}. On a donc une collection de représentations irréductibles de $G'(\A)$ formant le paquet d'Arthur $L^2_{\disc}(G'(F) \ G'(\A))_{\psi'}$.

\begin{proposition}
Si $\sigma \subset L^2_{\disc}(G'(F) \backslash G'(\A))_{\psi'}$, $\sigma$ est cuspidale.
\end{proposition}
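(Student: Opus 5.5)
Le plan est de reprendre l'argument « à la Jacquet–Shalika » de la fin du Chapitre~4, en comparant matrices de Hecke, et de l'appliquer à une représentation $\sigma$ de $L^2_\disc(G'(F)\backslash G'(\A))_{\psi'}$ supposée non cuspidale. D'après Langlands, une telle $\sigma$ est un résidu (ou une valeur principale) de séries d'Eisenstein issues d'un Levi propre $M' = \GL_{n_1}\times\cdots\times\GL_{n_k}\times G''$ de $G'$, avec $k \geq 1$ et $G'' = \Sp_{n''}$. Les données induisantes sont cuspidales : $\pi_1|\ |^{s_1},\ldots,\pi_k|\ |^{s_k}$ d'une part, et $\tau$ sur $G''$ d'autre part, où l'on normalise $\omega_{\pi_i}$ trivial sur $\R_+^\times \subset F_\infty^\times$ comme plus haut.

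Aux places non ramifiées presque toutes, la matrice de Hecke de $\sigma$ (vue dans $\SO_{t}(\C)\subset \GL_t(\C)$, $t=2m+1$) s'écrit alors
\[ t_{\pi_1}q^{s_1}\oplus\cdots\oplus t_{\pi_k}q^{s_k}\oplus t_{\pi_1}^{-1}q^{-s_1}\oplus\cdots\oplus t_{\pi_k}^{-1}q^{-s_k}\oplus t_\tau , \]
où $t_\tau$ est donnée par le paramètre d'Arthur de $\tau$ comme en~\eqref{eq:Clozel4_6}, soit $t_\tau=\bigoplus_j t_{\tau_j}\otimes\sp(m_j)$ avec les $\tau_j$ cuspidales autoduales. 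Par ailleurs, puisque $\sigma$ est dans le paquet de $\psi'=\rho$ simple générique, cette même matrice vaut $t_\rho$ presque partout.

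On forme ensuite les produits eulériens partiels $L^S(s,\cdot)$ associés aux deux côtés. Le membre de droite est $L^S(s,\rho)$, c'est-à-dire une seule fonction $L$ cuspidale. Le membre de gauche est un produit d'au moins $2k+\sum_j m_j\geq 2$ fonctions $L$ de représentations cuspidales tordues. Par le théorème de Jacquet–Shalika (unicité des données isobariques, qui ne requiert aucune hypothèse de Ramanujan), l'égalité des deux ensembles de représentations cuspidales (avec multiplicités) est impossible, puisque l'un a un seul élément et l'autre au moins deux. Donc $k=0$ et $M'=G'$, ce qui force $\sigma$ à être cuspidale. On a utilisé que chaque $\pi_i|\ |^{s_i}$, comme chaque $\tau_j|\ |^{(m_j+1)/2-l}$, est cuspidale, donc contribue un facteur cuspidal distinct dans la décomposition isobarique.

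Le point délicat est de justifier que la matrice de Hecke d'un constituant du spectre résiduel est bien donnée par cette formule, et que le paramètre de $\tau$ s'écrit comme une somme formelle de cuspidales. Pour le premier point, on utilise que $\sigma$ est un sous-quotient de l'induite, qui est non ramifiée en presque toute place. Pour le second, on s'appuie sur la classification d'Arthur pour $G''$, exactement comme en~\cite[2.3]{ClozelKret}. Une variante plus directe consiste à invoquer le fait général d'Arthur qu'un paramètre générique simple a un paquet discret réduit au spectre cuspidal ; l'argument ci-dessus en est précisément la justification, sans recours à Ramanujan.
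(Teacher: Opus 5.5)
Your argument is correct, but it is not the route the paper takes for this proposition.

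\textbf{The paper's route (local, archimedean).} Since $\rho$ is regular algebraic, for $v\mid\infty$ the Langlands parameter $\varphi_v\colon W_{F_v}\to\SO_t(\C)$ of $\rho_v$ is unitary, i.e.\ tempered. By Arthur's Theorem~1.5.2, $\sigma_v$ therefore lies in the tempered $L$-packet $\widetilde{\Pi}_{\varphi_v}$. Wallach's theorem on constant terms then says that a discrete automorphic representation with tempered archimedean component is cuspidal.

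\textbf{Your route (global, via cuspidal support).} A non-cuspidal constituent would have cuspidal support on a proper Levi $\GL_{n_1}\times\cdots\times\GL_{n_k}\times\Sp_{2m''}$ with $k\geq 1$. Arthur's classification for $\Sp_{2m''}$ then gives an isobaric decomposition of its standard Satake parameter with $2k+\sum_j m_j$ cuspidal constituents. This cannot match the single constituent $\rho$, by Jacquet--Shalika. This is exactly the mechanism the paper itself uses later, in Theorem~\ref{thm:Clozel6_7} and in Lemma~\ref{lem:a_sigma_cusp}, where Franke's corollary supplies the cuspidal support.

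\textbf{What each route buys.}
\begin{itemize}
\item Your argument needs no archimedean input at all: no purity lemma, no archimedean Arthur packets, no Wallach. It therefore works for any cuspidal self-dual orthogonal $\rho$ of $\GL_{2m+1}$, regular algebraic or not.
\item Its cost is invoking Arthur's classification for all the smaller groups $\Sp_{2m''}$ to describe the cuspidal datum $\tau$. That is harmless here, since the $\psi'$-decomposition of $L^2_\disc$ in the statement is already Arthur's.
\item The paper's proof is shorter given the cited theorems, but it genuinely uses the regularity and algebraicity of $\rho$, through the temperedness of $\rho_\infty$.
\end{itemize}

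\textbf{Two harmless slips.}
\begin{itemize}
\item The shifts attached to $\tau_j\otimes\sp(m_j)$ should be $(m_j-1)/2-l$ for $0\leq l\leq m_j-1$.
\item The count is actually at least $3$, not just at least $2$. The standard representation of $\SO_{2m''+1}(\C)$ is odd-dimensional, so the $\tau$-part always contributes at least one cuspidal constituent, even when $m''=0$.
\end{itemize}
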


Noter que $\psi'$ est en fait un paramètre dans $\Phi_{\tu{sim}}(G')$, \textit{i. e.} la représentation de $\SU(2)$ est triviale \cite[p.~29]{ArthurBook}. Soit $v$ une place archimédienne~; $\rho$ définit une représentation $\varphi \colon L_v \to \SO_t(\C)$ où $L_v = W_{F_v}$. Celle-ci est unitaire. Si $\sigma$ vérifie la Proposition~6.2, $\sigma_v$ appartient au $L$-paquet local $\wt \Pi_\varphi$ dans  le Théorème~1.5.1 d'Arthur~\cite{ArthurBook}, d'après le Théorème~1.5.2 de~\cite{ArthurBook}. En particulier, $\sigma_v$ est tempérée aux places archimédiennes, donc $\sigma$ est cuspidale~\cite{WallachConstantTerm}. L'argument donné dans~\cite{ClozelKret} après la Prop. 3.1 montre par ailleurs que $L^2_\disc(G'(F) \backslash G'(\A))_{\psi'}$ n'est pas réduit à $0$. Nous utiliserons un résultat beaucoup plus explicite. Soit $B' \subset G'$ un sous-groupe de Borel, $N'$ son radical unipotent et $\xi\colon N'(F) \backslash N'(\A) \to \C$ un caractère non dégénéré. On définit alors les représentations génériques pour $(B', \xi)$ de $G'(\A)$.

\begin{theorem}\label{thm:Clozel6_3}
Il existe une représentation $\sigma \subset L^2_\disc(G'(F) \backslash G'(\A))_{\psi'}$ qui est $(B', \xi)$-générique.
\end{theorem}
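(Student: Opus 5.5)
Pour établir le Théorème~\ref{thm:Clozel6_3}, je passerai par la descente automorphe de Ginzburg--Rallis--Soudry, qui construit des représentations génériques à partir de représentations cuspidales autoduales, puis je les identifierai au paquet d'Arthur grâce à la fonctorialité de Cogdell--Kim--Piatetski-Shapiro--Shahidi~\cite{Cogdell_etal_IHES_2004}. La représentation $\rho$ de $\GL_t(\A_F)$, $t = 2m+1$, est cuspidale, autoduale et orthogonale, donc $L(s,\rho,\Sym^2)$ a un pôle en $s=1$. Le groupe dual de $G' = \Sp_{n-r} = \Sp_{2m}$ est $\SO_{2m+1}(\C)$, et $\rho$ est de la forme requise pour le paramètre simple $\psi'$.

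\emph{Première étape.} La descente de Ginzburg--Rallis--Soudry appliquée à $\rho$ fournit une représentation cuspidale non nulle $\sigma_0$ de $\Sp_{2m}(\A_F)$ (ou de son revêtement métaplectique selon la parité; dans le cas $t$ impair on obtient bien $\Sp_{2m}$). Elle a les propriétés suivantes : chaque composante irréductible de $\sigma_0$ est globalement générique pour un caractère de Whittaker convenable, et son relèvement fonctoriel faible à $\GL_{2m+1}$ est $\rho$. Si la descente n'est générique que pour un caractère $\xi'$ différent de $\xi$, je pense qu'on peut conjuguer par un élément du groupe adjoint, à savoir le tore de $\GSp_{2m}(F)$. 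Cette conjugaison préserve la classe d'Arthur, puisque les matrices de Hecke sont inchangées, et elle remplace $\xi'$ par n'importe quel autre caractère non dégénéré, car ceux-ci forment une seule orbite sous $T_{\ad}(F)$.

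\emph{Deuxième étape.} J'identifierai le paramètre : $\sigma$ (une composante irréductible) est cuspidale donc dans $L^2_\disc$. Ses matrices de Hecke presque partout coïncident avec celles de $\rho$, via le relèvement faible et la classification d'Arthur~\cite{ArthurBook}. La décomposition de $L^2_\disc$ selon les paramètres, déterminée par les matrices de Hecke presque partout (Jacquet--Shalika, multiplicité forte), montre que $\sigma \subset L^2_\disc(G'(F)\backslash G'(\A))_{\psi'}$.

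L'obstacle principal me semble être l'ajustement du caractère de Whittaker dans la première étape, c'est-à-dire vérifier que la conjugaison par $\GSp_{2m}(F)$ envoie bien $L^2_{\disc,\psi'}$ dans lui-même et respecte la normalisation de $(B',\xi)$ fixée dans le texte. Une alternative serait d'invoquer directement Arthur : pour $\psi'$ générique (dans $\Phi_{\tu{sim}}$), le caractère trivial du groupe $\mathcal{S}_{\psi'}$, trivial ici, correspond localement au membre $\xi_v$-générique des $L$-paquets tempérés (conjecture de Shahidi, établie par Arthur~\cite{ArthurBook}). Le produit tensoriel de ces membres est alors dans le paquet global avec multiplicité un. Reste à passer de la généricité locale à la généricité globale, pour laquelle on utilise la non-annulation de la descente.
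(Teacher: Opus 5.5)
Your proposal is correct and follows essentially the same route as the paper. The paper only cites Arthur~\cite[pp.~483--484]{ArthurBook}, and Arthur's proof there is the Ginzburg--Rallis--Soudry descent from $\GL_{2m+1}$ to $\Sp_{2m}$, combined with the Cogdell--Kim--Piatetski-Shapiro--Shahidi lift to match the Hecke eigenvalues with those of $\rho$.

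Your extra step, moving the Whittaker character to the prescribed $\xi$ by conjugating with $T_{\GSp_{2m}}(F)$, is valid. That group surjects onto $T_{\ad}(F)$ by Hilbert~90, and the conjugation leaves the unramified Satake parameters unchanged. The ``alternative'' via local generic members is not needed, and as you note it would in any case still rely on the descent to get global genericity.
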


Voir ~\cite[p. 483 et 484]{ArthurBook}) ; la démonstration d'Arthur repose sur les travaux de Cogdell, Kim, Piatetski-Shapiro, Shahidi, Ginzburg, Rallis et Soudry.

\subsection{} Considérons maintenant le paramètre d'Arthur (pour $G$)
$$
\psi = \pi \otimes \sp(2) \oplus \rho.
$$
Il définit, en presque toute place finie, une matrice de Hecke (dans $\hat G$) qui coïncide avec celle de la représentation induite adélique suivante. Soit $M = \GL_r \times G' \subset G$, $P = MN$ le sous-groupe parabolique standard associé à $M$ et
\begin{equation}\label{eq:Clozel6_1}
I_s = \ind_{MN}^G (\pi[s] \otimes \sigma), \quad s = 1/2.
\end{equation}

Soit $(p_i))_{i \leq r}$ les paramètres du paramètre de Langlands de $\pi_v$ en une place archimédienne, et $(q_j)_{j \leq t}$ ceux de $\rho_v$~; $p = (p_i)$ et $q = (q_j)$ sont alors les caractères infinitésimaux de $\pi_v$ et $\rho_v$. De l'identité de caractères entre $\sigma_v$ et $\rho_v$, il résulte que $q$ est aussi le caractère infinitésimal de $\sigma_v$. Comme dans~\cite[\S~3.5]{ClozelKret} nous supposons maintenant $\pi$ superrégulière, et de plus

\begin{definition} (=~\cite[Def.~3.1]{ClozelKret}) $\pi$, $\rho$ sont disjointes si, pour tous $i \leq r/2$, $j \leq m$, $p_i \pm 1/2 \neq q_j$.
\end{definition}

On a posé
\begin{align*}
p &= (p_1 > p_2 > \ldots > -p_1) \cr
q &= (q_1 > \ldots > q_m > 0 > \ldots > -q_1).
\end{align*}
Le caractère infinitésimal de la représentation~\eqref{eq:Clozel6_1} est alors régulier~\cite[\S~3.5]{ClozelKret}. En particulier, tout sous-quotient unitaire de~\eqref{eq:Clozel6_1} est cohomologique (pour $s = 1/2$). Soit $I_s$ l'induite~\eqref{eq:Clozel6_1}, réalisée dans un espace $I$ indépendant de $s$. On considère les séries d'Eisenstein
$$
E(-, s) \colon I_s \to \cA(G(F) \backslash G(\A))
$$
absolument convergentes pour $\Re(s)$ assez grand. Soit $S \supset \infty$ un ensemble fini de places tel que, pour $v \notin S$, $\pi \otimes \sigma$ soit non-ramifiée. On considère une fonction décomposée $f = \otimes f_v$ dans $I_s$ telle que, pour $v \notin S$, $f_v$ soit le vecteur sphérique de l'induite. Le terme constant de $E(-, s)$ par rapport à $P$ est
$$
E_P(f_s) = f_s + M(s) f_s
$$
où $M(s) = \bigotimes_v M_v(s) \colon I_s \to I_{-s}$. On peut écrire
\begin{equation}\label{eq:Clozel6_2}
M(s) f_s = \bigotimes_{v \in S} M_v(s) f_v \otimes \prod_{v \notin S} \frac {L_v(s, \pi \times \rho) L_v(2s, \pi, \wedge^2)}{L_v(s+1, \pi \times \rho)L_v(2s+1, \pi, \wedge^2)} \bigotimes_{v \notin S} f_v.
\end{equation}
Le produit est absolument convergent pour $\Re(s) > 1$. Pour tout $v | \infty$, $\pi \otimes \sigma$ est tempérée et $M_v(s)$ est convergent, et donc holomorphe, pour $\Re(s) > 0$. Pour $s = 1/2$, il envoie $I_{1/2}$ sur son quotient de Langlands $\cL(\pi, \sigma, 1/2)$.

Considérons $M_v(s)$ pour $v \in S_\tu{f} := S - S_\infty$.
Shahidi \cite[\S 8.4]{Shahidi_EisL} a défini une fonction $L$ locale $L(s, \pi_v \times \sigma_v)$, holomorphe pour $\Re(s) \gg 0$.

\begin{equation} \label{eq:LShahidi_eq_LRankin}
\textit{On a $L(s, \pi_v \times \sigma_v) = L(s, \pi_v \times \rho_v)$ où la fonction $L$ de droite est la fonction $L$ de Rankin usuelle.}
\end{equation}

Ceci résulte de résultats de Cogdell, Kim, Piatetski-Shapiro et Shahidi \cite{Cogdell_etal_IHES_2004}.
En effet, les auteurs prouvent tout d'abord, si $\sigma_v$ est composante locale d'une représentation cuspidale globale générique, l'existence de $\rho_v$, unique, caractérisée par l'égalité de facteurs gamma  $\gamma (s, \pi_v \times \sigma_v, \psi_v) = \gamma (s, \pi_v \times \rho_v, \psi_v)$ pour toute représentation supercuspidale $\pi_v$ de $GL_r(F_v)$ (Prop.\ 7.2 \loccit).
De plus, ceci est compatible à la fonctorialité globale.
Puis ils démontrent (Prop.\ 7.5 et Lem.\ 7.2 \loccit) l'existence d'une image locale $\rho'_v$ de $\sigma_v$ vérifiant cette identité de facteurs  gamma  et l'identité $L(s, \pi_v \times \sigma_v) = L(s, \pi_v \times \rho'_v)$ pour toute représentation irréductible générique $\pi_v$ de $GL_r(F_v)$.
En particulier $\rho_v$ doit être égale à $\rho'_v$, d'où l'égalité des fonctions L.

\begin{remark}
  Il résulte de \eqref{eq:LShahidi_eq_LRankin} et de la méthode de Luo, Rudnick et Sarnak \cite{LuoRudnickSarnak} que $L(s, \pi_v \times \sigma_v)$ est holomorphe pour $\Re(s)> 1- 1/(r^2+1)-1/(t^2+1)$.
\end{remark}

Soit $N_v(s)$ l'opérateur d'entrelacement normalisé \cite{Shahidi90}
\begin{align*}
N_v(s) &= r(s, \pi_v \times \sigma_v)\inv M_v(s), \cr
r(s, \pi_v \times \sigma_v) &= \frac {L(s, \pi_v \times \sigma_v) L(2s, \pi_v, \wedge^2)}{L(s+1, \pi_v \times \sigma_v)L(2s+1, \pi_v, \wedge^2)} \lbr \eps(s, \pi_v \times \sigma_v) \eps(s, \pi_v, \wedge^2)\rbr.
\end{align*}
On peut ignorer le facteur epsilon, qui est holomorphe et partout non nul. On a maintenant~\cite[Thm. 11.1]{Cogdell_etal_IHES_2004}
\begin{equation}
\textit{$N_v(s)$ est holomorphe et non nul pour $\Re(s) \geq 0$}.
\end{equation}
On peut maintenant réécrire~\eqref{eq:Clozel6_2} comme
$$
M(s) f_s = \bigotimes_{v |\infty} M_v(s) f_v \otimes \bigotimes_{v \in S_{\tu{f}}} N_v(s) f_v( \prod_{v \in S_{\tu{f}}} \eps_v(s)\inv) \frac {L(s, \pi \times \rho)L(2s, \pi, \wedge^2)}{L(s+1, \pi \times \rho) L(2s+1, \pi, \wedge^2)} \bigotimes_{v \notin S} f_v.
$$
Pour $\Re(s) > 0$, $M(s) f_s$ est meromorphe, et son seul pôle provient de celui de $L(2s, \pi, \wedge^2)$ en $s= 1/2$, s'il n'est pas annulé par un zéro de $L(s, \pi \times \rho)$. En particulier

\begin{proposition}
Supposons que $L(1/2, \pi \times \rho) \neq 0$. Alors $E(f, s)$, pour $f$ convenable dans $I$, présente un pôle en $s = 1/2$. Pour $f \in I$, $\Res_{s=1/2} E(f, s)$ appartient à $L^2_\disc(G(F) \backslash G(\A))$.
\end{proposition}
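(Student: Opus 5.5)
Le plan suit celui du Chapitre 4 pour $L(s,\pi)$, appliqué à la formule~\eqref{eq:Clozel6_2} réécrite à l'aide des opérateurs normalisés. On choisit $\sigma$ générique fournie par le Théorème~\ref{thm:Clozel6_3}, de sorte que les résultats de \cite{Cogdell_etal_IHES_2004} s'appliquent aux places finies de $S$. On fixe $f=\otimes f_v$ décomposée, sphérique hors de $S$.

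\textbf{Étape 1 (comportement des facteurs locaux en $s=1/2$).} Aux places finies $v\in S_{\tu{f}}$, $N_v(s)$ est holomorphe et non nul pour $\Re(s)\ge 0$, donc en $s=1/2$ ; on choisit $f_v$ avec $N_v(1/2)f_v\neq 0$. Les facteurs $\eps_v(s)^{-1}$ sont holomorphes et non nuls. Aux places archimédiennes, $\pi_v\otimes\sigma_v$ est tempérée, donc $M_v(s)$ est donné par une intégrale convergente pour $\Re(s)>0$. En $s=1/2$, son image est le quotient de Langlands $\cL(\pi_v,\sigma_v,1/2)\neq 0$ ; on choisit $f_v$ avec $M_v(1/2)f_v\neq 0$.

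\textbf{Étape 2 (analyse du rapport global).} On étudie en $s=1/2$ le rapport global
\[
\frac{L(s,\pi\times\rho)\,L(2s,\pi,\wedge^2)}{L(s+1,\pi\times\rho)\,L(2s+1,\pi,\wedge^2)}.
\]
\begin{itemize}
\item $\pi$ est symplectique, donc $L(2s,\pi,\wedge^2)$ a un pôle simple en $s=1/2$.
\item Le numérateur $L(1/2,\pi\times\rho)$ est non nul par hypothèse.
\item Au dénominateur, $L(3/2,\pi\times\rho)$ et $L(2,\pi,\wedge^2)$ sont dans la région de convergence absolue des produits eulériens ($\pi$ et $\rho$ sont unitaires) ; ils sont donc finis et non nuls.
\end{itemize}
Ce rapport a donc un pôle simple en $s=1/2$. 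Un point à vérifier : il faut travailler avec les fonctions $L$ complétées ou partielles de façon cohérente. Les facteurs locaux en $v\in S$ du rapport ont été absorbés dans $N_v$, et aux places archimédiennes les facteurs $L_v$ sont holomorphes non nuls au voisinage de $s=1/2$ et $s=3/2$, par tempérance. On en déduit que $M(s)f_s$ a un pôle en $s=1/2$ pour $f$ convenable. Comme le terme constant de $E(f,s)$ le long de $P$ est $f_s+M(s)f_s$ et que $f_s$ est holomorphe, $E(f,s)$ a lui-même un pôle en $s=1/2$.

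\textbf{Étape 3 (le résidu est de carré intégrable).} C'est le point le plus délicat. On applique le critère de Langlands (cf.\ \cite{MoeglinWaldspurger_bookspec}, ou Kim dans \cite{cogdell2004lectures}). Comme $P$ est maximal et $\sigma$ cuspidale, le terme constant du résidu le long de $P$ est $\Res_{s=1/2}M(s)f_s\in I_{-1/2}$. Son exposant est $-1/2$ fois le caractère fondamental, qui est strictement négatif ; le critère de carré intégrabilité est donc satisfait. Il faut encore vérifier que les termes constants le long des autres paraboliques, non associés à $P$, sont nuls, ce qui découle de la cuspidalité de $\pi$ et de $\sigma$. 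On conclut que $\Res_{s=1/2}E(f,s)\in L^2_\disc(G(F)\backslash G(\A))$ pour tout $f\in I$.
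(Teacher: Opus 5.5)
Your proposal is correct and follows essentially the paper's argument. The paper rewrites the constant term with the normalized operators $N_v(s)$, which are holomorphic and nonzero for generic $\sigma$ by \cite{Cogdell_etal_IHES_2004}; it uses the convergence of $M_v(s)$ at the archimedean places for $\Re(s)>0$; and the pole of $L(2s,\pi,\wedge^2)$ at $s=1/2$ is not cancelled because $L(1/2,\pi\times\rho)\neq 0$. For square-integrability you verify the exponent criterion directly, which the paper itself mentions as an alternative to invoking Langlands' construction of the residual spectrum (see the proof of Lemma~\ref{lem:alt_Lhalf_res}).
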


Soit $R_{1/2} \colon I_{1/2} \to L^2_\disc(G(F) \backslash G(\A))$ l'opérateur d'entrelacement ainsi obtenu. Soit $\pi_G = \bigotimes_v \pi_{G, v}$ un constituant de l'image de $R_{1/2}$ (celle-ci est somme directe car contenue dans $L^2_\disc$). Si $v \notin S$, on peut supposer $\pi_{G, v}$ non-ramifiée. Si $v | \infty$, $\pi_{G, v}$ est l'image par $M_v(1/2)$ de $\ind(\pi_v[1/2] \otimes \sigma_v)$, c'est-à-dire le quotient de Langlands $\cL(\pi_v, \sigma_v, 1/2) := \cL(\ind(\pi_v[1/2]\otimes \sigma_v)$. Celui-ci est donc unitaire, et par conséquent cohomologique. (Si $v \in S_\tu{f}$, nous ne savons pas si $\ind(\pi_v[1/2] \otimes \sigma_v)$ a un unique quotient irréductible~; cela résulte peut-être des propriétés de ``quasi-temperedness'' de $\pi_v$ et $\sigma_v$, \textit{cf.}~\cite{Cogdell_etal_IHES_2004}.).

Supposons pour l'instant que, pour $v |\infty$, $\pi_{G, v}$ a pour caractère infinitésimal celui de la représentation triviale.

Nous imitons maintenant les arguments de~\cite[\S~3.8]{ClozelKret}. Soit $q = \sum_{v |\infty} q_v$ le degré minimal en lequel $\pi_{G, \infty}$ a un cohomologie non-nulle~; soit $\ig = \Lie_{\R} G(F_\infty)\otimes \C$, et $K_\infty \subset G(F_\infty)$ un sous-groupe compact maximal. Alors
$$
\pi_{G, \ff} \subset \uH^q(\ig, K_\infty; \uL^2_\disc(G(F)\G(\A))).
$$
D'après Nair et Rai, l'espace de droite est muni d'une $\Q$-structure. Comme dans le \S4 on en déduit que, pour $a \in \Aut(\C/\Q)$, $a(\pi_{G, \ff})$ se plonge dans $\uH^1(\ig, K_\infty; \uL^2_\disc(G(F) \backslash G(\A))$~; l'argument de restriction au bord de la compactification de Borel-Serre montre alors que c'est la composante finie d'une représentation résiduelle.

Pour $v \notin S$, $\pi_{G, v}$ est l'unique constituant irréductible non-ramifié de $\ind(\pi[1/2] \times \sigma)$.

\begin{lemma}
$a(\pi_{G, v})$ est l'unique constituant irréductible non-ramifié de $\ind(a(\pi_v)[1/2] \otimes a(\sigma_v))$.
\end{lemma}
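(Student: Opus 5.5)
Le point de départ est la description explicite de l'unique constituant non ramifié d'une induite non ramifiée par sa matrice de Hecke (paramètre de Satake), puis l'observation que l'action de \(a\) sur les représentations lisses (\(\pi_v \mapsto \pi_v \otimes_{\C,a} \C\)) commute à l'induction parabolique, à condition de normaliser convenablement. Pour \(v \notin S\), \(\pi_v\), \(\sigma_v\) sont non ramifiées et \(\ind(\pi_v[1/2]\otimes\sigma_v)\) est une induite d'un caractère non ramifié du Borel ; son unique constituant non ramifié est \(\pi_{G,v}\), de matrice de Hecke
\[ t_{\pi_v} q_v^{1/2} \oplus t_{\pi_v}\inv q_v^{-1/2} \oplus t_{\sigma_v}. \]

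\textbf{Étape 1 : compatibilité de \(a\) avec l'induction non normalisée.} L'induite \emph{non normalisée} \(\Ind_{P(F_v)}^{G(F_v)}\tau\) est définie par des conditions algébriques, valeurs dans l'espace de \(\tau\) et équivariance lisse, sans racines carrées. On a donc \(a(\Ind\,\tau) \simeq \Ind\,a(\tau)\) canoniquement, par \(f \mapsto a\circ f\) après le choix d'une \(a\)-semi-linéarité. L'induite normalisée vaut \(\Ind(\tau\otimes\delta_P^{1/2})\). Ici \(\delta_P^{1/2}\) restreint à \(\GL_r\) est \(|\det|^{(r+1)/2+(n-r)}\) à une puissance près. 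L'exposant total de \(|\det|\) dans \(\pi_v[1/2]\otimes\delta_P^{1/2}\) est un entier ou un demi-entier. Il faut contrôler l'effet de \(a\) sur \(|\det|^{k/2}\), c'est-à-dire \(q_v^{k/2}\mapsto a(q_v^{1/2})^{k}\).

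\textbf{Étape 2 : l'obstacle principal, les racines carrées.} La définition de \(a(\pi)\) pour \(\GL_r\) (Clozel) porte sur \(\pi_{\ff}\) algébrique, et \(a(\pi_v)\) s'entend ici au sens naïf \(\pi_v\otimes_{\C,a}\C\). Je vérifierai que la parité de \(r\) et de \(t=2m+1\) rend l'exposant total \(1/2 + (\text{demi-somme des racines})\) entier. Dans ce cas \(a(\pi_v[1/2]\otimes\delta_P^{1/2}) = a(\pi_v)[1/2]\otimes\delta_P^{1/2}\) sans torsion par le caractère quadratique \(\eps_a\) : \(q_v^{1/2}\mapsto \pm q_v^{1/2}\). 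Ce calcul est précisément ce que rend possible l'hypothèse d'algébricité régulière de \(\pi\) (\(r\) pair) et de \(\rho\) (\(t\) impair). Si l'exposant n'était pas entier, il faudrait remplacer \(a(\pi_v)\) par sa torsion par \(\eps_a\circ\det\), conformément à la convention \(\tilde a\). Pour \(\sigma_v\), l'induite de \(G'\) est traitée de la même façon, par les poids de \(\delta_{B'}^{1/2}\) sur le tore de \(\Sp_{n-r}\), avec une vérification de parité analogue.

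\textbf{Étape 3 : conclusion.} On en déduit que \(a\) transforme l'induite normalisée \(\ind(\pi_v[1/2]\otimes\sigma_v)\) en \(\ind(a(\pi_v)[1/2]\otimes a(\sigma_v))\). L'application \(a\) est un foncteur exact et envoie les invariants par \(G(\cO_v)\) sur les invariants (le compact étant défini sur \(\Q\)-structure entière, \(a\) préserve le fait d'avoir un vecteur fixe non nul). Elle envoie donc l'unique constituant non ramifié \(\pi_{G,v}\) sur un constituant non ramifié de l'induite conjuguée. L'unicité de ce constituant (Satake/Casselman) conclut.

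Comme contrôle, on peut aussi comparer directement les matrices de Hecke. Celle de \(a(\pi_{G,v})\) s'obtient en appliquant \(a\) aux valeurs propres de Satake \emph{normalisées par \(\delta^{1/2}\)}, ce qui redonne la même question de racines carrées de \(q_v\). L'étape 2 est donc bien le cœur de l'argument.
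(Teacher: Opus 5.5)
Your proposal is correct and takes essentially the paper's route. The paper only cites Lemma~3.2 of \cite{ClozelKret}. Your argument is the standard direct proof, the same computation the paper does in Lemma~\ref{lem:a_delta_pi_sigma}: naive conjugation commutes with unnormalized induction, and $\pi_v[1/2]\otimes\delta_P^{1/2}=\pi_v\otimes|\det|^{1/2+m+(r+1)/2}$ has integer exponent $m+1+r/2$ because $r$ is even. You should write that one line out rather than ``je vérifierai''. No separate parity check for $\sigma_v$ via $\delta_{B'}$ is needed, since $a(\sigma_v)$ is just the naive conjugate. Likewise, $K_v$-invariants are preserved simply because $a$ does not change the operators, not because of any integral $\Q$-structure on $K_v$.
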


Ceci résulte du Lemme~3.2 de~\cite{ClozelKret}.

La représentation $\pi_{G, v}$ appartient au paquet d'Arthur associé à $\psi = \pi \otimes \sp(2) \oplus \rho$, et $a(\pi_{G, v})$ appartient donc au paquet d'Arthur associé à $a(\psi) = a(\pi) \otimes \sp(2) \oplus a(\rho)$. (Noter que les paquets d'Arthur sont définis par les matrices de Hecke en presque toutes les places.) Il nous suffit donc d'étendre au nouveau contexte le Théorème~3.1 de~\cite{ClozelKret}~:

\begin{theorem}\label{thm:Clozel6_7}
Soit $\psi = \pi \otimes \sp(2) \oplus \rho$, $\pi_G \subset \cH_\psi$ une représentation cohomologique, non cuspidale, et supposons que $\pi_G$ est obtenue à l'aide de multi-résidus et de valeurs principales de séries d'Eisenstein à partir de $(M, \pi_M)$ où $M \subset G$ est un sous-groupe de Levi et $\pi_M$ cuspidale. Alors $M = \GL_r \times \Sp_m$, $\pi_M = \pi \times \sigma$ (ou $\sigma$ est associée à $\rho$) et $s = 1/2$.
\end{theorem}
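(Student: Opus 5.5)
Je suivrai pas à pas la démonstration donnée au \S4 pour le paramètre $\pi \otimes \sp(2) \oplus \one$, en remplaçant $\one$ par $\rho$. Écrivons $M = \GL_{n_1} \times \cdots \times \GL_{n_k} \times G''$ avec $G'' = \Sp_{n''}$. Écrivons aussi la donnée cuspidale $\pi_M = \pi_1|\ |^{s_1} \otimes \cdots \otimes \pi_k|\ |^{s_k} \otimes \tau$, où chaque $\pi_i$ a un caractère central trivial sur $\R^\times_+$. Comme $\pi_G$ est cohomologique, le Théorème~\ref{thm:Clozel2_3} donne $s_i \in \tfrac12\Z$. En particulier les $s_i$ sont réels, ce que j'utiliserai pour séparer les parties unitaires.

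On passe ensuite au paramètre cuspidal de $\tau$ : d'après Arthur, $\tau$ appartient à un paquet de paramètre $\bigoplus_j \tau_j \otimes \sp(m_j)$, avec des $\tau_j$ cuspidales autoduales de $\GL$. On compare alors en presque toute place les matrices de Hecke. Celle de l'induite est
\[ \bigoplus_i (t_{\pi_i}q^{s_i} \oplus t_{\pi_i}^{-1}q^{-s_i}) \oplus t_\tau . \]
Celle de $\psi$ est
\[ t_\pi q^{1/2} \oplus t_\pi q^{-1/2} \oplus t_\rho . \]
Par Jacquet--Shalika, on identifie les multi-ensembles de représentations cuspidales tordues $\{\pi_i|\ |^{\pm s_i}\} \cup \{\tau_j|\ |^{(m_j-1)/2 - l}\}$ et $\{\pi|\ |^{1/2}, \pi|\ |^{-1/2}, \rho\}$, via l'égalité des fonctions $L$ partielles. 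Le membre de droite compte trois objets. On a donc $2k + \sum_j m_j = 3$, d'où deux cas : $k=0$ et $\sum m_j = 3$, ou $k=1$ et $\sum m_j = 1$.

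Si $k=0$, alors $M=G$ et $\pi_G$ est cuspidale, ce qui est exclu par hypothèse. Si $k=1$, $\tau$ est associée à une unique $\tau_1$ cuspidale autoduale de $\GL_{2n''+1}$, et
\[ \{\pi_1|\ |^{s_1}, \pi_1^\vee|\ |^{-s_1}, \tau_1\} = \{\pi|\ |^{1/2}, \pi|\ |^{-1/2}, \rho\}. \]
Les caractères centraux de $\pi_1$, $\pi$, $\rho$ sont unitaires sur $\R^\times_+$, ce qui fixe les exposants. On écarte d'abord $\pi_1|\ |^{s_1} = \rho$. Dans ce cas $s_1=0$ et $\pi_1 = \rho$, donc $\pi_1^\vee = \rho$ aussi, et il faudrait $\rho = \pi|\ |^{\pm 1/2}$, ce qui est absurde : un exposant est nul, l'autre non, et de toute façon $r \neq t$ car $r$ est pair et $t$ impair. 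Quitte à remplacer $P$ par le parabolique associé (équation fonctionnelle, en utilisant $\pi$ autoduale), on a donc $\pi_1 = \pi$ et $s_1 = 1/2$. Il reste $\tau_1 = \rho$. Ainsi $n_1 = r$, $M = \GL_r \times \Sp_{n-r}$ (c'est le $\Sp_m$ de l'énoncé) et $\tau$ est dans le paquet de $\rho$, donc c'est une représentation $\sigma$ associée à $\rho$. Par la Proposition précédente, $\sigma$ est bien cuspidale.

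Le point délicat est l'application de Jacquet--Shalika avec des exposants a priori complexes et des représentations non nécessairement tempérées, puisqu'on ne dispose pas de Ramanujan. C'est là qu'intervient la réalité des $s_i$, fournie par le Théorème~\ref{thm:Clozel2_3}, combinée à la normalisation des caractères centraux. Il faut aussi vérifier que la décomposition de $t_\tau$ via le paramètre d'Arthur de $\tau$, qui provient de~\cite{ArthurBook} en remplacement de la cuspidalité générique, est licite. Enfin, le choix du signe $\pm 1/2$ se règle par conjugaison sous le groupe de Weyl.
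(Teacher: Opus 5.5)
Your proposal is correct and follows essentially the same route as the paper. Both use Arthur's parameter for the cuspidal datum on the symplectic factor, then the Jacquet--Shalika count of cuspidal constituents ($2k+\sum_j m_j=3$, so $k=1$ once cuspidality is excluded), then the parity of $r$ against $t$ to force $\tau_1=\rho$, and finally the central characters on $A_M(\R)^\circ$ to get $\pi_1=\pi$ and $s_1=1/2$, with the $-1/2$ choice giving an associated datum. The point you flag as delicate is not one: Jacquet--Shalika holds for formal sums of non-unitary cuspidal representations, and the central-character normalisation alone fixes $s_1$, so the reality of the $s_i$ from Theorem~\ref{thm:Clozel2_3} is not needed.
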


Supposons $\pi_G$ obtenue à partir de $M = \prod_{i=1}^I \GL_{r_i} \times \Sp_{m'}$, et d'une représentation cuspidale $\pi_1 \times \ldots \times \pi_I \times \sigma'$. Alors, pour presque toute place finie $v$ de $F$, on a d'une part une matrice de Hecke
\begin{equation}
T_v = T = t_\pi q^{1/2} \oplus t_\pi q^{-1/2} \oplus t_\rho
\end{equation}
et par ailleurs, d'après Arthur~:
\begin{align}\label{eq:Clozel6_8}
T_{\sigma'} &= \bigoplus_{j=1}^J t_{\sigma_j} \otimes \sp(m_j) \cr
T &= \bigoplus_{i=1}^I t(\pi_i) q^{s_i} \oplus t(\pi_i)\inv q^{-s_i} \oplus T_{\sigma'}
\end{align}

D'après le théorème d'unicité de Jacquet-Shalika, il y a trois termes cuspidaux dans la somme~\eqref{eq:Clozel6_8}. Si $|I| = 0$, la représentation $\pi_G$ est cuspidale, contrairement à l'hypothèse. Sinon, on doit avoir $|I| = 1$ ; alors $\sigma'$ est associée à une représentation cuspidale $\rho'$ de $\GL_{2m' + 1}$.

Puisque $r$ est pair, on doit avoir $\pi_1 \vert \vert^{s_1}= \pi \vert \vert^{1/2}$ (ou $ \pi \vert \vert^{-1/2}$, ce qui produit une donnée associée) et enfin $\rho = \rho'$. De plus $\pi_1 \vert \vert^{s_1} = \pi \vert \vert^{1/2}$ implique $\pi_1 = \pi, s_1 = 1/2$ : considérer les représentations sur $A_M(\R)^\circ \subset GL_r(\A_F)$.

\subsection{} D'après le paragraphe precédent, il existe une représentation résiduelle associée à $a(\psi)$ et quotient de l'induite $\ind_{MN}^G(a(\pi)|\ |^{1/2} \times \sigma)$ où $\sigma$ est une représentation cuspidale de $\Sp_m(\A_F)$ associée à $a(\rho)$. Noter que cet argument ne semble pas permettre d'obtenir $\sigma$ générique. Pour simplifier les notations, écrivons maintenant $\pi, \rho$ plutôt que $a(\pi)$, $a(\rho)$, et soit
$$
I_s = \ind_{MN}^G(\pi |\ |^s \times \sigma)
$$
Les résidus de $E(f, s)$, pour $f \in I_s$, sont de nouveau déterminés par les résidus de $M(s)f_s$; supposant de nouveau $\pi, \sigma$ non ramifiées pour $v$ (fixé) $\notin S$, et $f_v$ alors égale au vecteur sphérique, on a alors (pour $f$ décomposée):
\begin{equation}\label{eq:Clozel_6_9}
M(s) f_s = \bigotimes_{v | \infty} M_v(s) f_v \otimes \bigotimes_{v \in S_\ff} M_v(s) f_v \otimes Q^S(s) f^S,
\end{equation}
où $Q^S(s)$ est le quotient de fonctions $L$ partielles
\begin{equation}\label{eq:clozel_6_10}
Q^S(s) = \frac {L^S(s, \pi \times \rho) L^S(2s, \pi, \wedge^2)} {L^S(s+1, \pi \times \rho) L^S(2s + 1, \pi, \wedge^2)}
\end{equation}
Ainsi $Q^S(s) \bigotimes_{v \in S_\ff} M_v(S)$ présente un pôle en $s = 1/2$ (pour une fonction $f$ convenable.)

Pour $v \in S_\ff$, introduisons l'opérateur d'entrelacement normalisé
\begin{equation}
N_v(s) = Q_v(s)\inv M_v(s)
\end{equation}
où $Q_v(s)$ est le facteur en $v$ du quotient total analogue à~\eqref{eq:clozel_6_10}. Noter que la définition correcte (Langlands, Shahidi) de $N_v(s)$ contient un facteur $\eps$, cf. la formule pour $r(s)$ après~\eqref{eq:Clozel4_1}, mais celui-ci est holomorphe non nul. On remarque aussi (cf.~
Théorème~\ref{thm:Clozel6_8}) que $Q_v(s)$ est défini par $(\pi, \rho)$ et ne dépend pas du choix de $\sigma$. On peut alors écrire
\begin{equation}\label{eq:Clozel_6_12}
Q^S(s) \prod_{v \in S_\ff} M_v(s) = Q(s) \bigotimes_{v \in S_\ff} N_v(s).
\end{equation}
Comme fonction méromorphe de $s$, $Q(s)$ étant le quotient total (sur toutes les places finies) analogue à \eqref{eq:clozel_6_10}.

\begin{theorem}[Waldspurger]\label{thm:Clozel6_8}
Pour toute place finie $v$, $N_v(s)$ est holomorphe pour $\Re(s) \geq 1/2$.
\end{theorem}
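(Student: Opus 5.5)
Le plan consiste à se ramener à des propriétés locales de l'opérateur d'entrelacement en une place finie \(v\), en exploitant la structure de \(\sigma_v\) comme élément d'un paquet d'Arthur local associé à \(\rho_v\) (paramètre de Langlands borné, éventuellement non tempéré car Ramanujan n'est pas connu). On factorise l'opérateur \(M_v(s)\) associé à \(\ind(\pi_v[s]\otimes\sigma_v)\) en exploitant que \(\pi_v\), composante locale d'une cuspidale unitaire de \(\GL_r\), est une induite irréductible \(\ind(\delta_1[x_1]\otimes\cdots\otimes\delta_k[x_k])\) avec \(\delta_i\) de carré intégrable et \(|x_i|<1/2\) (bornes de Luo--Rudnick--Sarnak, ou simplement l'unitarité). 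De même, par la classification de Langlands/Mœglin, \(\sigma_v\) est un sous-quotient d'une induite \(\ind(\tau_1[y_1]\otimes\cdots\otimes\tau_l[y_l]\otimes\sigma_{v,0})\) avec \(\sigma_{v,0}\) tempérée et \(|y_j|<1/2\), et le paramètre de \(\rho_v\) est \(\bigoplus_j(\phi_{\tau_j}|\cdot|^{y_j}\oplus\phi_{\tau_j}^\vee|\cdot|^{-y_j})\oplus\phi_{\sigma_{v,0}}\).

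On réalise alors \(\ind(\pi_v[s]\otimes\sigma_v)\) comme sous-quotient d'une induite depuis un Levi plus petit \(\GL_{r_1}\times\cdots\times\GL_{r_k}\times\GL_{t_1}\times\cdots\times\Sp\), et l'on écrit \(M_v(s)\) comme produit (via la formule de Gindikin--Karpelevich / décomposition en éléments de Weyl simples de rang un) d'opérateurs de rang un de deux types : des opérateurs entre \(\delta_i[s+x_i]\) et \(\tau_j[\pm y_j]\) (type \(\GL\times\GL\)), des opérateurs entre \(\delta_i[s+x_i]\) et \(\delta_{i'}[s+x_{i'}]\) (type \(\wedge^2\)/\(\Sym^2\) dans \(\Sp\)), et des opérateurs entre \(\delta_i[s+x_i]\) et \(\sigma_{v,0}\) tempérée. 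Chacun, normalisé par son facteur de Langlands--Shahidi (pour les \(\GL\), les facteurs de Rankin--Selberg ; pour \(\sigma_{v,0}\), les facteurs de Shahidi, égaux aux facteurs de Rankin de \(\rho_{v,0}\) par l'argument rappelé en \eqref{eq:LShahidi_eq_LRankin}), est holomorphe dès que l'exposant réel est \(>-1\) environ : c'est classique pour \(\GL\) (Mœglin--Waldspurger, holomorphie pour \(\Re>-1\) des opérateurs normalisés de séries discrètes) et pour les paires tempérées dans un groupe classique. Comme \(\Re(s)\ge 1/2\) et \(|x_i|,|y_j|<1/2\), tous les exposants apparaissant sont \(>-1/2\), d'où l'holomorphie de chaque facteur normalisé. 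Le produit des facteurs normalisants locaux de rang un coïncide avec \(Q_v(s)\) à un produit de quotients de facteurs \(L\) près, par multiplicativité des facteurs \(L\) de Rankin et de \(\wedge^2\).

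L'obstacle principal est exactement ce dernier point : le produit des facteurs de normalisation de rang un n'est pas égal à \(Q_v(s)\), il diffère par des quotients \(L(z)/L(z+1)\) « internes » (par exemple entre \(\tau_j[y_j]\) et \(\tau_j^\vee[-y_j]\), ou provenant de la restriction du quotient \(\sigma_v\) de l'induite), qui pourraient introduire des pôles, et de plus \(\sigma_v\) n'est qu'un sous-quotient, de sorte qu'il faut contrôler la restriction de l'opérateur. Je traiterais cela en vérifiant que les pôles potentiels des facteurs \(L_v(s+x_i\pm y_j,\delta_i\times\tau_j)\) au dénominateur de \(Q_v\) et au numérateur se compensent exactement (la partie \(\sigma_v\)-interne ne dépend pas de \(s\)), et que pour \(\Re(s)\ge1/2\) les dénominateurs \(L(s+1+\cdots)\) n'ont pas de zéros (les facteurs \(L\) locaux n'en ont jamais) tandis que les numérateurs intervenant dans \(Q_v(s)^{-1}\) n'ont pas de pôle non compensé : leurs pôles sont en \(\Re(s)=-x_i\mp y_j<1\), et il faut exclure finement la bande \(1/2\le\Re(s)<1\), ce qui est l'endroit délicat, les bornes \(|x_i|,|y_j|<1/2\) étant insuffisantes seules. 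Je m'attends à devoir y invoquer l'appendice (holomorphie et non-annulation des opérateurs normalisés pour ces données induisantes), c'est-à-dire un argument fin sur les paquets d'Arthur locaux plutôt qu'une simple estimation d'exposants.
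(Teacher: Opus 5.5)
Your rank-one analysis is sound: you factor $M_v(s)$ through the finer Levi, normalize the $\GL\times\GL$ steps, use Moeglin--Waldspurger's holomorphy for exponent $>-1$, and note that every exponent is $>-1/2$. But the proposal never concludes. The step you call the main obstacle is left open, and your plan for it is to ``invoke the appendix'', i.e.\ Proposition~\ref{pro:entrelac_hol_nonnul}, which \emph{is} the proof of this statement. As written the argument is circular.

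Moreover the obstacle is misdiagnosed; the missing idea is simply the right reduced decomposition. The Weyl element only moves the $\GL_r$-block, and the appendix writes it as $\ul{w}=\ul{w}_1\ul{w}_2$. Here $\ul{w}_1$ moves each $\delta_i[s+x_i]$ past all the $\tau_j[y_j]$ (with $0<y_j<1/2$). Then $\ul{w}_2$ flips the $\delta_i$ through the tempered part, crossing one another and the $\tau_j[y_j]$ in reflected form. No $\tau_j$ is ever crossed with $\tau_j^\vee$ or with $\sigma_{v,0}$, so there are no ``internal'' quotients. The $L$-factors attached to the rank-one steps are exactly those in the expansion of $Q_v(s)$ given by multiplicativity of Rankin--Selberg factors and by $\wedge^2$ of a direct sum.

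For $\Re(s)\geq 1/2$, all of these except $L(s+x_i-y_j,\delta_i\times\tau_j^\vee)$ are $L$-factors of tempered parameters at points of real part $>0$ (the denominators even $>1/2$). Hence they are holomorphic and non-zero. The exceptional ones are precisely the normalizing factors of the $\GL$ steps of $\ul{w}_1$, and those normalized operators are holomorphic because $\Re(s+x_i-y_j)>-1/2>-1$. Finally, $M(\ul{w}_2,\dots)$ is in Langlands position: the exponents $s+x_i$, $2s+x_i+x_{i'}$, $s+x_i+y_j$ all have positive real part. So it is holomorphic \emph{without} any normalization \cite[Proposition IV.2.1]{Waldspurger_Plancherel}.

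Thus the bounds $\vert x_i\vert<1/2$, $0<y_j<1/2$ do suffice, and there is no band $1/2\leq\Re(s)<1$ to treat separately. Poles of the numerators of $Q_v$ become zeros of $Q_v(s)^{-1}$ and are harmless. You also never need Shahidi normalizations against $\sigma_{v,0}$. These would be problematic anyway: \eqref{eq:LShahidi_eq_LRankin} relies on genericity, and this theorem is used precisely when $\sigma$ may fail to be generic.

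Your second open point, that $\sigma_v$ is only a subquotient, is removed in the paper by irreducibility. The packet of $\rho_v$ consists of the subquotients of $\tau_1[y_1]\times\dots\times\tau_l[y_l]\rtimes\sigma'$, with $\sigma'$ in the tempered packet of the tempered part of $\rho_v$. These induced representations are irreducible by \cite[Théorème 2.11]{MoeglinWaldspurger_GGP_SO_gen}, since $0<y_j<1/2$. Hence $\sigma_v$ is the full induced representation and the factorization holds on the nose. For holomorphy alone you could instead note that intertwining operators commute with $\ind(p)$ for a surjection $p$ onto $\sigma_v$, so holomorphy descends to quotients. However, the irreducibility is what the paper needs for the non-vanishing half of Proposition~\ref{pro:entrelac_hol_nonnul}, which is used in Lemma~\ref{lem:alt_Lhalf_res}.
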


Voir la Proposition~\ref{pro:entrelac_hol_nonnul} dans l'Appendice. 

Puisque l'opérateur \eqref{eq:Clozel_6_12} a un pôle en $s = 1/2$, celui-ci doit donc provenir de $Q(s)$. Le dénominateur $L(s+1, \pi \times \rho) L(2s + 1, \pi, \wedge^2)$ étant non nul, le pôle provient de $L(s, \pi \times \rho) L(2s, \pi, \wedge^2)$ et on en déduit que $L(1/2, \pi \times \rho) \neq 0$. En se rappelant notre changement de notation, on voit que $L(1/2, a(\pi) \times a(\rho)) \neq 0$, d'où le Théorème~\ref{thm:Clozel-Rankin}.

\section{Algébricité des paramètres de Satake}
\label{sec:alg_Satake}

Suivant \cite[\S 3.2]{Clozel_AA} et \cite[\S 5]{BuzzardGee_conj} (avec une formulation légèrement différente) on rappelle comment tordre l'isomorphisme de Satake afin de le rendre défini sur \(\Q\).

Dans cette section \(F\) est un corps local non archimédien et \(G\) est un groupe réductif connexe sur \(F\).
On suppose que \(G\) est non ramifié.
On fixe une clôture séparable \(\ol{F}\) de \(F\) et on note \(\Gamma_F\) le groupe de Galois absolu de \(F\) correspondant.
On note \(q\) le cardinal du corps résiduel de \(F\), \(\varpi\) une uniformisante de \(F\) et \(|\cdot|\) la norme sur \(F\) normalisée par \(|\varpi|=q^{-1}\).
On note \(\sigma \in \Gamma_F\) un élément de Frobenius (arithmétique ou géométrique, peu importe: voir la Remarque \ref{rem:Satake_ind_gen} ci-dessous).
Soit \(S\) un tore déployé maximal dans \(G\), et soit \(T\) son centralisateur dans \(G\), qui est donc un tore maximal déployé par une extension non ramifiée de \(F\).
On choisit un sous-groupe de Borel \(B\) de \(G\) contenant \(S\).
Soit \(K\) un sous-groupe compact hyperspécial de \(G(F)\), i.e.\ \(K = \ul{G}(\cO_F)\) pour un modèle réductif \(\ul{G}\) de \(G\) sur l'anneau des entiers \(\cO_F\) de \(F\).
Le groupe dual \(\hat{G}\) vient (par définition) avec un épinglage \((\cB,\cT,(X_\alpha)_\alpha)\), en particulier on le considère comme un groupe déployé épinglé sur \(\Q\), et \({}^L G := \hat{G} \rtimes \Gamma_F\) est un groupe réductif sur \(\Q\) (au moins si on remplace \(\Gamma_F\) par un quotient fini).
On note \(W\) le groupe de Weyl de \(T_{\ol{F}}\) dans \(G_{\ol{F}}\), qui s'identifie au groupe de Weyl de \(\cT\) dans \(\hat{G}\).
On note \(W_F = W^{\Gamma_F}\), qui est aussi l'image dans \(W\) du normalisateur de \(S\) (ou \(T\)) dans \(G(F)\).
On note \(\cN_F\) la préimage de \(W_F\) dans le normalisateur de \(\cT\) dans \(\hat{G}\).
On munit \(N(F)\) de la mesure de Haar satisfaisant \(\operatorname{vol}(N(F) \cap K) = 1\).
On note \(\delta_B: T(F)/T(\cO_F) \to \R_{>0}\) le caractère modulaire (\(\operatorname{vol}(t A t^{-1}) = \delta_B(t) \operatorname{vol}(A)\) pour toute partie mesurable \(A\) de \(N(F)\) et tout \(t \in T(F)\)), aussi donné par la formule \(\delta_B(t) = |t^{2\rho_B}|\) où \(2 \rho_B \in X^*(T)^{\Gamma_F}\) est la somme des racines de \(T_{\ol{F}}\) dans \(B_{\ol{F}}\).

Pour un anneau commutatif \(R\) on note \(\cH_R(G(F),K)\) l'algèbre de Hecke en niveau \(K\), i.e.\ l'algèbre des fonctions bi-\(K\)-invariantes \(G(F) \to R\) à support compact, pour le produit de convolution.
On a en particulier une identification \(\cH_{\C}(G(F),K) \simeq \C \otimes_{\Q} \cH_{\Q}(G(F),K)\).
L'isomorphisme de Satake
\begin{align*}
  \zeta_{G,K}: \cH_{\C}(G(F),K) & \longrightarrow \cH_{\C}(T(F), T(\cO_F))^{W_F} \\
  \varphi & \longmapsto \left( t \mapsto \delta_B^{1/2}(t) \int_{N(F)} \varphi(tn) \, dn \right)
\end{align*}
n'est malheureusement pas défini sur \(\Q\) en général, mais seulement sur \(\Q(q^{1/2}) \subset \C\).
Il est toutefois défini sur \(\Q\) si \(q^{1/2} \in \Z\), ou si \(\rho_B \in X^*(T)\) (voir \ref{exa:fonct_Sat_tordu} pour quelques exemples).
Rappelons l'interprétation de Langlands de l'isomorphisme de Satake, suivant \cite[\S 6 et \S 7]{BorelCorvallis}.
Notant \(\cO(X)\) l'anneau des sections globales du faisceau structural sur un schéma \(X\) (en l'occurence affine), on a une suite d'isomorphismes de \(\Q\)-algèbres
\begin{equation} \label{eq:chain_Satake}
  \cH_{\Q}(T(F),T(\cO_F))^{W_F} \simeq \Q[X_*(S)]^{W_F} \simeq \Q[X^*(\hat{S})]^{W_F} = \cO(\hat{S})^{W_F} \xrightarrow{\sim} \cO(\hat{T} \rtimes \sigma)^{\cN_F} \xleftarrow{\sim} \cO(\hat{G} \rtimes \sigma)^{\hat{G}}.
\end{equation}
Le premier isomorphisme est donné par \(X_*(S) \simeq T(F)/T(\cO_F)\), \(\mu \mapsto \mu(\varpi)\).
L'isomorphisme \(\cO(\hat{S})^{W_F} \xrightarrow{\sim} \cO(\hat{T} \rtimes \sigma)\) est induit par le morphisme \(\hat{T} \rtimes \sigma \xrightarrow{\mathrm{pr}_1} \hat{T} \to \hat{S}\) où \(\hat{T} \to \hat{S}\) est dual de l'inclusion \(S \hookrightarrow T\).
On note \(\theta_{G,K}\) la composée de \eqref{eq:chain_Satake} et \(\theta_{G,K,\C}: \cH_{\C}(T(F),T(\cO_F))^{W_F} \simeq \cO(\hat{G}_{\C} \rtimes \sigma)^{\hat{G}_{\C}}\) son extension des scalaires.
L'isomorphisme \(\theta_{G,K,\C} \circ \zeta_{G,K}\) identifie les caractères de \(\cH_{\C}(G(F),K)\) avec les classes de \(\hat{G}(\C)\)-conjugaison semi-simples dans \(\hat{G}(\C) \rtimes \sigma\).
On notera \(c(\pi)\) la classe de conjugaison associée à une représentation irréductible non ramifiée (complexe) de \(G(F)\) (i.e.\ au caractère par lequel \(\cH_{\C}(G(F),K)\) agit sur la droite \(\pi^K\)).
Comme la notation l'indique celle-ci ne dépend pas des choix: sous-groupe compact hyperspécial \(K\) satisfaisant \(\pi^K \neq 0\), sous-groupe de Borel \(B \supset S\), tore déployé maximal \(S\).
En effet \(c(\pi)\) s'interprète aussi comme le support cuspidal de \(\pi\).

\begin{remark}
  La preuve de \cite[Proposition 6.7]{BorelCorvallis} montre en fait que la \(\Q\)-algèbre \(\cO(\hat{G} \rtimes \sigma)^{\hat{G}}\) admet une base indexée par les poids \(\cB\)-dominants \(\lambda \in X^*(\cT)^{\Gamma_F}\).
  Plus précisément à un tel poids dominant \(\lambda\) on peut associer une représentation (sur \(\Q\)) absolument irréductible \(V_\lambda\) de \(\hat{G}\), qui se prolonge de façon unique à \({}^L G\) si l'on impose que \(\Gamma_F\) agisse trivialement sur la droite des vecteurs de plus haut poids pour \((\cB,\cT)\).
  L'élément de \(\cO(\hat{G} \rtimes \sigma)^{\hat{G}}\) associé à \(\lambda\) est la fonction trace dans \(V_\lambda\).
  Notant \(V_{\lambda,\mu}\) l'espace propre pour \(\mu \in X^*(\cT)\) dans \(V_\lambda\), on a
  \[ \Tr( t \rtimes \sigma \,|\, V_{\lambda} ) = \sum_{\mu \in X^*(\cT)^{\Gamma_F}} \mu(t) \Tr(\sigma \,|\, V_{\lambda,\mu}) \]
  (les espaces correspondant à des poids \(\mu\) non invariants par \(\Gamma_F\) sont permutés par \(\sigma\) et disparaissent dans la trace).
  Vus la définition \cite[Lemma 6.4]{BorelCorvallis} de l'isomorphisme \(\cO(\hat{S})^{W_F} \xrightarrow{\sim} \cO(\hat{T} \rtimes \sigma)^{\cN_F}\) et l'identification
  \[ X_*(S) \simeq X^*(\hat{S}) \simeq X^*(\hat{T})^{\Gamma_F} \simeq X^*(\cT)^{\Gamma_F}, \]
  on voit que \(\Tr(- \,|\, V_\lambda) \in \cO(\hat{G} \rtimes \sigma)^{\hat{G}}\) correspond via les quatre derniers isomorphismes de \eqref{eq:chain_Satake} à
  \begin{equation} \label{eq:expl_Satake}
    \sum_{\mu \in X_*(S)} \Tr(\sigma \,|\, V_{\lambda,\mu}) [\mu] \in \Q[X_*(S)]^{W_F}.
  \end{equation}
\end{remark}

\begin{remark} \label{rem:Satake_ind_gen}
  Il existe une extension non ramifiée \(E\) telle que l'action de \(\Gamma_F\) sur \(\hat{G}\) se factorise par \(\Gal(E/F)\).
  Soit \(n\) le degré de \(E\) sur \(F\).
  Si \(\sigma' \in \Gamma_F\) est un autre élément dont l'image dans \(\Gal(E/F)\) est génératrice, la formule \eqref{eq:expl_Satake} montre également que pour \(f \in \cO(\hat{G}_\C)^{\hat{G}_\C}\) et \(t \in \hat{T}(\C)\) on a \(f(t \rtimes \sigma) = f(t \rtimes \sigma')\).
  En effet il est clair que l'endomorphisme \(\sigma|_{V_{\lambda,\mu}}\) du \(\Q\)-espace vectoriel de dimension finie \(V_{\lambda,\mu}\) est d'ordre fini divisant \(n\), et donc \(\sigma'|_{V_{\lambda,\mu}} = \sigma|_{V_{\lambda,\mu}}^k\) (pour un certain \(k \in (\Z/n\Z)^\times\)) lui est conjugué par \(\GL(V_{\lambda,\mu})\), donc a même trace.
  En ce sens la composée \eqref{eq:chain_Satake} ne dépend pas du choix de \(\sigma\).
\end{remark}

\begin{proposition} \label{pro:Sat_tordu}
  Soit \(G\) un groupe réductif connexe sur \(F\), supposé non ramifié.
  Soit \(\epsilon \in \frac{1}{2} X^*(G)^{\Gamma_F}\) satisfaisant \(\epsilon+\rho_{B'} \in X^*(T')\) (pour n'importe quelle paire de Borel \((B',T')\) de \(G_{\ol{F}}\), cette condition n'en dépendant pas).
  Notons \(z = (q^{1/2})^{2 \epsilon} \in Z(\hat{G})(\C)^{\Gamma_F}\) (rappelons qu'ici on a choisi \(q^{1/2}>0\) dans \(\C\)).
  On note \(m_z: \hat{G}_{\C} \rtimes \sigma \simeq \hat{G}_{\C} \rtimes \sigma\) la multiplication par \(z\), et \(m_z^*\) l'automorphisme de \(\cO(\hat{G}_{\C} \rtimes \sigma)^{\hat{G}_{\C}}\) induit.

  Alors la composée \(m_z^* \circ \theta_{G,K,\C} \circ \zeta_{G,K}\) est définie sur \(\Q\), i.e.\ c'est l'extension des scalaires d'un unique isomorphisme d'algèbre \(\cH_{\Q}(G(F),K) \simeq \cO(\hat{G} \rtimes \sigma)^{\hat{G}}\).

  En particulier pour une représentation lisse irréductible (complexe) \(\pi\) de \(G(F)\), supposée non ramifiée pour un sous-groupe compact hyperspécial \(K\) de \(G\) et pour \(a \in \Aut(\C)\) on a \(c(a(\pi)) = z^{-1} a(z c(\pi))\).
\end{proposition}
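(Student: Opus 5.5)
On calcule explicitement la composée sur la base de \(\cH_{\Q}(T(F),T(\cO_F))^{W_F} \simeq \Q[X_*(S)]^{W_F}\), pour se ramener à une question de rationalité des coefficients. D'abord \(\zeta_{G,K}\) s'écrit \(\zeta_{G,K} = m_{\delta^{1/2}} \circ \zeta^0_{G,K}\), où \(\zeta^0_{G,K}(\varphi)(t) = \int_{N(F)} \varphi(tn)\,dn\) est le terme constant non normalisé. Ce dernier est défini sur \(\Q\), car la mesure de Haar est normalisée par \(\operatorname{vol}(N(F)\cap K)=1\) et l'intégrale est une somme finie de valeurs de \(\varphi\) pondérées par des volumes rationnels. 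Son image est contenue dans \(\Q[X_*(S)]\) (plus précisément dans le sous-espace tordu-invariant) mais pas nécessairement dans les \(W_F\)-invariants. Le facteur \(\delta_B^{1/2}(\mu(\varpi)) = q^{-\langle \mu, \rho_B\rangle}\) est donc la seule source d'irrationalité, puisque \(\langle \mu,\rho_B\rangle \in \tfrac12\Z\).

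L'étape suivante consiste à traduire \(m_z^*\) du côté de \(\Q[X_*(S)]\). Pour \(t \in \hat{T}(\C)\), \(f(z t \rtimes \sigma)\) correspond, via \(\cO(\hat{S})^{W_F}\), à la multiplication de l'élément \([\mu]\) par \(\mu(z) = (q^{1/2})^{\langle 2\epsilon, \mu\rangle} = q^{\langle \epsilon,\mu\rangle}\). Ici \(\epsilon\) est vu dans \(\tfrac12 X^*(G)^{\Gamma_F} \subset \tfrac12 X_*(\hat{S})\), et \(z\) est central, donc \(m_z^*\) commute avec toutes les identifications de \eqref{eq:chain_Satake}, l'action de \(W_F\) fixant \(\epsilon\). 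La composée \(m_z^* \circ \theta_{G,K,\C} \circ \zeta_{G,K}\) envoie donc \(\varphi\) sur l'élément \(\sum_\mu q^{\langle \epsilon - \rho_B, \mu\rangle} \, \zeta^0(\varphi)(\mu(\varpi))\,[\mu]\), à un signe près dans la convention (\(\pm\epsilon\)) que l'on ajustera. Comme \(\epsilon - \rho_B = (\epsilon + \rho_B) - 2\rho_B\) et que \(\epsilon+\rho_B \in X^*(T)\), on a \(\langle \epsilon - \rho_B, \mu\rangle \in \Z\) pour \(\mu \in X_*(S) \subset X_*(T)\). Tous les coefficients sont donc rationnels, d'où un morphisme \(\cH_{\Q}(G(F),K) \to \cO(\hat{G}\rtimes\sigma)^{\hat{G}}\) dont l'extension des scalaires est la composée considérée. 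C'est un isomorphisme, car après extension des scalaires à \(\C\) on obtient un isomorphisme et \(\C/\Q\) est fidèlement plat. L'unicité est immédiate. Il faut aussi vérifier que la condition sur \(\epsilon+\rho_{B'}\) ne dépend pas de la paire de Borel : deux \(\rho\) diffèrent par un élément de \(X^*\), puisque \(\rho - w\rho\) est une somme de racines.

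Reste la conséquence sur \(c(\pi)\). Si \(\pi\) est non ramifiée, \(\cH_{\C}(G(F),K)\) agit sur \(\pi^K\) par un caractère \(\omega_\pi\), et \(a(\pi)^K = \pi^K \otimes_{\C,a} \C\) est muni du caractère \(a\circ\omega_\pi\) restreint à \(\cH_{\Q}\). Le caractère \(\omega_\pi\) correspond au point \(c(\pi)\) via \(\theta\circ\zeta\), donc au point \(z c(\pi)\) via l'isomorphisme rationnel \(m_z^*\circ\theta\circ\zeta\) (car \(m_z^*f (x) = f(zx)\)). Comme ce dernier isomorphisme est défini sur \(\Q\), il commute à l'action de \(a\) sur les \(\C\)-points de la variété \(\hat{G}\rtimes\sigma /\!/ \hat{G}\) définie sur \(\Q\). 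Le point associé à \(a(\pi)\) est donc \(a(zc(\pi))\), et \(c(a(\pi)) = z^{-1}a(zc(\pi))\). L'obstacle principal est la deuxième étape, c'est-à-dire le suivi précis de \(m_z^*\) à travers l'isomorphisme \(\cO(\hat{S})^{W_F} \simeq \cO(\hat{T}\rtimes\sigma)^{\cN_F}\) de Borel, ainsi que les conventions de signe (\(\rho_B\) contre \(-\rho_B\), \(\epsilon\) contre \(-\epsilon\)). Pour le contrôler, j'utiliserais la formule explicite \eqref{eq:expl_Satake}.
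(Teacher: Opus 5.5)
Votre démarche est celle de l'article. Vous écrivez la composée \(m_z^*\circ\theta_{G,K,\C}\circ\zeta_{G,K}\) comme \(\theta_{G,K,\C}\) précomposée avec \(\varphi\mapsto\sum_{\mu}q^{\langle\pm\epsilon-\rho_B,\mu\rangle}\int_{N(F)}\varphi(\mu(\varpi)n)\,dn\,[\mu]\), et cette application est définie sur \(\Q\) car \(\rho_B\pm\epsilon\in X^*(T)\). L'article écrit l'exposant \(-\langle\rho_B+\epsilon,\mu\rangle\). Comme \(2\epsilon\in X^*(G)\), les deux signes conviennent, et votre suivi \([\mu]\mapsto q^{\langle\epsilon,\mu\rangle}[\mu]\) est cohérent avec \(m_z^*f(x)=f(zx)\). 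Vous explicitez en plus la rationalité du terme constant non normalisé et la descente par fidèle platitude, que l'article laisse implicites.

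Il faut en revanche corriger une inversion dans le dernier paragraphe. Puisque \(m_z^*f(x)=f(zx)\), on a
\[
\omega_\pi(\varphi)=(\theta\circ\zeta)(\varphi)(c(\pi))=(m_z^*\circ\theta\circ\zeta)(\varphi)(z^{-1}c(\pi)).
\]
Le point associé à \(\omega_\pi\) par l'isomorphisme rationnel est donc \(z^{-1}c(\pi)\), et non \(zc(\pi)\). Votre argument donne ainsi \(z^{-1}c(a(\pi))=a(z^{-1}c(\pi))\), c'est-à-dire \(c(a(\pi))=z\,a(z^{-1}c(\pi))\), ce qui n'est pas encore la forme énoncée.

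Pour y parvenir, deux voies sont possibles.
\begin{enumerate}
\item Remarquez que \(a(z)z^{-1}=(a(q^{1/2})/q^{1/2})^{2\epsilon}=(\pm1)^{2\epsilon}\) est central et de carré \(1\). On en déduit \(z\,a(z)^{-1}=z^{-1}a(z)\), donc \(z\,a(z^{-1}c(\pi))=z^{-1}a(zc(\pi))\).
\item Plus directement, appliquez la rationalité à \(-\epsilon\), qui vérifie la même hypothèse (vous l'avez vous-même observé). Cela revient à utiliser \(m_{z^{-1}}^*\circ\theta_{G,K,\C}\circ\zeta_{G,K}\), qui donne immédiatement \(c(a(\pi))=z^{-1}a(zc(\pi))\).
\end{enumerate}
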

\begin{proof}
  Un petit calcul montre que la composée \(m_z^* \circ \theta_{G,K,\C} \circ \zeta_{G,K}\) est égale à la composée de
  \begin{align}
    \cH_{\C}(G(F),K) & \longrightarrow \C[X_*(S)]^{W_F} \label{eq:norm_Satake} \\
    \varphi & \longmapsto \sum_{\mu \in X_*(S)} (q^{1/2})^{-2 \langle \rho_B + \epsilon, \mu \rangle} \int_{N(F)} \varphi(\mu(\varpi) n) \, dn \, [\mu] \nonumber
  \end{align}
  avec \(\theta_{G,K,\C}\).
  Or l'application \eqref{eq:norm_Satake} est définie sur \(\Q\) car \(\rho_B + \epsilon \in X^*(T)\).
\end{proof}

\begin{example} \label{exa:fonct_Sat_tordu}
  \begin{enumerate}
  \item Pour \(G=\Sp_{2n,F}\), \(\GL_{2n+1,F}\), un groupe unitaire quasi-déployé (pour l'extension quadratique non ramifiée de \(F\)) en dimension impaire, ou un groupe spécial orthogonal (supposé non ramifié) en dimension paire, on a \(\rho_{B'} \in X^*(T')\) et on peut simplement prendre \(\epsilon=0\).
    On a donc tout simplement \(c(a(\pi)) = a(c(\pi))\).
  \item Pour \(G=\GL_{2n}\) on n'a pas \(\rho_{B'} \in X^*(T')\) mais on peut prendre pour \(2 \epsilon\) le morphisme déterminant.
    En identifiant comme d'habitude \(\hat{G}\) à \(\GL_{2n,\Q}\) la fonctorialité s'écrit \(c(a(\pi)) = q^{1/2} a(q^{-1/2} c(\pi))\).
    On retrouve \cite[\S 3.2]{Clozel_AA}.
  \end{enumerate}
\end{example}

En général il n'existe pas de caractère \(2 \epsilon\) comme dans la Proposition \ref{pro:Sat_tordu}, mais d'après \cite[\S 5.3]{BuzzardGee_conj} il existe une extension centrale (canonique)
\[ 1 \to \GL_{1,F} \xrightarrow{\nu} \widetilde{G} \to G \to 1 \]
et \(2 \epsilon \in X^*(\widetilde{G})^{\Gamma_F}\) (lui aussi canonique) satisfaisant la condition de la proposition (pour \(\tilde{G}\)).
On a en fait une isogénie centrale \(\widetilde{G} \to G \times \GL_{1,F}\) dont la composée avec \(\nu\) est \(x \mapsto (1,x^2)\) et qu'on peut écrire en termes de données radicielles de la façon suivante.
Soit \(T'\) un tore maximal de \(G_{\ol{F}}\) et \(\widetilde{T'}\) sa préimage dans \(\widetilde{G}\).
Alors \(X^*(\widetilde{T'}) / (X^*(T') \oplus \Z)\) est isomorphe à \(\Z/2\Z\), d'élément non trivial la classe de \((\rho_{B'},1/2) \in \frac{1}{2} (X^*(T') \oplus \Z)\) pour n'importe quel sous-groupe de Borel \(B'\) de \(G_{\ol{F}}\) contenant \(T'\).
On définit \(2 \epsilon = (0,1) \in X^*(T') \oplus \Z\).

On considère l'application \(\hat{\nu}: \widehat{\widetilde{G}} \to \GL_{1,\Q}\) duale de \(\nu\) et on note abusivement \(\hat{\nu}^{-1}(\{q\})\) le produit fibré de \(\hat{\nu}\) avec le morphisme \(q: \operatorname{Spec} \Q \to \GL_{1,\Q}\).
Appliquant la Proposition \ref{pro:Sat_tordu} on obtient un isomorphisme (canonique)
\[ \cH_{\Q}(G(F),K) \simeq \cO(\hat{\nu}^{-1}(\{q\}) \rtimes \sigma)^{\hat{G}}. \]
En particulier à une représentation irréductible non ramifiée (complexe) \(\pi\) de \(G(F)\) on associe un paramètre de Satake \(\tilde{c}(\pi)\) qui est une classe de conjugaison semi-simple dans \(\widehat{\widetilde{G}}(\C) \rtimes \sigma\) satisfaisant \(\hat{\nu}(\tilde{c}(\pi)) = q\), et l'application \(\tilde{c}\) est \(\Aut(\C)\)-équivariante.

\begin{example} \label{exa:fonct_Sat_tordu_SO_impair}
  Pour \(G=\SO_{2n+1,F}\) (déployé), dont le groupe dual \(\hat{G}\) s'identifie à \(\Sp_{2n,\Q}\), le groupe \(\widetilde{G}\) est isomorphe à \(\GSpin_{2n+1,F}\) et son groupe dual s'identifie à \(\GSp_{2n,\Q}\).
  On peut donc écrire la relation de fonctorialité pour le paramètre de Satake \(c(\pi)\) d'une représentation non ramifiée \(\pi\) de \(\SO_{2n+1}(F)\) dans \(\GSp_{2n}(\C)\) comme suit: \(c(a(\pi)) = q^{1/2} a(q^{-1/2} c(\pi))\).
\end{example}

\section{Paramètres d'Arthur-Langlands et paramètres de Satake}
\label{sec:para_Arthur_Satake}

Soit \(F\) un corps de nombres.
On considère \(G\) l'un des groupes quasi-déployés suivant:
\begin{enumerate}
\item \(\SO_{2n+1,F}\) déployé, où \(n \geq 0\),
\item \(\Sp_{2n,F}\) déployé, où \(n \geq 1\),
\item un groupe quasi-déployé \(\SO_{2n}^\alpha\) où \(\alpha \in F^\times/F^{\times,2}\) et \(n \geq 1\), qui est déployé sur \(F\) si \(\alpha=1\) et déployé sur \(F(\sqrt{\alpha})\) sinon.
\end{enumerate}
Pour fixer les idées dans le cas orthogonal pair non déployé on fixe le plongement suivant du quotient \(\hat{G} \rtimes \Gal(F(\sqrt{\alpha})/F)\) de \({}^L G\) dans \(\GL_{2n,\Qbar}\).
Soit \(S \in \mathrm{M}_{2n}(\Q)\) la matrice symétrique antidiagonale définie par \(S_{i,j} = \delta_{i,2n+1-j}\).
Soit \(\hat{G}\) le sous-groupe fermé de \(\GL_{2n,\Qbar}\) défini par (pour toute \(\Qbar\)-algèbre \(R\)):
\[  \hat{G}(R) = \SO(S,R) = \{ g \in \mathrm{M}_{2n}(R) \,|\, {}^t g S g = S \text{ et } \det g = 1 \}. \]
Enfin l'élément non trivial de \(\Gal(F(\sqrt{\alpha})/F)\) est identifié à
\[ \diag(I_{n-1}, \begin{pmatrix} 0 & 1 \\ 1 & 0\end{pmatrix}, I_{n-1}) \in \mathrm{O}(S,R) \smallsetminus SO(S,R). \]
On notera dans ce cas \(\Std_{{}^L G}: {}^L G \to \GL_{2n,\Qbar}\) cette représentation.
Dans les autres cas (déployés) on a également une représentation \(\Std_{{}^L G}\) de \({}^L G = \hat{G} \times \Gal(\ol{F}/F)\) triviale sur le second facteur et égale à la représentation standard de \(\hat{G} \simeq \Sp_{2n,\Qbar}\) (resp.\ \(\SO_{2n+1,\Qbar}\), resp.\ \(\SO_{2n,\Qbar}\)) si \(G=\SO_{2n+1}\) (resp.\ \(\Sp_{2n}\), resp.\ \(\SO_{2n}^1\)).

Rappelons qu'un paramètre d'Arthur-Langlands pour \(G\) est une somme formelle (non ordonnée) \(\bigoplus_i \pi_i \otimes \mathrm{sp}(d_i)\) où les \(\pi_i\) sont des représentations automorphes cuspidales pour \(\GL_{n_i,F}\) (supposées unitaires, c'est-à-dire à caractère central unitaire), les \(d_i \geq 1\) sont des entiers et le produit tensoriel est aussi formel\footnote{Plus rigoureusement cette notation désigne le multi-ensemble des paires \((\pi_i,d_i)\).}.
Cette somme formelle vérifie des conditions supplémentaires (autodualité et type orthogonal/symplectique des \(\pi_i\), multiplicité un des paires \((\pi_i,d_i)\), ainsi qu'une condition sur les caractères centraux) que nous ne détaillons pas ici.
À une telle somme formelle on peut associer une somme formelle de représentations automorphes cuspidales
\begin{equation} \label{eq:somme_formelle_ass_Arthur}
  \bigoplus_i \bigoplus_{j=0}^{d_i-1} \pi_i[(d_i-1)/2-j].
\end{equation}
Cette dernière détermine le paramètre d'Arthur-Langlands: étant donnée une représentation automorphe cuspidale unitaire \(\pi\) pour \(\GL_{m,F}\) telle qu'il existe un entier \(d\) pour lequel \(\pi[(d-1)/2]\) intervient dans \eqref{eq:somme_formelle_ass_Arthur}, on peut supposer \(d\) maximal pour cette propriété et alors \(d \geq 1\) et \((\pi,d)\) est l'un des \((\pi_i,d_i)\); une simple récurrence permet de conclure.
En outre il résulte de \cite{JacquetShalika_Euler2} qu'une somme formelle \(\sum_j \pi_j'\) de représentations automorphes cuspidales pour des groupes linéaires sur \(F\) (pas forcément unitaires) est déterminée par les paramètres de Satake associés
\[ \left( \bigoplus_j c(\pi'_{j,v}) \right)_{v \not\in S} \]
quel que soit l'ensemble fini \(S\) de places de \(F\) contenant toutes les places archimédiennes ainsi que les places finies où l'une des \(\pi'_j\) est ramifiée.

À une représentation automorphe discrète (c'est-à-dire se plongeant dans \(\cA^2(G)\)) \(\sigma\) pour \(G\) est associé un paramètre d'Arthur-Langlands \(\bigoplus_i \pi_i \otimes \mathrm{sp}(d_i)\), caractérisé par le fait qu'à toute place non-archimédienne \(v\) de \(F\) où \(\sigma_v\) est non ramifiée on a
\begin{equation} \label{eq:caract_para_Arthur_Satake}
  \Std_{{}^L G}(c(\sigma_v)) = \bigoplus_i \bigoplus_{j=0}^{d_i-1} q_v^{(d_i-1)/2-j} c(\pi_{i,v})
\end{equation}
où \(q_v\) est le cardinal du corps résiduel de \(F_v\).

\section{Action de \(\Aut(\C)\) sur les représentations cuspidales}

Soient \(F\) un corps de nombres et \(N \geq 1\) un entier.
D'après \cite[Théorème 3.13]{Clozel_AA} on a une action à gauche de \(\Aut(\C)\) (se factorisant par le groupe de Galois absolu de \(\Q\)) sur l'ensemble des classes d'isomorphismes de représentations automorphes cuspidales pour \(\GL_{N,F}\) qui sont algébriques régulières (\cite[Définitions 1.8 et 3.12]{Clozel_AA}), notée \((a,\pi) \mapsto a(\pi)\).
Cette action est caractérisée par la relation \(a(\pi)_{\ff} = a(\pi_{\ff})\), où \(\pi = \pi_\infty \otimes \pi_{\ff}\) et si \(\pi_{\ff}\) a pour espace sous-jacent \(W\), \(a(\pi_{\ff})\) est l'espace \(\C \otimes_{a^{-1}, \C} W\) sur lequel \(g \in \GL_N(\A_{F,\ff})\) agit par \(\mathrm{id}_{\C} \otimes \pi_{\ff}(g)\).

\begin{definition} \label{def:tilde_action}
  \begin{itemize}
  \item Soit \(\pi\) une représentation automorphe cuspidale \(\pi\) pour \(\GL_{N,F}\).
    Le \emph{poids} de \(\pi\) est l'unique réel \(w\) tel que le caractère central de \(\pi[w/2]\) soit unitaire (\(\pi[w/2]\) est alors unitaire).
  \item Une représentation automorphe cuspidale \(\pi\) pour \(\GL_{N,F}\) est \emph{demi-algébrique régulière} si \(\pi[1/2]\) est algébrique régulière.
  \item Pour \(a \in \Aut(\C)\) et \(\pi\) une représentation automorphe cuspidale pour \(\GL_{N,F}\) qui est algébrique régulière (resp.\ demi-algébrique régulière) on note \(\tilde{a}(\pi) = a(\pi)\) (resp.\ \(\tilde{a}(\pi) = a(\pi[1/2])[-1/2]\)).
  \end{itemize}
\end{definition}

Rappelons que d'après le lemme de pureté \cite[Lemme 4.9]{Clozel_AA} si \(\pi\) est une représentation automorphe cuspidale pour \(\GL_{N,F}\) telle qu'il existe \(s \in \C\) pour lequel \(\pi[-s]\) est algébrique, et si \(w \in \R\) désigne le poids de \(\pi\), alors \(\pi[w/2]\) est tempérée aux places archimédiennes.

\begin{proposition} \label{pro:AutC_poids_et_type_autodual}
  \begin{enumerate}
  \item L'action de \(\Aut(\C)\) sur l'ensemble des classes d'isomorphisme de représentations automorphes cuspidales algébriques (resp.\ demi-algébriques) régulières préserve le poids.
  \item Cette action commute avec le passage à la contragrédiente.
  \item Si \(\pi\) est une représentation automorphe cuspidale algébrique ou demi-algébrique régulière qui est de plus autoduale, alors pour tout \(a \in \Aut(\C)\) la représentation \(\tilde{a}(\pi)\), également autoduale par le point précédent, est du même type (symplectique ou orthogonal, voir \cite[Theorem 1.5.3]{ArthurBook}) que \(\pi\).
  \end{enumerate}
\end{proposition}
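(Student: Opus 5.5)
Je traiterai les trois points dans l'ordre, le troisième reposant sur les deux premiers.

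\emph{Point (1).} Si \(\pi\) est algébrique régulière de poids \(w\), le caractère central \(\omega_\pi\) est algébrique. Le caractère central de \(a(\pi)\) a pour partie finie \(a(\omega_{\pi,\ff})\). Pour un caractère de Hecke algébrique \(\omega\), on a \(|\omega| = |\cdot|^{-Nw/2}\). Vu la normalisation de \(\pi[w/2]\), l'exposant \(Nw/2\) est rationnel. Je pense en fait qu'il est entier ou demi-entier, puisque \(\omega\) est à valeurs dans un corps de nombres sur les idèles finis et que \(|\cdot|^{1/2}\) n'est pas algébrique. Comme \(a\) fixe \(|x|^{k}\) pour \(k\in\Z\), on obtient \(|a(\omega_\pi)| = |\omega_\pi|\) aux places finies, à un caractère d'ordre fini près. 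Le poids de \(a(\pi)\) est donc \(w\). Le cas demi-algébrique s'en déduit en appliquant ce qui précède à \(\pi[1/2]\) et en tordant par \([-1/2]\).

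Une variante plus robuste consiste à utiliser la caractérisation de \(a(\pi)_\infty\) dans \cite[\S 3]{Clozel_AA}. Le caractère infinitésimal de \(a(\pi)_v\) est obtenu en permutant les places archimédiennes et conserve la « somme des poids de Hodge », c'est-à-dire le poids.

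\emph{Point (2).} On a \(a(\pi^\vee)_{\ff} = a(\pi_{\ff})^\vee = (a(\pi)^\vee)_{\ff}\), car la contragrédiente d'une représentation admissible commute avec l'extension des scalaires \(\C\otimes_{a^{-1},\C}-\). Par multiplicité un forte, on conclut \(a(\pi^\vee)\simeq a(\pi)^\vee\). Pour \(\tilde a\), il faut en plus vérifier que \((\pi[1/2])^\vee = \pi^\vee[-1/2]\) reste compatible. Pour \(\pi\) autoduale de poids \(0\), la contragrédiente de \(\pi[1/2]\) est \(\pi[-1/2] = \pi[1/2]\otimes|\det|^{-1}\), et \(a\) commute à la torsion par \(|\det|^{\pm1}\), qui est algébrique. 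On obtient ainsi \(\tilde a(\pi^\vee)=\tilde a(\pi)^\vee\).

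\emph{Point (3) : le type.} C'est l'étape principale. Je caractériserai le type par l'exposant de Frobenius–Schur des paramètres archimédiens. Aux places réelles, le type est déterminé par \(\pi_\infty\) via \cite[Prop. 1.1]{ClozelKret} : pour une représentation régulière algébrique autoduale, la parité de \(N\) et la forme de \(\LLC(\pi_v)\) imposent le type. Aux places complexes, cet argument échoue, c'est l'obstacle principal. J'utiliserai alors une méthode globale, via la cohomologie de Nair–Rai du Théorème~\ref{thm:Nair_Rai}.

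Concrètement, si \(\pi\) est symplectique de dimension \(2n\), elle provient d'une représentation discrète \(\sigma\) de \(\SO_{2n+1}\), de paramètre d'Arthur \(\pi\), et d'après \cite{ArthurBook} on peut la choisir générique. Par régularité, \(\sigma_\infty\) est cohomologique. Par rationalité de \(W^\pm H^\bullet\), \(a(\sigma_{\ff})\) apparaît dans \(\cA^2(\SO_{2n+1})\). Son paramètre d'Arthur est déterminé par les paramètres de Satake, donc par la compatibilité de l'Exemple~\ref{exa:fonct_Sat_tordu_SO_impair} et par \eqref{eq:caract_para_Arthur_Satake}. Il vaut \(\tilde a(\pi)\), cuspidale, c'est-à-dire \(a(\pi[1/2])[-1/2]\) ou \(a(\pi)\) selon la normalisation. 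Un paramètre d'Arthur pour \(\SO_{2n+1}\) formé d'un seul bloc cuspidal est nécessairement symplectique, donc \(\tilde a(\pi)\) l'est aussi.

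On raisonne de même pour \(\pi\) orthogonale avec \(\Sp\) ou \(\SO_{2n}^\alpha\), le caractère \(\alpha\) étant le caractère central, qui est invariant. Le point délicat est de vérifier la compatibilité exacte des normalisations de Satake, entières ou demi-entières, entre \(\GL_N\) et \(G\). C'est précisément ce que fournissent la Proposition~\ref{pro:Sat_tordu} et ses exemples.
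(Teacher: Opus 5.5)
\textbf{Points (1) et (2).} Ils sont pour l'essentiel corrects.

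Pour (1), c'est votre variante par le type à l'infini qui est l'argument de l'article : la quantité \(\sum_{\iota}\sum_i (p_{\iota,i}+p_{\overline{\iota},i})=-[F:\Q]Nw\) est invariante par \(\iota\mapsto a^{-1}\circ\iota\). Votre premier argument par le caractère central est incomplet. Savoir que \(a\) fixe \(|x|^k\) pour \(k\in\Z\) ne dit rien du module de \(a\) appliqué à la partie unitaire de \(\omega_\pi\). De plus, \(Nw/2\) peut être demi-entier, par exemple pour un caractère de Hecke de type \(z\) sur un corps quadratique imaginaire.

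Pour (2), l'hypothèse ``autoduale de poids \(0\)'' est superflue, car \((\pi[1/2])^\vee=\pi^\vee[1/2]\otimes|\det|^{-1}\) pour toute \(\pi\).

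\textbf{Le trou dans le point (3).} Il apparaît lorsque \(F\) est totalement complexe. Vous affirmez ``par régularité, \(\sigma_\infty\) est cohomologique'', ce qui est faux en général. Être cohomologique impose que le caractère infinitésimal soit celui d'une représentation algébrique de \(G\), donc entier pour \(G\), et Salamanca-Riba ne s'applique que sous cette condition.
\begin{itemize}
\item Pour \(G=\SO_{2n+1}\), dont la demi-somme des racines positives est dans \((\frac12+\Z)^n\), il faut \(p_{\iota,i}\in\frac12+\Z\), c'est-à-dire \(\pi\) algébrique.
\item Pour \(G=\SO_{2n}^\alpha\), de demi-somme entière, il faut \(p_{\iota,i}\in\Z\), c'est-à-dire \(\pi\) demi-algébrique.
\end{itemize}
Votre argument ne couvre donc que les couples (algébrique, symplectique) et (demi-algébrique, orthogonal). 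En dimension impaire il n'y a rien à démontrer.

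Les couples (demi-algébrique, symplectique) et (algébrique, orthogonal de dimension paire) sont permis par l'énoncé et se rencontrent sur un corps totalement complexe. Par exemple, prenez un corps quadratique imaginaire \(K\) et une forme primitive sans CM par \(K\), de poids impair \(k\geq 3\), dont le caractère est le caractère quadratique de \(K\). Son changement de base à \(K\) est une représentation de \(\GL_2\) sur \(K\) demi-algébrique régulière (\(p=\pm(k-1)/2\in\Z\)) et de caractère central trivial, donc symplectique. Pour une telle \(\pi\), \(\sigma_\infty\) n'est cohomologique pour aucun système de coefficients, et le Théorème~\ref{thm:Nair_Rai} ne donne rien. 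La vraie difficulté n'est donc pas la normalisation de Satake, que la Proposition~\ref{pro:Sat_tordu} règle mécaniquement, mais cette contrainte d'intégralité.

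\textbf{L'idée manquante.} C'est un argument de symétrie par l'absurde. L'application \(\tilde a\) préserve la propriété d'être algébrique ou demi-algébrique, et \(\widetilde{a^{-1}}(\tilde a(\pi))\simeq\pi\). Supposez \(\pi\) et \(\tilde a(\pi)\) de types différents ; alors exactement l'une des deux est dans un couple compatible.
\begin{itemize}
\item Si c'est \(\pi\), votre argument donne directement la contradiction.
\item Si c'est \(\tilde a(\pi)\), appliquez votre argument au couple \((\tilde a(\pi),a^{-1})\). Il montre que \(\pi\) a le type de \(\tilde a(\pi)\), d'où la contradiction.
\end{itemize}
C'est exactement ainsi que conclut l'article, après n'avoir traité directement que les deux cas compatibles avec l'intégralité.
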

\begin{proof}
  Soit \(\pi\) une représentation automorphe cuspidale algébrique ou demi-algébrique pour \(\GL_{N,F}\), de poids \(w\).
  Soit \(v\) une place archimédienne de \(F\).
  \begin{itemize}
  \item Si \(v\) est réelle elle correspond à plongement \(\iota: F \hookrightarrow \C\) et grâce au lemme de pureté la restriction du paramètre de Langlands de \(\pi_v\) à \(\C^\times\) est conjuguée par \(\GL_N(\C)\) à
    \[ z \mapsto \diag(z^{p_{\iota,1}} \ol{z}^{-w-p_{\iota,1}}, \dots, z^{p_{\iota,N}} \ol{z}^{-w-p_{\iota,N}}) \]
    où \(p_{\iota,i} \in \Z\) pour tout \(i\) ou \(p_{\iota,i} \in \frac{1}{2} + \Z\) pour tout \(i\), avec la convention habituelle \(z^a \ol{z}^b = z^{a-b} (z \ol{z})^b\) si \(a,b \in \C\) satisfont \(a-b \in \Z\).
    On peut supposer \(p_{\iota,1} \geq \dots \geq p_{\iota,N}\).
    Le fait que ce morphisme se prolonge au groupe de Weil de \(F_v\) implique la relation \(p_{\iota,i}+p_{\iota,N+1-i}=-w\), et donc \(\sum_{i=1}^N p_{\iota,i} = -Nw/2\).
  \item Si \(v\) est complexe elle correspond à deux plongements \(\iota,\ol{\iota}: F \hookrightarrow \C\) et le paramètre de Langlands de \(\pi_v\) est conjugué à
    \[ z \in F_v^\times \mapsto \diag(\iota(z)^{p_{\iota,1}} \ol{\iota}(z)^{p_{\ol{\iota},1}}, \dots, \iota(z)^{p_{\iota,N}} \ol{\iota}(z)^{p_{\ol{\iota},N}}). \]
    Le lemme de pureté implique \(p_{\iota,i}+p_{\ol{\iota},i}=-w\) pour tout \(i\), en particulier on a
    \[ \sum_{i=1}^N p_{\iota,i} + p_{\ol{\iota},i} = -Nw. \]
  \end{itemize}
  Pour \(\iota: F \hookrightarrow \C\) on note \(p_\iota\) le multi-ensemble (i.e.\ \(\mathfrak{S}_N\)-orbite dans \(\C^N\)) associé à la famille \((p_{\iota,i})_{1 \leq i \leq N}\).
  À un décalage par \((N-1)/2\) près la famille \((p_\iota)_\iota\) est le type à l'infini \cite[\S 3.3]{Clozel_AA} de \(\pi\).
  Le type à l'infini détermine le poids: on a
  \begin{equation} \label{eq:type_infty_poids}
    \sum_{\iota: F \hookrightarrow \C} \sum_{i=1}^N p_{\iota,i} + p_{\ol{\iota},i} = -[F:\Q] N w.
  \end{equation}
  Supposons maintenant \(\pi\) régulière.
  On sait que le type à l'infini de \(\tilde{a}(\pi)\) est \((p_{a^{-1} \circ \iota})_\iota\), il résulte de \eqref{eq:type_infty_poids} que le poids de \(\tilde{a}(\pi)\) est \(w\).

  Montrons maintenant que \(\tilde{a}(\pi)^\vee \simeq \tilde{a}(\pi^\vee)\).
  Si \(\pi\) est algébrique régulière on a pour toute place non-archimédienne \(v\) la relation \(a(\pi)_v \simeq a(\pi_v)\), d'où il découle que la représentation contragrédiente de \(a(\pi)_v\) est \(a(\pi_v^\vee)\), et donc \(a(\pi)^\vee \simeq a(\pi^\vee)\) par multiplicité un forte.
  Si \(\pi\) est demi-algébrique régulière on a pour tout place non-archimédienne \(v\) la relation \(\tilde{a}(\pi)_v \simeq a(\pi[1/2])[-1/2]_v\) d'où il découle
  \[ \tilde{a}(\pi)_v^\vee \simeq a(\pi^\vee[-1/2])[1/2]_v \simeq a(\pi^\vee[1/2])[-1/2]_v \simeq \tilde{a}(\pi^\vee) \]
  puisque la torsion par une puissance entière de \(|\cdot|\) commute à l'action de \(\Aut(\C)\).
  On en déduit encore \(\tilde{a}(\pi)^\vee \simeq \tilde{a}(\pi^\vee)\).

  Supposons enfin que \(\pi\) est en outre autoduale.
  Si \(N\) est impair on sait que \(\pi\) est de type orthogonal, même sans hypothèse d'algébricité ou de régularité.
  On suppose donc \(N\) pair.
  Si \(F\) n'est pas totalement complexe, i.e.\ s'il existe une place réelle \(v\) de \(F\), alors l'alternative symplectique/orthogonal se lit sur le caractère infinitésimal de \(\pi_v\) (par le même argument que la preuve de \cite[Proposition 1.1]{ClozelKret}).
  En particulier \(\pi\) est de type symplectique si elle est algébrique régulière, de type orthogonal si elle est demi-algébrique régulière.
  Il nous reste donc à traiter le cas où \(N\) est pair et \(F\) est totalement complexe.

  Supposons que \(\pi\) est algébrique régulière et de type symplectique.
  Soit \(G\) le groupe déployé \(\SO_{N+1,F}\).
  Soit \(\sigma\) une représentation automorphe cuspidale pour \(G\) de paramètre d'Arthur-Langlands \(\pi\) (par exemple celle qui est générique, voir \cite[Proposition 8.3.2 (b)]{ArthurBook}).
  Pour une place archimédienne (complexe) de \(F\) on note \(\gfr_v\) la complexifiée de l'algèbre de Lie du groupe de Lie (réel) \(G(F_v)\) et \(K_v\) un sous-groupe compact maximal de \(G(F_v)\).
  On note \(\gfr = \prod_{v|\infty} \gfr_v\), \(K_\infty = \prod_{v | \infty} K_v\) et \(\cX = G(\R \otimes_\Q F) / K_\infty\).
  Pour une telle place \(v\) le caractère infinitésimal de \(\sigma_v\) est algébrique régulier et \(\sigma_v\) est unitaire, donc elle est cohomologique (d'après \cite{SalamancaRiba} et \cite{VoganZuckerman}, en notant que \(G(F_v)\) est semi-simple et connexe): il existe une (unique) représentation algébrique irréductible \(V_v\) de \(G(\C \otimes_{\R} F_v)\) telle que \(H^\bullet(\gfr_v,K_v; \sigma_v \otimes V) \neq 0\).
  Soit \((V, E \xrightarrow{\iota_0} \C))\) la paire associée par la Proposition \ref{pro:irr_alg_rep_qs} à la représentation \(\otimes_{v|\infty} V_v\) de \((\Res_{F/\Q} G)_\C = \prod_{v|\infty} G(\C \otimes_\R F_v)\), en particulier \(\otimes_{v|\infty} V_v \simeq \C \otimes_{\iota_0,E} V\).
  Il existe donc un entier \(q \geq 0\) tel que la représentation \(\sigma_{\ff}\) intervient avec multiplicité non nulle dans
  \[ H^q(\gfr,K_\infty; \cA^2(G) \otimes_{\C} V_{\C}) \simeq \bigoplus_{\iota: E \to \C} H^q(\gfr,K_\infty; \cA^2(G) \otimes_{\C} (\C \myotimes{\iota,E} V)), \]
  plus précisément elle intervient dans le facteur à droite correspondant à \(\iota_0\).
  D'après le Théorème \ref{thm:Nair_Rai} la représentation \(\sigma_{\ff}\) intervient donc dans
  \[ \C \otimes_{\Q} W^{\pm} H^q(G, \cX; V) \simeq \bigoplus_{\iota: E \hookrightarrow \C} \C \myotimes{\iota,E} W^{\pm} H^q(G, \cX; V), \]
  de nouveau dans le terme correspondant à \(\iota_0\) car l'isomorphisme \eqref{eq:Nair_Rai}, étant fonctoriel en \(V\), est \(E\)-équivariant.
  Remarquons que le Théorème \ref{thm:Nair_Rai} implique aussi que cette représentation de \(G(\A_{F,\ff})\) est semi-simple.
  On en déduit que pour \(a \in \Aut(\C)\) la composante \(a(\sigma_{\ff})\)-isotypique de
  \[ \C \myotimes{a^{-1} \circ \iota_0,E} W^{\pm} H^q(G, \cX; V) \]
  est non nulle.
  Utilisant le Théorème \ref{thm:Nair_Rai} dans l'autre sens on en déduit l'existence d'un \((\gfr,K)\)-module admissible irréductible \(\sigma'_\infty\) tel que \(\sigma' := \sigma'_\infty \otimes a(\sigma_{\ff})\) se plonge dans \(\cA^2(G)\) et
  \[ H^q(\gfr,K_\infty; \sigma'_\infty \otimes_\C (\C \otimes_{a^{-1} \circ \iota_0, E} V)) \neq 0. \]
  D'après l'Exemple \ref{exa:fonct_Sat_tordu_SO_impair} on a en toute place \(v\) où \(\sigma\) est non ramifiée \(c(\sigma'_v) = q_v^{1/2} a(q_v^{-1/2} c(\sigma_v))\) (égalité de classes de conjugaisons semi-simples dans \(\GSp_N(\C)\), où \(q_v\) est le cardinal du corps résiduel de \(F_v\)).
  En utilisant l'Exemple \ref{exa:fonct_Sat_tordu} (pour \(\GL_N\)) on déduit
  \[ \Std_{{}^L G}(c(\sigma'_v)) = q_v^{1/2} \Std_{{}^L G}(q_v^{-1/2} a(c(\sigma_v))) = q_v^{1/2} a(q_v^{-1/2} c(\pi_v)) = c(a(\pi_v)). \]
  On en déduit (voir \eqref{eq:caract_para_Arthur_Satake}) que le paramètre d'Arthur-Langlands de \(\sigma'\) est \(a(\pi)\), ce qui montre que \(a(\pi)\) est également de type symplectique.
  
  On peut raisonner de façon similaire si \(\pi\) est demi-algébrique régulière et de type orthogonal, en utilisant cette fois le groupe \(G = \SO_N^\alpha\) où \(\Gamma_{F(\sqrt{\alpha})}\) est le noyau du caractère \(\Gamma_F \to \{\pm 1\}\) correspondant au caractère central de \(\pi\).
  En utilisant cette fois l'Exemple \ref{exa:fonct_Sat_tordu} pour \(G\) on conclut encore que \(\tilde{a}(\pi) = a(\pi[1/2])[-1/2]\) est de type orthogonal.

  Ces deux cas particuliers suffisent pour montrer que \(\tilde{a}(\pi)\) est toujours du même type que \(\pi\).
  En effet on peut raisonner par l'absurde et supposer que \(\tilde{a}(\pi)\) et \(\pi\) ne sont pas du même type.
  Comme on sait que \(\tilde{a}(\pi)\) est algébrique (resp.\ demi-algébrique) si \(\pi\) est algébrique (resp.\ demi-algébrique), quitte à remplacer \(\pi\) par \(\tilde{a}(\pi)\) et \(a\) par \(a^{-1}\) on se ramène à l'un des deux cas précédents.
\end{proof}

\begin{remark}
  On a rappelé dans la preuve précédente que pour une représentation automorphe cuspidale algébrique régulière \(\pi\) pour \(\GL_{N,F}\) de poids \(w\), si \((p_\iota)_{\iota: F \hookrightarrow \C}\) désigne son type à l'infini (i.e.\ le caractère infinitésimal de \(\otimes_{v|\infty} \pi_v\)) alors
  \begin{itemize}
  \item pour tout plongement \(\iota\) on a l'égalité (de multi-ensembles) \(p_{\ol{\iota}} = -w-p_\iota\),
  \item pour \(a \in \Aut(\C)\) le type à l'infini de \(a(\pi)\) est \((p_{a^{-1} \iota})_\iota\).
  \end{itemize}
  Lorsque \(F\) n'est ni totalement réel ni CM la deuxième condition renforce la première, puisque la première s'applique aussi bien à \(a(\pi)\).
  Ainsi notant \(c \in \Aut(\C)\) la conjugaison complexe on a \(p_{aca^{-1} \iota} = -w-p_{\iota}\) pour tous \(\iota\) et \(a\).
  Soit \(F_\mathrm{CM}\) la plus grande sous-extension CM\footnote{corps de nombres totalement réel ou extension quadratique totalement imaginaire d'un corps de nombre totalement réel} de \(F\).
  On déduit facilement des relations ci-dessus que l'application \(\iota \mapsto p_\iota\) se factorise par \(\iota \mapsto \iota|_{F_\mathrm{CM}}\).
  Notant pour \(\iota: F_\mathrm{CM} \hookrightarrow \C\): \(p_\iota = p_{\tilde{\iota}}\) où \(\tilde{\iota}: F \hookrightarrow \C\) est n'importe quel prolongement de \(\iota\), on a toujours \(p_{\ol{\iota}} = -w-p_\iota\).
  Si \(F_\mathrm{CM}\) est totalement réelle (ce qui peut arriver même si \(F\) n'est pas totalement réelle) cela donne \(p_\iota = -w - p_\iota\) pour tout \(\iota\).
\end{remark}

\begin{definition} \label{def:SO_reg}
  Soit \(n \geq 1\) un entier et \(\pi\) une représentation automorphe cuspidale pour \(\GL_{2n,F}\), qu'on suppose demi-algébrique.
  On dit que \(\pi\) est \emph{\(\SO\)-régulière} si en toute place réelle (resp.\ complexe) \(v\) de \(F\) le caractère infinitésimal \(p_\iota\) (resp.\ \((p_\iota, p_{\ol{\iota}})\)) de \(\pi_v\) est de la forme
  \[ p_{\iota,1} > \dots > p_{\iota,n} \geq -p_{\iota,n} > \dots > -p_{\iota,1} \]
  resp.
  \[ (p_{\iota,1} > \dots > p_{\iota,n} \geq -p_{\iota,n} > \dots > -p_{\iota,1}),\ (p_{\ol{\iota},1} > \dots > p_{\ol{\iota},n} \geq -p_{\ol{\iota},n} > \dots > -p_{\ol{\iota},1}). \]
\end{definition}

\begin{proposition} \label{pro:AutC_SO_reg}
  Soit \(n \geq 1\) un entier et \(\pi\) une représentation automorphe cuspidale pour \(\GL_{2n,F}\) qui est autoduale, demi-algébrique et \(\SO\)-régulière.
  \begin{enumerate}
  \item Si \(F\) a une place réelle et \(n>1\) alors \(\pi\) est de type orthogonal.
  \item Si \(\pi\) est de type orthogonal alors pour tout \(a \in \Aut(\C)\) il existe une unique représentation automorphe cuspidale \(\tilde{a}(\pi)\) pour \(\GL_{2n,F}\) telle que pour toute place non-archimédienne \(v\) de \(F\) où \(\pi_v\) est non ramifiée on a
    \[ c(\tilde{a}(\pi_v)) = a(c(\pi_v)). \]
    De plus le caractère infinitésimal de \(\tilde{a}(\pi)_\infty\) est \(a((p_\iota)_\iota)\) où \((p_\iota)_\iota\) est le caractère infinitésimal de \(\pi_\infty\) (en particulier \(\tilde{a}(\pi)\) est également demi-algébrique et \(\SO\)-régulière) et \(\tilde{a}(\pi)\) est autoduale de type orthogonal.
  \end{enumerate}
\end{proposition}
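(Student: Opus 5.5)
Pour (1), je suivrai l'argument de \cite[Proposition 1.1]{ClozelKret}, déjà invoqué dans la preuve de la Proposition~\ref{pro:AutC_poids_et_type_autodual}. Soit \(v\) une place réelle. Le paramètre de Langlands de \(\pi_v\) est une somme de représentations irréductibles de dimension \(\leq 2\) de \(W_\R\). Par demi-algébricité, les exposants \(p_{\iota,i}\) sont dans \(\frac12 + \Z\), donc aucun n'est nul. La condition \(\SO\)-régulière force alors \(p_{\iota,n} > 0\), et le caractère infinitésimal est régulier. Le paramètre est donc somme de \(n\) représentations induites \(\Ind_{\C^\times}^{W_\R}(z/\li z)^{p_{\iota,i}}\), de dimension \(2\), deux à deux non isomorphes. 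Chacune de ces représentations, d'exposant demi-entier, est orthogonale (puisque \(p\) demi-entier implique que \(j^2 = -1\) agit par \((-1)^{2p} = -1\), ce qui donne le signe orthogonal pour l'induite). Par multiplicité un, la forme bilinéaire invariante sur la somme est orthogonale. Elle est compatible avec celle fournie par le paramètre global d'Arthur \cite[Theorem 1.5.3]{ArthurBook}, ce qui donne le type orthogonal. L'hypothèse \(n>1\) sert à exclure le cas \(\GL_2\) où le caractère central pourrait trancher autrement. Je reprendrai la discussion de \loccit{} pour fixer ce point.

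Pour (2), je reprendrai le second cas de la preuve de la Proposition~\ref{pro:AutC_poids_et_type_autodual}, sans supposer la régularité au sens de \cite{Clozel_AA}. Soit \(G = \SO_{2n}^\alpha\), où \(\alpha\) correspond au caractère central (quadratique) de \(\pi\). Par \cite[Proposition 8.3.2]{ArthurBook}, on choisit une représentation \(\sigma\), cuspidale générique, de paramètre \(\pi\) (paramètre simple). Aux places archimédiennes, le caractère infinitésimal de \(\sigma_v\) est \(p_\iota\) (resp. \((p_\iota,p_{\li\iota})\)), vu dans \(\Lie \cT\) pour \(\hat G = \SO_{2n}\). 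La condition \(\SO\)-régulière, avec \(p \in \frac12+\Z\), dit précisément que c'est \(-(\lambda+\rho)\) pour un poids dominant \(\lambda\) de \(G\). Le cas \(p_{\iota,n} = -p_{\iota,n}\) est exclu par demi-intégralité, et l'égalité \(p_{\iota,n} \geq -p_{\iota,n}\) correspond bien à la régularité pour le système de racines \(D_n\), où \(\rho\) est entier. Comme \(\sigma_v\) est unitaire, elle est cohomologique par \cite{SalamancaRiba,VoganZuckerman}, et par Enright aux places complexes. Le point délicat est que \(\SO_{2n}(\R)\) n'est pas connexe : il faudra prendre \(K\) contenant seulement la composante neutre, ce que le cadre de la Section 3 permet.

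On applique alors le Théorème~\ref{thm:Nair_Rai} et la Proposition~\ref{pro:irr_alg_rep_qs} comme dans la preuve précitée. On obtient une représentation discrète \(\sigma'\) de \(G\), de partie finie \(a(\sigma_{\ff})\), cohomologique pour le système conjugué. D'après l'Exemple~\ref{exa:fonct_Sat_tordu}(1), on a \(\rho_{B'} \in X^*(T')\) pour \(\SO\) pair, donc \(c(\sigma'_v) = a(c(\sigma_v))\). Le paramètre d'Arthur-Langlands \(\psi'\) de \(\sigma'\) vérifie donc \(\Std(c(\sigma'_v)) = a(c(\pi_v))\) par \eqref{eq:caract_para_Arthur_Satake}. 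Il reste à voir que \(\psi'\) est une unique cuspidale de \(\GL_{2n}\) avec \(d=1\). C'est là l'obstacle principal. Le caractère infinitésimal de \(\sigma'_\infty\) est \(a\) appliqué à celui de \(\sigma_\infty\), c'est-à-dire les \(p_{a^{-1}\iota}\), qui sont réguliers et à entrées dans \(\frac12+\Z\). Un facteur \(\pi_i \otimes \sp(d_i)\) avec \(d_i \geq 2\) produirait des exposants espacés de \(1\). Je comparerai avec la somme formelle \eqref{eq:somme_formelle_ass_Arthur} et les valeurs propres \(a(c(\pi_v))\), qui sont de poids \(0\) au sens où les \(a(q_v^{0})\) sont invariants. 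J'invoquerai ensuite le lemme de pureté pour exclure toute torsion \([\pm (d-1)/2]\) non nulle : les poids des constituants doivent être tous nuls, puisque la somme des \(|c|\) est symétrique. Je m'attends à ce que ce soit la partie la plus fragile : il faudra peut-être passer par l'alternative tempérée aux places finies via les résultats de Jacquet--Shalika, en montrant qu'une torsion non triviale donnerait un pôle de \(L^S(s, \psi'\times a(\pi)^\vee)\) incompatible. Une fois \(\psi' = \pi'\) cuspidale, on pose \(\tilde a(\pi) = \pi'\). L'unicité découle de la multiplicité un forte, et l'autodualité orthogonale vient du fait que \(\pi'\) est paramètre de \(\SO_{2n}^\alpha\). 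Enfin, le caractère infinitésimal de \(\pi'_\infty\) se lit sur celui de \(\sigma'_\infty\) par la Proposition~\ref{prop:Clozel1_1} et son corollaire.
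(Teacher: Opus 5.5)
\textbf{Le point central de (2) manque.} Vous écrivez ``une fois \(\psi' = \pi'\) cuspidale'', mais rien n'exclut que le paramètre de \(\sigma'\) soit une somme \(\bigoplus_i \pi_i\) de plusieurs représentations cuspidales orthogonales, même avec tous les \(d_i = 1\). Vous ne pouvez pas non plus vous rabattre sur \cite{Clozel_AA}, puisqu'une \(\pi\) \(\SO\)-régulière n'est pas régulière dès que \(p_{\iota,n} = 0\) : c'est toute la raison d'être de l'énoncé.

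Le papier raisonne par récurrence sur \(n\).
\begin{enumerate}
\item La cohomologie de \(\sigma'_\infty\) donne \(\bigsqcup_i p_{i,a \circ \iota} = p_\iota\). Chaque facteur de dimension impaire est donc algébrique régulier, et chaque facteur de dimension paire est demi-algébrique \(\SO\)-régulier.
\item S'il y a au moins deux facteurs, l'hypothèse de récurrence et le cas impair fournissent des \(\widetilde{a^{-1}}(\pi_i)\) avec \(c(\pi_v) = \bigoplus_i c(\widetilde{a^{-1}}(\pi_i)_v)\) pour presque tout \(v\). Le cas \(n=1\) se ramène, par induction automorphe depuis l'extension quadratique définie par \(\omega_\pi\), à l'action de \(\Aut(\C)\) sur les caractères de Hecke algébriques.
\item Ceci contredit Jacquet--Shalika, puisque \(\pi\) est cuspidale.
\end{enumerate}

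\textbf{Votre exclusion des \(d_i \geq 2\) ne tient pas non plus.}
\begin{itemize}
\item Un écart de \(1\) entre exposants archimédiens n'est pas une obstruction : c'est justement ainsi que les paramètres \(\pi \otimes \sp(2) \oplus \rho\) de l'article sont cohomologiques.
\item Le lemme de pureté ne contrôle pas les modules des valeurs propres de \(a(c(\pi_v))\) : \(a\) ne préserve pas les valeurs absolues, et Ramanujan n'est pas connu.
\item L'argument via \(L^S(s,\psi' \times a(\pi)^\vee)\) est circulaire, \(a(\pi)\) étant précisément l'objet à construire.
\end{itemize}
L'idée qui manque est la suivante. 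D'après Jacquet--Shalika, les valeurs propres \(x\) de \(c(\pi_v)\) vérifient \(q_v^{-1/2} < |x| < q_v^{1/2}\), donc aucun quotient de deux d'entre elles ne vaut \(q_v\). Cette condition est algébrique et préservée par \(a\), car \(a(q_v) = q_v\). Or un facteur \(\pi_i \otimes \sp(d_i)\) avec \(d_i \geq 2\) fournirait deux valeurs propres de \(a(c(\pi_v))\) de quotient \(q_v\).

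\textbf{Vous vous trompez aussi sur l'intégralité}, ce qui fausse (1) et l'étape de cohomologicité de (2). Pour \(\GL_{2n}\), algébrique signifie exposants dans \(\tfrac{2n-1}{2} + \Z = \tfrac12 + \Z\), donc demi-algébrique signifie exposants \emph{entiers}. Le cas \(p_{\iota,n} = 0\) est donc permis (c'est le sens du \(\geq\) dans la Définition~\ref{def:SO_reg}), et c'est lui qui impose \(n > 1\) dans (1). Pour \(n = 1\) et \(p_{\iota,1} = 0\), le paramètre de \(\pi_v\) est somme de deux caractères de \(W_\R\) et ne détermine pas le type.

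L'argument correct pour (1) : si \(n > 1\), alors \(p_{\iota,1} \geq 1\) est entier. L'induite \(\Ind_{\C^\times}^{W_\R} (z/\li z)^{p_{\iota,1}}\) est alors irréductible de déterminant \(\mathrm{sgn}\), donc orthogonale mais pas symplectique, et de multiplicité un. On conclut par \cite[Theorem 1.4.2]{ArthurBook}.

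Votre règle de signe est d'ailleurs inversée : pour \(p \in \tfrac12 + \Z\), l'induite est de déterminant trivial, donc symplectique (c'est pourquoi ``algébrique régulière'' entraîne ``symplectique''). Vos deux erreurs se compensent dans la conclusion, mais le raisonnement n'est pas valide.

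De même, dans (2), c'est parce que les \(p_\iota\) sont entiers (le \(\rho\) de \(D_n\) étant entier) que le caractère infinitésimal de \(\sigma_v\) s'écrit \(-(\lambda + \rho)\) pour un poids dominant \(\lambda\) de \(\SO_{2n}\). Avec des demi-entiers on obtiendrait des poids de \(\mathrm{Spin}_{2n}\), et \(\sigma_v\) ne serait cohomologique pour aucune représentation algébrique de \(G\).

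Le reste de votre squelette pour (2) est bien celui du papier : choix de \(\sigma\) générique sur \(\SO_{2n}^\alpha\), usage de \(K_\infty^0\), Nair--Rai, et \(c(\sigma'_v) = a(c(\sigma_v))\) par l'Exemple~\ref{exa:fonct_Sat_tordu}.
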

\begin{proof}
  Le premier point se prouve exactement comme \cite[Proposition 1.1]{ClozelKret}: si \(v\) est une place réelle de \(F\) nos hypothèses impliquent que le paramètre de Langlands de \(\pi_v\) admet un facteur irréductible de dimension \(2\) avec multiplicité un qui est autodual de type orthogonal (mais pas symplectique), et \cite[Theorem 1.4.2]{ArthurBook} implique que \(\pi\) est de type orthogonal.

  Démontrons le deuxième point par récurrence sur \(n \geq 1\).
  Dans le cas \(n=1\) on sait que le caractère central de \(\pi\) est d'ordre deux, correspondant à une extension quadratique \(E\) de \(F\) et \(\pi\) est l'induite automorphe d'un caractère \(\chi\) de \(\A_E^\times/E^\times\) dont la restriction à \(\A_F^\times/F^\times\) est triviale.
  On sait aussi que toute induite automorphe de cette forme est autoduale de type orthogonal, et qu'elle est demi-algébrique si et seulement si \(\chi\) est algébrique.
  On se ramène ainsi à l'action de \(\Aut(\C)\) sur les caractères de Hecke algébriques.
  Supposons maintenant le deuxième point vérifé pour \(n'<n\) et déduisons-le pour une représentation \(\pi\) pour \(\GL_{2n,F}\).
  On procède comme dans la démonstration de la Proposition \ref{pro:AutC_poids_et_type_autodual}.
  On choisit une représentation automorphe discrète \(\sigma\) pour \(G=\SO_{2n,F}^\alpha\) de paramètre d'Arthur-Langlands \(\pi\).
  En toute place non-archimédienne \(v\) où \(\pi_v\) est non ramifiée il existe un sous-groupe compact hyperspécial \(K_v\) de \(G(F_v)\) telle que \(\sigma_v\) soit sphérique pour \(K_v\), et on peut les choisir de sorte que \(\prod_v K_v\) soit un sous-groupe ouvert du facteur direct \(\prod'_v G(F_v)\) de \(G(\A_F)\).
  Observons qu'en une place archimédienne \(v\) de le caractère infinitésimal de \(\sigma_v\) est algébrique régulier (la Définition \ref{def:SO_reg} est faite pour cela), ainsi on obtient encore que \(\sigma_{\ff}\) intervient dans
  \[ \C \otimes_{\iota_0,E} W^{\pm} H^q(G,\cX; V) \]
  pour \((V, \iota_0: E \hookrightarrow \C)\) et \(q\) comme dans la preuve de la Proposition \ref{pro:AutC_poids_et_type_autodual}.
  Ici \(\cX = G(\R \otimes_\Q F) / K_\infty^0\) où \(K_\infty = \prod_{v|\infty} K_v\) et \(K_v\) un sous-groupe compact maximal de \(G(F_v)\) (comme \(F\) n'est pas forcément totalement complexe \(K_v\) n'est pas forcément connexe).
  On obtient encore une représentation automorphe discrète \(\sigma'\) pour \(G\) telle que \(\sigma'_{\ff} \simeq a(\sigma_{\ff})\) et
  \begin{equation} \label{eq:SOreg_sigma'_coh}
    H^q(\gfr, K_\infty^0; \sigma'_\infty \otimes_\C (\C \otimes_{a^{-1} \circ \iota_0, E} V)) \neq 0.
  \end{equation}
  D'après l'Exemple \ref{exa:fonct_Sat_tordu} (pour \(G\)) en toute place non ramifiée
  \[ \Std_{{}^L G}(c(\sigma'_v)) = \Std_{{}^L G}(a(c(\sigma_v))) = a(c(\pi_v)). \]
  Le paramètre d'Arthur-Langlands de \(\sigma'\) est donc une somme formelle \(\bigoplus_i \pi_i \otimes \mathrm{sp}(d_i)\) satisfaisant en toute place non-archimédienne \(v\) où \(\pi_v\) est non ramifiée
  \[ a(c(\pi_v)) = \bigoplus_i \bigoplus_{j=0}^{d_i-1} q_v^{(d_i-1)/2-j} c(\pi_{i,v}). \]
  On sait que tout valeur propre \(x\) de \(c(\pi_v)\) vérifie \(q_v^{-1/2} < |x| < q_v^{1/2}\), cela implique que pour toute paire \((x_1,x_2)\) de valeurs propres on a \(x_1/x_2 \neq q\).
  Cette condition est préservée par l'action de \(\Aut(\C)\), on en déduit qu'on a \(d_i=1\) pour tout \(i\), i.e.\ le paramètre d'Arthur-Langlands est somme formelle de représentations cuspidales autoduales de type orthogonal.
  On déduit de \eqref{eq:SOreg_sigma'_coh} que les caractères infinitésimaux \((p_{i,\iota})_\iota\) des facteurs \(\pi_i\) satisfont \(\bigsqcup_i p_{i,a \circ \iota} = p_\iota\), en particulier:
  \begin{itemize}
  \item si un facteur \(\pi_i\) est de dimension impaire il est algébrique régulier,
  \item sinon il est demi-algébrique \(\SO\)-régulier.
  \end{itemize}
  Si on suppose qu'il y a plus d'un facteur l'hypothèse de récurrence (et le cas déjà connu de la dimension impaire) nous fournit l'existence de représentations automorphes cuspidales \(\widetilde{a^{-1}}(\pi_i)\) telles que pour presque toute place non-archimédienne \(v\) de \(F\)
  \[ c(\pi_v) = \bigoplus_i c \left( \widetilde{a^{-1}}(\pi_i)_v \right) \]
  ce qui contredit le théorème de Jacquet-Shalika.
  Ainsi il n'y a qu'un facteur, qui est la représentation \(\tilde{a}(\pi)\) souhaitée.
\end{proof}

\begin{remark}
  Comme la notation le suggère dans le deuxième point de la Proposition \ref{pro:AutC_SO_reg} si \(\pi\) est régulière la représentation \(\tilde{a}(\pi)\) coïncide avec celle introduite dans la Définition \ref{def:tilde_action} (construite grâce à la cohomologie de \(\GL_{2n,F}\)).
\end{remark}

\section{Le théorème de Rohlfs et Speh}\label{sec:RohlfsSpeh}

On généralise \cite[Proposition II.1, Theorem II.2]{RohlfsSpeh_PsEis3} aux systèmes de coefficients quelconques dans le Théorème \ref{thm:RohlfsSpeh}.
Nous n'avons pas entièrement compris la preuve de Rohlfs et Speh (en particulier la phrase ``Then \(\omega_M^* \wedge \psi(\omega_N)\) is the lowest weight vector of a representation of \(K\)'' du paragraphe 2.3 loc.\ cit.) donc nous donnons une preuve différente, inspirée du calcul de la \((\gfr,K)\)-cohomologie par Zuckerman (voir le paragraphe suivant \cite[Theorem 3.3]{VoganZuckerman}) et du premier terme de la résolution de Johnson \cite{Johnson}.

\begin{lemma} \label{lem:pi0_Q_thetast}
  Soit \(G\) un groupe réductif connexe sur \(\R\) et \(Q\) un sous-groupe parabolique de \(G_\C\) tel que \(\sigma(Q)\) est opposé à \(Q\), où \(\sigma\) désigne la conjugaison complexe.
  Soit \(L\) le sous-groupe (``Lévi tordu'') de \(G\) tel que \(L_\C = Q \cap \sigma(Q)\).
  Alors l'application naturelle \(\pi_0(L(\R)) \to \pi_0(G(\R))\) est injective.
\end{lemma}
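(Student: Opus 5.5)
We will show that \(L(\R)\) is the centralizer in \(G(\R)\) of a compact torus, and then reduce to a maximal compact subgroup, where the statement becomes the classical connectedness of centralizers of tori in connected compact Lie groups.

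\emph{Step 1: an elliptic element defining \(Q\).} Let \(Z\) be the centre of \(L_\C\). Choose \(\lambda \in X_*(Z)\otimes\R\) such that \(Q = P_\lambda\), i.e.\ \(\lambda\) is strictly positive on the roots in \(\Lie N_Q\) and zero on those of \(L_\C\). Since \(L_\C = Q\cap\sigma(Q)\) is \(\sigma\)-stable, \(\sigma\) acts on \(X_*(Z)\otimes\R\), and \(\sigma(Q) = P_{\sigma\lambda}\).

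By hypothesis \(\sigma(Q) = Q^{\mathrm{op}} = P_{-\lambda}\), so \(\sigma\lambda\) is strictly negative on the roots of \(\Lie N_Q\). Hence \(\lambda' := (\lambda - \sigma\lambda)/2\) still satisfies \(P_{\lambda'} = Q\), and moreover \(\sigma\lambda' = -\lambda'\). Consequently \(x := i\lambda'\), viewed in \(\Lie Z \subset \gfr_\C\), is fixed by the real structure, so \(x \in \Lie_0 G(\R)\). Its adjoint eigenvalues are purely imaginary, so \(x\) is elliptic.

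Let \(S\) be the closure of \(\exp(\R x)\) in \(G(\R)\). It is a compact torus contained in \(G(\R)^0\). The centralizer of \(x\) in \(G_\C\) equals the centralizer of \(\lambda'\), which is \(L_\C\). Therefore \(L(\R) = Z_{G(\R)}(x) = Z_{G(\R)}(S)\).

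\emph{Step 2: reduction to a maximal compact subgroup.} Since \(S\) is compact, there is a Cartan involution \(\theta\) of \(G\) whose associated maximal compact subgroup \(K = G(\R)^\theta\) contains \(S\). Then \(\theta\) fixes \(x\), so \(L\) is \(\theta\)-stable and \(\theta|_L\) is a Cartan involution of \(L\).

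The Cartan decompositions \(G(\R) = K\exp(\pfr_0)\) and \(L(\R) = (K\cap L(\R))\exp(\pfr_0\cap\lfr_0)\) give, by deformation retraction, \(\pi_0(G(\R)) = \pi_0(K)\) and \(\pi_0(L(\R)) = \pi_0(K\cap L(\R))\). These identifications are compatible with the natural maps. Moreover \(K\cap L(\R) = Z_K(S)\). It therefore suffices to prove that \(\pi_0(Z_K(S)) \to \pi_0(K)\) is injective, i.e.\ that \(Z_K(S)\cap K^0 = Z_K(S)^0\).

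\emph{Step 3: the compact case.} We have \(S \subset K^0\), and \(Z_K(S)\cap K^0 = Z_{K^0}(S)\). The centralizer of a torus in a connected compact Lie group is connected: any element of \(Z_{K^0}(S)\) lies in a maximal torus of \(Z_{K^0}(S)^0\) containing \(S\), by the standard argument via maximal tori of \(K^0\). Since \(Z_{K^0}(S)\) is connected and contains \(Z_K(S)^0\), the two are equal, which is the required injectivity.

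The main point needing care is Step 1: checking that averaging with \(\sigma\) preserves the parabolic, and that the resulting element is genuinely real and elliptic, so that \(L(\R)\) is a centralizer of a compact torus. The retraction compatibilities in Step 2 and the connectedness statement in Step 3 are standard.
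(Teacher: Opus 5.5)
Your proof is correct, and it takes a genuinely different route from the paper.

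\textbf{The paper's route.} The paper argues algebraically. After passing to a z-extension so that $G_\der$ is simply connected, it uses the Borel--Tits theorem ($G_\der(\R)$ and $L_\der(\R)$ are connected) to embed $\pi_0(L(\R))$ and $\pi_0(G(\R))$ into $\pi_0$ of the real points of the tori $L/L_\der$ and $G/G_\der$. It then only needs the kernel torus $C$ of $L/L_\der\to G/G_\der$ to have connected real points.

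\textbf{Your route.} You use the opposition hypothesis to average $\lambda$ into an anti-invariant $\lambda'$ that still defines $Q$. This gives a real elliptic element $x=i\lambda'$ with $Z_{G_\C}(x)=L_\C$, so $L(\R)=Z_{G(\R)}(S)$ for a compact torus $S$. The Cartan decomposition then reduces everything to the classical connectedness of centralizers of tori in compact connected Lie groups.

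\textbf{What each buys.} Your argument is purely Lie-theoretic and avoids z-extensions and Borel--Tits. It also produces the description of $L$ as the centralizer of a compact torus, which is the natural one in the $\theta$-stable setting of the Rohlfs--Speh theorem where the lemma is used.

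It moreover sidesteps a point where the paper's write-up claims too much. The kernel $C$ need not be anisotropic: for $G=\Res_{\C/\R}\SL_2$ and $Q=B\times B^{\mathrm{op}}\subset \SL_2\times\SL_2=G_\C$, one has $\sigma(Q)=B^{\mathrm{op}}\times B$ opposite to $Q$, and $L=C=\Res_{\C/\R}\Gm$. What is true, and sufficient, is that $C(\R)$ is connected. Indeed $-\sigma$ permutes the images in $X_*(C)$ of the simple coroots not in $L$, so $X_*(C)$ is a sum of sign lattices and copies of $\Z[\Gal(\C/\R)]$. Your argument only ever uses the anti-invariant part of $X_*(Z)\otimes\R$, so this issue never arises.

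\textbf{One small repair in Step 1.} Compactness of $S$ does not follow from the adjoint eigenvalues of $x$ being purely imaginary, since these are blind to a split central torus of $G$. Deduce it instead from $\sigma\lambda'=-\lambda'$. This places $x$ in the Lie algebra of the maximal anisotropic subtorus of $Z$, whose group of real points is compact. Equivalently, every character of $Z$ has modulus one on $\exp(\R x)$.
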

\begin{proof}
  Commençons par supposer que le sous-groupe dérivé \(G_\der\) de \(G\) est simplement connexe.
  On sait dans ce cas que \(G_\der(\R)\) est connexe \cite[Corollaire 4.7]{BorelTits_compl}, en particulier \(\pi_0(G(\R))\) se plonge dans \(\pi_0((G/G_\der)(\R))\) (noter que \(G/G_\der\) est un tore).
  Le sous-groupe dérivé \(L_\der\) de \(L\) est également simplement connexe, d'où un diagramme commutatif
  \[
    \begin{tikzcd}
      \pi_0(L(\R)) \ar[d, hook] \ar[r] & \pi_0(G(\R)) \ar[d, hook] \\
      \pi_0((L/L_\der)(\R)) \ar[r] & \pi_0((G/G_\der)(\R))
    \end{tikzcd}
  \]
  ce qui nous ramène à montrer que la flèche horizontale du bas est injective.
  Le morphisme de tores \(L/L_\der \to G/G_\der\) est clairement surjectif, et son noyau \(C\) est un tore anisotrope: on se ramène à montrer que si \(G\) est semi-simple alors le plus grand tore central de \(L\) est anisotrope, ce qui découle facilement du fait que \(\sigma(Q)\) est opposé à \(Q\).
  En particulier \(C(\R)\) est connexe et l'application \(\pi_0((L/L_\der)(\R)) \to \pi_0((G/G_\der)(\R))\) est injective, ce qui conclut la preuve dans le cas où \(G_\der\) est supposé simplement connexe.

  Pour ramener le cas général à ce cas particulier on utilise une z-extension \(\tilde{G} \to G\) où \(\tilde{G}\) est un groupe réductif connexe sur \(\R\) de sous-groupe dérivé simplement connexe, et \(\tilde{G} \to G\) est surjective de noyau un tore central de \(\tilde{G}\) isomorphe à \(\Res_{\C/\R} \GL_{1,\C}^r\) pour un entier \(r \geq 0\).
  L'application induite \(\tilde{G}(\R) \to G(\R)\) est surjective de noyau connexe (isomorphe à \((\C^\times)^r\)), donc \(\pi_0(\tilde{G}(\R)) \simeq \pi_0(G(\R))\).
  En notant \(\tilde{Q}\) la préimage de \(Q\) dans \(\tilde{G}_\C\) et \(\tilde{L}\) celle de \(L\) dans \(\tilde{G}\), on a \(\tilde{L}_\C = \tilde{Q} \cap \sigma(\tilde{Q})\) et \(\pi_0(\tilde{L}(\R)) \simeq \pi_0(L(\R))\), donc le lemme pour \((\tilde{G}, \tilde{Q})\) implique le lemme pour \((G,Q)\).
\end{proof}

\begin{theorem}[Rohlfs-Speh] \label{thm:RohlfsSpeh}
  Soit \(G\) un groupe réductif connexe sur \(\R\), \(K\) un sous-groupe compact maximal de \(G(\R)\), \(\pi\) un \((\gfr,K)\)-module irréductible unitarisable après torsion par un caractère, \(V\) une représentation algébrique irréductible de \(G_\C\) telle que \(H^\bullet(\gfr,K^0; \pi \otimes V) \neq 0\).
  Soit \(i\) le plus petit degré pour lequel \(H^i(\gfr,K^0; \pi \otimes V) \neq 0\).
  Soit \(\pi \hookrightarrow \ind_P^G \sigma\) le plongement dans une module anti-standard (dual de la réalisation de la contragrédiente \(\pi^\vee\) comme quotient de Langlands).
  Alors l'application induite
  \[ H^i(\gfr,K^0; \pi \otimes V) \to H^i(\gfr,K^0; \ind_P^G \sigma \otimes V) \]
  est injective, et \(H^\bullet(\gfr,K^0; \ind_P^G \sigma \otimes V)\) est nulle en degré \(<i\).
\end{theorem}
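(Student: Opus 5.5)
Le plan suit le calcul de Zuckerman de la \((\gfr,K)\)-cohomologie via le premier terme de la résolution de Johnson. D'après Vogan--Zuckerman et Salamanca-Riba, \(\pi\) (unitarisable après torsion, à cohomologie non nulle) est isomorphe à un module \(A_\qfr(\lambda)\) pour une sous-algèbre parabolique \(\theta\)-stable \(\qfr = \lfr \oplus \ufr\), où \(Q\) est \(\theta\)-stable donc \(\sigma(Q)\) est opposé à \(Q\) pour la forme réelle pertinente. Ce module est obtenu par induction cohomologique \(\mathcal{R}^S_\qfr\) en degré \(S = \dim(\ufr \cap \kfr)\) à partir d'un caractère \(\C_\lambda\) de \(L\). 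Le degré minimal de la cohomologie est alors \(i = R := \dim(\ufr \cap \pfr)\), et \(H^R(\gfr,K^0;\pi\otimes V)\) s'identifie à \(H^0(\lfr, L\cap K^0; \C_\lambda \otimes H^R(\ufr\cap\pfr\text{-part}))\). On utilise le Lemme~\ref{lem:pi0_Q_thetast} pour contrôler le passage de \(K\) à \(K^0\) et de \(L\cap K\) à \(L\cap K^0\) : l'injectivité \(\pi_0(L(\R)) \to \pi_0(G(\R))\) garantit que la formule de Zuckerman reste valable avec \(K^0\) et que la classe en degré \(R\) est décrite par un unique \(K^0\)-type, le \(K\)-type minimal \(\mu = \lambda + 2\rho(\ufr\cap\pfr)\), apparaissant avec multiplicité un.

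Il s'agit ensuite de relier \(\pi\) à son module anti-standard \(\ind_P^G\sigma\). Le \(K\)-type minimal \(\mu\) de \(\pi\) (au sens de Vogan) apparaît avec multiplicité un dans \(\ind_P^G\sigma\), et l'inclusion \(\pi \hookrightarrow \ind_P^G\sigma\) induit un isomorphisme sur les \(\mu\)-isotypiques. En degré \(R\), les cochaînes \(\Hom_{K^0}(\wedge^R \pfr, \pi\otimes V)\) qui peuvent porter de la cohomologie sont concentrées, par l'argument du laplacien de Kuga--Parthasarathy, sur le \(K\)-type \(\mu\) apparié à \(\wedge^R(\ufr\cap\pfr)\otimes V\). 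On compare donc les complexes de Chevalley--Eilenberg de \(\pi\) et de \(\ind_P^G\sigma\) en degrés \(\le R\), en utilisant le premier terme de la résolution de Johnson, qui présente \(\pi\) comme noyau d'une application \(\ind_P^G\sigma \to \bigoplus(\text{induites standard})\). On en déduit l'annulation en degré \(<R\) pour \(\ind_P^G\sigma\), en calculant sa cohomologie par la réciprocité de Frobenius/Shapiro : elle vaut \(H^\bullet(\mfr, K_M^0; \sigma \otimes H^\bullet(\nfr,V))\) tordu, puis on applique Kostant et la borne \(\ell(w)+\text{degré minimal pour }\sigma\ge R\). On obtient ensuite l'injectivité en degré \(R\), car un cocycle de \(\pi\) qui devient un cobord dans \(\ind_P^G\sigma\) est un cobord d'une \((R-1)\)-cochaîne à valeurs dans \(\ind_P^G\sigma\). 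Sa projection vers le conoyau, qui a des \(K\)-types strictement plus grands que \(\mu\), doit être un cocycle en degré \(R-1\), donc nul par l'annulation de la cohomologie en degré \(<R\) de ces modules (à vérifier par le même calcul de Kostant/Shapiro). On corrige alors la cochaîne pour qu'elle soit à valeurs dans \(\pi\).

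L'obstacle principal est cette dernière étape : montrer que le conoyau \(\ind_P^G\sigma/\pi\) (ou le premier terme de Johnson) a une \((\gfr,K^0)\)-cohomologie à coefficients \(V\) nulle en degré \(\le R-1\). J'essaierais d'abord de le déduire de ce que ses sous-quotients ont même caractère infinitésimal mais des paramètres de Langlands plus grands pour l'ordre de Vogan. Leur degré cohomologique minimal, donné par \(\dim(\ufr'\cap\pfr)\) si ils sont unitaires, ou par la borne de Shapiro pour les induites standard, est alors \(\ge R\). Si cela échoue, je me restreindrais à l'argument direct sur les \(K\)-types de degré \(R\).
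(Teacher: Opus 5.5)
Votre plan a une lacune réelle, exactement à l'endroit que vous appelez ``obstacle principal''. Posez \(C=\ind_P^G\sigma/\pi\). La suite exacte longue
\[ H^{R-1}(\gfr,K^0;\ind_P^G\sigma\otimes V)\to H^{R-1}(\gfr,K^0;C\otimes V)\to H^{R}(\gfr,K^0;\pi\otimes V)\to H^{R}(\gfr,K^0;\ind_P^G\sigma\otimes V) \]
montre ceci : une fois acquise l'annulation en degré \(<R\) pour \(\ind_P^G\sigma\), l'injectivité cherchée est \emph{équivalente} à \(H^{R-1}(\gfr,K^0;C\otimes V)=0\). Vous avez donc reformulé l'énoncé, sans le démontrer.

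Les pistes proposées ne comblent pas ce trou.
Les constituants de \(C\) ne sont en général pas unitarisables, donc la borne de Vogan--Zuckerman \(\dim(\ufr'\cap\pfr)\) ne s'applique pas. Ce ne sont pas des induites, donc Shapiro ne s'applique pas non plus. Être ``plus grand pour l'ordre de Vogan'' ne donne par soi-même aucune borne sur le degré de cohomologie ; il faudrait la théorie de Kazhdan--Lusztig--Vogan des groupes \(\Ext\), absente de votre plan.

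Le premier terme \(X_1\) de la résolution de Johnson ne suffit pas davantage. De \(C\hookrightarrow X_1\) et \(H^{<R}(\gfr,K^0;X_1\otimes V)=0\), on ne tire pas \(H^{R-1}(\gfr,K^0;C\otimes V)=0\). Il faudrait la résolution entière et des annulations en degré \(<R\) pour tous ses termes. Cela demande, comme la borne ``\(\ell(w)+{}\)degré minimal pour \(\sigma\) \(\geq R\)'' que vous affirmez sans justification pour \(\ind_P^G\sigma\) lui-même, de connaître explicitement les données de Langlands de ces modules.

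Enfin, l'argument de Kuga--Parthasarathy ne vaut que pour des modules unitaires. Il ne dit rien des cochaînes à valeurs dans \(\ind_P^G\sigma\), et la multiplicité un du \(K\)-type \(\mu\) n'empêche pas l'image d'une classe de devenir le bord d'une cochaîne portée par d'autres \(K\)-types.

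L'idée manquante, qui est celle de l'article, est de voir \(\pi\), \(\ind_P^G\sigma\) et \(C\) simultanément comme images par l'induction cohomologique. Le plongement \(\pi\hookrightarrow\ind_P^G\sigma\) est \(\cR^S_{\qfr,K}\) appliqué au plongement \(\C(\lambda)\hookrightarrow\Ind_{P_{L,0}}^L\C(\lambda|_{M_{L,0}})\) de \(\C(\lambda)\) dans son module anti-standard pour \(L\) (Johnson, Prop.~IV.1 ; Vogan, Thm.~8.2.4). De plus, \(\cR^S_{\qfr,K}\) est exact sur les \((\lfr,K_L)\)-modules de longueur finie ayant le caractère infinitésimal de \(\C(\lambda)\), puisque \(\cR^k\) y est nul pour \(k\neq S\).

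On combine alors deux ingrédients :
\(\cR^0_{\qfr,K}\) est adjoint à droite de la restriction tordue par \(D=\wedge^{\dim\ufr}\ufr\), et les \(\ufr\)-coinvariants sont adjoints à gauche de l'inflation ;
dans le théorème de Kostant, seul le plus long représentant, de longueur \(\dim\ufr\), contribue.
On obtient, fonctoriellement en \(Y\) dans cette catégorie,
\[ H^\bullet(\gfr,K;\cR^S_{\qfr,K}(Y)\otimes V)\simeq H^{\bullet-R}(\lfr,K_L;Y\otimes\C(\lambda_0^{-1})), \]
où \(\lambda_0\) est le caractère algébrique de même différentielle que \(\lambda\).

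Appliquée à \(Y=\Ind_{P_{L,0}}^L\C(\lambda|_{M_{L,0}})\) et à son quotient par \(\C(\lambda)\) (dont l'image est votre \(C\)), cette formule donne d'un coup les annulations en degré \(<R\). Elle ramène aussi l'injectivité en degré \(R\) à l'exactitude à gauche de \(H^0(\lfr,K_L;-)\), pourvu que le caractère de signe \(\lambda\lambda_0^{-1}\) soit trivial. C'est là, et non pour ``contrôler le passage de \(K\) à \(K^0\)'', qu'intervient le Lemme~\ref{lem:pi0_Q_thetast} : si \(G(\R)\) est connexe, \(L(\R)\) l'est aussi. Le cas général se ramène à celui-ci via \(G_\sico\), en décomposant la restriction de \(\pi\) à \((\gfr_\der,K_\sico)\) en composantes isotypiques permutées transitivement par \(K/K^0\). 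Votre plan omet également cette réduction.
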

\begin{proof}
  Soit \(\theta\) l'involution de Cartan de \(G\) correspondant à \(K\).
  Soit \(\pi\) un \((\gfr,K)\)-module irréductible unitarisable après torsion par un caractère tel qu'il existe un \((\gfr,K^0)\)-module irréductible de dimension finie \(V\) tel que \(H^\bullet(\gfr,K^0; \pi \otimes V) \neq 0\).
  (On ne suppose pas tout de suite que \(V\) provient d'une représentation algébrique de \(G_\C\).)
  On sait que \(\pi\) est isomorphe à \(A_\qfr(\lambda)\) où
  \begin{itemize}
  \item \(\qfr\) est une sous-algèbre parabolique \(\theta\)-stable de \(\gfr\), elle s'écrit donc \(\qfr = \lfr \oplus \ufr\) où \(\ufr\) est l'algèbre de Lie du radical unipotent \(U\) du sous-groupe parabolique \(Q\) de \(G_\C\) correspondant à \(\qfr\), \(L\) est le stabilisateur de \(\qfr\) dans \(G\) et \(\lfr = \C \otimes_\R \Lie L\);
  \item \(\lambda\) est un caractère de \(L(\R)\), également vu comme un \((\lfr,K_L)\)-module de dimension un \(\C(\lambda)\) où \(K_L = K \cap L(\R)\), vérifiant une condition de positivité: soit \(T\) un tore maximal de \(L\), pour toute racine \(\alpha\) de \(T\) dans \(\ufr\) on demande \(\Re \langle \alpha^\vee, d \lambda|_{T(\R)} \rangle \geq 0\) où \(d \lambda|_{T(\R)} \in \Hom_\R(\Lie T(\R), \C) \simeq \C X^*(T)\).
      En fait le nombre complexe \(\langle \alpha^\vee, d \lambda|_{T(\R)} \rangle\) est entier car \(\alpha^\vee \in X_*(T)\) est dans \(X_*(T_\mathrm{der})\) et l'image de \(T_\mathrm{der}\) dans \(L/L_\mathrm{der}\) est un tore anisotrope, et pour un tore réel anisotrope \(T'\) les caractères continus de \(T(\R)\) sont exactement les éléments de \(X^*(T')\);
  \item \(A_\qfr(\lambda) := \cR^S_{\qfr,K}(\C(\lambda))\) où \(S = \dim_\C \ufr \cap \kfr\) et \(\cR^\bullet_{\qfr,K}\) est le foncteur dérivé à droite du foncteur \(\cR_{\qfr,K}^0 := \cF(\res^{\lfr,K_L}_{\qfr,K_L}(-) \otimes_\C D)\) où \(\cF\) est l'adjoint à droite du foncteur de restriction des \((\gfr,K)\)-modules vers les \((\qfr,K_L)\)-modules et \(D := \wedge^{\dim \ufr} \ufr\).
  \end{itemize}
  On sait également que \(\cR^k_{\qfr,K}(\C(\lambda))\) est nul pour \(k \neq S\), plus généralement \(\cR^k_{\qfr,K}(X)=0\) pour \(k \neq S\) si \(X\) est un \((\lfr,K_L)\)-module de longueur finie dont tous les sous-quotients irréductibles ont même caractère infinitésimal que \(\C(\lambda)\) (voir \cite[Propositions IV.1 et IV.2]{Johnson}).
  En particulier la restriction du foncteur \(\cR^S_{\qfr,K}\) à cette sous-catégorie est exacte.

  Soit \(P_{L,0}\) un sous-groupe parabolique minimal de \(L\), de quotient réductif \(M_{L,0}\), de sorte que l'application évidente \(\C(\lambda) \hookrightarrow \Ind_{P_{L,0}}^L \C(\lambda|_{M_{L,0}})\) est le plongement de \(\C(\lambda)\) dans un module anti-standard.
  Le plongement \(\pi \hookrightarrow \ind_P^G \sigma\) est isomorphe à \(\cR^S_{\qfr,K}\) appliqué à \(\C(\lambda) \hookrightarrow \Ind_{P_{L,0}}^L \C(\lambda|_{M_{L,0}})\): le fait qu'appliquer \(\cR^S_{\qfr,K}\) préserve l'injectivité est un cas particulier de l'exactitude rappelée ci-dessus, et le fait que
  \[ \cR^S_{\qfr,K}(\Ind_{P_{L,0}}^L \C(\lambda|_{M_{L,0}})) \]
  est un module anti-standard résulte de \cite[Proposition IV.1]{Johnson} (qui renvoie à \cite[Theorem 8.2.4]{Vogan_book}, voir aussi \cite[Theorem 8.2]{AdamsJohnson} pour la traduction ``à la Langlands'').

  On peut calculer \(H^\bullet(\gfr,K; - \otimes_\C V)\) comme
  \[ \Ext^\bullet_{\gfr,K}(V^\vee, -) = H^\bullet(\RHom_{\gfr,K}(V^\vee, -)). \]
  Par adjonction, et grâce notamment au fait que le foncteur \(\res^{\gfr,K}_{\qfr,K_L}\) est exact, on a pour tout \(X^\bullet \in D^-(\gfr,K)\) et tout \(Y^\bullet \in D^+(\qfr,K_L)\)
  \[ \RHom_{\gfr,K}(X^\bullet, R \cR^0_{\qfr,K}(Y^\bullet)) \simeq \RHom_{\qfr,K_L}(\res^{\gfr,K}_{\qfr,K_L}(X^\bullet), Y^\bullet \otimes_\C D). \]
  On souhaite appliquer cette adjonction au cas où \(Y^\bullet = \res^{\lfr,K_L}_{\qfr,K_L}(B^\bullet)\) où \(B^\bullet \in D^+(\lfr,K_L)\).
  Le foncteur exact \(\res^{\lfr,K_L}_{\qfr,K_L}\) admet un adjoint à gauche, le foncteur \(\cG := - \otimes_{U(\ufr)} \C\) des \(\ufr\)-coinvariants.
  On en déduit pour \(A^\bullet \in D^-(\qfr,K_L)\) et \(B^\bullet \in D^+(\lfr,K_L)\)
  \[ \RHom_{\qfr,K_L}(A^\bullet, \res^{\lfr,K_L}_{\qfr,K_L}(B^\bullet)) \simeq \RHom_{\lfr,K_L}(L\cG(A^\bullet), B^\bullet) \]
  et on souhaite donc calculer
  \[ L\cG(\res^{\gfr,K}_{\qfr,K_L}(X^\bullet) \otimes_\C D^\vee) \]
  dans le cas où \(X = V^\vee\) (concentré en degré \(0\)).
  Un argument classique (utilisant notamment le fait que le foncteur de restriction envoie \((\qfr,K_L)\)-modules projectifs sur \(\ufr\)-modules projectifs) nous permet de calculer \(L\cG(V^\vee \otimes_\C D^\vee)\) (on omet dorénavant le foncteur \(\res^{\gfr,K}_{\qfr,K_L}\) pour simplifier la notation) comme étant quasi-isomorphe au complexe \(\left( \bigwedge^{-\bullet} \ufr \right) \otimes_\C V^\vee \otimes_\C D^\vee\) avec différentielles
  \begin{multline*}
    d((X_1 \wedge \dots \wedge X_k) \otimes f) = \sum_{a=1}^k (-1)^a (X_1 \wedge \dots \widehat{X_a} \dots \wedge X_k) \otimes (X_a \cdot f) \\
    + \sum_{1 \leq a < b \leq k} (-1)^{a+b} ([X_a,X_b] \wedge X_1 \wedge \dots \widehat{X_a} \dots \widehat{X_b} \dots X_k) \otimes f,
  \end{multline*}
  où \(X_a \in \ufr\) et \(f \in V^\vee \otimes_\C D^\vee\).
  Comme les termes du complexe \((\bigwedge^{-\bullet} \ufr) \otimes_\C V^\vee \otimes_\C D^\vee\) sont de dimension finie on a
  \begin{align}
    \RHom_{\lfr,K_L} \left( \left( \bigwedge^{-\bullet} \ufr \right) \otimes_\C V^\vee \otimes_\C D^\vee, B^\bullet \right)
    &\simeq \RHom_{\lfr,K_L} \left( \C, \Tot^\bullet \left( \left( \bigwedge^\bullet \ufr \right)^\vee \otimes_\C V \otimes_\C D \otimes_\C B^\bullet \right) \right) \nonumber \\
    &\simeq \RHom_{\lfr,K_L} \left( \C, \Tot^\bullet \left( C^\bullet(\ufr, V) \otimes_\C B^\bullet \right) \otimes_\C D \right) \label{eq:RHom_ufr_V_D_B}
  \end{align}
  où \(C^\bullet(\ufr, V)\) est le complexe de Chevalley-Eilenberg \(\Hom_\C(\bigwedge^\bullet \ufr, V)\).
  Supposons maintenant que \(V\) est la restriction d'une représentation algébrique irréductible \(V^G_\mu\) de \(G_\C\) de plus haut poids \(\mu\).
  Rappelons que d'après Kostant \(C^\bullet(\ufr, V)\) est quasi-isomorphe à une somme directe de \((\lfr,K_L)\)-modules irréductibles \(V^L_{w(\mu+\rho)-\rho}[\lng(w)]\) (restrictions de représentations algébriques irréductibles de \(L_\C\), décalées).
  Pour \(B^\bullet = \C(\lambda)\) (concentré en degré \(0\)) un seul de ces termes peut avoir une contribution non nulle à \eqref{eq:RHom_ufr_V_D_B}: celui pour lequel \(d \lambda|_{T(\R)}\) est algébrique (i.e.\ provient d'un élément de \(X^*(T)\)) et \(w(\mu+\rho)-\rho+2\rho = -d \lambda|_{T(\R)}\) (\(2\rho\) étant le caractère par lequel \(L\) agit sur \(D\)).
  La condition de positivité sur \(\lambda\) implique que ce \(w\) est en fait le plus long des représentants de Kostant pour \(Q=L_\C U \subset G_\C\), qui a longueur \(\dim_\C \ufr\).
  On retrouve le fait \cite[Proposition 5.4 (c)]{VoganZuckerman} que \(H^\bullet(\gfr,K; A_\qfr(\lambda) \otimes_\C V_\mu)\) est non nulle seulement si \(d \lambda|_{T(\R)}\) est la restriction d'un \(\lambda_0 \in X^*(T)\) et \(\mu = -w_0(\lambda+\rho) - \rho\), et dans ce cas
  \[ H^\bullet(\gfr,K; A_\qfr(\lambda) \otimes_\C V_\mu) \simeq H^{\bullet-R}(\lfr, K_L; \C(\lambda \lambda_0^{-1})) \]
  où \(R = \dim_\C \ufr^{\theta=-1} = \dim_\C \ufr - S\).
  Remarquons que \(\lambda \lambda_0^{-1}\) est un caractère \(L(\R) \to \{\pm 1\}\).

  On obtient aussi un diagramme commutatif
  \[
    \begin{tikzcd}
      H^\bullet(\gfr,K; A_\qfr(\lambda) \otimes_\C V_\mu) \ar[r, dash, "{\sim}"] \ar[d] & H^{\bullet-R}(\lfr, K_L; \C(\lambda \lambda_0^{-1})) \ar[d] \\
      H^\bullet(\gfr,K; (\ind_P^G \sigma) \otimes_\C V_\mu) \ar[r, dash, "{\sim}"] & H^{\bullet-R}(\lfr, K_L; \Ind_{P_{L,0}}^L \C(\lambda \lambda_0^{-1} |_{M_{L,0}}))
    \end{tikzcd}
  \]
  et lorsque \(\lambda = \lambda_0\) il est clair que la flèche verticale de droite est non nulle en degré \(R\).
  Si \(G(\R)\) est connexe (i.e.\ si \(K^0=K\)) alors \(L(\R)\) est également connexe (Lemme \ref{lem:pi0_Q_thetast}), en particulier \(\lambda=\lambda_0\) et cela conclut la preuve dans ce cas.

  Expliquons maintenant comment ramener le théorème dans le cas général au cas où \(G(\R)\) est connexe.
  Soit \(\pi\) un \((\gfr,K)\)-module irréductible unitarisable (après torsion par un caractère) tel qu'il existe une représentation irréductible algébrique \(V\) de \(G_\C\) vérifiant \(H^\bullet(\gfr,K^0; \pi \otimes V) \neq 0\).
  Notons \(G_\sico\) le revêtement simplement connexe du sous-groupe dérivé \(G_\der\) de \(G\), et \(K_\sico\) la préimage de \(K\) dans \(G_\sico(\R)\).
  On sait que \(G_\sico(\R)\) est connexe \cite[Corollaire 4.7]{BorelTits_compl}, donc \(K_\sico\) est connexe.
  La restriction \(\pi'\) de \(\pi\) à \((\gfr_\der,K_\sico)\) est semi-simple: il existe une famille finie \((\pi_j)_{j \in J}\) de \((\gfr_\der,K_\sico)\)-modules irréductibles et une décomposition canonique
  \[ \pi' \simeq \bigoplus_{j \in J} \pi'[\pi_j] \text{ où } \pi'[\pi_j] := \Hom_{{\gfr_\der,K_\sico}}(\pi_j, \pi) \otimes_\C \pi_j. \]
  On peut supposer les \(\pi_j\) non isomorphes et les composantes isotypiques \(\pi'[\pi_j]\) non nulles pour tout \(j \in J\).
  Cette hypothèse et l'irréductibilité de \(\pi\) impliquent que \(K/K^0\) agit transitivement sur l'ensemble des classes d'isomorphisme des \((\pi_j)_{j \in J}\) (autrement dit, sur \(J\)).
  Notant comme d'habitude \(A_G\) le plus grand tore déployé central de \(G\) et \(\afr_G = \C \otimes_\R \Lie A_G(\R)\), l'action de \((\afr_G, K^0 \cap Z(G(\R)))\) sur \(\pi \otimes_\C V\) est triviale et on a un isomorphisme d'espaces vectoriels gradués
  \[ H^\bullet(\gfr,K^0, \pi \otimes_\C V) \simeq H^\bullet(\gfr_\der,K_\sico; \pi' \otimes_\C V) \otimes_\C H^\bullet(\afr_G; \C), \]
  en particulier \(i\) est également le plus petit degré en lequel \(H^\bullet(\gfr_\der,K_\sico; \pi' \otimes_\C V)\) est non nulle et
  \[ H^i(\gfr,K^0, \pi \otimes_\C V) \simeq H^i(\gfr_\der,K_\sico; \pi' \otimes_\C V). \]
  On a enfin
  \[ H^i(\gfr_\der,K_\sico; \pi' \otimes_\C V) \simeq \bigoplus_{j \in J} H^i(\gfr_\der,K_\sico; \pi'[\pi_j] \otimes_\C V) \]
  et tout \(k \in K\) induit un isomorphisme entre les facteurs \(H^i(\gfr_\der,K_\sico; \pi'[\pi_j] \otimes_\C V)\) et \(H^i(\gfr_\der,K_\sico; \pi'[k^{-1}(\pi_j)] \otimes_\C V)\).
  Comme \(K/K^0\) agit transitivement sur \(J\) on en déduit que chaque \(H^i(\gfr_\der,K_\sico; \pi'[\pi_j] \otimes_\C V)\) est non nul.
  Enfin \(\pi \hookrightarrow \ind_P^G \sigma\) se restreint en \(\bigoplus_j \pi'[\pi_j] \hookrightarrow \Hom_{{\gfr_\der,K_\sico}}(\pi_j, \pi) \otimes _\C \ind_{P_\sico}^{G_\sico} \sigma_j\), où chaque induite \(\ind_{P_\sico}^{G_\sico} \sigma_j\) est anti-standard.
  Appliquant le théorème à tous les \(\pi_j\), on le déduit donc pour \(\pi\).
\end{proof}

\section{Fonction \(L\) de Rankin pour des représentations autoduales}\label{sec:FunctionRankin}

Soit \(F\) un corps de nombres, \(r,t \geq 1\) des entiers.
Soit \(\pi\) (resp.\ \(\rho\)) une représentation automorphe cuspidale autoduale pour \(\GL_{r,F}\) (resp.\ \(\GL_{t,F}\)).
On suppose que parmi \(\pi\) et \(\rho\), l'une est de type orthogonal et l'autre est de type symplectique, que celle qui est symplectique est algébrique régulière et que celle qui est orthogonale est algébrique régulière (resp.\ demi-algébrique \(\SO\)-régulière) si elle est en dimension impaire (resp.\ paire).
On suppose qu'en toute place archimédienne \(v\) de \(F\) la restriction du paramètre de Langlands de \(\pi_v\) (resp.\ \(\rho_v\)) à \(\C^\times\) est
\[ z \mapsto \diag((z/\ol{z})^{p_1}, \dots, (z/\ol{z})^{p_r}) \ \ \text{resp. } \diag((z/\ol{z})^{q_1}, \dots, (z/\ol{z})^{q_t}) \]
où \(p_1 \geq \dots \geq p_r\) (resp.\ \(q_1 \geq \dots \geq q_t\)) sont des éléments de \(\Z\) si \(\pi\) (resp.\ \(\rho\)) est orthogonale, \(\frac{1}{2} + \Z\) si \(\pi\) (resp.\ \(\rho\)) est symplectique, et les multi-ensembles
\[ \{ p_i \pm 1/2 \,|\, 1 \leq i \leq r\}, \ \ \{q_j \,|\, 1 \leq j \leq t\} \]
sont disjoints et sans multiplicité, sauf peut-être pour l'élément \(0\) qu'on autorise à avoir multiplicité \(2\).

\begin{theorem} \label{thm:equiv_Sp_SO}
  Pour tout \(a \in \Aut(\C)\) on a
  \[ L(1/2, \pi \times \rho) = 0 \Leftrightarrow L(1/2, \tilde{a}(\pi) \times \tilde{a}(\rho)) = 0. \]
\end{theorem}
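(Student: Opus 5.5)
Je suivrai le schéma des Théorèmes~\ref{thm:Clozel-Standard} et~\ref{thm:Clozel-Rankin}, en choisissant le groupe classique selon le type et la parité. Notons \(\pi_1\) celle des deux représentations qui est symplectique et \(\pi_2\) l'orthogonale, de dimensions \(r_1\) et \(r_2\). Je prends \(G\) quasi-déployé contenant le Lévi \(M = \GL_{r_1} \times G'\), où \(G'\) a pour groupe dual le groupe classique adapté à \(\pi_2\) : si \(r_2\) est impair, \(G = \Sp_{2n}\), \(G' = \Sp_{r_2-1}\) ; si \(r_2\) est pair, \(G = \SO^\alpha_{2n}\) et \(G' = \SO^\alpha_{r_2}\), avec \(\alpha\) donné par le caractère central de \(\pi_2\). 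Dans chaque cas, le facteur de normalisation du terme constant est
\[ \frac{L(s, \pi_1 \times \pi_2) L(2s, \pi_1, \wedge^2)}{L(s+1, \pi_1 \times \pi_2) L(2s+1, \pi_1, \wedge^2)}, \]
qui a un pôle en \(s = 1/2\) exactement quand \(L(1/2, \pi \times \rho) \neq 0\). Je prends pour \(\sigma\) la représentation générique du paquet de \(\pi_2\), qui est cuspidale par le Théorème~\ref{thm:Clozel6_3} et l'argument de tempérance. Le paramètre d'Arthur est \(\psi = \pi_1 \otimes \sp(2) \oplus \pi_2\). L'hypothèse de disjonction (avec la multiplicité \(2\) autorisée en \(0\) dans le cas pair) garantit que le caractère infinitésimal de \(I_{1/2}\) en chaque place archimédienne est algébrique régulier, ce qui est le rôle de la Définition~\ref{def:SO_reg}. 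Les quotients de Langlands unitaires sont alors cohomologiques (Salamanca-Riba, Vogan--Zuckerman, Enright).

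Les étapes sont les suivantes. D'abord, si \(L(1/2,\pi\times\rho)\neq0\), on utilise \cite[Thm.~11.1]{Cogdell_etal_IHES_2004} pour les \(N_v(s)\) aux places finies et la convergence de \(M_v(s)\) pour \(\Re(s)>0\) aux places archimédiennes ; on obtient un résidu non nul de carré intégrable \(\pi_G \subset \cH_\psi\). Ensuite, par le Théorème~\ref{thm:Nair_Rai}, la classe de \(\pi_{G,\ff}\) en degré minimal a une conjuguée \(a(\pi_{G,\ff})\) dans la cohomologie pondérée, à coefficients conjugués. Le Théorème~\ref{thm:RohlfsSpeh}, combiné au diagramme \eqref{eq:diag_BSres_cstterm}, montre que la restriction au bord est non nulle, donc que la représentation obtenue \(\pi'_G\) n'est pas cuspidale. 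On calcule les paramètres de Satake par la Proposition~\ref{pro:Sat_tordu} et les Exemples~\ref{exa:fonct_Sat_tordu} et~\ref{exa:fonct_Sat_tordu_SO_impair}, puis on applique les Propositions~\ref{pro:AutC_poids_et_type_autodual} et~\ref{pro:AutC_SO_reg}. On en déduit que le paramètre de \(\pi'_G\) est \(\tilde a(\pi_1) \otimes \sp(2) \oplus \tilde a(\pi_2)\), les types étant préservés. L'argument de Jacquet--Shalika du Théorème~\ref{thm:Clozel6_7}, avec le Théorème~\ref{thm:Clozel2_3} pour forcer \(s_i \in \frac12\Z\), montre alors que le support cuspidal est \((\GL_{r_1}\times G', \tilde a(\pi_1)[1/2]\otimes\sigma')\), avec \(\sigma'\) associée à \(\tilde a(\pi_2)\).

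Enfin, on inverse le raisonnement pour la série d'Eisenstein conjuguée. Le résidu en \(s=1/2\) doit provenir de \(Q(s)\bigotimes_{v\in S_\ff}N_v(s)\). Par la Proposition~\ref{pro:entrelac_hol_nonnul} de l'appendice, les \(N_v(s)\) sont holomorphes pour \(\Re(s)\ge 1/2\), même si \(\sigma'\) n'est pas générique et sans hypothèse de Ramanujan ; c'est ce résultat qui remplace le Théorème~\ref{thm:Clozel6_8} dans les cas orthogonaux. Le dénominateur ne s'annule pas, donc \(L(1/2,\tilde a(\pi)\times\tilde a(\rho))\neq0\). L'implication inverse s'obtient en appliquant le même argument à \(a^{-1}\).

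L'obstacle principal est le cas demi-algébrique pair \(G=\SO^\alpha_{2n}\). Le Satake y est tordu et il faut vérifier que l'action \(\tilde a\) sur \(\pi_1\) et \(\pi_2\) se combine bien en l'action naturelle sur \(G\). Il faut aussi tenir compte de la non-connexité de \(K_\infty\) et de l'ambiguïté venant de l'automorphisme extérieur de \(\SO_{2n}\) (\(\wt\Psi\)). Je traiterai cela comme dans la preuve de la Proposition~\ref{pro:AutC_SO_reg}, en travaillant avec \(K_\infty^0\) et les classes sous \(\mathrm{O}_{2n}\).
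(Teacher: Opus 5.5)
\textbf{Lacune : le choix du groupe ignore l'asymétrie des hypothèses.} Le point qui ne marche pas est le choix du groupe. Vous placez toujours la représentation symplectique sur le facteur \(\GL\). Or les hypothèses du théorème ne sont pas symétriques en \((\pi,\rho)\) : c'est \(\pi\), quel que soit son type, qui est supposée superrégulière et disjointe de \(\rho\) (les multi-ensembles \(\{p_i\pm1/2\}\) et \(\{q_j\}\) sont disjoints et sans multiplicité). Si \(\pi\) est orthogonale et \(\rho\) symplectique, vous induisez \(\rho[1/2]\otimes\sigma\), avec \(\sigma\) associée à \(\pi\). Le caractère infinitésimal archimédien est alors \(\{q_j\pm1/2\}\cup\{p_i\}\), et rien ne garantit sa régularité : il suffit que \(q_j-q_{j+1}=1\). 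Par exemple, en une place réelle, \(r=1\), \(p_1=0\), \(t=4\) et \(q=(5/2,3/2,-3/2,-5/2)\) satisfont les hypothèses. Pourtant, sur \(\Sp_8\), on trouve \(\{\pm3,\pm2,\pm2,\pm1,0\}\), qui est singulier. La composante archimédienne du résidu n'est alors pas cohomologique, donc \(\pi_{G,\ff}\) n'apparaît pas dans \(H^\bullet(\gfr,K_\infty^0;\cA^2(G)\otimes V_\C)\), et l'argument de rationalité de Nair--Rai ne peut pas démarrer. Votre affirmation selon laquelle la disjonction assure la régularité du caractère infinitésimal de \(I_{1/2}\) est donc fausse dans ce cas, qui n'est pas couvert.

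\textbf{Correction.} Il faut garder \(\pi\) sur \(\GL_r\). Quand \(\rho\) est symplectique, on prend \(G=\SO_{2n+1}\) déployé, avec \(n=r+t/2\) et \(M=\GL_r\times\SO_{t+1}\). Le facteur de normalisation contient alors \(L(2s,\pi,\Sym^2)\), qui a un pôle en \(s=1/2\) parce que \(\pi\) est orthogonale, et \(\{p_i\pm1/2\}\cup\{q_j\}\subset\frac12+\Z\) est régulier par hypothèse. C'est ce cas qui exige le Satake tordu via \(\GSpin_{2n+1}\) (Exemple \ref{exa:fonct_Sat_tordu_SO_impair}), et non \(\SO^\alpha_{2n}\), pour lequel l'Exemple \ref{exa:fonct_Sat_tordu} donne \(\epsilon=0\). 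Il exige aussi la vérification du Lemme \ref{lem:a_delta_pi_sigma} avec \(\delta=1\) : \(\delta_{\ol P}^{1/2}\otimes\pi[1/2]=\pi[1/2-n+r/2]\) est compatible à \(a\) précisément parce que \(\pi\) est algébrique pour \(r\) impair et demi-algébrique pour \(r\) pair. Dans le cas \(\Sp_{2n}\), il faut en outre tordre \((\pi,\rho)\) par \(\omega_\rho\circ\det\) pour que \(\rho\) soit un paramètre de \(\Sp_{t-1}\).

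\textbf{Sur l'identification du support cuspidal.} Votre route (Arthur pour \(G\), puis Jacquet--Shalika comme dans les Théorèmes \ref{thm:Clozel6_7} et \ref{thm:Clozel2_3}) est viable. L'article, lui, n'utilise pas la classification pour \(G\). Il projette le terme constant le long de \(\ol P\) de \(\cA^2(G)[a(\pi_{G,\ff})]\) sur la composante propre généralisée de \(a(\delta_{\ol P,\ff}^{1/2}\otimes\pi_{\ff}[1/2]\otimes\sigma_{\ff})\), et montre que cette composante est formée de formes cuspidales (Lemmes \ref{lem:trad_a_piGf_res_bord}, \ref{lem:a_sigma_cusp}, \ref{lem:cA_geneigen_subset_cusp}). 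Cela évite les ambiguïtés \(\mathrm{O}_{2n}\) que vous redoutiez.
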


Le reste de cette section est dédié à la preuve de ce théorème.
Grâce aux Propositions \ref{pro:AutC_poids_et_type_autodual} et \ref{pro:AutC_SO_reg} le rôle de \((\pi,\rho)\) et \((\tilde{a}(\pi),\tilde{a}(\rho))\) est symétrique, il suffit donc de montrer que si \(L(1/2, \pi \times \rho)\) est non nulle alors il en va de même de \(L(1/2, \tilde{a}(\pi) \times \tilde{a}(\rho))\).

Soit \(n = r + \lfloor t/2 \rfloor\) et \(G\) le groupe quasi-déployé suivant:
\begin{enumerate}
\item si \(\rho\) est symplectique, \(G = \SO_{2n+1} = \SO_{2r+t+1}\) (déployé),
\item si \(\rho\) est orthogonale de dimension impaire, \(G = \Sp_{2n} = \Sp_{2r+t-1}\) (déployé),
\item sinon (\(\rho\) orthogonale de dimension paire) \(G\) est le groupe quasi-déployé \(\SO_{2n}^\alpha\) où \(\alpha \in F^\times/F^{\times,2}\) correspond au caractère central de \(\rho\).
\end{enumerate}

Dans le cas où \(\rho\) est orthogonale de dimension impaire, son caractère central \(\omega_\rho\) est à valeurs dans \(\{\pm 1\}\) et quitte à remplacer \((\pi, \rho)\) par \(((\omega_\rho \circ \det) \otimes \pi, (\omega_\rho \circ \det) \otimes \rho)\) (ce qui ne change pas les fonctions L considérées) on peut supposer \(\omega_\rho = 1\).
Soit \(P=MN\) le sous-groupe parabolique standard (triangulaire supérieur) de \(G\) de facteur de Lévi \(M = \GL_{r,F} \times G'\) où \(G'\) est du même type que \(G\), de rang absolu \(\lfloor t/2 \rfloor\).
Soit \(\sigma\) une représentation automorphe cuspidale pour \(G'\) de paramètre d'Arthur-Langlands \(\rho\) (par exemple celle qui est générique, voir \cite[Proposition 8.3.2 (b)]{ArthurBook}).
On fixe un ``bon'' sous-groupe compact maximal \(K = \prod_v K_v\) de \(G(\A_F)\) (cf.~\cite[\S I.1.4]{MoeglinWaldspurger_bookspec}) et on note \(K_{v,M} = K_v \cap M(F_v)\).
On note aussi \(K_{\infty} = \prod_{v | \infty} K_v\) et \(K_{\infty,M} = \prod_{v|\infty} K_{v,M}\).
Pour une place archimédienne \(v\) de \(F\) on note \(\mfr_v = \C \otimes_\R \Lie M(F_v)\), et on note \(\mfr = \prod_{v|\infty} \mfr_v\).
On fixe un produit tensoriel restreint \(\pi \otimes \sigma = \bigotimes'_v \pi_v \otimes \sigma_v\) et un plongement \(j: \pi \otimes \sigma \hookrightarrow \cA_\cusp(M(F) \backslash M(\A_F))\).
Ici \(\pi_v \otimes \sigma_v\) est un \((\mfr,K_{\infty,M})\)-module irréductible (resp.\ une représentation lisse irréductible de \(M(F_v)\)) si \(v\) est archimédienne (resp.\ non-archimédienne).
En dehors d'un ensemble fini \(S\) de places (contenant toutes les places archimédiennes et celles où \(G\) est ramifié) la représentation \(\pi_v \otimes \sigma_v\) est non ramifiée, i.e.\ la dimension du sous-espace des vecteurs \(K_{v,M}\)-sphériques est \(1\), et on a fixé une base \(e_v\) de cette droite pour former le produit tensoriel restreint.
On considère pour \(s \in \C\) l'induite parabolique (\(K\)-finie et normalisée) \(\ind_P^G \pi[s] \otimes \sigma\).
L'espace de cet induite ne dépend pas de \(s \in \C\): le morphisme de restriction à \(K\) induit un isomorphisme avec l'induite \(K\)-finie \(\Ind_{K_M}^K (\pi \otimes \sigma)|_{K_M}\).
Si \(v\) est une place non-archimédienne telle que \(G_{F_v}\) est non ramifié et \(\pi_v \otimes \sigma_v\) est non ramifiée (pour \(K_{v,M} = K_v \cap M(F_v)\)) on note \(f^\mathrm{nr}_{P,v,s}\) l'élément \(K_v\)-invariant de \(\ind_{P(F_v)}^{G(F_v)} \pi[s] \otimes \sigma\) valant \(e_v\) en \(1 \in G(F_v)\).
L'induite adélique \(\ind_P^G \pi[s] \otimes \sigma\) s'identifie au produit tensoriel restreint des induites locales \(\ind_{P(F_v)}^{G(F_v)} \pi_v[s] \otimes \sigma_v\) (\(\ind_{\pfr_v, K_{v,M}}^{\gfr_v,K_v}\) aux places archimédiennes \(v\)).
On a un plongement de \((\gfr,K_\infty,G(\A_{F,\ff}))\)-modules
\begin{align*}
  j_{P,s}: \ind_P^G \pi[s] \otimes \sigma & \longrightarrow \cA_\cusp(N(\A_F) M(F) \backslash G(\A)) \\
  f & \longmapsto (mnk \mapsto \delta_P^{1/2}(m) |\det m_\GL|^s j(f(k))(m))
\end{align*}
où \(m = (m_\GL, m_{G'}) \in M(\A_F)\), \(n \in N(\A_F)\) et \(k \in K\).
Il existe \(s_0 \in \R\) tel que pour tout \(s \in \C\) satisfaisant \(\Re s > s_0\) et tout \(f \in \ind_P^G \pi[s] \otimes \sigma\) et tout compact \(C\) de \(G(\A_F)\) la série d'Eisenstein
\[ E_P^G(f, s)(g) := \sum_{\gamma \in P(F) \backslash G(F)} j_{P,s}(f)(\gamma g) \]
converge normalement pour \(g \in C\), et définit une forme automorphe \(E_P^G(f, s) \in \cA(G)\).
Le sous-groupe parabolique \(\ol{P} = M \ol{N}\) de \(G\) opposé à \(P\) est conjugué à \(P\): il existe \(w \in G(F) \smallsetminus M(F)\) normalisant \(M\), et on a \(\Ad(w) \ol{N} = N\).
On a un isomorphisme
\begin{align*}
  L_{w^{-1}}: \cA_\cusp(\ol{N}(\A_F) M(F) \backslash G(\A_F)) & \longrightarrow \cA_\cusp(N(\A_F) M(F) \backslash G(\A_F)) \\
  h & \longmapsto (g \mapsto h(w^{-1} g))
\end{align*}
qui d'ailleurs ne dépend pas du choix de \(w\).
Pour une forme automorphe \(h \in \cA(G)\) on note \(C^G_P h \in \cA(N(\A_F) M(F) \backslash G(\A_F))\) son terme constant (pour \(P\)):
\[ (C^G_P h)(g) := \int_{N(F) \backslash N(\A_F)} h(ng) dn \]
où la mesure de Haar choisie sur \(N(\A_F)\) est celle donnant \(\vol( N(F) \backslash N(\A_F)) = 1\).
D'après \cite[\S II.1.7]{MoeglinWaldspurger_bookspec} on a pour \(\Re s > s_0\)
\begin{equation} \label{eq:terme_cst_Eis_Rankin_autodual}
  C^G_P (E_P^G(f, s)) = j_{P,s}(f) + L_{w^{-1}} M(s) j_{P,s}(f)
\end{equation}
où
\[ (M(s) f)(g) := \int_{\ol{N}(\A_F)} f(\ol{n} g) d\ol{n}. \]
(Cette intégrale est convergente pour \(\Re s > s_0\) et \(f\) dans l'image de \(j_{P,s}\), et définit un élément de \(\cA_\cusp(\ol{N}(\A_F) M(F) \backslash G(\A))\).)
Les séries d'Eisenstein \(E_P^G(f,s)\) et les opérateurs d'entrelacement \(M(s)\) admettent un prolongement méromorphe à \(\C\) \cite[\S IV]{MoeglinWaldspurger_bookspec}, \cite{BernsteinLapid}.

\begin{lemma} \label{lem:alt_Lhalf_res}
  \begin{itemize}
  \item Si \(L(1/2, \pi \times \rho)\) est nulle alors \(M(s)\) est holomorphe en \(s=1/2\),
  \item sinon \((s-1/2)E_P^G(-,s)\) est holomorphe en \(s=1/2\), sa valeur en \(s=1/2\) est un morphisme non identiquement nul
    \begin{equation} \label{eq:residu_Eis}
      \beta: \ind_P^G \pi[1/2] \otimes \sigma \longrightarrow \cA(G).
    \end{equation}
    d'image incluse dans le sous-espace \(\cA^2(G)\) des formes automorphes de carré intégrable, et il existe un morphisme non nul
    \[ N: \ind_P^G \pi[1/2] \otimes \sigma \longrightarrow \ind_{\ol{P}}^G \pi[1/2] \otimes \sigma\]
    tel que \(C_{\ol{P}}^G \circ \beta = j_{\ol{P}, 1/2} \circ N\).
  \end{itemize}
\end{lemma}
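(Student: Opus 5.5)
Le plan est d'analyser le terme constant \eqref{eq:terme_cst_Eis_Rankin_autodual} via la factorisation de \(M(s)\) en produit d'opérateurs locaux. Prenons \(f=\otimes_v f_v\) décomposée, avec \(f_v = f^{\mathrm{nr}}_{P,v,s}\) pour \(v\notin S\). Le calcul de Langlands–Gindikin–Karpelevich (Shahidi) donne
\[ M(s)f = \bigotimes_{v\in S} M_v(s)f_v \otimes \frac{L^S(s,\pi\times\rho)\,L^S(2s,\pi,R)}{L^S(s+1,\pi\times\rho)\,L^S(2s+1,\pi,R)} \bigotimes_{v\notin S} \tilde f_v, \]
où \(R=\wedge^2\) ou \(\Sym^2\) selon le type de \(G\), c'est-à-dire selon que \(\pi\) est symplectique ou orthogonale. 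On choisit \(R\) de sorte que \(L(s,\pi,R)\) ait un pôle simple en \(s=1\). Pour \(v\in S\) fini, on normalise en posant \(N_v(s)=r_v(s)^{-1}M_v(s)\), avec les facteurs de Shahidi, et l'on utilise \eqref{eq:LShahidi_eq_LRankin} pour identifier \(L(s,\pi_v\times\sigma_v)\) à \(L(s,\pi_v\times\rho_v)\). On obtient ainsi
\[ M(s) = \bigotimes_{v|\infty} M_v(s)\otimes\bigotimes_{v\in S_\ff}N_v(s)\cdot\epsilon\text{-facteurs}\cdot \frac{L(s,\pi\times\rho)\,L(2s,\pi,R)}{L(s+1,\pi\times\rho)\,L(2s+1,\pi,R)}. \]

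Les ingrédients locaux sont les suivants. Aux places archimédiennes, \(\pi_v\) et \(\sigma_v\) sont tempérées grâce au lemme de pureté et aux paquets d'Arthur de paramètres génériques. Les opérateurs \(M_v(s)\) convergent donc pour \(\Re s>0\), et en \(s=1/2\) ils sont holomorphes et non nuls, d'image le quotient de Langlands. Aux places finies, on invoque la Proposition~\ref{pro:entrelac_hol_nonnul} de l'appendice : \(N_v(s)\) est holomorphe et non nul en \(s=1/2\). C'est l'étape délicate, puisque \(\sigma\) n'est pas supposée générique et que Ramanujan n'est pas disponible, mais on l'admet ici. Du côté global, \(L(s+1,\pi\times\rho)\) et \(L(2s+1,\pi,R)\) sont non nuls en \(s=1/2\) par non-annulation sur \(\Re=1\), ou par convergence absolue pour \(\Re s>1\). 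Enfin, \(L(2s,\pi,R)\) a un pôle simple en \(s=1/2\) et \(L(s,\pi\times\rho)\) est holomorphe.

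La dichotomie en découle. Si \(L(1/2,\pi\times\rho)=0\), le zéro compense le pôle simple et \(M(s)\) est holomorphe en \(1/2\). Sinon, \(M(s)\) a un pôle au plus simple, dont le résidu est \(N:=c\cdot\bigotimes_v(\text{opérateurs locaux en }1/2)\), avec \(c\neq 0\). Ce résidu est non nul car chaque facteur local est non nul sur des vecteurs convenables.

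Il reste à passer du terme constant à la série d'Eisenstein. Par la théorie de Langlands et Mœglin–Waldspurger, les pôles de \(E_P^G(f,s)\) pour \(\Re s>0\) coïncident, avec le même ordre, avec ceux de son terme constant le long de \(P\), qui est le seul parabolique associé non trivial puisque \(M\) est maximal. Donc \((s-1/2)E_P^G(-,s)\) est holomorphe, et sa valeur \(\beta\) a pour terme constant \(L_{w^{-1}}\circ N\circ j\), soit \(C^G_{\overline P}\circ\beta=j_{\overline P,1/2}\circ N\) après translation par \(w\). Ainsi \(\beta\neq0\). Le critère de carré intégrabilité de Langlands permet de conclure que \(\beta\) est à valeurs dans \(\cA^2(G)\) : les seuls exposants cuspidaux de \(\beta(f)\) sont dans \(\ind_{\overline P}\pi[1/2]\otimes\sigma\), soit \(-1/2\), qui est strictement négatif.
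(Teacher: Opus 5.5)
Your proposal is correct and follows the paper's proof essentially step by step. The steps match: you factor $M(s)$ into local operators using the unramified Langlands--Shahidi computation, get archimedean holomorphy from temperedness, use the appendix (Proposition~\ref{pro:entrelac_hol_nonnul}) at the finite places of $S$, use the simple pole of $L(2s,\pi,R)$, detect the pole of $E_P^G$ through its constant term (Moeglin--Waldspurger), and obtain square-integrability from the exponent criterion.

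One minor point: drop the detour through Shahidi's normalising factors and \eqref{eq:LShahidi_eq_LRankin}. That identity requires $\sigma$ to be globally generic, which is not available when the lemma is later applied to $\sigma'$. The appendix normalises directly with $L(s,\pi_v\times\rho_v)$, and that is all your argument actually uses.
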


\begin{proof}
  D'après \cite[Théorème IV.1.1]{Waldspurger_Plancherel} en toute place non-archimédienne \(v\) de \(F\) est défini pour \(\Re(s) \gg 0\) un opérateur d'entrelacement
  \begin{align*}
    J_{\ol{P}|P,v,s}: \ind_{P(F_v)}^{G(F_v)} \pi_v[s] \otimes \sigma_v & \longrightarrow \ind_{P(F_v)}^{G(F_v)} \pi_v[s] \otimes \sigma_v \\
    f_v & \longmapsto (g \mapsto \int_{\ol{N}(F_v)} f_v(\ol{n} g) d \ol{n}).
  \end{align*}
  Si \(G_{F_v}\) et \(\pi_v \otimes \sigma_v\) sont non ramifiés alors \(J_{\ol{P}|P,v,s}\) envoie \(f^\mathrm{nr}_{P,v,s}\) sur
  \begin{equation} \label{eq:intertw_nr}
    \frac{L(s, \pi_v \times \rho_v) L(2s, \pi_v, R)}{L(s+1, \pi_v \times \rho_v) L(2s+1, \pi_v, R)} f^\mathrm{nr}_{\ol{P},v,s}
  \end{equation}
  où \(R = \bigwedge^2\) (resp.\ \(\Sym^2\)) si \(G\) est un groupe symplectique ou orthogonal pair (resp.\ orthogonal impair).
  Cette formule découle de \cite[Proposition 4.3.1]{Shahidi_EisL}\footnote{Des représentations \emph{duales} \(\tilde{r}_i\) apparaissent dans l'énoncé de Shahidi, ce n'est pas le cas pour nous car n'utilisons pas la même convention pour l'isomorphisme de Satake: pour un tore déployé \(S\) sur \(F_v\) Shahidi identifie \(X_*(S)\) à \(S(F_v)/S(\cO_{F_v})\) en envoyant \(\mu\) sur la classe de \(\mu(\varpi_v)^{-1}\), où \(\varpi_v\) est une uniformisante, tandis que nous utilisons l'identification opposée.
    De toutes façons les représentations \(\pi_v\) et \(\rho_v\) sont autoduales \dots
    Voir aussi la Remarque \ref{rem:Satake_ind_gen}.}.
  On dispose également \cite[\S 10]{Wallach_book2} d'opérateurs d'entrelacement aux places archimédiennes entre induites \(K_\infty\)-finies (la définition impose de travailler avec des représentations de \(\prod_{v|\infty} M(F_v)\) plutôt qu'avec des \((\mfr,K_{\infty,M})\)-modules, on se contente de restreindre les opérateurs d'entrelacement aux sous-espaces de vecteurs \(K_{\infty}\)-finis).
  Pour \(\Re s \gg 0\) le produit sur presque toutes les places \(v\) des facteurs eulériens apparaissant dans \eqref{eq:intertw_nr} sont absolument convergents et le diagramme suivant est commutatif.
  \[
    \begin{tikzcd}
      \ind_P^G \pi[s] \otimes \sigma \ar[r, "{j_{P,s}}"] \ar[d, "{\prod_v J_{\ol{P}|P,v,s}}"] & \cA_\cusp(N(\A_F) M(F) \backslash G(\A)) \ar[d, "{M(s)}"] \\
      \ind_{\ol{P}}^G \pi[s] \otimes \sigma \ar[r, "{j_{\ol{P},s}}"] & \cA_\cusp(\ol{N}(\A_F) M(F) \backslash G(\A))
    \end{tikzcd}
  \]
  Les opérateurs d'entrelacement locaux \(J_{\ol{P}|P,v,s}\), initialement définis pour \(\Re s \gg 0\), admettent un prolongement méromorphe à \(\C\) (cf.\ \cite[Théorème IV.1.1]{Waldspurger_Plancherel}, \cite[\S 10]{Wallach_book2}).
  Les fonctions \(L(s, \pi \times \rho)\) et \(L(s, \pi, R)\) sont définies par des produits absolument convergents pour \(\Re(s) > 1\), avec prolongement méromorphe à \(\C\).
  Pour une place non-archimédienne \(v \in S\) on écrit \(J_{\ol{P}|P,v,s} = r(s, \pi_v, \rho_v) N_v(s)\) où
  \[ r(s, \pi_v, \rho_v) := \frac {L(s, \pi_v \times \rho_v) L(2s, \pi_v, R)}{L(s+1, \pi_v \times \rho_v)L(2s+1, \pi_v, R)} \epsilon(s, \pi_v \times \rho_v) \epsilon(s, \pi_v, R). \]
  (Les facteurs epsilon ne jouent aucun rôle dans la suite, en fait en \(s=1/2\) seul le facteur \(L(s, \pi_v \times \rho_v)\) peut présenter un pôle.)
  D'après la Proposition \ref{pro:entrelac_hol_nonnul} (voir aussi \S \ref{sec:app_autres_gpes}) l'opérateur d'entrelacement normalisé \(N_v(s)\) est holomorphe et non identiquement nul en \(s=1/2\).
  À une place archimédienne \(v\) de \(F\) la représentation \(\pi_v \otimes \sigma_v\) de \(M(F_v)\) est tempérée et pour \(\Re(s)>0\) l'opérateur d'entrelacement \(J_{\ol{P}|P,v,s}\) est défini par des intégrales intégrales absolument convergentes (uniformément en \(s\) dans un compact de \(\Re^{-1}(]0,+\infty[)\)) \cite[Theorem 7.22]{Knapp_book} et il est donc holomorphe sur ce domaine, et \(J_{\ol{P}|P,v,s}\) n'est pas identiquement nul.
  On a donc une égalité d'opérateurs (méromorphes en la variable \(s\))
  \begin{equation} \label{eq:intertw_norm_prod}
    \prod_v J_{\ol{P}|P,v,s} = \frac {L(s, \pi \times \rho) L(2s, \pi, R)}{L(s+1, \pi \times \rho)L(2s+1, \pi, R)} \epsilon(s, \pi \times \rho) \epsilon(s, \pi, R) N(s)
  \end{equation}
  où \(s \mapsto N(s)\) est holomorphe sur \(\{ s \in \C \,|\, \Re(s) \geq 1/2 \}\) et \(N(s)\) n'est pas identiquement nul pour \(\Re s = 1/2\).
  Remarquons que par hypothèse \(L(s,\pi,R)\) admet un pôle simple en \(s=1\), et les autres facteurs \(L\) et \(\epsilon\) apparaissant dans \eqref{eq:intertw_norm_prod} sont holomorphes en \(s=1/2\).
  On voit déjà que si \(L(1/2, \pi \times \rho) = 0\) alors \(M(s)\) est holomorphe en \(s=1/2\).
  Supposons maintenant \(L(1/2, \pi \times \rho)\) non nulle.
  Alors pour toute \(f \in \ind_P^G \pi[s] \otimes \sigma\) la fonction \(s \mapsto (s-1/2) M(s) j_{P,s}(f)\) est holomorphe en \(s=1/2\).
  Le fait que \(N(1/2)\) n'est pas identiquement nul implique l'existence de \(f \in \ind_P^G \pi[s] \otimes \sigma\) telle que \(s \mapsto (s-1/2) M(s) j_{P,s}(f)\) soit non nulle en \(s=1/2\).
  On en déduit grâce à \cite[Remark IV.4.4]{MoeglinWaldspurger_bookspec} que \(s \mapsto (s-1/2) E_P^G(f,s)\) est holomorphe en \(s=1/2\) et induit un morphisme non  nul \(\ind_P^G \pi[1/2] \otimes \sigma \longrightarrow \cA(G)\).
  D'après la construction de Langlands du spectre résiduel par résidus \cite[\S V]{MoeglinWaldspurger_bookspec} ce morphisme arrive dans \(\cA^2(G)\).
  Dans notre situation particulièrement simple on peut plus directement vérifier le critère sur les exposants \cite[\S I.4.11]{MoeglinWaldspurger_bookspec}.

  L'opérateur \(N\) de la deuxième partie du lemme est, à un élément de \(\C^\times\) près, l'opérateur \(N(1/2)\): l'égalité \(C_{\ol{P}}^G \circ \beta = j_{\ol{P},1/2} \circ N\) découle de la formule \eqref{eq:terme_cst_Eis_Rankin_autodual}, du diagramme commutatif ci-dessus et de la formule \(C_{\ol{P}}^G = L_w \circ C_P^G\).
\end{proof}

On suppose dans la suite que \(L(1/2, \pi \times \rho)\) est non nulle, on doit donc montrer qu'il en va de même de \(L(1/2, \tilde{a}(\pi) \times \tilde{a}(\rho))\).
Il existe un facteur irréductible \(\pi_G\) de l'image de \(\beta\) \eqref{eq:residu_Eis} qui est non ramifié en dehors de \(S\).
Pour une place archimédienne \(v\) le \((\gfr_v,K_v)\)-module \(\pi_{G,v}\) est le quotient de Langlands de \(\ind_{\pfr_v,K_{v,M}}^{\gfr_v,K_v} \pi_v[1/2] \otimes \sigma_v\) (car c'est son seul quotient irréductible).
Le caractère infinitésimal de \(\pi_{G,v}\) est algébrique régulier et \(\pi_{G,v}\) est unitaire, donc elle est cohomologique (d'après \cite{SalamancaRiba} et \cite{VoganZuckerman}): il existe une (unique) représentation algébrique irréductible \(V_v\) de \(G(\C \otimes_{\R} F_v)\) telle que \(H^\bullet(\gfr_v,K_v^0; \pi_{G,v} \otimes V) \neq 0\).
Soit \(q_v\) le degré minimal en lequel cette cohomologie est non nulle.
Soient \(q = \sum_{v | \infty} q_v\) et \(\cX = G(\R \otimes_{\Q} F)/K_\infty^0\).
On voit \(\otimes_{v|\infty} V_v\) comme une représentation irréductible de \((\Res_{F/\Q} G)_{\C}\).
Soit \((V, E \xrightarrow{\iota_0} \C))\) la paire associée par la Proposition \ref{pro:irr_alg_rep_qs} à la représentation irréductible \(\otimes_{v|\infty} V_v\) de \((\Res_{F/\Q} G)_{\C}\), en particulier \(\otimes_{v|\infty} V_v \simeq \C \otimes_{\iota_0,E} V\).
La représentation \(\pi_{G,\ff}\) intervient donc avec multiplicité non nulle dans
\begin{equation} \label{eq:decomp_g_K_coh_V_iota}
 H^q(\gfr,K_\infty^0; \cA^2(G) \otimes_{\C} V_{\C}) \simeq \bigoplus_{\iota: E \to \C} H^q(\gfr,K_\infty^0; \cA^2(G) \otimes_{\C} (\C \myotimes{\iota,E} V)), 
 \end{equation}
plus précisément elle intervient dans le facteur à droite correspondant à \(\iota_0\).
D'après le Théorème \ref{thm:Nair_Rai} la représentation \(\pi_{G,\ff}\) intervient donc dans
\[ \C \otimes_{\Q} W^{\pm} H^q(G, \cX; V) \simeq \bigoplus_{\iota: E \hookrightarrow \C} \C \myotimes{\iota,E} W^{\pm} H^q(G, \cX; V), \]
de nouveau dans le terme correspondant à \(\iota_0\) car l'isomorphisme \eqref{eq:Nair_Rai}, étant fonctoriel en \(V\), est \(E\)-équivariant.
Le Théorème \ref{thm:Nair_Rai} implique aussi que cette représentation de \(G(\A_{F,\ff})\) est semi-simple.

\begin{definition} \label{def:geneigen_colim}
  Soient \(S_0\) un ensemble fini de places de \(F\) contenant toutes les places archimédiennes et \(\tau \simeq \otimes'_v \tau_v\) une représentation admissible irréductible de \(M(\A_F^{S_0})\).

  Pour une représentation complexe admissible \(U\) de \(M(\A_{F,\ff})\) soit \(U_{\chi(\tau)} \subset U\) le sous-espace des \(u \in U\) tels qu'il existe un ensemble fini \(S \supset S_0\) de places de \(F\) et un sous-groupe compact ouvert \(K^S = \prod_{v \not\in S} K_{M,v}\) de \(M(\A_F^S)\) satisfaisant les conditions suivantes:
  \begin{itemize}
  \item pour toute place \(v \not\in S\) le sous-groupe compact ouvert \(K_{M,v}\) de \(M(F_v)\) est hyperspécial,
  \item en notant \(\tau^S = \otimes'_{v \not\in S} \tau_v\) on a \((\tau^S)^{K^S} \neq 0\), et \(K^S\) laisse \(u\) invariant,
  \item en notant \(\chi^S(\tau^S)\) le caractère de l'algèbre de Hecke (commutative) \(\cH_\C(M(\A_F^S), K^S)\) par lequel elle agit sur la droite \((\tau^S)^{K^S}\), il existe un entier \(N \geq 0\) tel que pour tout \(\varphi \in \cH_\C(M(\A_F^S), K^S)\) on a
  \[ (\varphi - \chi^S(\tau^S)(\varphi) \id)^N u = 0. \]
  \end{itemize}

  On définit aussi
  \[ \cA(\ol{N}(\A_F) M(F) \backslash G(\A_F))_{\chi(\tau)} \subset \cA(\ol{N}(\A_F) M(F) \backslash G(\A_F)) \]
  le sous-espace des formes automorphes \(h: \ol{N}(\A_F) M(F) \backslash G(\A_F) \to \C\) telles que pour tout \(k \in K\) la forme automorphe
  \begin{align*}
    M(F) \backslash M(\A_F) & \longrightarrow \C \\
    m & \longmapsto h(mk)
  \end{align*}
  soit dans \(\cA(M)_{\chi(\tau)}\).
\end{definition}

Noter que dans la Définition \ref{def:geneigen_colim} le sous-espace \(U_{\chi(\tau)}\) est une sous-représentation de \(U\), et l'application naturelle
\[ \bigoplus_{[\tau]} U_\tau \longrightarrow U \]
est injective, où la somme directe porte sur les classes d'équivalences de représentations admissibles irréductibles \(\tau\), deux représentations \(\tau\) et \(\tau'\) étant équivalentes si et seulement si \(\tau_v \simeq \tau'_v\) pour presque toute place \(v\).
(On peut penser à \(\chi(\tau)\) comme la classe d'équivalence de \(\tau\).)
En outre si \(U\) est limite directe de représentations admissibles de \(M(\A_{F,\ff})\) (par exemple si \(U\) est admissible ou si \(U = \cA(M)\)), alors cette application est surjective.
En particulier on dispose alors de projections canoniques \(U \to U_{\chi(\tau)}\).

On en déduit une décomposition canonique du \((\gfr,K) \times G(\A_{F,\ff})\)-module
\begin{equation} \label{eq:decomp_cA_Nbar_chi_tau}
  \cA(\ol{N}(\A_F) M(F) \backslash G(\A_F)) = \bigoplus_{[\tau]} \cA(\ol{N}(\A_F) M(F) \backslash G(\A_F))_{\chi(\tau)}.
\end{equation}

\begin{lemma} \label{lem:trad_piGf_res_bord}
  L'image par restriction aux strates de bord correspondant à \(\ol{P}\) de la composante \(\pi_{G,\ff}\)-isotypique
  \begin{equation} \label{eq:comp_isot}
    \left( \C \myotimes{\iota_0,E} W^{\pm} H^q(G, \cX; V) \right)[\pi_{G,\ff}]
  \end{equation}
  est non nulle, plus précisément il existe des entiers positifs \(a,b \geq 0\) tels que l'image de \eqref{eq:comp_isot} dans
  \[ \Ind_{\ol{P}(\A_{F,\ff})}^{G(\A_{F,\ff})} \left( \left( \C \myotimes{\iota_0,E} H^a(M, \cX_{\ol{P}}; H^b(\Lie \ol{N}, V)) \right)_{\chi(\delta_{\ol{P},\ff}^{1/2} \otimes (\pi_{\ff}[1/2] \otimes \sigma_{\ff}))} \right) \]
  est non nulle.
\end{lemma}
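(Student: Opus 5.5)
Pour démontrer le Lemme \ref{lem:trad_piGf_res_bord}, on passe par le côté automorphe. Grâce au diagramme commutatif \eqref{eq:diag_BSres_cstterm} (appliqué au parabolique \(\ol{P}\)), composé avec le diagramme reliant \(W^\pm H^\bullet\) à \(H^\bullet\) après le Théorème \ref{thm:Nair_Rai}, il suffit de montrer une propriété de la \((\gfr,K_\infty^0)\)-cohomologie. Il faut trouver une classe dans \(H^q(\gfr,K_\infty^0; \cA^2(G)\otimes (\C\otimes_{\iota_0,E}V))\), de type \(\pi_{G,\ff}\), dont l'image par le terme constant \(C^G_{\ol{P}}\) est non nulle dans
\[ \Ind_{\ol{P}(\A_{F,\ff})}^{G(\A_{F,\ff})} \bigoplus_{a+b=q} H^a(\mfr,K_{\ol{P}}; \cA(M)_{\chi(\tau)} \otimes H^b(\ol{\nfr},V_\C)), \]
avec \(\tau = \delta_{\ol{P},\ff}^{1/2}\otimes(\pi_\ff[1/2]\otimes\sigma_\ff)\). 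On utilise ici que la décomposition selon les \(\chi(\tau)\) (Définition \ref{def:geneigen_colim}, \eqref{eq:decomp_cA_Nbar_chi_tau}) est compatible aux isomorphismes verticaux, de sorte que la projection sur la composante \(\chi(\tau)\) commute avec la restriction au bord. La rationalité n'intervient pas : l'énoncé porte sur le facteur indexé par \(\iota_0\).

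On choisit la classe via \(\beta\). On prend \(\pi_G \subset \beta(\ind_P^G \pi[1/2]\otimes\sigma)\) et une classe non nulle \(x \in H^q(\gfr,K_\infty^0; \pi_{G}\otimes V_{\iota_0})\), où \(q=\sum q_v\) est le degré minimal. Le Lemme \ref{lem:alt_Lhalf_res} donne \(C^G_{\ol{P}}\circ\beta = j_{\ol{P},1/2}\circ N\). Par suite, \(C^G_{\ol{P}}\) restreint à \(\pi_G\) se factorise par l'application \(N\) (non nulle) de \(\pi_G\) dans \(\ind_{\ol{P}}^G\pi[1/2]\otimes\sigma\), suivie de \(j_{\ol{P},1/2}\). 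Comme \(\pi_G\) est irréductible et que cette composée n'est pas nulle sur \(\pi_G\) (on choisit \(\pi_G\) de sorte que \(N\) ne s'annule pas dessus, ce qui est possible puisque l'image de \(\beta\) est semi-simple et que \(N\) se factorise à travers \(\beta\)), elle est injective. Aux places archimédiennes, son image se trouve dans \(\ind_{\ol{\pfr}_v,K_{v,M}}^{\gfr_v,K_v}\pi_v[1/2]\otimes\sigma_v\). C'est le module anti-standard contenant le quotient de Langlands \(\pi_{G,v}\), car l'induite depuis \(\ol{P}\) avec \(s=1/2>0\) est duale de l'induite standard depuis \(P\). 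Par ailleurs, les valeurs de \(j_{\ol{P},1/2}\) se trouvent dans la composante \(\chi(\tau)\), puisque \(\pi[1/2]\otimes\sigma\), tordue par \(\delta^{1/2}\), est exactement le support cuspidal à presque toutes les places.

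L'étape principale consiste à montrer la non-nullité en cohomologie. On applique le Théorème \ref{thm:RohlfsSpeh} place par place : le plongement \(\pi_{G,v}\hookrightarrow \ind_{\ol{P}_v}\pi_v[1/2]\otimes\sigma_v\) induit une injection en degré minimal \(q_v\). La formule de Künneth sur les places archimédiennes (ces injections sont en degré minimal et la cohomologie de l'induite est nulle en degré inférieur) donne l'injectivité en degré \(q\) du produit tensoriel, et le facteur fini est simplement \(\pi_{G,\ff}\hookrightarrow\ind\). Le point délicat est le suivant : le plongement anti-standard du Théorème \ref{thm:RohlfsSpeh} est défini à partir d'un parabolique minimal de Langlands, alors qu'ici on a une induite depuis \(\ol{P}\) de la représentation tempérée \(\pi_v\otimes\sigma_v\). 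On réalise \(\pi_v\otimes\sigma_v\) comme sous-module d'une induite depuis un parabolique plus petit (les données tempérées se plongent dans des induites unitaires de séries discrètes), puis on utilise l'induction par étages pour factoriser le plongement anti-standard à travers \(\ind_{\ol{P}}\). L'injectivité pour le composé entraîne alors celle pour \(\ind_{\ol P}\).

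On identifie enfin \(H^q(\gfr,K^0;\ind_{\ol{P}}(\dots)\otimes V)\) via Kostant, par \(\bigoplus_{a+b=q}\Ind H^a(\mfr,K_{\ol P};\dots\otimes H^b(\ol\nfr,V))\), comme rappelé avant \eqref{eq:diag_BSres_cstterm}. La classe image est donc non nulle dans un terme \((a,b)\), et sa composante \(\chi(\tau)\) est non nulle, ce qui donne les entiers \(a,b\) de l'énoncé.
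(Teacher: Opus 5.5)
Votre plan est celui du texte : diagramme \eqref{eq:diag_BSres_cstterm} pour \(\ol{P}\), factorisation \(C^G_{\ol P}\circ\beta=j_{\ol P,1/2}\circ N\), puis Théorème \ref{thm:RohlfsSpeh}. Votre raccord entre le module anti-standard de Rohlfs--Speh et \(\ind_{\ol P}\) est correct.

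Au passage, vous n'avez pas à choisir \(\pi_G\). L'image de \(\beta\) est formée de formes de support cuspidal \([M,\pi[1/2]\otimes\sigma]\), et une telle forme dont le terme constant le long de \(\ol P\) est nul est nulle. Donc \(\ker N=\ker\beta\), et l'application \(\mathrm{Im}\,\beta\to\ind_{\ol P}^G\pi[1/2]\otimes\sigma\) induite par \(N\) est injective, en particulier sur le \(\pi_G\) déjà fixé (non ramifié hors de \(S\)).

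En revanche, la dernière étape comporte un vrai trou. Vous établissez la non-nullité de la classe dans
\[ H^q\bigl(\gfr,K_\infty^0;\ind_{\ol P}^G(\pi[1/2]\otimes\sigma)\otimes V_{\iota_0}\bigr)\simeq\bigoplus_{a+b=q}\Ind_{\ol P(\A_{F,\ff})}^{G(\A_{F,\ff})}H^a\bigl(\mfr,K_{\ol P};\tau\otimes H^b(\ol{\nfr},V_{\iota_0})\bigr), \]
où \(V_{\iota_0}=\C\myotimes{\iota_0,E}V\) et \(\tau=\delta_{\ol P}^{1/2}\otimes\pi[1/2]\otimes\sigma\). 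Vous la lisez ensuite directement dans la cohomologie de la strate de bord, c'est-à-dire dans \(\Ind H^a(\mfr,K_{\ol P};\cA(M)_{\chi(\tau)}\otimes H^b(\ol{\nfr},V_{\iota_0}))\).

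Ce passage se fait par l'application induite en cohomologie par \(j_{\ol P,1/2}\). Elle n'est pas injective en général, bien que \(j_{\ol P,1/2}\) le soit. En effet, l'espace propre généralisé \(\cA(M)_{\chi(\tau)}\) contient \(\Sym^\bullet\afr_M^*\otimes\tau\) (produits par des polynômes en \(m_M\)), et \(H^\bullet(\afr_M;\Sym^\bullet\afr_M^*)\) est \(\C\), concentré en degré \(0\). Dans la décomposition
\[ H^a(\mfr,K_{\ol P};\tau\otimes W)\simeq\bigoplus_{k\geq0}H^{a-k}(\mfr/\afr_M,K_{\ol P};\tau\otimes W)\otimes\wedge^k\afr_M^*, \]
toutes les composantes avec \(k\geq1\) meurent donc dans la cohomologie de \(\cA(M)\). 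C'est la formule de Franke rappelée avant le Théorème \ref{thm:Nair_Rai}, qui traduit le fait que \(\cX_{\ol P}\) est un quotient par \(A_M(\R)^0\). Rien dans votre argument n'exclut que la classe issue de \(\pi_G\) ait une composante de ce type.

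Le remède est la seconde assertion du Théorème \ref{thm:RohlfsSpeh} (annulation en degré \(<q_v\)). Vous ne l'invoquez que pour Künneth, où elle n'est pas nécessaire : un produit tensoriel d'injections est injectif, et la composante de multidegré \((q_v)_v\) est facteur direct. Elle sert en fait ici.

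Pour la transférer à \(I_v=\ind_{\ol P_v}(\pi_v[1/2]\otimes\sigma_v)\), il ne suffit pas que \(I_v\) soit un sous-module du module anti-standard de Rohlfs--Speh ; il faut qu'il en soit facteur direct. C'est le cas, puisque \(\pi_v\otimes\sigma_v\) est facteur direct d'une induite unitaire (semi-simple) de séries discrètes et que l'induction est exacte.

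Par Künneth, on a alors \(H^{d}(\gfr,K_\infty^0;I\otimes V_{\iota_0})=0\) pour \(d<q\), donc \(H^{a'}(\mfr,K_{\ol P};\tau\otimes H^b(\ol{\nfr},V_{\iota_0}))=0\) pour \(a'+b<q\). Comme \(H^{a-k}(\mfr/\afr_M,K_{\ol P};\tau\otimes W)\) est facteur direct de \(H^{a-k}(\mfr,K_{\ol P};\tau\otimes W)\), les composantes avec \(k\geq1\) sont nulles en degré total \(q\).

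La classe vit donc dans la partie \(k=0\). Celle-ci s'injecte dans la cohomologie de \(\cA(M)_{\chi(\tau)}\), toujours par la formule de Franke : \(\tau\) est facteur direct de la partie cuspidale, elle-même facteur direct de \(\cA(M)\) via la décomposition par support cuspidal. C'est pour cela que les deux assertions du Théorème \ref{thm:RohlfsSpeh} sont nécessaires.
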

\begin{proof}
  Cela résulte du diagramme commutatif \eqref{eq:diag_BSres_cstterm} (pour \(\ol{P}\)), de la factorisation \(C_{\ol{P}}^G \circ \beta = j_{\ol{P},1/2} \circ N\) du Lemme \ref{lem:alt_Lhalf_res}, et (crucialement) du Théorème \ref{thm:RohlfsSpeh}.
\end{proof}

\begin{lemma} \label{lem:trad_a_piGf_res_bord}
  L'image de la composée
  \begin{align*}
    &\ \cA^2(G)[a(\pi_{G,\ff})] \\
    \xrightarrow{C_{\ol{P}}^G} &\ \cA(\ol{N}(\A_F) M(F) \backslash G(\A_F)) \\
    \twoheadrightarrow &\ \cA(\ol{N}(\A_F) M(F) \backslash G(\A_F))_{\chi(a(\delta_{\ol{P},\ff}^{1/2} \otimes (\pi_{\ff}[1/2] \otimes \sigma_{\ff})))}
  \end{align*}
  est non nulle, où la deuxième flèche est la projection donnée par \eqref{eq:decomp_cA_Nbar_chi_tau}.
\end{lemma}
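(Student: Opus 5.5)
The plan is to transport the nonvanishing of Lemme~\ref{lem:trad_piGf_res_bord} by the automorphism \(a\), using that every map in that lemma is defined over \(\Q\) (or over \(E\)). Then we translate back to automorphic forms through Théorème~\ref{thm:Nair_Rai} and the diagram \eqref{eq:diag_BSres_cstterm}. Write \(\tau := \delta_{\ol{P},\ff}^{1/2} \otimes (\pi_{\ff}[1/2] \otimes \sigma_{\ff})\).

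\textbf{Rational structures.} The restriction to the boundary strata for \(\ol{P}\), composed with \(W^\pm H^q(G,\cX;V) \to H^q(G,\cX;V)\), is a morphism of \(\Q[G(\A_{F,\ff})]\)-modules. Its target \(\Ind_{\ol{P}(\A_{F,\ff})}^{G(\A_{F,\ff})} H^a(M,\cX_{\ol{P}};H^b(\Lie \ol{N},V))\) is defined over \(\Q\) (non-normalized induction), and the morphism is \(E\)-linear by functoriality in \(V\). For \(a \in \Aut(\C)\), the semilinear bijection \(x \otimes v \mapsto a(x) \otimes v\) identifies \(\C \otimes_{\iota_0,E} W^\pm H^q\) with \(\C \otimes_{a \circ \iota_0,E} W^\pm H^q\). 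It commutes with this restriction map and with the action of \(G(\A_{F,\ff})\). (The exact convention, \(a\circ\iota_0\) versus \(a^{-1}\circ\iota_0\), is to be matched with the proof of Proposition~\ref{pro:AutC_poids_et_type_autodual}.)

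\textbf{Transport of the projections.} This bijection sends the \(\pi_{G,\ff}\)-isotypic component onto the \(a(\pi_{G,\ff})\)-isotypic component. It also sends the generalized eigenspace \(U_{\chi(\tau)}\) onto \((aU)_{\chi(a(\tau))}\). The reason is that the unramified Hecke algebras \(\cH_\C(M(\A_F^S),K^S)\) are extensions of scalars of \(\cH_\Q\), and the conditions of Définition~\ref{def:geneigen_colim} are polynomial identities \((\varphi - \chi^S(\tau^S)(\varphi))^N u=0\) with \(\varphi\) rational; applying \(a\) replaces \(\chi^S(\tau^S)\) by \(a\circ\chi^S(\tau^S) = \chi^S(a(\tau^S))\). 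We do not need the twisted Satake normalization here, since we work directly with Hecke characters, and the factor \(\delta^{1/2}\) is kept inside \(\tau\). Hence Lemme~\ref{lem:trad_piGf_res_bord} gives
\[ \Bigl(\C \otimes_{a\circ\iota_0,E} W^\pm H^q(G,\cX;V)\Bigr)[a(\pi_{G,\ff})] \longrightarrow \Ind\Bigl(\bigl(\C \otimes_{a\circ\iota_0,E} H^a(M,\cX_{\ol{P}};H^b(\Lie\ol{N},V))\bigr)_{\chi(a(\tau))}\Bigr) \]
with nonzero image.

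\textbf{Back to automorphic forms.} Set \(V' = \C\otimes_{a\circ\iota_0,E}V\). By Théorème~\ref{thm:Nair_Rai} and the commutative square after it, the left-hand side is \(H^q(\gfr,K_\infty^0; \cA^2(G)[a(\pi_{G,\ff})]\otimes V')\), using that isotypic projection commutes with these \(G(\A_{F,\ff})\)-equivariant isomorphisms. By \eqref{eq:diag_BSres_cstterm}, the composite with boundary restriction is the map induced in \((\gfr,K)\)-cohomology by \(C^G_{\ol{P}}\), landing in \(\Ind\, H^a(\mfr,K_{\ol{P}};\cA(M)\otimes H^b(\nfr_{\ol{P}},V'))\). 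The right vertical isomorphism is compatible with the decompositions by \(\chi(\cdot)\) on both sides, by the Remarque following \eqref{eq:diag_BSres_cstterm}, since both come from the action of the unramified Hecke algebra of \(M\). Therefore the \(\chi(a(\tau))\)-component on the automorphic side is computed from \(\cA(\ol{N}(\A_F)M(F)\backslash G(\A_F))_{\chi(a(\tau))}\) by a functorial construction. If the composite in the statement were zero, every class in the image would have zero \(\chi(a(\tau))\)-component, contradicting the nonvanishing above.

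\textbf{Main obstacle.} The delicate point is this last compatibility. One must check that the decomposition \eqref{eq:decomp_cA_Nbar_chi_tau} is respected by the identification of \(H^\bullet(\gfr,K;\Ind \cA(M_{\ol{P}})\otimes V')\) with \(\Ind\, H^\bullet(\qfr,K_{\ol{P}};\ldots)\) and by Kostant's decomposition. One must also check that the rational side's generalized eigenspaces match the automorphic ones; this presumably uses that \(\cA(M)\) is a direct limit of admissible representations, so the projections exist. I would verify it by noting that every map involved is \(M(\A_F^S)\)-Hecke equivariant for the unramified Hecke algebras, which forces compatibility with generalized eigenspace decompositions.
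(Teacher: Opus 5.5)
Your proposal is correct and is essentially the paper's proof. The paper transports Lemme~\ref{lem:trad_piGf_res_bord} by \(a\) using only that boundary restriction and non-normalized parabolic induction are defined over \(\Q\), then concludes via Théorème~\ref{thm:Nair_Rai} and the diagram \eqref{eq:diag_BSres_cstterm}, exactly as in your three steps. Your extra checks (transport of the Hecke generalized eigenspaces, compatibility of the \(\chi(\cdot)\)-decompositions with the vertical isomorphisms) only spell out what the paper leaves implicit, and your \(a\circ\iota_0\) versus the paper's \(a^{-1}\circ\iota_0\) is merely a matter of the convention chosen for \(a(\pi_{\ff})\).
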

\begin{proof}
  La restriction aux strates de bord et l'induction parabolique non normalisée étant définies sur \(\Q\), on déduit du Lemme \ref{lem:trad_piGf_res_bord} que l'image de la composante isotypique
  \[ \left( \C \myotimes{a^{-1} \iota_0,E} W^{\pm} H^q(G, \cX; V) \right)[a(\pi_{G,\ff})] \]
  dans
  \[ \Ind_{\ol{P}(\A_{F,\ff})}^{G(\A_{F,\ff})} \left( \left( \C \myotimes{a^{-1} \iota_0,E} H^a(M, \cX_{\ol{P}}; H^b(\Lie \ol{N}, V)) \right)_{\chi(a(\delta_{\ol{P},\ff}^{1/2} \otimes (\pi_{\ff}[1/2] \otimes \sigma_{\ff})))} \right) \]
  est également non nulle.
  On conclut en utilisant à nouveau le Théorème \ref{thm:Nair_Rai} et le diagramme \eqref{eq:diag_BSres_cstterm}.
\end{proof}

\begin{lemma} \label{lem:a_delta_pi_sigma}
  Pour une place non-archimédienne \(v\) de \(F\) on a
  \[ a(\delta_{\ol{P},v}^{1/2} \otimes (\pi_v[1/2] \otimes \sigma_v)) = \delta_{\ol{P},v}^{1/2} \otimes (\tilde{a}(\pi)_v[1/2] \otimes a(\sigma_v)). \]
\end{lemma}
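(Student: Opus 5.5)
Le plan est de comparer les deux membres facteur par facteur, sur \(M(F_v) = \GL_r(F_v) \times G'(F_v)\), où \(a(\tau)\) désigne \(\C \otimes_{a^{-1},\C} \tau\). Le caractère \(\delta_{\ol{P},v}^{1/2}\) est une puissance de \(|\det m_{\GL}|_v\), c'est-à-dire \(q_v^{-k\,\mathrm{val}(\det m_{\GL})}\) avec \(k \in \frac12\Z\). On a \(2\rho_{\ol{P}} = -(2n-r\pm1)\det\) ou \(-(2n-r)\det\) selon le type de \(G\) ; on vérifiera au cas par cas les trois cas \(\SO_{2n+1}\), \(\Sp_{2n}\), \(\SO_{2n}^\alpha\). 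Le point essentiel est que, pour \(x \in \Q\), on a \(a(q_v^{x}) = q_v^x\) si \(x\in\Z\) et \(a(q_v^{x}) = \pm q_v^x\) si \(x \in \frac12+\Z\), le signe étant \(\eps_a = a(\sqrt{q_v})/\sqrt{q_v}\).

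On regroupera donc le côté \(\GL_r\) en \(\delta_{\ol{P},v}^{1/2}|\det|^{1/2} \otimes \pi_v\) et on distinguera selon la parité. Le caractère \(|\det|^{1/2}\delta_{\ol{P},v}^{1/2}\) est \(|\det|^{e}\). Si \(e\in\Z\), il est fixé par \(a\), et on doit alors montrer \(a(\pi_v) \simeq \tilde{a}(\pi)_v\) modulo le décalage par \(|\det|^{1/2}\) de chaque membre. Si \(e \in \frac12 + \Z\), on a \(a(|\det|^e) = |\det|^e\,\eps_a^{\mathrm{val}\det}\), et le signe non ramifié \(\eps_a^{\mathrm{val}\det}\) doit être absorbé exactement par la différence entre \(a(\pi_v)\) et \(\tilde{a}(\pi)_v\). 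C'est précisément là qu'intervient la Définition \ref{def:tilde_action}. Si \(\pi\) est algébrique, on a \(\tilde{a}(\pi)_v = a(\pi_v)\), puisque \(a(\pi)_\ff = a(\pi_\ff)\). Si \(\pi\) est demi-algébrique, on a \(\tilde{a}(\pi)_v = a(\pi_v[1/2])[-1/2] = a(\pi_v)\otimes \eps_a^{\mathrm{val}\det}\). Il faut alors vérifier, dans chacun des cas (1)--(3), que la parité de \(e\) coïncide avec l'algébricité ou la demi-algébricité de \(\pi\). Concrètement, \(e = \frac12 - \langle\rho_{\ol P}\rangle\) où \(\rho_P = \frac{2n-r+1}{2}\), \(\frac{2n-r}{2}\) ou \(\frac{2n-r-1}{2}\) selon que \(G\) est \(\Sp_{2n}\), \(\SO_{2n+1}\) ou \(\SO_{2n}\). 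On confronte ensuite cela aux hypothèses : par exemple, pour \(G=\Sp\), \(t\) est impair et \(\pi\) symplectique algébrique ; il faut alors \(r\) pair, donc \(e\) entier, ce qui est cohérent. Les autres cas se traitent de même, avec \(\pi\) symplectique algébrique ou orthogonale (demi-)algébrique selon la parité de \(r\). C'est cette vérification de parité, cas par cas, qui constitue l'obstacle principal ; si un cas semble incohérent, je recalculerai \(\rho_P\) avec les conventions de la Proposition \ref{pro:Sat_tordu}.

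Sur le facteur \(G'\), l'énoncé demande seulement \(a(\sigma_v)\) des deux côtés, sans rien à démontrer. En revanche, pour identifier ensuite \(a(\sigma)\) au paramètre \(\tilde{a}(\rho)\), il faudra utiliser les Exemples \ref{exa:fonct_Sat_tordu} et \ref{exa:fonct_Sat_tordu_SO_impair}, ce qui sort toutefois du présent lemme. Il ne reste alors qu'à combiner les deux facteurs, l'action de \(a\) commutant au produit tensoriel externe.
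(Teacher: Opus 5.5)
Your proposal is correct and follows the paper's proof. Both write \(\delta_{\ol{P},v}^{1/2} \otimes \pi_v[1/2] = \pi_v[e]\) with \(e = \tfrac12 - \rho_P = \delta/2 - n + r/2\); you wrote \(\tfrac12 - \langle \rho_{\ol{P}} \rangle\), which has the wrong sign, but the class of \(e\) modulo \(\Z\) is unchanged. Both then check case by case that \(e \in \Z\) exactly when \(\pi\) is algebraic (\(\Sp_{2n}\), \(\SO_{2n}^\alpha\), and \(\SO_{2n+1}\) with \(r\) odd), and \(e \in \tfrac12 + \Z\) exactly when \(\pi\) is demi-algebraic (\(\SO_{2n+1}\) with \(r\) even). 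Your explicit tracking of the unramified sign \(\eps_a^{\mathrm{val}\det}\) only spells out what the paper leaves implicit in the identity \(a(\pi_v[e]) = \tilde{a}(\pi)_v[e]\).
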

\begin{proof}
  Un petit calcul au cas par cas nous donne
  \[ \delta_{\ol{P},v}^{1/2} \otimes (\pi_v[1/2] \otimes \sigma_v) = \pi_v[\delta/2-n+r/2] \otimes \sigma_v \]
  où
  \[ \delta =
    \begin{cases}
      1 & \text{ si } G = \SO_{2n+1}, \\
      0 & \text{ si } G = \Sp_{2n}, \\
      2 & \text{ si } G = \SO_{2n}^\alpha.
    \end{cases} \]
  Si \(\hat{G}\) est orthogonal, c'est-à-dire si \(G = \Sp_{2n}\) ou \(\SO_{2n}^\alpha\), alors \(r\) et \(\delta\) sont pairs et \(\pi\) est algébrique, donc
  \[ a(\pi_v[\delta/2-n+r/2]) = a(\pi_v)[\delta/2-n+r/2] = \tilde{a}(\pi)_v [\delta/2-n+r/2]. \]
  Si \(G = \SO_{2n+1}\) alors \(\pi\) est algébrique (resp.\ demi-algébrique) si \(r\) est impair (resp.\ pair), et quelle que soit la parité de \(r\) on conclut
  \[ a(\pi_v[\delta/2-n+r/2]) = \tilde{a}(\pi)_v [\delta/2-n+r/2]. \]
\end{proof}

\begin{lemma} \label{lem:a_sigma_cusp}
  Le sous-espace \(\cA(G')_{\chi(a(\sigma_{\ff}))}\) de \(\cA(G')\) est inclus dans \(\cA_\cusp(G')\), et toute sous-représentation irréductible de \(\cA(G')_{\chi(a(\sigma_{\ff}))}\) a pour paramètre d'Arthur \(\tilde{a}(\rho)\).
\end{lemma}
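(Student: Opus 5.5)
The plan is to control $\cA(G')_{\chi(a(\sigma_{\ff}))}$ through cuspidal supports. By \cite[\S III.2.6]{MoeglinWaldspurger_bookspec}, $\cA(G')$ decomposes as a direct sum $\bigoplus_{\{L,\tau_L\}} \cA(G')_{\{L,\tau_L\}}$ over classes of cuspidal data, where $L$ is a Levi subgroup of $G'$ and $\tau_L$ is a cuspidal automorphic representation of $L$ (not necessarily unitary). The summand with $L = G'$ is exactly $\cA_\cusp(G')[\tau_{G'}]$. On each summand the unramified Hecke algebras, at the places $v$ where the datum is unramified, act by generalized eigencharacters. These characters are given by the Satake parameter obtained by inducing $c(\tau_{L,v})$ from $L$ to $G'$. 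Hence $\cA(G')_{\chi(a(\sigma_{\ff}))}$ is the sum of the summands $\cA(G')_{\{L,\tau_L\}}$ for which this induced Satake parameter equals $c(a(\sigma_v))$ for almost all $v$. It therefore suffices to show that any such datum has $L = G'$ and that the resulting cuspidal representation has Arthur parameter $\tilde a(\rho)$.

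First I compute $\Std_{{}^L G'}(c(a(\sigma_v)))$ at almost every $v$, case by case.
\begin{itemize}
\item If $G'$ is symplectic, or split or quasi-split even orthogonal, take $\epsilon = 0$ in Proposition~\ref{pro:Sat_tordu} (Example~\ref{exa:fonct_Sat_tordu}). This gives $c(a(\sigma_v)) = a(c(\sigma_v))$, hence $\Std(c(a(\sigma_v))) = a(c(\rho_v))$. This equals $c(\tilde a(\rho)_v)$: for $t$ odd by Example~\ref{exa:fonct_Sat_tordu} for $\GL_t$ ($\rho$ algebraic), and for $t$ even by Proposition~\ref{pro:AutC_SO_reg} ($\rho$ half-algebraic and $\SO$-regular).
\item If $G' = \SO_{t+1}$ ($\rho$ symplectic, algebraic, $t$ even), combine Example~\ref{exa:fonct_Sat_tordu_SO_impair} with the $\GL_t$ case of Example~\ref{exa:fonct_Sat_tordu}, exactly as in the proof of Proposition~\ref{pro:AutC_poids_et_type_autodual}. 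This gives $\Std(c(a(\sigma_v))) = q_v^{1/2} a(q_v^{-1/2} c(\rho_v)) = c(a(\rho)_v)$.
\end{itemize}
In every case $\tilde a(\rho)$ is a cuspidal autodual representation of $\GL_{t,F}$ of the same type as $\rho$, by Propositions~\ref{pro:AutC_poids_et_type_autodual} and~\ref{pro:AutC_SO_reg}.

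Now let $\{L,\tau_L\}$ be a cuspidal datum contributing to $\cA(G')_{\chi(a(\sigma_{\ff}))}$. Write $L = \GL_{n_1}\times\cdots\times\GL_{n_k}\times G''$ and $\tau_L = \pi_1[s_1]\otimes\cdots\otimes\pi_k[s_k]\otimes\tau''$, with $\tau''$ cuspidal on $G''$ and $\pi_i$ unitary cuspidal. Let $\bigoplus_j \pi''_j\otimes\mathrm{sp}(d_j)$ be the Arthur--Langlands parameter of $\tau''$. By \eqref{eq:caract_para_Arthur_Satake}, the standard Satake parameter of the datum at almost all $v$ is
\[ \bigoplus_i \bigl(c(\pi_{i,v})q_v^{s_i} \oplus c(\pi_{i,v}^\vee)q_v^{-s_i}\bigr) \oplus \bigoplus_j\bigoplus_{l=0}^{d_j-1} q_v^{(d_j-1)/2-l}c(\pi''_{j,v}). \]
This is the Satake parameter of a formal sum of cuspidal representations of general linear groups. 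By the result of \cite{JacquetShalika_Euler2} recalled in \S\ref{sec:para_Arthur_Satake}, this formal sum must equal the single cuspidal representation $\tilde a(\rho)$. Since each index $i$ contributes at least two terms, we get $k = 0$, so $L = G'$. The same count forces a single $j$ with $d_j = 1$ and $\pi''_1 = \tilde a(\rho)$. Hence the datum is cuspidal on $G'$ itself and its summand lies in $\cA_\cusp(G')$. Any irreducible subrepresentation is then a cuspidal representation of $G'$ whose Satake parameters match $\tilde a(\rho)$ at almost all places, so by \eqref{eq:caract_para_Arthur_Satake} its Arthur parameter is $\tilde a(\rho)$.

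The main obstacle I expect is making precise that the $\chi$-component of Definition~\ref{def:geneigen_colim} is exactly the sum of cuspidal-support components with matching Satake parameters almost everywhere. This needs two facts. First, on $\cA(G')_{\{L,\tau_L\}}$ the spherical Hecke algebras at good places act through generalized eigencharacters determined by the cuspidal support. This follows from the construction of these spaces via derivatives of Eisenstein series, which are unramified at almost all places, together with Satake compatibility with parabolic induction. Second, distinct Hecke characters give independent summands, which was noted after Definition~\ref{def:geneigen_colim}. I would address this first, then run the Jacquet--Shalika count.
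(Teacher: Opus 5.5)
Your proposal is correct and follows essentially the paper's proof. It uses the same twisted-Satake computation of $\Std_{{}^L G'}(c(a(\sigma_v)))$ via Examples~\ref{exa:fonct_Sat_tordu} and~\ref{exa:fonct_Sat_tordu_SO_impair}, Arthur's classification applied only to the classical factor of a proper Levi, and a Jacquet--Shalika count against the single cuspidal $\tilde a(\rho)$.

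The only difference is where you get the input that Hecke characters of non-cuspidal automorphic forms are induced from cuspidal data on proper Levis. The paper takes it directly from \cite[Corollary 1 p.236]{Franke_weighted}, applied to $\cA(G')/\cA_\cusp(G')$. You derive it from the cuspidal-support decomposition of \cite[\S III.2.6]{MoeglinWaldspurger_bookspec}, and that works: the eigencharacter compatibility you flag follows from the injectivity of the map to cuspidal components of constant terms, together with the compatibility of Satake with parabolic induction.
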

\begin{proof}
  On a une décomposition canonique (similaire à \ref{eq:decomp_cA_Nbar_chi_tau})
  \[ \cA(G')/\cA_\cusp(G') = \bigoplus_{[\tau]} (\cA(G')/\cA_\cusp(G'))_{\chi(\tau)}. \]
  Considérons une classe d'équivalence \([\tau]\) telle que le facteur associé soit non nul.
  Il résulte de \cite[Corollary 1 p.236]{Franke_weighted} qu'il existe un sous-groupe de Lévi propre \(L\) de \(G'\), un ensemble fini \(S' \supset S\) de places de \(F\) et une représentation automorphe cuspidale \(\tau_L\) pour \(L\) non ramifiée hors de \(S'\) tels que pour tout \(v \not\in S'\) la représentation \(\tau_v\) est non ramifiée et son paramètre de Satake \(c(\tau_v)\) est l'image de \(c(\tau_{L,v})\) par l'application induite par \({}^L L \hookrightarrow {}^L G'\) sur les classes de conjugaison sous \(\hat{L}\) et \(\hat{G'}\).
  Comme \(L\) s'identifie à un produits de groupes linéaires et d'un groupe classique quasi-déployé du même type que \(G'\), la classification endoscopique d'Arthur \cite{ArthurBook} implique l'existence d'une famille \((\eta_i)_{i \in I}\) de représentations automorphes cuspidales (pas forcément unitaires) de groupes linéaires non ramifiées hors de \(S'\) telles que pour toute place \(v \not\in S'\) on ait l'égalité de classes de conjugaisons dans \(\GL_N(\C)\)
  \begin{equation} \label{eq:std_LGprime_chi}
    \Std_{{}^L G'}(c(\tau_{L,v})) = \bigoplus_{i \in I} c(\eta_{i,v}).
  \end{equation}
  Le fait que \(L\) est propre implique \(|I| \geq 3\).

  Par ailleurs d'après les Exemples \ref{exa:fonct_Sat_tordu} et \ref{exa:fonct_Sat_tordu_SO_impair} pour \(v \not\in S\) on a \(c(a(\sigma_v)) = q_v^{-\epsilon/2} a(q_v^{\epsilon/2} c(\sigma_v))\) où \(\epsilon=0\) (resp.\ \(\epsilon=1\)) si \(G=\Sp_{2n}\) ou \(\SO_{2n}^\alpha\) (resp.\ \(G = \SO_{2n+1}\)).
  On en déduit
  \begin{equation} \label{eq:std_LGprime_asigma}
    \Std_{{}^L G'}(c(a(\sigma_v))) = q_v^{-\epsilon/2} a(q_v^{\epsilon/2} \Std_{{}^L G'}(c(\sigma_v))) = q_v^{-\epsilon/2} a(q_v^{\epsilon/2} c(\rho_v)) = c(\tilde{a}(\rho)_v)
  \end{equation}
  où la dernière égalité utilise l'Exemple \ref{exa:fonct_Sat_tordu} pour un groupe linéaire (et la définition de \(\tilde{a}(\rho)\) au cas par cas).

  Comparant \eqref{eq:std_LGprime_chi} et \eqref{eq:std_LGprime_asigma} on conclut grâce au théorème de Jacquet-Shalika que le caractère \(\chi(a(\sigma_{\ff}))\) n'intervient pas dans \(\cA(G')/\cA_\cusp(G')\).
  On a donc \(\cA_\cusp(G')_{\chi(a(\sigma_{\ff}))} = \cA(G')_{\chi(a(\sigma_{\ff}))}\).
  Le fait que toute sous-représentation irréductible de \(\cA^2(G')_{\chi(a(\sigma_{\ff}))}\) a pour paramètre d'Arthur \(\tilde{a}(\rho)\) résulte de l'égalité \eqref{eq:std_LGprime_asigma}.
\end{proof}

\begin{lemma} \label{lem:cA_geneigen_subset_cusp}
  Le sous-espace (introduit dans la Définition \ref{def:geneigen_colim})
  \[ \cA(\ol{N}(\A_F) M(F) \backslash G(\A_F))_{\chi(a(\delta_{\ol{P},\ff}^{1/2} \otimes (\pi_{\ff}[1/2] \otimes \sigma_{\ff})))} \]
  de \(\cA(\ol{N}(\A_F) M(F) \backslash G(\A_F))\) est inclus dans \(\cA_\cusp(\ol{N}(\A_F) M(F) \backslash G(\A_F))\).
\end{lemma}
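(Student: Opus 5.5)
On ramène l'énoncé à une assertion sur \(M = \GL_r \times G'\), puis on la déduit du Lemme \ref{lem:a_sigma_cusp}, du côté \(\GL_r\), et d'un argument de Jacquet-Shalika pour les contributions non cuspidales du côté \(\GL_r\).

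\emph{Réduction à \(M\).} Par la Définition \ref{def:geneigen_colim}, une forme \(h\) du sous-espace considéré est caractérisée par ses restrictions \(m \mapsto h(mk)\), \(k \in K\), qui sont des formes automorphes sur \(M\) dans \(\cA(M)_{\chi(\tau)}\), avec \(\tau := a(\delta_{\ol{P},\ff}^{1/2} \otimes (\pi_{\ff}[1/2] \otimes \sigma_{\ff}))\). Comme la cuspidalité d'une fonction sur \(\ol{N}(\A_F) M(F) \backslash G(\A_F)\) se teste sur ces restrictions, il suffit de montrer l'inclusion \(\cA(M)_{\chi(\tau)} \subset \cA_\cusp(M)\). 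D'après le Lemme \ref{lem:a_delta_pi_sigma}, \(\tau\) est, hors d'un ensemble fini de places,
\[ \tau_v = \delta_{\ol{P},v}^{1/2} \otimes (\tilde{a}(\pi)_v[1/2] \otimes a(\sigma_v)). \]
Le caractère \(\delta_{\ol{P}}^{1/2}\) est une torsion par \(|\det|^{x}\) sur le facteur \(\GL_r\), automorphe et globale. On peut donc l'absorber et l'on se ramène à un caractère de la forme \(\chi(\pi'_{\ff} \otimes a(\sigma_{\ff}))\), où \(\pi' = \tilde{a}(\pi)[y]\) est une représentation automorphe cuspidale de \(\GL_r(\A_F)\) pour un \(y \in \R\).

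\emph{Décomposition selon le quotient non cuspidal.} Comme dans la preuve du Lemme \ref{lem:a_sigma_cusp}, on écrit
\[ \cA(M)/\cA_\cusp(M) = \bigoplus_{[\tau]} (\cA(M)/\cA_\cusp(M))_{\chi(\tau)}, \]
et l'on applique Franke \cite[Corollary 1 p.236]{Franke_weighted} : si le facteur relatif à \(\chi(\pi' \otimes a(\sigma))\) était non nul, il existerait un sous-groupe de Lévi propre \(L = L_1 \times L_2 \subset \GL_r \times G'\) et une représentation cuspidale \(\tau_L\) de \(L\) dont les paramètres de Satake se transfèrent en ceux de \(\pi' \otimes a(\sigma)\) presque partout. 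La propreté de \(L\) laisse deux cas.

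Si \(L_1 \subsetneq \GL_r\), les paramètres de Satake de \(\pi'_v\) coïncideraient presque partout avec ceux d'une somme d'au moins deux représentations cuspidales de groupes linéaires. Cela contredit Jacquet-Shalika \cite{JacquetShalika_Euler2}, puisque \(\pi'\) est cuspidale.

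Si \(L_2 \subsetneq G'\), on compare les classes \(\Std_{{}^L G'}\), exactement comme dans le Lemme \ref{lem:a_sigma_cusp}. La classe de \(a(\sigma_v)\) est \(c(\tilde{a}(\rho)_v)\) par \eqref{eq:std_LGprime_asigma}, avec \(\tilde{a}(\rho)\) cuspidale. Celle de \(\tau_{L_2}\) est, via Arthur, une somme d'au moins deux classes cuspidales (\(|I| \geq 3\)), ce qui contredit encore Jacquet-Shalika. On peut aussi invoquer directement le Lemme \ref{lem:a_sigma_cusp}, en notant que \(\cA(M)\) s'identifie, à la somme des translatés par le centre près, à \(\cA(\GL_r) \otimes \cA(G')\) et que les composantes propres généralisées se factorisent.

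Le seul point délicat est cette factorisation des espaces propres généralisés pour \(M = \GL_r \times G'\), combinée à la description du support cuspidal de \(\cA(M)\) modulo les formes cuspidales. On le traite en utilisant la décomposition de \cite[\S III.2.6]{MoeglinWaldspurger_bookspec} selon le support cuspidal, qui est compatible aux produits, et en appliquant séparément Jacquet-Shalika au facteur \(\GL_r\) et le Lemme \ref{lem:a_sigma_cusp} au facteur \(G'\).
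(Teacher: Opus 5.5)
Your proof is correct and is essentially the paper's argument. The paper likewise uses Lemme~\ref{lem:a_delta_pi_sigma} to rewrite the character, invokes Lemme~\ref{lem:a_sigma_cusp} for the $G'$ factor, and treats the $\GL_r$ factor by the same Franke--Jacquet--Shalika argument, without Arthur's classification. The only cosmetic difference is that you apply Franke's corollary to $M=\GL_r\times G'$ directly and split according to which factor of $L=L_1\times L_2$ is proper, whereas the paper reduces to the two factors separately; both work.
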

\begin{proof}
  Vus les Lemmes \ref{lem:a_delta_pi_sigma} et \ref{lem:a_sigma_cusp} il reste à montrer l'inclusion
  \[ \cA(\GL_{r,F})_{\chi(\tilde{a}(\pi)_{\ff})} \subset \cA_\cusp(\GL_{r,F}) \]
  et la preuve est similaire à celle du Lemme \ref{lem:a_sigma_cusp} (et même un peu plus simple puisqu'on ne fait pas appel à la classification d'Arthur).
\end{proof}

Rappelons qu'on a \cite[\S III.2.6]{MoeglinWaldspurger_bookspec} une décomposition par support cuspidal de l'espace des formes automorphes pour \(G\)
\[ \cA(G) = \bigoplus_D \cA(G)_D \]
où la somme porte sur les données cuspidales \(D = [L,\pi_L]\), i.e.\ les classes d'équivalence de paires \((L,\pi_L)\) où \(L\) est un sous-groupe de Lévi de \(G\) et \(\pi_L\) est un \((\lfr,K_L) \times L(\A_{F,\ff})\)-module admissible irréductible apparaissant dans \(\cA_\mathrm{cusp}(L)\), pour la relation d'équivalence donnée par la conjugaison par \(G(F)\) et l'équivalence de \((\lfr,K_L) \times L(\A_{F,\ff})\)-modules.
Comme expliqué en \cite[\S III.3.3]{MoeglinWaldspurger_bookspec} cette décomposition passe au sous-espace \(\cA^2(G) \subset \cA(G)\) des formes automorphes de carré intégrable:
\[ \cA^2(G) = \bigoplus_D \cA^2(G)_D \ \ \ \text{où } \cA^2(G)_D = \cA^2(G) \cap \cA(G)_D. \]

Combinant les Lemmes \ref{lem:trad_a_piGf_res_bord}, \ref{lem:a_delta_pi_sigma} et \ref{lem:cA_geneigen_subset_cusp} on obtient l'existence d'un \((\mfr,K_{\infty,M}, M(\A_{F,\ff}))\)-module admissible irréductible \(\tilde{a}(\pi)[1/2] \otimes \sigma'\) intervenant dans \(\cA_\mathrm{cusp}(M)\), où \(\sigma'\) a paramètre d'Arthur \(a(\rho)\), et d'une sous-représentation irréductible \(\pi'_G\) de \(\cA^2(G)_{[M, \tilde{a}(\pi)[1/2] \otimes \sigma']}\) telle que \(\pi'_{G,\ff} \simeq a(\pi_{G,\ff})\).
D'après la construction de Langlands du spectre résiduel par résidus (voir \cite[\S V]{MoeglinWaldspurger_bookspec}) on conclut que \(\pi'_G\) intervient avec multiplicité non nulle dans l'image du résidu en \(s=1/2\) des séries d'Eisenstein (méromorphes en \(s\))
\[ E(-,s): \ind_P^G (\tilde{a}(\pi)[s] \otimes \sigma') \longrightarrow \cA(G). \]
Appliquant le Lemme \ref{lem:alt_Lhalf_res} à \((\tilde{a}(\pi),\tilde{a}(\rho),\sigma')\), on déduit
\[ L(1/2, \tilde{a}(\pi) \times \tilde{a}(\rho)) \neq 0 \]
et cela conclut la preuve du Théorème \ref{thm:equiv_Sp_SO}.

\begin{remark}
  Pour pouvoir appliquer le Lemme \ref{lem:alt_Lhalf_res} à \((\tilde{a}(\pi),\tilde{a}(\rho),\sigma')\) on peut utiliser la Proposition \ref{pro:AutC_poids_et_type_autodual} pour voir que \(\tilde{a}(\pi)\) est autoduale du même type que \(\pi\).
  Ce n'est en fait pas nécessaire: la preuve du Lemme \ref{lem:alt_Lhalf_res} montre que si la série d'Eisenstein a un pôle en \(s=1/2\) alors \(L(1/2,\tilde{a}(\pi) \times \tilde{a}(\rho)) \neq 0\) et \(L(s, \tilde{a}(\pi), R)\) a un pôle en \(s=1\).
\end{remark}

\begin{remark} \label{rem:Arthur_pour_Gprime}
  La preuve du Théorème \ref{thm:equiv_Sp_SO} n'utilise la classification endoscopique d'Arthur que pour le facteur \(G'\) du sous-groupe de Lévi \(M\) de \(G\), pas pour \(G\) lui-même.
  On peut vérifier que lorsque \(G'\) est trivial ou isomorphe à \(\SO_2^\alpha\) ou à \(\SO_3 \simeq \PGL_2\), ce qui revient à \(t \leq 2\), la preuve n'utilise pas du tout les résultats d'Arthur pour les groupes classiques.
  (Il faut également vérifier que la preuve de la Proposition \ref{pro:entrelac_hol_nonnul} ne les utilise pas; c'est bien le cas).
\end{remark}

\newcommand{\BC}{\textup{BC}}
\newcommand{\ut}{{}^t}
\newcommand{\antidiagTwo}[2]{{ \lhk \begin{matrix} &#1 \cr #2&  \cr \end{matrix} \rhk }}
\newcommand{\antidiagThree}[3]{ \lhk \begin{matrix} &&#1 \cr &#2&  \cr #3&&  \cr \end{matrix} \rhk }
\newcommand{\antidiagFour}[4]{ \lhk \begin{matrix} &&& #1 \cr &&#2&  \cr &#3&&  \cr #4&&& \end{matrix} \rhk }
\newcommand{\diagTwo}[2]{ \lhk \begin{matrix} #1& \cr &#2  \cr \end{matrix} \rhk }
\newcommand{\diagThree}[3]{ \lhk \begin{matrix} #1&& \cr &#2&  \cr &&#3 \end{matrix} \rhk }
\newcommand{\diagFour}[4]{ \lhk \begin{matrix} #1&&& \cr &#2&&  \cr &&#3&  \cr &&&#4 \end{matrix} \rhk }
\newcommand{\vierkantThree}[9]{\lhk \begin{matrix} #1 & #2 & #3 \cr #4 & #5 & #6 \cr #7 & #8 & #9 \end{matrix}\rhk }
\newcommand{\nilpThree}[3]{\lhk \begin{matrix} & #1 & #2 \cr &  & #3 \cr & & \end{matrix}\rhk }
\newcommand{\unipThree}[3]{\lhk \begin{matrix} 1 & #1 & #2 \cr & 1 & #3 \cr & & 1 \end{matrix}\rhk }
\newcommand{\BorelThree}[6]{\lhk \begin{matrix} #1 & #2 & #3 \cr & #4 & #5 \cr & & #6 \end{matrix}\rhk }

\newcommand{\lowerunipThree}[3]{\lhk \begin{matrix}
1 &  & \cr
#1 & 1 &  \cr
#2& #3& 1 \end{matrix}\rhk }

\newcommand{\smallantidiagTwo}[2]{{ \lhk \begin{smallmatrix} &#1 \cr #2&  \cr \end{smallmatrix} \rhk }}
\newcommand{\smallantidiagThree}[3]{ \lhk \begin{smallmatrix} &&#1 \cr &#2&  \cr #3&&  \cr \end{smallmatrix} \rhk }
\newcommand{\smallantidiagFour}[4]{ \lhk \begin{smallmatrix} &&& #1 \cr &&#2&  \cr &#3&&  \cr #4&&& \end{smallmatrix} \rhk }
\newcommand{\smalldiagTwo}[2]{ \lhk \begin{smallmatrix} #1& \cr &#2  \cr \end{smallmatrix} \rhk }
\newcommand{\smalldiagThree}[3]{ \lhk \begin{smallmatrix} #1&& \cr &#2&  \cr &&#3 \end{smallmatrix} \rhk }
\newcommand{\smalldiagFour}[4]{ \lhk \begin{smallmatrix} #1&&& \cr &#2&&  \cr &&#3&  \cr &&&#4 \end{smallmatrix} \rhk }
\newcommand{\smallvierkantThree}[9]{\lhk \begin{smallmatrix} #1 & #2 & #3 \cr #4 & #5 & #6 \cr #7 & #8 & #9 \end{smallmatrix}\rhk }
\newcommand{\smallnilpThree}[3]{\lhk \begin{smallmatrix} & #1 & #2 \cr &  & #3 \cr & & \end{smallmatrix}\rhk }
\newcommand{\smallunipThree}[3]{\lhk \begin{smallmatrix} 1 & #1 & #2 \cr & 1 & #3 \cr & & 1 \end{smallmatrix}\rhk }
\newcommand{\smallBorelThree}[6]{\lhk \begin{smallmatrix} #1 & #2 & #3 \cr & #4 & #5 \cr & & #6 \end{smallmatrix}\rhk }

\newcommand{\Asai}{\tu{Asai}}
\newcommand{\iA}{{\mathfrak A}}
\newcommand{\rel}{{\tu{rel}}}
\renewcommand{\BC}{{\tu{CB}}}
\newcommand{\CB}{{\tu{CB}}}
\newcommand{\ia}{{\mathfrak a}}
\newcommand{\isomfrom}{\overset \sim \leftarrow}

\section{Fonction $L$ de Rankin, version unitaire}

Dans ce chapitre, nous étudions le cas des groupes unitaires. Soit $E/F$ une extension quadratique de corps de nombres. Nous prouverons en particulier le résultat d’invariance en $s = 1/2$ pour la fonction $L$
\[
L(s, \pi \times \rho, \std \otimes \std^\vee)
\]
où $\pi$, $\rho$ sont des représentations automorphes cuspidales conjuguées autoduales de $\GL_n(\A_E)$, $\GL_r(\A_E)$, et sous certaines hypothèses (voir ci-dessus Théorème~\ref{thm:TheoremeUnitaire}) de nature similaire au cas de Rankin (Théorème~\ref{thm:Clozel-Rankin}).

On démontre également l’invariance du pôle en $s = 1/2$ de la fonction $L$ d’Asai
\[
L(s, \pi, \Asai^\pm),
\]
voir Proposition~\ref{prop:ParityInvariance}.

Nous commençons le chapitre par quelques préparations, car nous voulions déterminer quelles fonctions $L$ apparaissent dans la formule pour le terme constant des séries d’Eisenstein (voir Proposition~\ref{prop:Lquot}). Sans aucun doute, ce résultat peut également être déduit de \cite{Shahidi90} (et, quand $r \leq 1$, de \cite{GoldbergCrelle}).

\subsection{Notation}
Soit $E/F$ une extension quadratique de corps locaux ou globaux de caractéristique $0$, et $W_E, W_F$ leurs groupes de Weil. Pour $N \geq 0$ un entier, on écrit $G_N = \Res^E_F \GL_N$ et $U_N$ le groupe unitaire défini par la matrice
\begin{equation*}
\Phi_N = {\tiny \antidiagFour {1}{-1}{\iddots}{(-1)^{(N-1)}}}.
\end{equation*}
On note les faits suivants :
\begin{equation}\label{eq:Obvious}
\Phi_N^2 = (-1)^{N-1} \cdot \tu{id}_N \quad \tu{et} \quad \ut \Phi_N = \Phi_N\inv.
\end{equation}
Pour tout $F$-alg\`ebre $A$ on a
\[
U_N(A) = \{ g \in \GL_N(A \otimes_F E)\ |\ \ut \li g \Phi_N g = \Phi_N\}.
\]
Le groupe $U_N$ est quasi-déployé, un sous-groupe de Borel $B$ étant les matrices triangulaires supérieures. Soit $T \subset B$ le tore diagonale, donc pour $N$ pair (resp. impair)
\begin{align}\label{eq:ToreMaximal}
G_1^{N/2 }  \isomto &T, \quad (t_i)  \mapsto \diag(t_1, \ldots, t_{ N/2}, \li t_{ N/2 }\inv, \ldots, \li t_1\inv) \quad (\tu{resp.}) \cr
G_1^{\lfloor N/2 \rfloor} \times U_1 \isomto &T, \quad ((t_i), s)  \mapsto \diag(t_1, \ldots, t_{\lfloor N/2 \rfloor}, s, \li t_{\lfloor N/2 \rfloor}\inv, \ldots, \li t_1\inv)
\end{align}
où $\li t$ designe l'action de $\Gal(E/F)$ sur $t \in G_1$ et $U_1$ est le noyeau du norme $G_1 \to \Gm$. Soit $A_0 \subset T$ le tore déployé maximal, donc $A_0 \simeq \Gm^{\lfloor N/2 \rfloor}$ (via l'isomorphisme~\eqref{eq:ToreMaximal}).

On pose $\ia^* = X^*(A_0)_\C$, $\ia = X_*(A_0)_\C$, $\iA^* = X^*(T)_{\C}$, $\iA = X_*(T)_{\C}$. Soient $\Phi, \Phi^+, \Delta$ (resp. $\Phi_{\rel}, \Phi^+_{\rel}, \Delta_{\rel}$) les ensembles des racines, des racines positives et des racines simples de $G$ associés à $B, T$ (resp. $B, A_0$). On a $\Delta_\rel = \{\alpha_1, \ldots, \alpha_{\lfloor N/2 \rfloor}\}$ avec
\begin{equation}\label{eq:racines}
\alpha_i = t_i t_{i+1}\inv \tu{ pour } i < \lfloor N/2 \rfloor, \quad\tu{et} \quad
\alpha_m = t_m^{2-\eps} \tu{ si $N = 2m + \eps$, $\eps \in \{0,1\}$}.
\end{equation}
Comme d'habitude nous écrivons $\check \alpha$ pour la coracine correspondante à une racine $\alpha$.

Les $L$-groupes sont donn\'es par
\begin{alignat*}{3}
\LL G_N  =  \GL_N(\C)^2 \rtimes W_F,    & \quad (g, h) &&\mapsto (h, g) \cr
\LL U_N = \GL_N(\C) \rtimes W_F,        &  \quad \quad\textup{\ \ } a &&\mapsto \Phi_N \ut a\inv \Phi_N\inv  \cr
\LL (\Res_{E/F} (U_N)_E) = \GL_N(\C) \times \GL_N(\C) \rtimes W_F & \quad (g, h) &&\mapsto (\Phi_N {}^t h\inv \Phi_N\inv, \Phi_N {}^t g\inv \Phi_N\inv)
\end{alignat*}
où a chaque fois $W_F$ agit \`a travers $\Gal(E/F)$ et on a noté l'action des éléments dans $W_F \backslash W_E$. Soit $\BC$
le morphisme de changement de base stable, défini par la composition
\begin{equation}\label{eq:CB}
\xymatrix@R=0pt{
\LL U_N \ar[r] &  \LL (\Res_{E/F} (U_N)_E) \ar[r]^{\sim} & \LL G_N \cr
g \rtimes w \ar@{|->}[r] & (g,g) \rtimes w\textup{ ; } (g,h) \rtimes w \ar@{|->}[r] & (g, \Phi_N \ut h\inv \Phi_N\inv) \rtimes w
}
\end{equation}
(la deuxième provient du fait que $(U_N)_E \simeq \GL_{N, E}$).

Nous utiliserons également les deux morphismes de changement de base
\begin{equation}\label{eq:xi_kappa}
\xi_{\kappa} \colon \LL U_N \to \LL G_N \quad\quad (\kappa \in \{\pm 1\})
\end{equation}
introduits par Mok \cite[(2.1.9), p. 7]{Mok}. Mok écrit en réalité $\xi_{\chi_\kappa}$ pour notre $\xi_\kappa$ car les deux morphismes reposent sur le choix d’un caractère $\chi_\kappa \colon \A_E^\times / E^\times \to \C^\times$ avec la propriété que $\chi_\kappa$ restreint à $\A_F^\times / F^\times$ est le caractère trivial lorsque $\kappa = +1$, et est sinon le caractère $\omega_{E/F} \colon \A_F^\times / F^\times \to \{\pm 1\}$ associé à $E/F$ par la théorie du corps de classes. Le morphisme $\BC$ coincide avec $\xi_+$ si $\chi_+ = 1$ en conjuguant par l'élément $(1, \Phi_N\inv) \in \wh G_N$. Dans notre travail, le choix des caractères $\chi_\pm$ n’a pas d’importance, donc nous fixerons simplement $\chi_+ = 1$, et $\chi_-$ sera un choix arbitraire. 

\subsection{Sous-groupes de Levi maximale} Soit $1 \leq n \leq \lfloor N/2 \rfloor$ et $r = N - 2n$. Alors on note $P_r = M_r N_r \subset U_N$  le sous-groupes paraboliques triangulaires supérieurs maximale, avec
\begin{align*}
M_r  = \left\lbrace \left. \smalldiagThree{g}{a}{h}
\in U_N \ \right|  \ g \in G_n,  a \in U_r, h = \Phi_n \,{}^t g^{-1} \Phi_n^{-1} \right\rbrace \simeq G_n \times U_r,
\end{align*}
\vskip-0.5cm
\begin{align*}
N_r  = \left\lbrace \lhk \begin{smallmatrix}
1_n & * & * \\
  & 1_r & *  \\
  &   & 1_n
\end{smallmatrix} \rhk  \in U_N \right\rbrace.
\end{align*}
Si $r$ est clair, on note plus simplement $P, M$ et $N$ pour $P_r, M_r$ et $N_r$.

\begin{lemma}\label{lem:modulaire}
Soit $\delta_{P}(m) = |\det(m; \Lie(N))|$ le caractère modulaire. Alors $\delta_{P}(m) = |\det(g)|_E^{n+r}$ pour $m = \diag(g,a,h) \in M(F)$.
\end{lemma}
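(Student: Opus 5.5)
We will compute \(\delta_P\) directly as the absolute value of \(\det(\Ad(m)|_{\Lie N})\), viewing \(\Lie N\) as an \(F\)-vector space, and decomposing it according to the blocks of \(N = N_r\). An element of \(\Lie N\) has the form
\[
\lhk \begin{smallmatrix} 0 & X & Y \\ & 0 & X' \\ & & 0 \end{smallmatrix} \rhk,
\]
with \(X \in \mathrm{M}_{n\times r}(E)\) arbitrary, and \(X'\) determined by \(X\) through the linearised condition \(\ut \li{u}\,\Phi_N + \Phi_N u = 0\). The same condition imposes on \(Y \in \mathrm{M}_n(E)\) a linear constraint over \(F\): after multiplying by a suitable antidiagonal sign matrix (essentially \(\Phi_n\), using \eqref{eq:Obvious}), \(Z := Y \Phi_n^{\pm 1}\) runs over the hermitian (or anti-hermitian, which is equivalent after multiplying by an element of \(E^\times\) of trace zero) \(n\times n\) matrices. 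This is an \(F\)-space of dimension \(n^2\). We therefore get an \(M\)-stable filtration \(0 \subset \Lie N_{13} \subset \Lie N\), with graded pieces \(\Lie N_{13} \simeq \{Z\}\) and \(\Lie N/\Lie N_{13} \simeq \mathrm{M}_{n\times r}(E)\). The determinant is multiplicative along this filtration.

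\emph{Step 1 (the \(X\)-part).} Here \(m = \diag(g,a,h)\) acts by \(X \mapsto g X a^{-1}\). This is \(E\)-linear, and its \(E\)-determinant is \(\det(g)^r \det(a)^{-n}\). Its \(F\)-determinant is therefore \(N_{E/F}(\det(g)^r\det(a)^{-n})\), whose absolute value is \(|\det g|_E^{r}\,|\det a|_E^{-n}\). Since \(a \in U_r(F)\), we have \(\det a \cdot \li{\det a} = 1\), so \(|\det a|_E = 1\), and this part contributes \(|\det g|_E^{r}\).

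\emph{Step 2 (the \(Y\)-part).} Here \(Y \mapsto g Y h^{-1}\). Using the relation between \(h\) and \(g\) that defines \(M\) (namely \(h = \Phi_n\, \ut\li{g}^{-1}\Phi_n^{-1}\); the bar should be read into the formula for \(M_r\)), this becomes \(Z \mapsto g Z\, \ut\li g\) on hermitian matrices. The determinant of this \(F\)-linear map is an algebraic character of \(G_n = \Res_{E/F}\GL_n\), so it is determined by its values on a dense subset. It is enough to treat diagonal \(g = \diag(g_1,\dots,g_n)\), for which \(z_{ij} \mapsto g_i \li g_j z_{ij}\). Pairing the coordinates \((i,j)\) and \((j,i)\) for \(i<j\), which together form one copy of \(E\) over \(F\), and taking the diagonal coordinates in \(F\), the determinant is
\[
\prod_i g_i\li g_i \cdot \prod_{i<j} N_{E/F}(g_i \li g_j) = N_{E/F}(\det g)^{n}.
\]
Hence in general it equals \(N_{E/F}(\det g)^n\), and its absolute value is \(|\det g|_E^{n}\).

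Multiplying the two contributions gives \(\delta_P(m) = |\det g|_E^{n+r}\). The only delicate point, which we expect to be the main (though routine) obstacle, is Step 2. There we must identify correctly, with the sign conventions of \(\Phi_N\) and \eqref{eq:Obvious}, the \(F\)-form satisfied by the \((1,3)\)-block and the exact twisted action of \(h\). The plan is to check this on the diagonal torus \eqref{eq:ToreMaximal}, where \(h = \diag(\li g_n^{-1},\dots,\li g_1^{-1})\). As a sanity check, the result also follows from \(\delta_P = |2\rho_P|\) computed on \(A_0\) from the relative roots \eqref{eq:racines}: the roots \(t_i\li t_j^{-1}\)-type with \(i\le n<j\) give multiplicity \(2r\) in total, and those with \(i,j\le n\) give exponent \(n\) per variable, yielding \(|\det g|_E^{n+r}\) again.
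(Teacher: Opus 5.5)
Your proof is correct and is essentially the paper's argument. Both split $\Lie N$ into the $\mathrm{M}_{n\times r}(E)$ block and the $(1,3)$ block (the paper identifies the latter with $\Lie U_n$, you with (anti-)hermitian matrices carrying the action $Z\mapsto gZ\,{}^t\li g$) and compute the two determinants. The paper notes that the character is a power of $N_{E/F}\circ\det$ and evaluates only at $\diag(t,1,\dots,1,\li t^{-1})$; the same fact is what justifies your reduction to diagonal $g$, since diagonal matrices are not dense but a character of $\GL_n(E)$ is determined by its restriction to the diagonal torus. You are also right that a bar is missing in the paper's formula $h=\Phi_n\,{}^t g^{-1}\Phi_n^{-1}$ defining $M_r$.
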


On inclut la vérification directe, car il est crucial de savoir sous quelles hypothèses sur $n,r$, le caractère $\delta_{P}^{1/2}$ prend des valeurs dans $\Q$ (donc si $n+r$ est pair). On a $\delta_{P}|_{U_r} = 1$, car $U_r$ n'a pas de caractère non-trivial défini sur $F$, et $\det(\cdot ; \Lie(N))|_{G_n}$ étant algébrique doit être de la forme $(N_{E/F} \det(\cdot))^x$ pour $x \in \Z$. Le déterminant de $m = \diag(t, 1, 1, \ldots, 1, 1, \li t\inv)$ ($t \in E^\times$) agissant sur
\[
\Lie(N) = \left\{
\left( \begin{smallmatrix}
0  & {b}  & {c} \\
0  & 0  & { (-1)^{n+r} \Phi_r \cdot \ut \li b \cdot \Phi_n} \\
0  & 0  & 0
\end{smallmatrix} \right) \ |\ b \in \uM_{n \times r}(E), c \in \Lie U_n
\right\},
\]
est égal à $\det(m; M_{n \times r}(E)) \det(m; \Lie U_n) = N_{E/F}(t)^r N_{E/F}(t)^n$, et donc $x = r+n$. Pour vérifier la deuxième, on constate que $\Lie U_n$ consiste en des $X \in \uM_{n \times n}(E)$ qui sont fixés par l'opérateur
\[
X \mapsto - \Phi_n \ut \li X \Phi_n\inv = [(-1)^{i+j+1} \li X_{n+1-j, n+1-i}]_{i,j=1}^n,
\]
et donc $\Lie U_n = \bigoplus V_{i,j}$ ($i \leq n+1-j$) où
\begin{align*}
E \cong V_{i,j} &= \{x e_{i,j} + (-1)^{i+j-1}\li {x} e_{n+1-j,n+1-i}\ |\ x \in E\}  \quad\quad (i < n+1 - j)  \cr
F \cong V_{i,n+1-i} &= \{x e_{i,j}\ |\  x \in E, \text{ tel que } \li x = (-1)^{n} x\} \quad\quad (i = n+1-j).
\end{align*}
Sur les $V_{i,j}$ du premier type, $m$ agit avec déterminant $N_{E/F}(t) = t \li t$ ($n-1$ fois, correspondant à la première ligne) ou déterminant $1$ (pour les autres lignes). Sur les $V_{i,n+1-i}$, qui sont de dimension $1$, $m$ agit par $N_{E/F}(m) = t \li t$ si $i = 1$, et par $1$ si $i \neq 1$. Donc on trouve $(t \li t)^n$, ce qui montre le lemme.

\label{lem:modulaire2}
\begin{lemma}
Nous avons $\delta_B(t) = |t_1|^{N-1}_E |t_2|^{N-3}_E \cdots |t_m|^{\eps+1}_E$ où $t = (t_1, t_2, \ldots) \in T(F)$ comme dans \eqref{eq:ToreMaximal} et $N= 2m + \eps$ avec $m \in \Z$ et $\eps \in \{0,1\}$.
\end{lemma}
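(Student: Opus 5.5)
The plan is to compute $\delta_B$ directly as the absolute value of a determinant, by extending scalars to $E$ where $U_N$ becomes $\GL_N$, exactly as in the verification of Lemma~\ref{lem:modulaire}. By definition $\delta_B(t) = |\det(\Ad t ; \Lie N_B)|_F$ for $t \in T(F)$, where $N_B$ is the unipotent radical of $B$. This determinant is an element of $F^\times$. It is unchanged if we compute it as the $E$-linear determinant of $\Ad t$ on $\Lie N_B \otimes_F E$.

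Next I identify $\Lie N_B \otimes_F E$. Since $(U_N)_E \simeq \GL_{N,E}$, this space is the space of strictly upper triangular $N \times N$ matrices. On it the element $t$ of \eqref{eq:ToreMaximal} acts through the diagonal matrix with entries
\[ x = (x_1,\dots,x_N) = (t_1,\dots,t_m,[s],\li t_m\inv,\dots,\li t_1\inv), \]
where the middle entry $s \in U_1$ is present only when $\eps=1$. I will check the form of the $E$-linear action against the concrete description of $\Lie N$ used in the proof of Lemma~\ref{lem:modulaire}. The eigenvalue of $\Ad t$ on $e_{i,j}$ is $x_i x_j\inv$. Hence
\[ \det(\Ad t;\Lie N_B) = \prod_{i<j} x_i x_j\inv = \prod_{k=1}^N x_k^{N+1-2k}. \]

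Now I substitute. For $k = i \leq m$ the exponent is $N+1-2i$. For $k = N+1-i$ the exponent is $-(N+1-2i)$ and the entry is $\li t_i\inv$, so this factor is $\li t_i^{\,N+1-2i}$. When $N$ is odd the middle index is $k=m+1$, and its exponent $N+1-2(m+1)$ vanishes, so $s$ disappears. This gives
\[ \det(\Ad t;\Lie N_B) = \prod_{i=1}^m N_{E/F}(t_i)^{N+1-2i} \in F^\times. \]
Using $|N_{E/F}(y)|_F = |y|_E$, I obtain $\delta_B(t) = \prod_i |t_i|_E^{N+1-2i}$. The exponents run from $N-1$ at $i=1$ down to $N+1-2m = \eps+1$ at $i=m$, which is the claimed formula.

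The only delicate point is the compatibility of the $F$-determinant with the $E$-determinant after base change, together with the normalizations $|N_{E/F}(y)|_F = |y|_E$ of the absolute values. As a sanity check, I will restrict the formula to $m = \diag(g,a,h)$ with $g$ diagonal and compare with Lemma~\ref{lem:modulaire}. There $\delta_P$ is obtained from $\delta_B$ after removing the contribution of $B \cap M$.
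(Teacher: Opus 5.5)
Your proof is correct, but it takes a different route from the paper. The paper does not compute on all of $\Lie N_B$. Instead it uses the transitivity $\delta_B = \delta_{P_\eps}\,\delta^{M_\eps}_{B\cap M_\eps}$ for the maximal parabolic $P_\eps$ with Levi $M_\eps = G_m \times U_\eps$. It then combines two known inputs:
\begin{itemize}
\item $\delta_{P_\eps}(t) = |\det t|_E^{m+\eps}$, taken from Lemma~\ref{lem:modulaire} with $n=m$ and $r=\eps$;
\item the standard modular character $|t_1|_E^{m-1}|t_2|_E^{m-3}\cdots|t_m|_E^{1-m}$ of the Borel of $G_m=\Res_{E/F}\GL_m$, the factor $U_\eps$ contributing nothing.
\end{itemize}
Adding exponents gives $(m+\eps)+(m+1-2i)=N+1-2i$.

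You instead base change to $E$, using $(U_N)_E\simeq\GL_{N,E}$ through the first factor of $E\otimes_F E\simeq E\times E$. This sends $B_E$ to the upper triangular Borel and $t$ to the diagonal matrix with the same entries, so the whole determinant comes at once from the eigenvalues $x_ix_j^{-1}$.

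What your route buys:
\begin{itemize}
\item It is self-contained and avoids the hand computation of $\Lie U_n$ over $F$ (with its signs) in the proof of Lemma~\ref{lem:modulaire}.
\item It treats both parities of $N$ uniformly and shows directly why the $U_1$-coordinate $s$ drops out.
\item The same argument would reprove Lemma~\ref{lem:modulaire} itself.
\end{itemize}
The paper's route is shorter once that lemma is in hand, and it reuses the explicit $F$-description of $\Lie N$ already written down there.
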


Pour cela, on utilise $\delta_B(t) = \delta_{P_\eps}(t) \delta^{M_\eps}_{B \cap M_\eps}(t)$, où $\delta_{P_\eps}(t) = |\det(t)|_E^{m+\eps}$ par Lemme~\ref{lem:modulaire} et $\delta_{B \cap M_\eps}^{M_\eps}$ est le caractère module du Borel $B \cap M_\eps$ de $M_\eps$. Comme $M_\eps = G_m \times U_\eps$ nous avons $\delta_{B \cap M_\eps}^{M_\eps}(t) = |t_1|^{m-1}_E |t_2|^{m-3}_E \cdots |t_m|_E^{1-m},$ et le lemme s’ensuit.

Soit $A_{M} \subset M$ le centre depoloyé. On pose $\ia_P^* = X^*(A_{M})_\C$ et $\ia_P = X_*(A_{M})_\C$. On identifie $\ia_P^* = X^*_F(P)_\C$ (caractères $F$-rationels). On note $\rho_{P}$ le demi somme des $A_0$-racines qui apparessent dans $N$. Le caractère ``$\det$'' est un base de $\ia_P^*$, et on a $\rho_P = (n+r) \det \in \ia_P^*$ (Lemme~\ref{lem:modulaire}). Soit $e_i^*$ (resp. $e_i$) ($i \leq \lfloor N/2 \rfloor$) le base standard de $\ia^*$ (resp.  $\ia$) qui provient de l'isomorphisme $A_0 \simeq \Gm^{\lfloor N/2 \rfloor}$. Soit $\alpha = e_n^* - e_{n+1}^* \in \Delta_\rel$ si $r > 0$ et $\alpha = 2 e_n^*$ si $r = 0$. Alors $\alpha$ est l'unique racine simple tel que $\alpha|_{A_{M}} \neq 1$. On a $\check{\alpha} = e_n - e_{n+1}$ si $r > 0$ et $\check \alpha = e_n$ si $r = 0$. On pose
\[
\til \rho_{P} := \langle \rho_{P}, \check \alpha \rangle\inv \rho_{P} = e_1^* + e_2^* + \ldots + e_n^* \in \ia^*.
\]
Le plongement $\C^{\lfloor N/2 \rfloor} = \ia^* = (\iA^*)_{\Gal(E/F)} \isomfrom (\iA^*)^{\Gal(E/F)} \subset \iA^* = \C^N$ est donn\'e par
\[
(x_i) \mapsto \begin{cases} (x_1, \ldots, x_{N/2}, -x_{N/2}, \ldots, - x_1) \quad & \tu{$N$ pair} \cr
(x_1, \ldots, x_{\lfloor N/2 \rfloor }, 0, -x_{\lfloor  N/2 \rfloor }, \ldots, - x_1) \quad & \tu{$N$ impair} \end{cases}
\]
Donc
\begin{equation}\label{eq:DemiSommeRacines}
\til \rho_{P} = (\underset {\tu{$n$ fois}} {\underbrace { 1, \ldots, 1}}, \underset {\tu{$r$ fois}} {\underbrace { 0, \ldots, 0}} , \underset {\tu{$n$ fois}} {\underbrace  { -1, \ldots, -1}}) \in \iA^*.
\end{equation}

\subsection{Les répresentations $\nn_i$}\label{sec:LanglandsRecette}
Dans le terme constant de la série d'Eisenstein intervient un certain quotient de fonctions $L$. Nous déterminerons précisément quelles fonctions $L$ apparaissent. Nous rappelons d'abord la recette. À partir du $P = P_r \subset U_N$ comme avant, on peut prendre les L-groupes $\LL P, \LL M, \LL N$ plong\'es dans $\LL U_N$. Explicitement
\begin{equation}\label{eq:M_r-hat}
\wh M = \lbr \left. m = \smalldiagThree gah \in \GL_N(\C)\ \right| \ g,h \in \GL_n(\C), a \in \GL_r(\C) \rbr,
\end{equation}
avec action de $W_F$ \`a travers $\Gal(E/F)$ donnée par
\begin{align}\label{eq:Laction_on_M_r}
\Phi_N \ut m\inv \Phi_N\inv & =
 \smalldiagThree { \Phi_n \ut h\inv  \Phi_n\inv}{\Phi_r \ut a\inv \Phi_r\inv}{ \Phi_n \ut g\inv \Phi_n\inv}.
\end{align}
On a alors l'isomorphisme (cf. \eqref{eq:Obvious})
\begin{align}\label{eq:IdentifyLgroupM}
\varphi \colon \LL [G_n \times U_r]  & \isomto \LL M, \quad (g,h;a) \rtimes w  \mapsto \smalldiagThree {g}a{\Phi_n \ut h\inv \Phi_n\inv} \rtimes w.
\end{align}
Soit
\begin{equation}\label{eq:LieDuale}
\nn = \Lie(\LL N) = \lbr \left. \smallnilpThree y x {\ut z} \right| x \in \uM_{n \times n}(\C), y,z \in \uM_{n \times r}(\C) \rbr.
\end{equation}
Le groupe dual $\widehat{U} (N)$ est $\GL_N(\C)$, avec les données radicielles habituelles correspondant au tore diagonal et au sous-groupe de Borel triangulaire supérieur. Pour chaque racine $\check{\beta}$ de $\GL_N(\C)$ nous avons l'espace radiciel correspondant $X_{\check{\beta}}$ provenant de l'épinglage standard de $\GL_N(\C)$ ($X_{\check{\beta}}$ correspond essentiellement à une entrée de matrice). Selon la recette générale de Langlands, les nombres $\langle \tilde{\rho}_P, \check{\beta} \rangle$ donnent un intervalle d'entiers de la forme ${1, \ldots, m}$ pour un certain entier $m$, si $\check{\beta}$ parcourt les racines pour lesquelles $X_{\check{\beta}} \subset \nn$. Pour $i \in {1, \ldots, m}$ on pose
\begin{equation}\label{eq:Espace_n_i}
\nn_i = \bigoplus_{\check{\beta}:\ X_{\check{\beta}} \subset \nn,\ \langle \tilde \rho_P, \check \beta \rangle = i } X_{\check \beta} \subset \nn.
\end{equation}
Par Langlands l'action adjoint de $\LL M$ laisse l'espace $\nn_i$ stable et $\nn_i$ est irréductible.

Soit $E_i = (0, \ldots, 0, 1_i, 0, \ldots 0)$ ($i \leq N$) le base de $\iA$ qui provient de \eqref{eq:ToreMaximal}. Explicitement on a $\check \beta = E_i - E_j$ pour $i \neq j$, $i,j\leq N$ et $X_{\check{\beta}} \subset \nn$ dans un des 3 cas suivant:
\begin{itemize}[leftmargin=2.5cm,labelsep=0.4cm]
\item[(Cas X)] $1 \leq i \leq n$ et $n + r + 1 \leq j \leq N$ (le bloc ``$x$'' dans~\eqref{eq:LieDuale});
\item[(Cas Y)] $1 \leq i \leq n$ et $n + 1 \leq j \leq n+r$ (le bloc ``$y$'' dans~\eqref{eq:LieDuale});
\item[(Cas Z)] $n+1 \leq i \leq n + r$ et $n + r + 1 \leq j \leq N$ (le bloc ``$z$'' dans~\eqref{eq:LieDuale}).
\end{itemize}
On a donc
\[
\langle \til \rho_P, \check \beta \rangle =
\langle ( 1, \ldots, 1_{(n)},  0, \ldots, 0_{(n+r)} ,  -1, \ldots, -1_{(2n+r)}), E_i - E_j \rangle =
\begin{cases}
2 & \textup{(cas X)}\cr
1 & \textup{(cas Y et Z)}.
\end{cases}
\]
L'espace $\nn$ se decompose donc en deux representations irreducibles $\nn_1$ et $\nn_2$.

Soit $m = \diag(g,a,h) \in \widehat M$, alors
\begin{equation}\label{eq:Conjugation1}
\tu{Ad}_m \smallunipThree yx {\ut z}  = \smallunipThree {gy a^{-1}}{gxh^{-1}}{ a\, \ut z  h^{-1}},
\end{equation}
et $1 \rtimes w$ ($w \in W_F \backslash W_E$) agit par
\begin{align}\label{eq:Conjugation2}
\Phi_N \prescript{t}{} {\smallunipThree y x z^{-1}} \Phi_N^{-1}
= \smallunipThree {\Phi_r^{-1} \ut z \Phi_n^{-1}}{ (-1)^{n+r} \Phi_n (\ut y z - \ut x) \Phi_n^{-1}}{ \Phi_n^{-1} \ut y \Phi_r^{-1}}.
\end{align}
L'action de $\LL M$ sur $\nn_i$ ($i = 1,2$) est donc donnée par
\begin{align}\label{eq:RepR1R2}
R_1 \colon \LL M & \to \GL(\uM_{n \times n}(\C))  \cr
\diag(g,a,h) \rtimes w & \mapsto (x \mapsto g x h^{-1}) (x \mapsto (-1)^{n+r+1} \Phi_n \ut x \Phi_n^{-1})^{q(w)} \cr
R_2 \colon \LL M & \to \GL(\uM_{n \times r}(\C) \oplus \uM_{n\times r}(\C)) \simeq \GL_{2nr}(\C) \cr
\diag(g,a,h) \rtimes w & \mapsto \smalldiagTwo {R(g, a)}{R(\ti h, \ti a)} \smallantidiagTwo{S}{S^{-1}}^{q(w)},
\end{align}
où
\begin{align}\label{eq:RepR}
q \colon W_F/W_E & \isomto \Z/2\Z, \quad S \in \GL(\uM_{n \times r}(\C)),\ S(x) := \Phi_n^{-1} x \Phi_r^{-1}, \cr
R \colon \GL_n(\C) \times \GL_r(\C) & \to \GL(\uM_{n \times r}(\C)), \quad (g, a) \mapsto (x \mapsto g \cdot x \cdot a\inv).
\end{align}

\subsection{La fonction $L$ associé a $\nn_1$}
 On a (voir~\eqref{eq:IdentifyLgroupM} pour $\varphi$)
\begin{align*}
R_1 \circ \varphi [(g, h; a) \rtimes w] = (x \mapsto g x (\Phi_n \ti h \Phi_n\inv)\inv)(x \mapsto (-1)^{n+r+1} \Phi_n \ut x \Phi_n\inv)^{q(w)}.
\end{align*}
Donc $R_1 \circ \varphi$ est trivial sur $\GL_r(\C)$ et isomorphe à $\std \otimes \std$ sur $\GL_n(\C)^2 = \widehat G_n$. La représentation $R_1 \circ \varphi|_{\widehat G_n} \simeq \std \otimes \std$ a, à isomorphisme près, exactement deux extensions à une représentation du groupe $\LL G_n$ qui sont triviales sur $W_E$. Ces deux extensions sont appelées $\Asai^\pm$, dans lesquelles les éléments de $W_F \backslash W_E$ agissent par $v_1 \otimes v_2 \mapsto \pm v_2 \otimes v_1$ sur $\std \otimes \std$ (cf. \cite[Eq. (2.2.9)]{Mok}). On a donc $\Asai^- \simeq \omega_{E/F} \otimes \Asai^+$, où $\omega_{E/F} \colon W_F \to \{\pm1\}$ est la caractère quadratique associé a $E/F$. Une façon de distinguer $\Asai^+$ et $\Asai^-$ est
\begin{equation}\label{eq:Asai}
\Tr \Asai^\pm(1 \rtimes w) = \pm n,
\end{equation}
pour $w \in W_F \backslash W_E$. Alors, pour identifier $R_1 \circ \varphi$, on calcule
\begin{align*}
\Tr(1 \rtimes w, R_1 \circ \varphi) &= \Tr(\uM_{n \times n}(\C) \owns x \mapsto (-1)^{n+r+1} \Phi_n \ut x \Phi_n^{-1}) \cr
&= \Tr( [x_{ij}]_{i,j=1}^n \mapsto [(-1)^{n+r+1} (-1)^{i+j} x_{n+1-j, n+1-i}]_{i,j=1}^n ) = (-1)^{r} n.
\end{align*}
Pour la deuxième égalité, on écrit $\Phi_n = sw$ avec $s = \diag(1, -1, \ldots)$ et $w$ le matrice avec $1, \ldots, 1$ sur l'anti-diagonale, et alors, transposer est $(x_{ij}) \mapsto (x_{ji})$, conjuger avec $w$ est $(x_{ij}) \mapsto (x_{n+1-i,n+1-j})$, et conjuger avec $s$ est $(x_{ij}) \mapsto ( (-1)^{i+j} x_{ij})$. Pour la dernière égalité, nous voyons que l'opérateur permute, à signes près, les vecteurs de la base standard. Pour un tel opérateur, la trace est obtenue en additionnant les signes correspondant \`a des vecteurs de base qui ne sont pas déplacés. Ici, il s'agit des entrées sur l'anti-diagonale. Donc si $j = n+1-i$, et avec signe
\[
(-1)^{n+r+1} (-1)^{i+j} = (-1)^{n+r+1} (-1)^{i+(n+1-i)} = (-1)^r.
\]
Alors, nous concluons que $R_1 \simeq \Asai^{(-1)^{r}}$.

\subsection{La function $L$ associe a $\nn_2$}
La function $L$ associe a $\nn_2$ est décrit par le lemme suivant.

\begin{lemma}\label{lem:FunctionLpourR1}
Soit $\pi \otimes \sigma$ une représentation automorphe de $M(\A) \simeq \GL_n(\A_E) \times U_r(\A_F)$, alors
\begin{equation}\label{eq:DescriptionFonctionL_R1}
L(s, \pi, R_2 \circ \varphi) = L(s, \pi \otimes \tu{BC}(\sigma), R)
\end{equation}
où, on rappelle, $\tu{BC}$ est le morphisme de changement de base standard pour $U_r$.
\end{lemma}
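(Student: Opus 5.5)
Le lemme est une égalité de fonctions \(L\) partielles (places non ramifiées), et je le démontrerai place par place. Il suffit de vérifier, en presque toute place \(v\) de \(F\), que la représentation \(R_2 \circ \varphi\) de \(\LL(G_n \times U_r)\) est isomorphe à la composée de \(R\) (restreinte par \(\BC\) sur le facteur \(U_r\)) avec l'induction de \(W_E\) à \(W_F\). Plus précisément, l'égalité \eqref{eq:DescriptionFonctionL_R1} à interpréter est
\[
L(s, \pi \otimes \BC(\sigma), R) = L(s, \pi \times \BC(\sigma)^\vee)
\]
(fonction \(L\) de Rankin sur \(E\)). Je viserai donc un isomorphisme
\[
R_2 \circ \varphi \simeq \Ind_{\LL(\Res_{E/F}(G_n \times U_r)_E)}^{\LL(G_n\times U_r)} (\std \otimes \std^\vee) \circ (\id \times \BC).
\]

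Première étape : calculer \(R_2 \circ \varphi\) explicitement à partir de \eqref{eq:RepR1R2} et \eqref{eq:IdentifyLgroupM}. Pour \((g,h;a) \in \widehat G_n \times \GL_r(\C)\), on a \(\varphi(g,h;a) = \diag(g,a,\Phi_n \ut h\inv \Phi_n\inv)\). Le premier bloc donne alors \(R(g,a)\), soit \(x \mapsto g x a\inv\), c'est-à-dire \(\std_g \otimes \std_a^\vee\). Le second donne \(R(\ut(\Phi_n \ut h\inv \Phi_n\inv)\inv, \ut a\inv)\), soit \(x \mapsto \Phi_n^{-1}h\Phi_n\, x\, \ut a\) après simplification par \eqref{eq:Obvious}. À conjugaison par \(\Phi_n\) près, c'est \(\std_h \otimes \std^\vee_{\Phi_r \ut a\inv \Phi_r\inv}\).

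Deuxième étape : identifier le second bloc comme le conjugué par un élément \(w \in W_F \smallsetminus W_E\) du premier, modulo \(\BC\). En effet \(\BC(a) = (a, \Phi_r \ut a\inv \Phi_r\inv)\), et l'action de \(w\) échange \(g\) et \(h\). La restriction de \(R_2\circ\varphi\) à \(\widehat M \rtimes W_E\) est donc la somme directe \(\rho_1 \oplus \rho_1^w\), avec \(\rho_1 = (\std \otimes \std^\vee)\circ(\mathrm{pr}_1 \times \BC_1)\). L'opérateur \(\smallantidiagTwo{S}{S^{-1}}\) échange les deux facteurs, ce qui montre que \(R_2 \circ \varphi\) est l'induite de \(\rho_1\). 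Il faut vérifier que \(S\) entrelace correctement, ce qui est un calcul matriciel de signes à partir de \eqref{eq:Conjugation2} et \eqref{eq:Obvious}. Le point délicat est que le carré de \(\smallantidiagTwo{S}{S^{-1}}\) vaut l'identité, alors que \(w^2 \in W_E\) agit via \(\rho_1(w^2)\) : il faudra contrôler que \(w^2\) agit trivialement sur \(\widehat M\), ce qui est le cas puisque l'action se factorise par \(\Gal(E/F)\).

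Troisième étape : conclure par les propriétés formelles des facteurs \(L\) locaux. L'induction préserve les facteurs \(L\) d'Artin–Langlands, et aux places non ramifiées le paramètre de Satake de \(\BC(\sigma)_w\) est l'image par \(\BC\) de celui de \(\sigma_v\). Aux places \(v\) déployées dans \(E\), la décomposition est directe et l'on obtient deux facteurs, un pour chaque place de \(E\) au-dessus de \(v\). Aux places inertes, le facteur associé à l'induite est celui de \(\rho_1\) évalué en \(\mathrm{Frob}_w = \mathrm{Frob}_v^2\), ce qui donne le facteur local en la place de \(E\) au-dessus de \(v\). L'obstacle principal est la comptabilité des conjugaisons par \(\Phi_n\), \(\Phi_r\) et des signes \((-1)^{n+r}\) à la deuxième étape. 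Je ferai d'abord ce calcul sur des éléments diagonaux du tore, suffisant pour les paramètres de Satake, avant de l'étendre.
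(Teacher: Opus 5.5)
Your proposal is correct and is essentially the paper's argument. The paper conjugates by $\diag(S,1)$ to write $R_2\circ\varphi\simeq R'\circ B$, where $R'$ is the Shapiro-induced representation of ${}^L(G_n\times G_r)$ and $B=\id\times\BC$, and then concludes by Shapiro's lemma on local parameters; this is exactly your identification of $R_2\circ\varphi$ as induced from $\widehat M\rtimes W_E$, followed by inductivity of $L$-factors.

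Two small remarks. First, the sign bookkeeping you flag as the main obstacle is not needed: since $\uM_{n\times r}(\C)\oplus 0$ is stable under $\widehat M\rtimes W_E$ and $R_2(1\rtimes w)$ maps it onto the other block, $R_2\circ\varphi\simeq\Ind\rho_1$ follows at once. Second, since this is an isomorphism of $L$-group representations, it gives the equality of local factors at every place where local parameters are available, which is how the paper states it, and not only the partial $L$-functions you restricted to.
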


On a
\begin{align*}
R_2 \circ \varphi \colon \LL [G_n \times U_r] & \to \GL(\uM_{n \times r}(\C) \oplus \uM_{n\times r}(\C)) \cr
(g,h,a) \rtimes w & \mapsto \diagTwo {R(g, a)}{R(\Phi_n h \Phi_n^{-1} , \ti a)} \antidiagTwo{S}{S^{-1}}^{q(w)}.
\end{align*}
Apr\`es avoir conjugu\'e avec $\smalldiagTwo {S}1$, ceci devient
\begin{align}\label{eq:ConjugerS1}
(g,h,a) \rtimes w  \mapsto & \diagTwo {S R(g, a) S^{-1} }{R(\Phi_n h \Phi_n^{-1} , \ti a)} \antidiagTwo{1}{1}^{q(w)} \cr
= & \diagTwo { R(\Phi_n g \Phi_n^{-1}, \Phi_r a \Phi_r^{-1} )  }{R(\Phi_n h \Phi_n^{-1} , \ti a)} \antidiagTwo{1}{1}^{q(w)} \cr
\simeq & \diagTwo { R(g, a )  }{R(h ,  \Phi_r \ti a  \Phi_r^{-1})} \antidiagTwo{1}{1}^{q(w)},
\end{align}
(dans le ``$\simeq$'' on a conjug\'e $g, h$ par $\Phi_n$ et $a$ par $\Phi_r$). Soit
\begin{align*}
R' \colon \LL [ G_n \times G_r] & \to \GL_{2nr}(\C) \quad (g,h;a,b) \rtimes w  \mapsto \diagTwo {R(g, a)}{R(h, b)} \antidiagTwo 11^{q(w)} \cr
B \colon \LL [G_n \times U_r] &\to \LL [G_n \times G_r] \quad (g,h;a) \rtimes w \mapsto (g,h; a, \autom ra ) \rtimes w.
\end{align*}
Donc $B$ est donn\'e par changement de base standard sur $U_r$ et l'identit\'e sur $G_n$. Alors \eqref{eq:ConjugerS1} donne
\[
R_2 \circ \varphi \simeq R' \circ B.
\]
Pour d\'emontrer Lemme~\ref{lem:FunctionLpourR1} on peut supposer que $E/F$ est local (si $E/F$ est global, la démonstration se fait en considérant chaque place de $F$).
Soient
\begin{align*}
\psi_{\pi \otimes \sigma} \colon L_F  & \to [\GL_n(\C)^2 \times \GL_r(\C) ] \rtimes W_F \cr
\psi_{\pi \otimes \tu{\BC}(\sigma)} \colon L_E & \to [\GL_n(\C) \times \GL_r(\C)] \times W_E
\end{align*}
les param\`etres de $\pi$ et $\pi \otimes \tu{\BC}(\sigma)$. Alors $B\psi_\pi$ et $\psi_{\pi \otimes \BC(\sigma)}$ correspondent via le lemme de Shapiro, et on v\'erifie que
\[
R' B \psi \simeq \ind_{L_E}^{L_F} (R \psi_{\pi \otimes \BC(\sigma)}).
\]
Le lemme s'en d\'eduit.

\subsection{Séries d'Eisenstein}\label{sec:Series} Ici nous supposerons que  $E/F$ est une extension quadratique des corps de nombres (bien entendu, on ne fait pas d'hypothèse que $E/F$ soit de type CM). En plus, on fixe $n, r$ deux entiers, avec $n > 0$ et $r \geq 0$, et
\begin{itemize}
\item $\pi$ une représentation automorphe cuspidale unitaire de $G_n(\A_F)$ ;
\item $\sigma$ une représentation automorphe cuspidale unitaire de $U_r(\A_F)$.
\end{itemize}
On pose $N = 2n+r$. On voit alors $\pi \otimes \sigma$ comme représentation automorphe cuspidale de $M \subset U_N$. Soit $s \in \C$. On pose
\[
I_s = \ind_{P(\A_F)}^{U_N(\A_F)} (\pi |\det|^s_E \otimes \sigma) = \Ind_{P}^{U_N(\A_F)} (\pi |\det|^{s + \tfrac {n+r}2}_E \otimes \sigma),
\]
où, comme d'habitude, $P$ agit par inflation via $P \twoheadrightarrow M$ et l'induction $\ind$ est unitaire. Quand $s = 0$, nous écrivons $I = I_0$.

Nous écrivons $H_{P}$ pour l'application de Harish-Chandra
\[
H_P \colon M(\A_F) \to \ia, \quad \forall \chi \in X_F^*(P), \forall m \in M(\A_F):
\exp(\langle \chi, H_P(m) \rangle) = |\chi(m)|_{\A_F}.
\]
Soit $K \subset U_N(\A_F)$ un sous-groupe compact maximal en bonne position par rapport à $P$ \cite[Section I.1.4]{MoeglinWaldspurgerANENS89}. Alors nous étendons $H_P$ au groupe $U_N(\A_F)$ via la décomposition d'Iwasawa $U_N(\A_F) = K M(\A_F) N(\A_F)$ tel que $H_P$ soit trivial sur $K$ et $N(\A_F)$.

Comme dans les autres chapitres, nous considerons des series d'Eisenstein,
\begin{align*}
I & \to \cA(U_N(F) \backslash U_N(\A_F)) \cr
 f & \mapsto E(f, s)(g) = \sum_{\gamma \in P(F) \backslash U_N(F)} \exp(\langle s + \rho_P, H_P(m) \rangle) f(\gamma g)
\end{align*}
pour $s \in \C$ avec $\Re(s) \gg 1$ (ici on utlise $\til \rho_P$ comme base de $\ia_P^* \simeq \C$). La série du côté droit converge lorsque $\Re(s) \gg 1$, et la série d'Eisenstein est autrement définie par prolongement méromorphe. Si $E(f, s)$ a un pôle en $s = s_0 > 0$ pour un certain $f \in I$, alors il a également un pôle pour les autres $f \in I$ avec $f \neq 0$. Dans ce cas, les résidus de $E(f,s)$ en $s = s_0$ génèrent une représentation automorphe discrète $\Pi(s_0)$ de $U_N$.

\begin{proposition}\label{prop:Lquot}
Soit $\pi \otimes \sigma$ une représentation automorphe cuspidale unitaire générique de $\GL_n(\A_E) \times U_r(\A_F)$. Alors les pôles avec $\Re(s) > 0$ de la série d'Eisenstein $E(f, s)$ pour $f \in I_s$, $f \neq 0$, coïncident avec les pôles avec $\Re(s) > 0$ de
\begin{equation}\label{eq:Lquot}
\frac {L(s, \pi \otimes \BC(\sigma), R)}{L(s+1, \pi \otimes \BC(\sigma), R)}
\frac {L(2s, \pi, \Asai^{(-1)^r})} {L(2s+1, \pi, \Asai^{(-1)^r})}
\end{equation}
où $\BC$, $R$ et $\Asai^\pm$ sont définis dans \eqref{eq:CB},~\eqref{eq:RepR}~et~\eqref{eq:Asai}.
\end{proposition}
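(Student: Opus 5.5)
The plan is to read off the poles of $E(f,s)$ from its constant term along $P$, and then to identify the global normalizing factor of the intertwining operator with the quotient \eqref{eq:Lquot}.

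Since $P=P_r$ is maximal and $\pi\otimes\sigma$ is cuspidal, the only nonzero constant terms of $E(f,s)$ are along $P$ and its associates. By \cite[\S II.1.7]{MoeglinWaldspurger_bookspec} we have
\[ E(f,s)_P = f_s + M(s)f_s, \]
and by \cite[\S IV]{MoeglinWaldspurger_bookspec} the singularities of $E(f,s)$ are exactly those of $M(s)f_s$. So it suffices to determine, for $\Re(s)>0$, the poles of $M(s)=\bigotimes_v M_v(s)$ as $f$ ranges over $I$. Because $E(f,s)$ with $f\neq 0$ has a pole at $s_0$ as soon as one $f$ does (see \S\ref{sec:Series}), it is enough to test on decomposed $f=\otimes f_v$ with $f_v$ spherical for $v\notin S$.

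At an unramified place $v\notin S$, the Gindikin--Karpelevich formula in the form of Langlands, as made explicit by Shahidi \cite[Prop.~4.3.1]{Shahidi_EisL}, gives
\[ M_v(s)f_v^{\mathrm{nr}} = \prod_{i=1,2}\frac{L(is,\pi_v\otimes\sigma_v,r_i)}{L(is+1,\pi_v\otimes\sigma_v,r_i)}\, f_v^{\mathrm{nr}}, \]
where $r_i$ is the action of ${}^LM$ on $\nn_i$ and the variable is normalized by $\til\rho_P$. By \S\ref{sec:LanglandsRecette}, $\nn_1$ carries $R_1\circ\varphi\simeq\Asai^{(-1)^r}$ and $\nn_2$ carries $R_2\circ\varphi$. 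By Lemme~\ref{lem:FunctionLpourR1}, $L(s,\pi_v\otimes\sigma_v,R_2\circ\varphi)=L(s,\pi_v\otimes\BC(\sigma_v),R)$. This produces the partial quotient $Q^S(s)$ of \eqref{eq:Lquot}, with the $\Asai$ factor evaluated at $2s$ because $\langle\til\rho_P,\check\beta\rangle=2$ on $\nn_1$.

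At a place $v\in S$, I would write $M_v(s)=r_v(s)N_v(s)$, where $r_v(s)$ is the local analogue of \eqref{eq:Lquot} times $\epsilon$-factors; this is Shahidi's normalization \cite{Shahidi90}, valid because $\pi\otimes\sigma$ is generic. Then
\[ M(s)f = Q(s)\,\epsilon(s)^{-1}\bigotimes_{v\in S}N_v(s)f_v\otimes f^S, \]
with $Q(s)$ the completed quotient \eqref{eq:Lquot}, including archimedean factors. The claim then reduces to a local statement: for every $v\in S$, $N_v(s)$ is holomorphic and not identically zero on $\Re(s)>0$. Given this, the poles of $M(s)$ in $\Re(s)>0$ are exactly the poles of $Q(s)$. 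The $\epsilon$-factors are entire and nowhere vanishing. If the archimedean factors are absorbed into $Q$, then at a pole of $Q$ some $f_S$ realizes it, since $\bigotimes_v N_v(s_0)\neq0$.

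The main obstacle is this local holomorphy and non-vanishing without the Ramanujan conjecture. For non-archimedean $v$, I would first try to invoke the Appendix (Proposition~\ref{pro:entrelac_hol_nonnul} and its unitary-group variant \S\ref{sec:app_autres_gpes}). Failing that, I would use the standard route for generic data:
\begin{itemize}
\item bound the exponents of $\pi_v$ and $\BC(\sigma)_v$ strictly by $1/2$ (Luo--Rudnick--Sarnak for $\GL$, transferred to $U_r$ by base change);
\item apply the standard module conjecture for quasi-split unitary groups (Heiermann--Opdam) to reduce $N_v(s)$ to rank-one operators on tempered data twisted by small exponents;
\item conclude from the classical fact that normalized operators are holomorphic and nonzero on $\Re\geq0$ for tempered data, together with the exponent bound, which keeps all shifts inside $\Re(s)>0$.
\end{itemize}
At archimedean places I would argue in the same way, using Arthur's normalization and the Langlands classification, since a generic unitary representation there is a Langlands quotient of a standard module with exponents bounded by $1/2$. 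The unramified computation and the identification of $r_1,r_2$ are routine given the preceding subsections.
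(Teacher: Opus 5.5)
Your architecture is the paper's (which is Grbac--Shahidi's \cite{GrbacShahidi}): take the constant term along $P$, compute the unramified places via the Langlands recipe and Lemme~\ref{lem:FunctionLpourR1}, apply Shahidi's normalization on $S$, and reduce to holomorphy and non-vanishing of the local normalized operators $N_v(s)$ on $\Re(s)>0$. The gap is in that last, non-formal input.

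The paper does not prove this input itself. It imports it from Kim--Krishnamurthy \cite{KimKrishnamurty1, KimKrishnamurty2}, who treat normalized operators for \emph{generic} data on unitary groups, and this replaces Lemma~2.3 of \cite{GrbacShahidi}; that is where the genericity hypothesis is used. Neither of your substitutes supplies it. Proposition~\ref{pro:entrelac_hol_nonnul_unitaire} gives holomorphy only for $\Re(s)\geq 1/2$, and non-vanishing only on the line $\Re(s)=1/2$. So it says nothing on $0<\Re(s)<1/2$. Nor does it give non-vanishing at a possible pole of the $L$-quotient with $\Re(s)>1/2$.

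Your fallback contains a false step: the exponent bound does \emph{not} keep all shifts in $\Re>0$. A unitary generic $\pi_v$ is $\pi_1\vert.\vert^{a_1}\times\dots\times\pi_t\vert.\vert^{a_t}$, with the $a_i\in(-1/2,1/2)$ occurring in pairs $\pm a$. The long intertwining operator contains the rank-one operators $\pi_i\vert.\vert^{a_i+s}\rtimes\sigma'\to{}^c\theta(\pi_i)\vert.\vert^{-a_i-s}\rtimes\sigma'$. Their parameter has real part $\Re(s)+a_i$, which is negative as soon as $\Re(s)<-a_i$; this is exactly why the Appendix stops at $\Re(s)\geq 1/2$. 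Going below that line would need a further argument, which you do not give. One option is to control reducibility of the tempered generic induced representations for $0<\Re(z)<1/2$ and use $N(-z)N(z)=\mathrm{id}$ up to an invertible scalar, so that $N(z)$ stays holomorphic for $-1/2<\Re(z)<0$.

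Non-vanishing of each rank-one factor also does not imply non-vanishing of their composite. For generic data the standard argument is that Shahidi's normalized operators preserve Whittaker functionals up to non-zero scalars, and you never invoke this. The same issues recur at the archimedean places, where a generic unitary $\pi_v$ need not be tempered.

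As written, your argument only compares poles on part of the half-plane. Poles of $E(f,s)$ with $\Re(s)\geq 1/2$ are poles of the quotient. Conversely, poles of the quotient are poles of $E(f,s)$ only on $\Re(s)=1/2$. That suffices for the use at $s=1/2$ in Théorème~\ref{thm:TheoremeUnitaire}, but not for the proposition as stated.

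A minor point: with $\nn_i$ defined by \eqref{eq:Espace_n_i}, the Asai block is $\nn_2$, not $\nn_1$. Only with that labelling does your product $\prod_i L(is,r_i)$ produce $L(2s,\pi,\Asai^{(-1)^r})$.
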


Ici on remarque que d’après le résultat de Henniart~\cite[Sect.~5.2]{HenniartIMRN2003} les facteurs locaux $L_v(s, \pi \otimes \BC(\sigma), R)$ et $L_v(2s, \pi, \Asai^{(-1)^r})$, définis par Shahidi, coïncident avec ceux définis par les paramètres locaux de Langlands de $\pi$ et $\rho$.

La démonstration de cette proposition est détaillée dans Grbac-Shahidi~\cite[Thm.2.1]{GrbacShahidi}, avec certaines réserves. Leur argument est écrit dans le cas où $r \leq 1$, car dans ce cas, ils n’ont pas besoin de supposer que $\sigma$ est générique, car pour $\pi$ cela est de toute façon automatique, et lorsque $r \leq 1$, c’est aussi vrai pour $\sigma$. À part cela, la démonstration de notre proposition est identique à leur Théorème~2.1, sauf qu’on remplace leur Lemme~2.3 par les travaux de Kim et Krishnamurthy \cite{KimKrishnamurty1, KimKrishnamurty2}, qui démontrent l’holomorphie et la non-annulation des opérateurs d’entrelacement normalisés. Kim et Krishnamurthy ne posent pas la restriction $r \leq 1$, mais supposent la condition de généricité (satisfaite pour nous) ; voir également la Remarque 2.4, p.~198 de~\cite{GrbacShahidi}.

\subsection{Pôles des fonctions $L$ d'Asai}\label{sec:Asai}
Soit $E/F$ une extension quadratique des corps de nombres. Soit $\rho$ une représentation cuspidale conjuguée autoduale de $\GL_n(\A_E)$. Alors exactement une des deux fonctions d’Asai
\[
L(s, \rho, \Asai^\pm)
\]
admet un pôle en $s = 1$ (voir~\cite[p.~30]{Mok}). Les deux fonctions sont non nulles en $s = 1$.

\begin{definition}\label{def:Parite}
On dit que $\rho$ est \emph{conjuguée orthogonale} ou \emph{de parité ``$\eta(\rho) = +1$''} (resp. \emph{conjuguée symplectique} ou \emph{de parité ``$\eta(\rho) = -1$''}) si $L(s, \rho, \Asai^+)$ (resp. $L(s, \rho, \Asai^-)$) a un pôle en $s = 1$.
\end{definition}

Dans le langage de Mok, $\rho$ définit un paramètre conjugué autodual simple générique dans $\til \Phi_{\text{sim}}(r)$, et est donc, par son Théorème 2.4.2, de la forme $\rho = \xi_{\kappa}(\sigma)$ pour une représentation automorphe $\sigma$ de $U_r$ et pour un signe unique $\kappa = \kappa(\rho) \in \{\pm1\}$. Le Théorème 2.5.4(a) de Mok montre alors que $L(s, \rho, \Asai^{(-1)^{r-1} \kappa})$ admet un pôle en $s = 1$. Donc
\begin{equation}\label{eq:MokIdentity}
\eta(\rho) = \kappa(\rho)(-1)^{r-1}.
\end{equation}

\subsection{L'invariance de la parité}
Nous allons prouver que la parité est invariante sous l'action de $\Aut(\C)$.

\begin{proposition}\label{prop:ParityInvariance}
Soit $\pi$ une représentation automorphe régulier algébrique cuspidale conjuguée autoduale de $G_n$. Nous avons
\[
\eta(\pi) = \eta(a(\pi))
\]
pour tout $a \in \Aut(\C)$.
\end{proposition}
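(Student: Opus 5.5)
Nous suivrons la méthode de la preuve de la Proposition~\ref{pro:AutC_poids_et_type_autodual}(3), transposée aux groupes unitaires. D'après Mok, on écrit \(\pi = \xi_\kappa(\sigma)\) pour une représentation automorphe discrète \(\sigma\) de \(U_n\) de paramètre simple générique \(\pi\), avec \(\kappa = \kappa(\pi)\). Par \eqref{eq:MokIdentity}, on a \(\eta(\pi) = \kappa(\pi)(-1)^{n-1}\). Il suffit donc de montrer que \(\kappa(a(\pi)) = \kappa(\pi)\), c'est-à-dire que \(a(\pi)\) est encore dans l'image de \(\xi_\kappa\) pour le même signe \(\kappa\). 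Comme le choix de \(\chi_\pm\) est libre, on prend \(\chi_+ = 1\). Pour \(\chi_-\), on choisit un caractère algébrique (au sens de Weil) de \(\A_E^\times/E^\times\) dont la restriction à \(\A_F^\times\) est \(\omega_{E/F}\). Un tel caractère existe, peut-être seulement demi-algébrique selon la nature de \(E\) ; ce point est à vérifier.

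\emph{Étape 1.} On se ramène à \(\kappa = +1\) : on remplace \(\pi\) par \(\pi \otimes \chi_\kappa^{-1}\), qui est encore cuspidale, conjuguée autoduale et (demi-)algébrique régulière. On vérifie que \(a(\pi \otimes \chi^{-1}) = a(\pi) \otimes a(\chi)^{-1}\) et que \(a(\chi_-)\) restreint à \(\A_F^\times\) vaut encore \(\omega_{E/F}\), puisque cette restriction est à valeurs dans \(\{\pm1\}\) et donc fixée par \(a\). On n'a ainsi qu'à traiter la question « \(\pi\) provient de \(U_n\) par \(\BC\) », avec d'éventuelles torsions par \(|\det|^{1/2}\) selon la parité, à gérer comme dans la Définition~\ref{def:tilde_action}.

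\emph{Étape 2 (cohomologie).} Aux places archimédiennes, \(\sigma_v\) est unitaire, de caractère infinitésimal algébrique régulier, puisque celui de \(\pi\) l'est. Par Salamanca-Riba et Vogan--Zuckerman (Enright aux places complexes), \(\sigma_v\) est donc cohomologique. Par le Théorème~\ref{thm:Nair_Rai}, \(\sigma_{\ff}\) intervient dans \(\C \otimes_{\iota_0,E'} W^\pm H^q(U_n,\cX;V)\). La structure rationnelle fournit alors une représentation discrète \(\sigma'\) de \(U_n\) avec \(\sigma'_{\ff} \simeq a(\sigma_{\ff})\).

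\emph{Étape 3 (Satake).} Par la Proposition~\ref{pro:Sat_tordu} et ses exemples (pour \(n\) impair, on a \(\rho_B \in X^*(T)\) ; pour \(n\) pair, on utilise la torsion \(q^{1/2}\) via \(\widetilde{G}\)), on obtient \(\BC(c(\sigma'_v)) = c(a(\pi)_v)\), à torsion demi-entière près, aux places non ramifiées (y compris les places inertes, où \(\sigma\) apparaît dans \({}^L U_n\)). Le paramètre d'Arthur--Langlands de \(\sigma'\) au sens de Mok est donc \(\bigoplus_i \pi_i \otimes \sp(d_i)\), avec \(\bigoplus_{i,j} q^{(d_i-1)/2-j} c(\pi_{i,v}) = c(a(\pi)_v)\). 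Par Jacquet--Shalika et la cuspidalité de \(a(\pi)\), ce paramètre vaut \(a(\pi)\) lui-même (un seul terme, \(d=1\)). Donc \(a(\pi) = \xi_{+}(\sigma')\), d'où \(\kappa(a(\pi)) = +1\) et \(\eta(a(\pi)) = \eta(\pi)\).

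Le point délicat est la compatibilité de l'action de \(\Aut(\C)\) avec les normalisations de Satake et du changement de base aux places inertes et décomposées, en particulier le facteur \(q^{1/2}\) quand \(n\) est pair. Il faut aussi garantir que le choix de \(\chi_-\) permet de rester dans le cadre algébrique. Si ce dernier point échoue, on raisonnera directement avec \(\xi_-\), en vérifiant que \(a(\xi_-(\sigma)) = \xi_{-}'(a(\sigma))\) pour le caractère \(a(\chi_-)\), qui est encore un choix licite.
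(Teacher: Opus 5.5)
Your Steps 2--3 coincide with the paper's proof in the case $\kappa(\pi)=+1$. You descend $\pi$ by standard base change to a discrete cohomological $\sigma$ of $U_n$. Nair--Rai then gives a discrete $\sigma'$ with $\sigma'_{\ff}\simeq a(\sigma_{\ff})$. Satake parameters and Jacquet--Shalika then give $\BC(\sigma')=a(\pi)$.

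One point needs sharpening. The Satake comparison must be exact, not ``à torsion demi-entière près'': otherwise Jacquet--Shalika only identifies the parameter of $\sigma'$ with a twist of $a(\pi)$. The paper gets exactness from $\BC(\delta_B^{1/2})=\delta_n^{1/2}$. Hence $c(a(\sigma_v))=\delta_B^{-1/2}a(\delta_B^{1/2})\,a(c(\sigma_v))$ is sent by $\BC$ to $\delta_n^{-1/2}a(\delta_n^{1/2})\,a(c(\pi_v))=c(a(\pi_v))$.

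The genuine gap is Step 1. It needs a character $\chi_-$ with $\chi_-|_{\A_F^\times}=\omega_{E/F}$ that is algebraic and keeps $\pi\otimes\chi_-^{-1}$ algebraic. You leave this ``à vérifier'', and it is not automatic.

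At a real place of $F$ ramified in $E$, $\chi_-$ must restrict to the sign on $\R^\times$. This forces a local component $z^{a}\bar z^{-a}$ with $a\in\frac12+\Z$. That is impossible for an algebraic or half-algebraic Hecke character once $E$ also has a real place. Indeed, the infinity type is then a power of the norm, positive on $\R^\times$ at complex places, even after a $|\cdot|^{1/2}$-twist. An example is $F=\Q(\sqrt2)$, $E=\Q(2^{1/4})$. There $a(\chi_-)$ has no meaning, so your fallback fails as well. In that example the archimedean signs in fact force $\kappa(\pi)=+1$, but your argument neither proves nor uses this.

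The half-algebraic branch you allow also fails. There $\pi\otimes\chi_-^{-1}$ is only half-algebraic, so the descended $\sigma$ has infinitesimal character in $\frac n2+\Z$. It is then not cohomological and Step 2 breaks. No $|\det|^{1/2}$-twist can repair this, since $|\det|_E$ is trivial on $U_n(\A_F)$.

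The missing idea is that no reduction to $\kappa=+1$ is needed. Suppose you have shown, for \emph{every} regular algebraic conjugate self-dual cuspidal $\pi$ and every $a$, that $\kappa(\pi)=+1\Rightarrow\kappa(a(\pi))=+1$. Apply this to $a(\pi)$, which is again regular algebraic, cuspidal and conjugate self-dual, and to $a^{-1}$. Then $\kappa(a(\pi))=+1$ would force $\kappa(\pi)=+1$. Since $\kappa$ takes only two values, $\kappa(a(\pi))=\kappa(\pi)$.

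This is what the paper does. It treats only standard base change, with $\chi_+=1$, and obtains ``l'implication réciproque'' in the same way, i.e.\ by this symmetry.
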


(Nous remarquons ici que $a(\pi)$ existe d’après un résultat de Clozel \cite[Thm.~3.13]{Clozel_AA}.)

Supposons d’abord que $\eta(\pi) = (-1)^n$. Alors $\kappa(\pi) = 1$, donc $\pi = \BC(\sigma)$ pour une certain représentation automorphe  discret $\sigma$ de $U_r$ d’après le résultat de Mok. Nous savons que $\pi$ est cohomologique cuspidale, donc tempérée à l’infini. Mok prouve la compatibilité aux places infinies, et ainsi $\sigma$ est également tempérée à l’infini, ce qui implique que $\sigma$ est cuspidale (selon Wallach \cite{WallachConstantTerm}). Comme le caractère infinitésimal de $\sigma$ coïncide avec celui d’une représentation algébrique, nous savons que $\sigma$ est cohomologique. Par le résultat de Rai et Nair \cite{NairRai}, la cohomologie $L^2$ a une $\Q$-structure, et donc la représentation $a(\sigma)$ existe. Nous affirmons maintenant que les représentations
\begin{equation}\label{eq:BCandGaloisTwist}
\CB(a(\sigma)) \quad \tu{et} \quad a(\BC(\sigma))
\end{equation}
sont isomorphes. Ensuite, \eqref{eq:BCandGaloisTwist} implique que $\kappa(a(\pi)) = 1$. Cela prouve que ``$\eta(\pi) = (-1)^n \Rightarrow \eta(a(\pi)) = (-1)^n$''. Pour l’implication réciproque, on peut argumenter de manière similaire.

Nous prouvons maintenant \eqref{eq:BCandGaloisTwist}. Par la propriété de multiplicité 1 forte, il suffit de le prouver aux places finies non ramifiées. Soit donc $v$ une telle place de $F$, et écrivons
\begin{equation}\label{eq:QuotientBeforeTwist}
\ind_{B}^{U_n}(\chi) \surjects \sigma_v, \quad \quad \chi = (\chi_1, \ldots, \chi_m) \in \Hom_{\tu{unr}}(T(F), \C^\times).
\end{equation}
avec $\chi$ le paramètre de Satake de $\sigma_v$ (les composantes $\chi_i \colon E^\times \to \C^\times$ sont définies via \eqref{eq:ToreMaximal}; l'induction est normalisée). Alors $\BC(\sigma_v)$ a pour paramètre de Satake
\[
(\chi_1, \ldots, \chi_m, \chi_m\inv, \ldots, \chi_1\inv)
, \quad \tu{ou} \quad (\chi_1, \ldots, \chi_m, 1, \chi_m\inv, \ldots, \chi_1\inv)
\]
selon que $n = 2m + \eps$ ($\eps \in \{0, 1\}$) est pair ou impair. En prenant pour $\chi$ le caractère module $\delta_B^{U_n}$, nous voyons d’après le Lemme \ref{lem:modulaire2} que $\BC(\delta_B^{U_n})$ est le caractère module $\delta_n$ de $G_n$.

En utilisant la compatibilité de l’induction non normalisée avec $a(\cdot)$, nous voyons que
\[
\Ind_B^{U_r}(a(\delta_B^{1/2}) a(\chi)) \surjects a(\sigma_v)
\]
Ainsi, le paramètre de Satake de $a(\sigma_v)$ est
$\delta_B^{-1/2} a(\delta_B^{1/2}) a(\chi)$.
En appliquant $\CB$, nous trouvons que le paramètre de Satake de $\CB(a(\chi_v))$ est égal à
\[
\CB(\delta_B^{-1/2} a(\delta_B^{1/2}) a(\chi)) = \delta_n^{-1/2} a(\delta_n^{1/2}) a(\CB(\chi)).
\]
ce qui est le paramètre de Satake de $a (\CB(\sigma_v))$.

\subsection{Le paramètre $\psi$}\label{sec:ParamPsi}
On continue avec les notations $E/F, \rho$ de la section précédente. Nous rappelons brièvement quelques définitions du livre de Mok pour les paramètres $U_N$ (pour plus de détails, voir \cite[Chap. 2]{Mok}). Nous écrivons $\Psi(N)$ pour l’ensemble des paramètres formels
\begin{equation}\label{eq:ParametreGenerale}
\psi^N = \pi_1 \otimes \sp(d_1) \oplus \pi_2 \otimes \sp(d_2) \oplus \ldots \oplus \pi_t \otimes \sp(d_t).
\end{equation}
Nous nous intéressons uniquement aux paramètres auto-duaux elliptiques conjugués ; cela signifie que $\pi_i$ est conjugué autoduale pour chaque $i$. Mok définit deux ensembles supplémentaires
\[
\Psi_2(U_N, \xi_\kappa), \quad\quad \kappa \in {\pm 1}.
\]
Ces deux sous-ensembles peuvent être utilisé pour paramétrer le spectre discret de $U_N$ (voir Théorème 2.5.2 dans \cite{Mok}), et ils correspondent au fait que $U_N$ peut être vu de deux manières comme un groupe endoscopique maximal de $\GL_N$ tordu (via le changement de base standard ou tordu). Le paramètre $\psi^N$ définit un élément de $\Psi_2(U_N, \xi_\kappa)$ si et seulement si\footnote{Moralement, pour que $\psi^N$ appartienne à $\Psi_2(U_N, \xi_\kappa)$, cela signifie que leparamètre global d’Arthur $\cL_F \times \SL_2(\C) \to \LL U_N$ se factorise par $\xi_\kappa$ ; mais comme le groupe de Langlands global $\cL_F$ n’est pas connu d'exister (ou d'avoir les propriétés attendues), Mok ne peut pas formuler la condition de cette manière.} pour chaque $i$, nous avons (cf.~\cite[pp. 20-21]{Mok})
\[
\eta(\pi_i)(-1)^{N + d_i} = \kappa.
\]
Dans notre argument, nous considérerons le paramètre
\[
\psi^N = \pi \otimes \sp(2) \oplus \rho  \in \Psi(N)
\]
où $\pi$ est une réprésentation automorphe cuspidale conjugée autoduale de $G_n$.
Selon ce qui précède, $\psi^N$ définit un élément de $\Psi_2(U_N, \xi_\kappa)$ pour $\kappa \in \{\pm1 \}$, si
\begin{equation}\label{eq:SignCondition}
\eta(\pi) = (-1)^r\kappa \quad \tu{et} \quad \eta(\rho) = (-1)^{r+1}\kappa.
\end{equation}
Remarquez que, indépendamment du signe de $\kappa$, nous avons $\eta(\pi) \neq \eta(\rho)$. En particulier, $\pi$ et $\rho$ ne peuvent pas être de même parité. Dans le cas où leurs parités seraient les mêmes, $\psi^N$ ne définirait pas un paramètre discret pour $U_N$, mais plutôt pour le groupe endoscopique tordu $U_{2n} \times U_r$ de $G_N$.

Nous supposerons jusqu’à la fin de cette section que la condition \eqref{eq:SignCondition} est satisfaite.

Dans son (2.4.3) Mok définit un substitut $\cL_{\psi^N}$ du groupe de Langlands global, pour le paramètre $\psi^N$. De la discussion là, il s'ensuit que dans notre cas
\[
\cL_{\psi^N} = \LL U_n \times_{W_F} \LL U_r = (\GL_n(\C) \times \GL_r(\C)) \rtimes W_F.
\]
Mok définit en outre deux morphismes $\til \psi^N$ et $\til \psi$
\begin{equation}\label{eq:Diagram}
\xymatrix{
&& \LL U_N \ar[d]^{\xi_\kappa} \cr
\cL_{\psi^N} \times \SL_2(\C) \ar[urr]^{\til \psi} \ar[rr]_{\quad\quad\quad\til \psi^N\quad} && \LL G_N
}
\end{equation}
où
\begin{align*}
\til \psi^N \lhk (g, a) \rtimes w, \vierkant xyzv \rhk & = \vierkantdrie {\xi_{\kappa(\pi)}(g)x}{}{\xi_{\kappa(\pi)}(g)y}{}{\xi_{\kappa(\rho)}(a)}{}{\xi_{\kappa(\pi)}(g)z}{}{\xi_{\kappa(\pi)}(g)v} \rtimes w \cr
\til \psi \lhk (g, a) \rtimes w, \vierkant xyzv \rhk & = \vierkantdrie {gx}{}{gy}{}{a}{}{gz}{}{gv} \rtimes w.
\end{align*}

Nous considérons le paramètre $\psi = (\psi^N, \til \psi^N)$ dans $\Psi(U_N, \xi_\kappa)$ (voir [\loccit, Def. 2.4.5]). Nous définissons
\[
\cS_\psi = \pi_0\lhk \tu{Cent}(\tu{Im} \til \psi, \hat U_N) / Z(\hat U_N)^{W_F} \rhk \simeq \{\pm 1\}.
\]
Vérifions qu'effectivement $\cS_\psi  \simeq \{\pm 1\}$. Nous avons
$\tu{Im}(\til \psi)^0 = \lbr \vierkantdrie *{\ }*{\ }*{\ }*{\ }* \in \GL_N(\C) \rbr$ et le centralisateur de ce groupe est
\[
\lbr \vierkantdrie {x 1_n}{\ }{\ }{\ }{y 1_r}{\ }{\ }{\ }{x 1_n} \in \GL_N(\C) \ |\ x,y \in \C^\times \rbr.
\]
Une matrice $M$ de ce type commute avec $1 \rtimes w$ pour $w \in W_F \backslash W_E$ lorsque $\Phi_N \ut M\inv \Phi_N\inv = M$ et donc précisément lorsque $x, y \in \{\pm 1\}$. Comme $Z(\hat U_N)^{W_F}) = \{\pm 1\}$, nous trouvons $\cS_\psi \simeq \{\pm 1\}$.

Pour chaque place $F$ $v$, Mok définit un analogue local $\cS_{\psi_v}$ de $\cS_\psi$ (ci-dessous son (2.2.11)), ainsi que des flèches naturelles $\cS_{\psi} \to \cS_{\psi_v}$. Par son Théorème 2.5.1, le paramètre local $\psi_v$ donne lieu à une paire $(\Pi_{\psi_v}, p)$ consistant en un paquet (le $A$-paquet local) et une application
\[
p \colon \Pi_{\psi_v} \to \Hom(\cS_{\psi_v}, \C^\times), \quad \tau \mapsto \langle \cdot, \tau \rangle.
\]
Le caractère global de signe $\eps_\psi$ est défini comme suit. Écrivons $\cL_\psi = \cL_{\psi^N}$. Mok définit
\[
\tau_\psi \colon \cS_\psi \times \cL_\psi \times \SL_2(\C) \to \GL(\hat \ig), \quad (s, g, h) \mapsto \ad(s \til \psi(g, h)).
\]
Alors nous avons la décomposition en irréductibles $\hat \ig = \bigoplus_\alpha \lambda_\alpha \otimes \mu_\alpha \otimes \nu_\alpha$ où $\lambda_\alpha$, $\mu_\alpha$, $\nu_\alpha$ sont des représentations irréductibles de $\cS_\psi$, $\cL_\psi$, $\SL_2(\C)$ (respectivement). Alors
\[
\eps_\psi = \prod_\alpha \lambda_\alpha
\]
où le produit porte sur tous les indices $\alpha$ tels que $\mu_\alpha$ soit symplectique et $\eps(1/2, \mu_\alpha) = -1$ ([\loccit , (2.5.6)]). On pourrait déterminer directement la forme de $\eps_\psi$ à partir de la définition de notre paramètre explicite $\psi^N$, mais nous n’en aurons pas besoin.

Le paquet global $\Pi_\psi(\eps_\psi)$ est alors l’ensemble des $\otimes’ \pi_v$ admissibles tels que $\langle \cdot, \pi_v \rangle = 1$ pour presque tout $v$, et $\prod_v \langle \cdot, \pi_v \rangle|_{\cS_\psi} = \eps_\psi$. Nous obtenons un sous-espace \cite[Thm. 2.5.2]{Mok}
\begin{equation}\label{eq:H_psi}
\cH_\psi := \bigoplus_{\pi \in \Pi_\psi(\eps_\psi)} \pi \subset L^2_{\disc}(U_N(F) \backslash U_N(\A_F)).
\end{equation}

\subsection{La version unitaire de ``superrégularité''} Comme précédemment, nous considérons le $A$-paramètre $\psi = \pi \otimes \sp(2) \oplus \rho$. Nous cherchons une hypothèse naturelle de régularité sur les caractères infinitésimaux de $\pi$ et $\rho$ (pour $v | \infty$) de sorte que toutes les représentations dans les paquets archimédiens associés à $\psi_v$ soient cohomologiques.

À ce moment, nous faisons uniquement les hypothèses sur $\pi$, $\rho$ et $\sigma$ que $\rho = \BC(\sigma)$, et que $\sigma$ et $\pi$ sont automorphes unitaires.
Nous passons à la notation locale : Soit $v | \infty$ et écrivons $\pi, \sigma, F, E$ pour $\pi_v, \rho_v, F_v, E_v$. On commence avec le cas $F = \R$ et $E \simeq \C$.

Le caractère infinitésimal de $\pi$ est donné par un élément dans $X^*(T(n))_{\C}^+$, où
$T(n) = \Res_{E/F} \Gm^n \subset G_n$ est le tore diagonal et les caractères $X^*$ sont sur $E$, le $+$ indique la chambre dominante, définie par le Borel triangulaire supérieur. Ainsi, explicitement, le caractère infinitésimal est de la forme
\[
(p_1, \ldots, p_n) \times (p_1’, \ldots, p_n’) \in \C^n \times \C^n = X^*(T(n))_{\C}
\]
avec $p_1 \geq p_2 \geq \ldots \geq p_n$ et $p_1’ \geq p_2’ \geq \ldots \geq p_n’$. Par autodualité conjuguée, il est dans l’image de $\xi_\kappa \colon \LL U_n \to \LL G_n$, et donc il satisfait
\[
(p_1’, p_2’, \ldots, p_n’) = (-p_n, -p_{n-1}, \ldots, -p_1).
\]
Écrivons, pour le caractère infinitésimal de $\rho$, de manière similaire
\[
(q_1, \ldots, q_r) \times (q_1’, \ldots, q_r’) \in \C^r \times \C^r = X^*(T(r))_{\C}
\]
avec $q_1 \geq q_2 \geq \ldots \geq q_r$ et $q_1’ \geq q_2’ \geq \cdots \geq q_r’$. Le caractère infinitésimal de
\[
I_s = \tu{ind}_M^{U_N} (\pi |\det |_E^{1/2} \otimes \sigma)
\]
est alors donné par l'ensemble
\begin{equation}\label{eq:Differentes}
\lhk \{ p_i \pm 1/2 \} \cup \{q_j\}\rhk \times \lhk \{ p_i' \pm 1/2 \} \cup \{q_j'\} \rhk \in (\C^N / \iS_N)^2
\end{equation}
où $1 \leq i \leq n$ et $1 \leq j \leq r$.

Les nombres dans cet ensemble doivent tous être différents, ce qui équivaut à
\begin{equation}\label{eq:SuperregularityDisjointness}
\begin{cases}
p_1 + 1/2 > p_1 - 1/2 >  \ldots > p_n + 1/2 > p_n - 1/2 & \tu{($\pi$ superrégulier)} \cr
q_1 > q_2 > \ldots > q_r & \tu{($\rho$ régulier)} \cr
\forall i \forall j : p_i \pm 1/2 \neq q_j & \tu{($\pi$, $\rho$ disjoints).}
\end{cases}
\end{equation}
La condition de superrégularité sur $\pi$ peut aussi être formulée comme $p_i > p_{i+1} + 1$ pour tout $i \leq n-1$. 

Pour être cohomologique, le caractère infinitésimal doit non seulement être régulier, mais aussi entier. Par~\eqref{eq:Differentes} cela revient à
\[
p_i \pm 1/2 \in \tfrac {N-1}2 + \Z \quad \text{et} \quad q_j \in \tfrac {N-1}2 + \Z
\]
pour tout $i,j$ avec $1 \leq i \leq n$, $1 \leq j \leq r$. Comme
\[
\tfrac {N-1}2 + \Z = \tfrac {2n+r-1}2 + \Z = \tfrac {r-1}2 + \Z
\]
nous trouvons que $q_j \in \tfrac {r-1}2 + \Z$, ce qui signifie que $\rho$ (et donc $\sigma$) est algébrique. Pour $\pi$, la condition est que
\[
\forall i : \quad p_i \in \tfrac {N}2 + \Z = \tfrac r2 + \Z.
\]
La représentation $\pi$ est algébrique si et seulement si les $p_i$ appartiennent à $\tfrac {n-1}2 + \Z$. Ainsi, la condition sur $\pi$ est que
\begin{equation}\label{eq:Algebraic_or_L_Algebraic}
\begin{cases}
\text{Si $n + r$ est impair alors $\pi$ doit être algébrique.} \cr
\text{Si $n + r$ est pair alors  $\pi$ doit être demi-algébrique}.
\end{cases}
\end{equation}

Dans les autres cas, à savoir $E/F$ est $\R/\R$ ou $\C/\C$, les groupes $U_{n,F}$ et $U_{r,_F}$ sont déployés et isomorphes à $\GL_n$ et $\GL_r$. Donc, dans ces cas, le caractère infinitésimal de $\pi$ et $\rho$ est donné par $p_1 \geq \ldots \geq p_n$ et $q_1 \geq \ldots \geq q_r$, et il n'y a pas de $p_i'$, $q_j'$; les conditions d’intégralité, de super-régularité et de disjonction sont autrement les mêmes que dans \eqref{eq:SuperregularityDisjointness} et \eqref{eq:Algebraic_or_L_Algebraic}.

Nous revenons maintenant à la notation globale de la section précédente.

\begin{lemma}\label{lem:EstCohomologique}
Supposons que $\pi$ soit algébrique ou $L$-algebrique selon la recette dans \eqref{eq:Algebraic_or_L_Algebraic}, $\pi$ soit superrégulier, $\rho$ est algébrique régulier et que $\pi$ et $\rho$ sont disjoints en chaque place infinie de $F$. Alors chaque représentation automorphe $\tau \subset \cH_\pi$ est cohomologique.
\end{lemma}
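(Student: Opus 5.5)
On se ramène à une question purement archimédienne. Soit \(\tau \subset \cH_\psi\) (le \(\cH_\pi\) de l'énoncé désigne évidemment \(\cH_\psi\) pour \(\psi = \pi \otimes \sp(2) \oplus \rho\)) et soit \(v\) une place archimédienne de \(F\). On montrera que \(\tau_v\) est unitaire, à caractère infinitésimal entier et régulier. Le critère déjà invoqué dans les sections précédentes s'applique alors : Salamanca-Riba \cite{SalamancaRiba} et Vogan--Zuckerman \cite{VoganZuckerman} si \(E_v/F_v = \C/\R\) ou \(\R/\R\), Enright \cite{EnrightDuke79} si \(F_v = \C\). Il fournit une représentation algébrique \(V_v\) telle que \(H^\bullet(\gfr_v, K_v; \tau_v \otimes V_v) \neq 0\), et en prenant le produit tensoriel sur \(v \mid \infty\) on obtient que \(\tau\) est cohomologique.

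L'unitarité est immédiate, puisque \(\tau\) intervient dans \(L^2_\disc(U_N(F)\backslash U_N(\A_F))\) par \eqref{eq:H_psi}. Reste à déterminer le caractère infinitésimal de \(\tau_v\). Par \cite[Thm. 2.5.1]{Mok}, \(\tau_v\) appartient au \(A\)-paquet local \(\Pi_{\psi_v}\). Les éléments de ce paquet ont tous le caractère infinitésimal associé au paramètre de Langlands \(\phi_{\psi_v}\), c'est-à-dire à la restriction de \(\psi_v\) à \(W_{F_v}\) composée avec \(w \mapsto \diag(|w|^{1/2}, |w|^{-1/2})\) dans le facteur \(\SL_2\). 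C'est une propriété standard des paquets d'Arthur archimédiens : les paquets d'Adams--Johnson, et plus généralement ceux définis par transfert endoscopique tordu, ont un caractère infinitésimal fixé, car l'identité de transfert est compatible avec l'action du centre de l'algèbre enveloppante.

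On calcule ensuite ce caractère par la Proposition~\ref{prop:Clozel1_1}. La composée \(\xi_\kappa \circ \phi_{\psi_v}\) restreinte à \(\C^\times\) a pour exposants les multi-ensembles \eqref{eq:Differentes}, à savoir \(\{p_i \pm 1/2\} \cup \{q_j\}\) et leurs analogues primés. On vérifie comme dans la discussion qui précède l'énoncé que le même calcul vaut à l'identique lorsque \(E_v/F_v\) est \(\R/\R\) ou \(\C/\C\). Les hypothèses \eqref{eq:SuperregularityDisjointness} donnent la régularité : superrégularité de \(\pi\), régularité de \(\rho\) et disjonction. Le choix algébrique ou demi-algébrique \eqref{eq:Algebraic_or_L_Algebraic}, joint à l'algébricité de \(\rho\), donne l'intégralité, c'est-à-dire l'appartenance de toutes les valeurs à \(\tfrac{N-1}{2} + \Z\). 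Ce caractère est donc celui d'une représentation algébrique irréductible de \(U_N(\C \otimes_\R F_v)\), de la forme \(-(\lambda + \rho)\).

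Le point délicat sera le passage du \(A\)-paquet de Mok au caractère infinitésimal, surtout aux places complexes, où la normalisation \(\tau\) versus \(c\tau'\) et la conjugaison par \(\Phi_N\) dans \(\xi_\kappa\) doivent être suivies avec soin. Pour cela, on utilisera en premier lieu la caractérisation des paquets locaux de \cite{Mok} par l'identité de caractères tordus avec la représentation de \(\GL_N(E_v)\) associée à \(\xi_\kappa \circ \psi_v\). On combinera cette caractérisation avec le Corollaire de la section~1 appliqué au morphisme \(\xi_\kappa\), ce qui ramène le calcul au côté \(\GL_N\), où le caractère infinitésimal de l'induite de Speh est connu.
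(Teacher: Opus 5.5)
Your proposal is correct and follows essentially the paper's proof. The paper argues the same way: \(\tau\) is unitary because it lies in the discrete spectrum by \eqref{eq:H_psi}, its infinitesimal character is integral and regular by the computation \eqref{eq:Differentes}--\eqref{eq:Algebraic_or_L_Algebraic}, and Salamanca-Riba \cite{SalamancaRiba} then gives cohomology at each infinite place. You also spell out a step the paper leaves implicit, namely that every member of the local packet \(\Pi_{\psi_v}\) has the infinitesimal character of \(\phi_{\psi_v}\), which you obtain from the twisted character identity.
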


D'après l'équation~\eqref{eq:H_psi}, $\tau$ apparaît dans le spectre discret de $U_N$ ; en particulier, elle est unitaire. Son caractère infinitésimal est entier par algébricité et régulier par l'argument de \eqref{eq:Differentes}--\eqref{eq:Algebraic_or_L_Algebraic}. Ainsi, $\tau_v$ est cohomologique pour chaque place infinie selon \cite[Thm.~1.8]{SalamancaRiba}.

\begin{theorem}\label{thm:IsEisenstein}
Soit $\pi_G$ une représentation de $\cH_\psi$, résiduelle ($\psi = \pi \otimes \sp(2) \oplus \rho$), provenant par la formation de résidus et de valeurs principales d’une représentation cuspidale $\pi_{M’}$ d’un sous-groupe de Levi de $G$. Alors $M’ = M = G_n \times U_r$, $\pi_M = \pi \otimes \sigma$ ($\sigma$ associé à $\rho$ par changement de base), et $s = 1/2$.
\end{theorem}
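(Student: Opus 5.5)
We follow the pattern of the proofs of Théorèmes~\ref{thm:Clozel6_7} and~\ref{thm:Clozel-Standard}: compare Satake parameters at almost every place, then apply Jacquet--Shalika. Suppose that \(\pi_G\) is obtained from a Levi subgroup \(M' = G_{n_1} \times \cdots \times G_{n_I} \times U_{r'}\) of \(U_N\), where \(N = 2\sum n_i + r'\), and from a cuspidal representation \(\pi_1[s_1] \otimes \cdots \otimes \pi_I[s_I] \otimes \sigma'\). Here the \(\pi_i\) are unitary, with central character trivial on \(A_{M'}(\R)^0\). Since \(\pi_G\) is cohomological (Lemme~\ref{lem:EstCohomologique}), Théorème~\ref{thm:Clozel2_3} gives \(s_i \in \tfrac12 \Z\). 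This is not strictly needed, but it is reassuring.

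The first step is to compute, at an unramified place \(v\), the image under the standard base change of the Satake parameter of \(\pi_{G,v}\). The constituent of the induced representation gives the multiset
\[ \bigoplus_i \bigl( c(\pi_{i,v}) q_v^{s_i} \oplus c(\pi_{i,v}^{c\vee}) q_v^{-s_i} \bigr) \oplus \BC(c(\sigma'_v)). \]
On the other side, \(\pi_G \subset \cH_\psi\) together with the characterization of Mok's packets gives \(c(\pi_v) q_v^{1/2} \oplus c(\pi_v) q_v^{-1/2} \oplus c(\rho_v)\). The places of \(F\) that split in \(E\) are enough for this. Next, by Mok's classification applied to \(\sigma'\), the parameter \(\BC(\sigma')\) is a sum \(\bigoplus_j \tau_j \otimes \sp(m_j)\) of conjugate self-dual cuspidal representations, which unfolds as in \eqref{eq:somme_formelle_ass_Arthur}. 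By Jacquet--Shalika the two formal sums of cuspidal representations coincide. The right-hand side has exactly three terms, \(\pi[1/2]\), \(\pi[-1/2]\) and \(\rho\), so \(2I + \sum_j m_j = 3\).

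The next step is the case analysis. If \(I = 0\), then \(M' = U_N\) and \(\pi_G\) is cuspidal. This contradicts the hypothesis that \(\pi_G\) is residual; alternatively, \(\sum m_j = 3\) would force a parameter different from \(\psi\). So \(I = 1\) and \(\sum m_j = 1\), hence \(\sigma'\) has simple generic parameter \(\tau_1\). We then get \(\{\pi_1[s_1], \pi_1^{c\vee}[-s_1], \tau_1\} = \{\pi[1/2], \pi[-1/2], \rho\}\). Consider the absolute values of central characters on \(A_{M'}(\R)^0\), which are trivial for \(\pi_1\), \(\pi\) and \(\rho\). If \(\pi_1[s_1] = \rho\), then \(s_1 = 0\), and the remaining two terms would give \(\pi[1/2] = \pi[-1/2]\), which is absurd. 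Hence \(\pi_1[s_1] = \pi[\pm 1/2]\). Replacing the datum by its associate if necessary, we get \(\pi_1 = \pi\), \(s_1 = 1/2\) and \(\tau_1 = \rho\). A dimension count then gives \(n_1 = n\) and \(r' = r\), so \(M' = M\). Finally \(\BC(\sigma') = \rho\), that is, \(\sigma'\) is associated to \(\rho\) by base change.

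The main obstacle is the first step. One has to justify that the Satake parameters of a residual or principal-value constituent are those of the cuspidal support; this is Franke together with the decomposition by cuspidal support \cite[\S III.2.6]{MoeglinWaldspurger_bookspec}. One also has to make the unitary Satake normalizations (Lemme~\ref{lem:modulaire}, the twists \(\xi_\kappa\)) compatible with Mok's description of \(\cH_\psi\). We will work only at the places of \(F\) that split in \(E\). There \(U_N \simeq \GL_N\), the identification is the usual one for \(\GL_N\), and the parameters \(\xi_\kappa\) differ only by a character that cancels on both sides. The density of these places is enough for Jacquet--Shalika, via strong multiplicity one over \(E\).
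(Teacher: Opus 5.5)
Your strategy is the paper's. You compare the Hecke matrices of the cuspidal datum with those of $\psi$, use Jacquet--Shalika to get $2I+\sum_j m_j=3$, discard $I=0$ because $\pi_G$ is residual, and identify $\pi_1$, $s_1$, $\tau_1$ through the central characters on $A_{M'}(\R)^0$. That last step handles the matching $\pi_1[s_1]=\rho$ more cleanly than the paper's degree argument.

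The step that fails as written is your decision to compare only at places of $F$ that split in $E$. Jacquet--Shalika's theorem on isobaric sums needs equality of Satake parameters at all but finitely many places of $E$, and the places of $E$ above inert places of $F$ form an infinite set. That a set of places of density one suffices is not part of their theorem, and it does not follow from their argument. That argument compares orders of poles of partial Rankin--Selberg $L$-functions. To discard infinitely many Euler factors at places of norm $q_v^2$, you would need their product to be holomorphic and non-zero at the relevant point. That requires bounds towards Ramanujan which are not known in general.

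The restriction is also unnecessary, and dropping it is what the paper does. At an unramified inert place $v$ everything is unramified. The Satake parameter of the unramified constituent $\pi_{G,v}$ of the induced representation is the image of that of $\pi_{1,v}[s_1]\otimes\sigma'_v$ under $\LL M'\to\LL U_N$. Mok characterizes $\cH_\psi$ and $\cH_{\psi'}$ by the images of Satake parameters under the base change map at \emph{every} unramified place, so the normalization issue you feared is no worse there than at split places. Composing these gives equality of the multisets of Hecke matrices at every place $w$ of $E$ above almost every place of $F$. That is exactly the input of Jacquet--Shalika in its usual form, and the rest of your argument goes through unchanged.

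Two minor slips. First, for $I=0$ only residuality excludes the case: a parameter with $\sum_j m_j=3$ can be $\psi$ itself. Second, if $\pi_1[s_1]=\rho$, what remains is $\{\rho,\tau_1\}=\{\pi[1/2],\pi[-1/2]\}$, not ``$\pi[1/2]=\pi[-1/2]$''. This is absurd because $\rho$ has trivial central exponent on $A_{M'}(\R)^0$ while $\pi[\pm 1/2]$ does not.
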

\begin{proof}
On a
\begin{align*}
M &= \prod_{i=1}^I G_{n_i} \times U_{r'} \quad \quad \textup{($I$ peut être $0$)}\cr
N & = r' + 2 \sum n_i
\end{align*}
et $\pi_G$ provient de $\pi_1 \otimes \pi_2 \otimes \cdots \otimes \pi_I \otimes \sigma'$ ($\pi_i$ cuspidale, $\omega_{\pi_i} = 1$ sur $A_{G_{n_i}}$, $\sigma'$ cuspidale). Soit $v$ une place finie non-ramifiée. Alors $\sigma'$ définit, par changement de base stable, une représentation $\sigma'_{E_w}$ de $\GL_{r', E_w}$ ($w | v$). Soit $t_w(\sigma')$ la matrice de Hecke de $\sigma'_{E_w}$ (voir \cite[p.~403]{MinguezParisBookProject}).

D'après Mok's Theorem~2.5.2 (avec $\kappa = 1$) il existe $\psi' \in \Psi_2(r', \xi_1)$ tel que $\sigma' \subset \cH_{\psi'}$. Alors
\[
\psi' = \rho_1 \otimes \sp(m_1) \oplus \cdots \oplus \rho_j \otimes \sp(m_j),
\]
(avec les conditions de Mok, p. 131). Pour $w$ finie on a $t_w(\sigma') = t_w(\psi')$. 

Supposons que $\pi_G$ provient de $\pi_M$. Alors
\begin{equation}\label{eq:C_1}
t_{\pi_G, w} = t_{\pi_1, w} q^{s_1} \oplus \cdots \oplus t_{\pi_{I, w}} q^{s_I} \oplus t_{\pi_1, w}\inv q^{-s_1} \oplus \cdots \oplus t_{\pi_I, w} q^{-s_I} \oplus t_{\sigma', w}
\end{equation}
Puisque $\pi_G \subset \cH_{\psi}$,
\begin{equation}\label{eq:C_2}
t_{\pi_G, w} = t_{\pi_w} q^{1/2} \oplus t_{\til \pi, w} q^{-1/2} \oplus t_{\rho,w}.
\end{equation}
On a $\omega_\pi(z \li z) = 1$ et donc $\omega_\pi$ est triviale sur $A_M$. Maintenant \eqref{eq:C_2} contient $3$ représentations cuspidales et \eqref{eq:C_1} contient $2I + \sum m_j r_j$ cuspidales. Donc
\begin{itemize}
\item[(a)] $I = 0$, $\pi_G$ est cuspidale (exclu),
\item[(b)] $I = 1$, donc $\sum m_j r_j  = 1$ et
\[
\pi |\ |^{1/2} \boxplus \til \pi |\ |^{-1/2} \boxplus \rho = \pi_1 |\ |^{s_1} \boxplus \til \pi_1 |\ |^{-s_1} \boxplus \rho_1
\]
avec $\rho_1$ cuspidale. Les degrés sont $(n,n,r)$ et $(n_1, n_1, r')$.
\end{itemize}
On sait que $n \neq r$ (parité). Donc $n = n_1$ et $r = r'$, $\pi |\ |^{1/2}$ ou $\til \pi |\ |^{-1/2}$ égale à $\pi_1 |\ |^{s_1} \ldots$ d’où $s_1 = 1/2$, $\pi = \pi_1$, $\rho = \rho_1$.
\end{proof}

\subsection{L'argument final}
Soient $n,r \in \Z_{>0}$. On pose $N = 2n + r$ et $G = U_N$. Soient $\pi$ et $\rho$ des représentations automorphes régulières, conjuguées autoduales cuspidales de $G_n$ et $G_r$ respectivement, telles que
\begin{itemize}
\item Si $n+r$ est impair, nous supposons que $\pi$ est algébrique et que $\rho$ est algébrique.
\item Si $n+r$ est pair, nous supposons que $\pi[1/2]$ est algébrique et que $\rho$ est algébrique.
\end{itemize}
On suppose aussi:
\begin{itemize}
\item $\pi$ est superrégulière et les caractères infinitésimaux du couple $\pi, \rho$ sont disjoints pour chaque place archimédienne (voir Équation~\eqref{eq:SuperregularityDisjointness});
\item $\eta(\pi) = (-1)^r$ et $\eta(\rho) = (-1)^{r-1}$ (voir Définition~\ref{def:Parite} et Équation~\eqref{eq:SignCondition}).
\end{itemize}

\begin{theorem}\label{thm:TheoremeUnitaire}
On a
\[
L(1/2, \pi \otimes \rho, R) \neq 0 \Longleftrightarrow L(1/2, \til a(\pi) \otimes a(\rho), R) \neq 0
\]
pour tout $a \in \Aut(\C)$.
\end{theorem}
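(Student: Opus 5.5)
Nous suivrons exactement le schéma de la preuve du Théorème~\ref{thm:equiv_Sp_SO}, transposé au groupe unitaire \(G = U_N\), \(N = 2n+r\), avec le sous-groupe de Lévi \(M = G_n \times U_r\). Comme \(\tilde a(\pi)\) et \(a(\rho)\) satisfont encore les mêmes hypothèses, le rôle de \((\pi,\rho)\) et \((\tilde a(\pi),a(\rho))\) est symétrique. Il suffit donc de montrer que \(L(1/2,\pi\otimes\rho,R)\neq 0\) implique \(L(1/2,\tilde a(\pi)\otimes a(\rho),R)\neq 0\). Plus précisément :
\begin{itemize}
\item régularité, algébricité et type à l'infini sont préservés par \(a\) ;
\item la parité \(\eta\) est préservée, par la Proposition~\ref{prop:ParityInvariance}, appliquée à \(\rho\) et à \(\pi\) ou à \(\pi[1/2]\) ;
\item la condition~\eqref{eq:SignCondition} est donc préservée.
\end{itemize}
Grâce à~\eqref{eq:SignCondition} avec \(\kappa=1\) et au résultat de Mok, on choisit \(\sigma\) cuspidale générique sur \(U_r\) avec \(\BC(\sigma)=\rho\). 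Elle est tempérée à l'infini, donc cuspidale (Wallach).

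\emph{Étape 1 (existence du résidu).} On part de la Proposition~\ref{prop:Lquot}. Le facteur \(L(2s,\pi,\Asai^{(-1)^r})\) a un pôle simple en \(s=1/2\), puisque \(\eta(\pi)=(-1)^r\). Si \(L(1/2,\pi\otimes\rho,R)\neq 0\), la série d'Eisenstein \(E(f,s)\), \(f\in I_s\), a donc un pôle en \(s=1/2\). Comme dans le Lemme~\ref{lem:alt_Lhalf_res}, on obtient un opérateur résidu non nul \(\beta\) à valeurs dans \(\cA^2(G)\), avec \(C_{\ol P}^G\circ\beta = j_{\ol P,1/2}\circ N\) pour un opérateur d'entrelacement normalisé \(N\neq 0\). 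L'holomorphie et la non-annulation de \(N\) aux places finies viennent de Kim--Krishnamurthy. Soit \(\pi_G\) un facteur irréductible de l'image. Par le Lemme~\ref{lem:EstCohomologique}, et parce que \(\pi_G\subset\cH_\psi\) avec \(\psi=\pi\otimes\sp(2)\oplus\rho\), la composante \(\pi_{G,\infty}\) est cohomologique pour un \(V\) convenable. On note \(q\) le degré minimal.

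\emph{Étape 2 (transport galoisien).} Le Théorème~\ref{thm:Nair_Rai} place \(\pi_{G,\ff}\) dans \(\C\otimes_{\iota_0,E}W^\pm H^q(G,\cX;V)\). Le Théorème~\ref{thm:RohlfsSpeh} et le diagramme~\eqref{eq:diag_BSres_cstterm} donnent l'analogue du Lemme~\ref{lem:trad_piGf_res_bord} : la restriction au bord \(\ol P\) de la composante isotypique est non nulle sur la partie \(\chi(\delta_{\ol P,\ff}^{1/2}\otimes \pi_\ff[1/2]\otimes\sigma_\ff)\). La restriction au bord est définie sur \(\Q\), ce qui donne l'analogue du Lemme~\ref{lem:trad_a_piGf_res_bord} pour \(a(\pi_{G,\ff})\).

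Reste l'analogue du Lemme~\ref{lem:a_delta_pi_sigma}, que je m'attends à être le calcul délicat. D'après le Lemme~\ref{lem:modulaire}, \(\delta_{\ol P}^{1/2}\otimes \pi[1/2] = \pi[\pm(n+r)/2 + 1/2]\), et la torsion obtenue est entière exactement sous la condition~\eqref{eq:Algebraic_or_L_Algebraic}. On a donc \(a(\delta^{1/2}_{\ol P,v}\otimes\pi_v[1/2]) = \delta^{1/2}_{\ol P,v}\otimes\tilde a(\pi)_v[1/2]\). Pour \(\sigma\), on utilise l'identité \(c(a(\sigma_v))=\delta_B^{-1/2}a(\delta_B^{1/2})a(c(\sigma_v))\), établie dans la preuve de la Proposition~\ref{prop:ParityInvariance}. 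On en déduit \(\BC(a(\sigma))=a(\rho)\) aux places non ramifiées, par~\eqref{eq:BCandGaloisTwist}.

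\emph{Étape 3 (cuspidalité et conclusion).} On montre, comme dans les Lemmes~\ref{lem:a_sigma_cusp} et~\ref{lem:cA_geneigen_subset_cusp}, que la partie \(\chi(a(\sigma_\ff))\) de \(\cA(U_r)\) est cuspidale. Tout support non cuspidal donnerait, via Franke et la classification de Mok (Théorème 2.5.2), au moins deux cuspidales dans le changement de base, ce qui contredit Jacquet--Shalika puisque \(a(\rho)\) est cuspidale. Le même argument vaut, plus simplement, pour \(\tilde a(\pi)\) sur \(G_n\). On obtient ainsi \(\pi'_G\subset\cA^2(G)\) avec \(\pi'_{G,\ff}\simeq a(\pi_{G,\ff})\) et de support cuspidal \([M,\tilde a(\pi)[1/2]\otimes\sigma']\), où \(\BC(\sigma')=a(\rho)\). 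Le Théorème~\ref{thm:IsEisenstein} est consistant avec ce support. Par la construction de Langlands, \(\pi'_G\) est un résidu en \(s=1/2\) de la série d'Eisenstein induite de \(\tilde a(\pi)\otimes\sigma'\).

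Pour conclure, on applique à nouveau la Proposition~\ref{prop:Lquot}. Le point à surveiller est que \(\sigma'\) n'est a priori pas générique. Il faut donc soit invoquer directement l'holomorphie des opérateurs normalisés pour \(\sigma'\) (analogue unitaire du Théorème~\ref{thm:Clozel6_8}, via l'Appendice), soit remplacer \(\sigma'\) par le membre générique du même paquet, ce qui ne change pas la partie non ramifiée. Les facteurs \(L(s+1,\cdot)\) et \(L(2s+1,\cdot)\) ne s'annulent pas en \(s=1/2\), et \(L(2s,\tilde a(\pi),\Asai)\) a un pôle simple. Un pôle de la série d'Eisenstein en \(s=1/2\) force donc \(L(1/2,\tilde a(\pi)\otimes a(\rho),R)\neq 0\).
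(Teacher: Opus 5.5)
Your proposal is correct. Its skeleton is the paper's: residue, cohomology, Nair--Rai, boundary restriction, conjugate residue, $L$-quotient. But you pin down the cuspidal support of the conjugate residual representation $\pi'_G$ by a different route.

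\textbf{Your route.} You transpose the refined boundary argument of Section~\ref{sec:FunctionRankin}, namely the $M$-eigencharacter of the restriction to the $\ol{P}$-stratum. You use unitary analogues of Lemmas~\ref{lem:a_delta_pi_sigma}, \ref{lem:a_sigma_cusp} and~\ref{lem:cA_geneigen_subset_cusp}, and your twist $\delta_{\ol P}^{1/2}\otimes\pi[1/2]=\pi[(1-n-r)/2]$ is the right one.

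\textbf{The paper's route.} The paper's unitary proof only extracts from the boundary argument that $\pi'_G$ is residual. It then computes the Arthur parameter of $\pi'_G$ on $U_N$ from Satake parameters with the signs $\eps_m$ (Lemme~\ref{lem:PsiPrime}). It concludes with the Jacquet--Shalika count of Théorème~\ref{thm:IsEisenstein}.

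\textbf{Comparison.} Your route uses Mok's classification only for $U_r$, in the spirit of Remarque~\ref{rem:Arthur_pour_Gprime}, and avoids the sign bookkeeping on $U_N$. The price is the finer eigenspace form of the boundary lemma and a Franke-type cuspidality argument on $U_r$. The paper's route needs the global classification for $U_N$, but only the coarse information ``non cuspidal'' from the boundary.

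\textbf{Two points on the analytic inputs.}

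First, your endgame is more careful than the paper's. The cuspidal $\sigma'$ in the support of $\pi'_G$ need not be generic. So Proposition~\ref{prop:Lquot}, which rests on Kim--Krishnamurthy and hence on genericity, does not apply directly; Proposition~\ref{pro:entrelac_hol_nonnul_unitaire} is the right tool. Keep only that option. Your alternative, ``replacing $\sigma'$ by the generic member of its packet'', is not an argument. The pole is known only for the Eisenstein series induced from $\sigma'$, and transferring it to another member of the packet already requires holomorphy of the normalized operators for $\sigma'$.

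Second, in Step~1 the generic $\sigma$ with $\BC(\sigma)=\rho$ is not supplied by Mok's results as stated. The paper gets it from Soudry's descent, after checking via Jacquet--Shalika that the partial Asai function $L^S(s,\rho,\Asai^{(-1)^{r-1}})$ has a pole at $s=1$. Alternatively, Proposition~\ref{pro:entrelac_hol_nonnul_unitaire} gives holomorphy for $\Re s\ge 1/2$ and non-vanishing on $\Re s=1/2$. That lets you drop genericity in Step~1 as well.
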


(On rappelle que la représentation $R$ est $\std \otimes \std^\vee$, voir l’Équation~\eqref{eq:RepR}; la notation $\til a$ est définie dans Définition~\ref{def:tilde_action}.) 

Il suffit de prouver ``$\Rightarrow$''. Alors supposons que $L(1/2, \pi \otimes \rho, R) \neq 0$. Pour que $\rho = \BC(\sigma)$ pour une certaine représentation automorphe cuspidale $\sigma$ de $U_r$, il faut que $+1 = \kappa(\rho) = \eta(\rho) (-1)^{r-1}$ (\S\ref{sec:Asai}), et donc que
\begin{equation}\label{eq:eta_rho}
\eta(\rho) = (-1)^{r-1}.
\end{equation}
Pour l’argument ci-dessous, il faut que
\begin{equation}\label{eq:Has_Pole}
L(2s, \pi, \Asai^{(-1)^r})
\end{equation}
ait un pôle en $s = 1/2$. Cela donne (\S\ref{sec:Asai}) la condition
\begin{equation}\label{eq:eta_pi}
\eta(\pi) = (-1)^r.
\end{equation}

Soit $\psi$ le paramètre $\pi \otimes \sp(2) \oplus \rho$, comme considéré dans la Section~\ref{sec:ParamPsi}. Puisque la condition dans \eqref{eq:SignCondition} est vraie, $\psi$ définit, via soit $\xi_+$ soit $\xi_-$, une représentation du spectre discret de $G$.

Un résultat de Soudry~\cite{SoudryAsterique2005} (cf. \cite[Thm. 6.1, p. 134]{CoPiSh}) assure que $\rho = \CB(\sigma)$ pour un représentation automorphe discrète générique $\sigma$ de $U_r$, à condition que la fonction $L$ partielle
\begin{equation}\label{eq:Lpartielle}
L^S(\rho, s, \Asai^{\eps})
\end{equation}
ait un pôle en $s=1$, pour $\eps = (-1)^{r-1}$ et $S$ un ensemble suffisamment grand de places de $F$ incluant les places infinies (que $\rho = \CB(\sigma)$ puisse presque être déduit des résultats de Mok, mais pas tout à fait, car on ne saurait pas si $\sigma$ peut être pris générique).

Vérifions \eqref{eq:Lpartielle}. D'après \eqref{eq:eta_rho}, la fonction $L$ complète $L(s, \rho, \Asai^\eps)$ a un pôle en $s=1$. Donc nous devons établir le même fait pour la fonction $L$ partielle. Comme $\rho \simeq \rho^{c, \vee}$, la Proposition~3.6(ii) de \cite{JacquetShalika_Euler2} implique que la fonction $L$ partielle
\begin{equation}\label{eq:LpartiellePole}
L^{S_\ff}(s, \rho \times \rho^c) = L^{S_{\ff}}(\rho, s, \Asai^+) L^{S_{\ff}}(\rho, s, \Asai^-)
\end{equation}
a un pôle simple en $s = 1$, où $S_{\ff} \subset S$ est l'ensemble des places finies dans $S$. Ainsi, exactement l'une des fonctions $L^{S_{\ff}}(\rho, s, \Asai^\pm)$ a un pôle en $s = 1$. Si $L^{S_{\ff}}(\rho, s, \Asai^{-\eps})$ a un pôle, alors, comme les facteurs locaux $L_v(\rho, s, \Asai^{-\eps})$ ne s'annulent pas en $s =1$, nous trouvons que la fonction complète $L(s, \rho, \Asai^{-\eps})$ a un pôle en $s = 1$, et par conséquent, le pôle de
\[
L(s, \rho \times \rho^c) = L(s, \rho, \Asai^+) L(s, \rho, \Asai^-)
\]
en $s = 1$ n'est pas simple. Cela contredit~\cite{JacquetShalika_Euler2}. Ainsi, $L^{S_{\ff}}(s, \rho, \Asai^{\eps})$ a un pôle en $s = 1$. Comme les $L_v(s, \rho, \Asai^\eps)$ sont non nuls en $s=1$ pour $v$ infini, nous constatons que $L^S(s, \rho, \Asai^{\eps})$ a également un pôle. Cela signifie que la condition de~\cite[Thm.~6.1]{CoPiSh} est satisfaite. Donc $\sigma$ existe et $\rho_v$ est associée a $\sigma_v$ pour $v|\infty$ et $v$ non-ramifiée~\cite[p.~122]{CoPiSh}.

Nous allons maintenant démontrer que $\sigma$ est en fait cuspidale. Il suffit de montrer que $\sigma_v$ appartient à la série discrète pour $v|\infty$. Pour un tel $v$, nous savons que $\rho_v$ est associé à $\sigma_v$. Si l’extension $E \otimes_F F_v / F_v$ est isomorphe à $\C/\C$ ou $\R/\R$, l’énoncé est clair. Sinon, l’extension est $\C/\R$. Dans ce cas, soit
\begin{equation}\label{eq:phi_rho}
\varphi_\rho = \lhk (z/\li z)^{q_1}, \ldots, (z/\li z)^{q_r} \rhk, \quad \quad (q_i \equiv (r-1)/2 \mod 1 )
\end{equation}
le paramètre de $\rho$. Le caractère infinitésimal est $(q, -q)$ modulo $W \times W$ avec $q = (q_1, \ldots, q_r)$, voire Proposition~\ref{prop:Clozel1_1}. Le caractère infinitésimal de $\sigma$ est alors $q$ ($\Lie T_{U_r} \otimes \C = \C^r$). Maintentant $\sigma$ donne une représentation $\varphi_\sigma$ de $\C^\times$, étendue à $W_{\R}$, avec $q_i \in \frac {r-1}2 + \Z$, $q_i \neq q_j$. Donc $\sigma_v$ est une série discrète (cf.~\cite[Prop.~4.3.2]{BergeronClozelAsterisque}). En particulier $\sigma$ est cuspidale.

Nous visons maintenant à utiliser la Proposition 8.4 pour montrer que la série d’Eisenstein
\[
E(-, s) \colon I_s = \ind_P^G(\pi|\ |^s \otimes \sigma) \to \cA(G).
\]
admet un résidu en $s = 1/2$. D’après la Proposition, nous devons vérifier que le quotient
\[
\frac {L(s, \pi \otimes \BC(\sigma), R)L(2s, \pi, \Asai^{(-1)^r})}{L(s+1, \pi \otimes \BC(\sigma), R)L(2s+1, \pi, \Asai^{(-1)^r})}
\]
a un pôle en $s = 1/2$. Notez que le dénominateur est holomorphe autour de $s = 1/2$, et que le numérateur a un pôle (car le premier facteur n’est pas nul par l’hypothèse, le second facteur a un pôle par \eqref{eq:eta_pi} et la Définition~\ref{def:Parite}). Ainsi, $E(-, s)$ a bien un résidu en $s = 1/2$. Nous pouvons donc définir $\pi_G$ comme étant un facteur irréductible, non ramifié en dehors de $S$, de l’image de $I_{1/2} \to \cA^2(G)$ définie en évaluant $(s-1/2)E(-, s)$ en $s = 1/2$.

Pour $v|\infty$, les représentations $\pi_v$, $\sigma_v$ dans $\ind(\pi_v|\ |^s \otimes \sigma_v)$ sont tempérées, et par conséquent $\pi_{G, v}$ est le quotient de Langlands de $\ind(\pi_v|\ |^s \otimes \sigma_v)$. Par superrégularité nous constatons que $\pi_{G, v}$ est cohomologique (voir la Lemme~\ref{lem:EstCohomologique}). Ainsi, 
\[
\pi_G \tu{ est résiduelle et cohomologique}.
\] 

Nous utilisons maintenant l’argument consistant à restreindre les classes de cohomologie aux composantes de bord de la compactification de Borel-Serre. Plus précisément, comme dans la discussion avant~\eqref{eq:decomp_g_K_coh_V_iota}, nous trouvons une représentation algébrique $V$ de $\Res_{F/\Q} G$, $q \in \Z$ bien choisi, telle que $\pi_{G, \ff}$ intervienne dans
\[
H^q(\ig, K; \cA^2(G) \otimes_{\C} V_\C).
\]
En répétant les arguments après~\eqref{eq:decomp_g_K_coh_V_iota}, il existe un $(\ig, K) \times G(\A_{F, \ff})$-module admissible $\pi_G’ = \pi_{G, \infty}’ \otimes \pi’_{G, \ff}$ tel que $\pi_G’$ soit résiduel (discret et non cuspidal), $\pi_{G, \ff}’ \simeq a(\pi_{G, \ff})$ et $\pi_{G, \infty}’$ soit cohomologique pour $\C \otimes_{a\inv \circ \iota_0, E} V$ (pour la notation $\iota_0$ et $E$, voir ci-dessus~\eqref{eq:decomp_g_K_coh_V_iota}).

\begin{lemma}\label{lem:PsiPrime}
Le paramètre d’Arthur de $\pi_{G, \ff}’$ est $\psi_{\til a(\pi), a(\rho)}$.
\end{lemma}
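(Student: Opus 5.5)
Le plan consiste à comparer les paramètres de Satake de \(\pi'_G\) et de \(\pi_G\) en presque toute place finie non ramifiée \(v\), puis à conclure par le théorème de Jacquet--Shalika et la caractérisation du paramètre d'Arthur par les paramètres de Satake (via le changement de base \(\xi_\kappa\) de Mok). Ce changement de base envoie la classe de Satake de \(\pi_{G,v}\) dans \(\hat G_N\) sur
\[ c(\pi_{v})q_v^{1/2} \oplus c(\pi_v) q_v^{-1/2} \oplus c(\rho_v). \]
Ici \(\pi\) est conjuguée autoduale, donc \(\til \pi = \pi^c\) et l'écriture de \eqref{eq:C_2} se réécrit ainsi, modulo la convention de Satake pour \(G_n\) aux places déployées ou inertes. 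Il s'agit de calculer l'effet de \(a\) sur cette classe.

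\emph{Première étape : torsion du Satake.} J'utiliserais la Proposition~\ref{pro:Sat_tordu} pour \(G = U_N\), en raisonnant comme dans la preuve de \eqref{eq:BCandGaloisTwist}. Si \(\pi_{G,v}\) est un sous-quotient de \(\ind_B^{U_N}\chi\), alors \(a(\pi_{G,v})\) est un sous-quotient de \(\ind_B^{U_N}(\delta_B^{-1/2}a(\delta_B^{1/2})a(\chi))\). On obtient donc
\[ c(\pi'_{G,v}) = \delta_B^{-1/2}a(\delta_B^{1/2}) a(c(\pi_{G,v})). \]
Par le Lemme~\ref{lem:modulaire2}, le changement de base de \(\delta_B^{U_N}\) est le caractère modulaire \(\delta_N\) de \(G_N\). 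On en déduit
\[ \BC(c(\pi'_{G,v})) = \delta_N^{-1/2} a(\delta_N^{1/2}) a(\BC(c(\pi_{G,v}))). \]
Concrètement, \(a\) agit sur les valeurs propres en fixant \(q_v^{x}\) pour les demi-entiers \(x\) tels que \(q_v^{x}\) est rationnel, et il multiplie les autres par un signe \(\pm 1\) (\(a(q_v^{1/2}) = \pm q_v^{1/2}\)). On peut aussi tordre par \(\xi_\kappa\) plutôt que par \(\BC\) ; c'est compatible avec Mok puisque \(\chi_\kappa\) est de type fini.

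\emph{Deuxième étape : calcul des blocs.} On sépare les trois blocs de la somme. Le bloc \(c(\rho_v)\) occupe des positions centrales du tore, où l'exposant de \(\delta_N^{1/2}\) est dans \(\frac{N-1}{2} - (\text{entier})\). Par la parité \(q_j \in \frac{r-1}{2}+\Z\) et l'algébricité de \(\rho\), on obtient \(a(c(\rho_v))\) tordu par le signe \(\delta_N^{-1/2}a(\delta_N^{1/2})\) restreint, qui devrait coïncider avec la torsion définissant \(a(\rho)\) (Exemple~\ref{exa:fonct_Sat_tordu}). Ce bloc donne donc \(c(a(\rho)_v)\). Les blocs \(c(\pi_v)q_v^{\pm 1/2}\) donnent de même \(c(\til a(\pi)_v) q_v^{\pm1/2}\) : si \(n+r\) est impair, \(\pi\) est algébrique et le facteur \(q_v^{\pm 1/2}\) absorbe la différence de parité entre \(N-1\) et \(n-1\) ; si \(n+r\) est pair, \(\pi[1/2]\) est algébrique, d'où \(\til a\). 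C'est le calcul miroir du Lemme~\ref{lem:a_delta_pi_sigma}. Il se fait position par position avec la formule explicite du Lemme~\ref{lem:modulaire2} et la relation \(N = 2n+r\).

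\emph{Troisième étape : conclusion.} Le paramètre d'Arthur de \(\pi'_G\), obtenu par le Théorème 2.5.2 de Mok, est déterminé par les paramètres de Satake en presque toute place, d'après Jacquet--Shalika et la version unitaire de \eqref{eq:caract_para_Arthur_Satake}. Il vaut donc \(\til a(\pi)\otimes\sp(2)\oplus a(\rho) = \psi_{\til a(\pi),a(\rho)}\). Il faudra vérifier, via la Proposition~\ref{prop:ParityInvariance} et son analogue demi-algébrique, que \(\til a(\pi)\) et \(a(\rho)\) sont cuspidales, conjuguées autoduales et de parités inchangées. Ainsi la condition \eqref{eq:SignCondition} reste satisfaite et le paramètre est bien un élément de \(\Psi_2(U_N,\xi_\kappa)\).

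L'obstacle principal est la comptabilité des demi-puissances de \(q_v\) dans la deuxième étape, aux places inertes où \(E_w\) a pour corps résiduel de cardinal \(q_v^2\). Il faut vérifier que les signes produits par \(a(q^{1/2})\) se répartissent exactement comme dans les définitions de \(a(\rho)\) et de \(\til a(\pi)\). Je traiterais séparément les places déployées, où \(U_N \simeq \GL_N\) et l'on se ramène à l'Exemple~\ref{exa:fonct_Sat_tordu}, et les places inertes, où la Proposition~\ref{pro:Sat_tordu} s'applique avec \(\epsilon=0\) pour \(N\) impair. Pour \(N\) pair, on passerait par l'extension centrale \(\widetilde{G}\).
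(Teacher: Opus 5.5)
Your proposal follows the paper's proof. You compare Satake parameters at unramified places through $\BC$, track the signs coming from $\delta_B^{1/2}$ (the paper's $\eps_N$, via Lemma~\ref{lem:modulaire2}) and from the twisted Satake isomorphisms for $G_n$ and $G_r$ (Example~\ref{exa:fonct_Sat_tordu}), and conclude by Jacquet--Shalika as at the start of the proof of Lemma~\ref{lem:a_sigma_cusp}; your block-by-block bookkeeping amounts exactly to the paper's identities $\eps_N\eps_r=1$ and $\eps_N\eps_0\eps_n=\eps_{n+r}$.

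Two remarks. The inert places you single out as the main obstacle are harmless: there $\delta_B^{1/2}$ and $q_{E_w}^{1/2}=q_v$ only involve integral powers of $q_v$, so no sign appears and no central extension is needed. Checking that the resulting parameter lies in $\Psi_2(U_N,\xi_+)$ is also unnecessary, since Mok already attaches a discrete parameter to $\pi'_G$ and the Satake comparison identifies it.
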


Nous travaillons avec les $L$-groupes $\LG_{\Q}$, $\LL G_{N,\Q}$ définis sur $\Q$ comme expliqué dans la Section~\ref{sec:alg_Satake}. Notez également que $\BC  \colon \LG_{\Q} \to \LL G_{N,\Q}$ est défini sur $\Q$. Par le même argument que celui au début de la preuve du Lemme~\ref{lem:a_sigma_cusp} il suffit de vérifier que nous avons

\begin{equation}\label{eq:Lem8_10_goal}
\BC(c(\pi'_{G, v})) = \diag(q_v^{1/2}, q_v^{1/2}) \otimes c(\til a(\pi)_v) \oplus c(a(\rho)_v),
\end{equation}
Pour $a \in \Aut(\C)$ et $m \in \Z/2\Z$ nous définissons le signe $\eps_m \in \{\pm 1\}$ par
\[
\eps_m = a\lhk q_v^{ \tfrac{m-1}2} \rhk  \cdot q_v^{- \tfrac{m-1}2}.
\]
Nous avons alors les formules $a(c(\pi_{G, v})) = \eps_N c(a(\pi_{G, v})$ (par Lemme~\ref{lem:modulaire2}), $a(c(\pi_v)) = \eps_n c(a(\pi_v))$ et $a(c(\rho_v)) = \eps_r c(a(\rho_v))$ (pour les deux derniers, voir Exemple~\ref{exa:fonct_Sat_tordu}). En utilisant ces formules, nous calculons 
\begin{align}\label{eq:Lem8_10_computation}
\BC(c(\pi'_{G, v})) %
&= \BC (c(a(\pi_{G, v}))) \cr
&= \eps_N \BC( a(c(\pi_{G, v}))) \cr
&= \eps_N a(\BC(c(\pi_{G, v}))) \cr
&= \eps_N  a \lhk \diag \lhk q_v^{1/2}, q_v^{-1/2} \rhk \otimes c(\pi_v) \oplus \BC(c(\sigma_v))  \rhk \cr
&= \eps_N  a \lhk \diag \lhk q_v^{1/2}, q_v^{-1/2} \rhk \otimes a(c(\pi_v)) \oplus a(c(\rho_v)) \rhk \cr
&= \eps_N  a \lhk \diag \lhk q_v^{1/2}, q_v^{-1/2} \rhk \otimes \eps_n c(a(\pi_v)) \oplus \eps_r c(a(\rho_v))\rhk  \cr
 &=  \diag \lhk q_v^{1/2}, q_v^{-1/2} \rhk \otimes \eps_N \eps_n \eps_0 c(a(\pi_v)) \oplus \eps_N \eps_r c(a(\rho_v)).  
\end{align}
En utilisant $N = 2n + r$, nous calculons
\begin{align*}
a \lhk q_v^{ \tfrac {N-1} 2} q_v^{ \tfrac {n-1} 2} q_v^{-\tfrac {1} 2} \rhk 
q_v^{ - \tfrac {N-1} 2}
q_v^{ - \tfrac {n-1} 2}
q_v^{ \tfrac {1} 2} & = a \lhk q_v^{ \tfrac {n+r}2} \rhk q_v^{ - \tfrac {n+r}2} \cr
a\lhk q_v^{ \tfrac {N-1}2 } q_v^{ \tfrac {r-1}2 }\rhk 
q_v^{ -\tfrac {N-1}2 } q_v^{ -\tfrac {r-1}2 }
& = 1
\end{align*}
et donc $\eps_N \eps_n \eps_0 = \eps_{n + r}$  et $\eps_N \eps_r = 1$. Ainsi,
\[
\eps_N \eps_n \eps_0 c(a(\pi_v)) = \til a(c(\pi_v)), \quad \eps_N \eps_r c(a(\rho_v)) = a(c(\rho_v)).
\]
En remplaçant cela à la fin de \eqref{eq:Lem8_10_computation}, nous trouvons \eqref{eq:Lem8_10_goal}, d’où le Lemme~\ref{lem:PsiPrime}.

D’après le résultat de Soudry (voir~\eqref{eq:Lpartielle}), la représentation $a(\rho)$ est de la forme $\BC(\sigma’)$, où $\sigma’$ est une représentation automorphe discrète et générique de $U_r$. Nous rappelons que cela nécessite que la fonction $L$ partielle $L^S(s, a(\rho), \Asai^\eps)$ ait un pôle en $s = 1$ pour $\eps = (-1)^{r-1}$. Pour le voir : la fonction $L$ complète $L(s, a(\rho), \Asai^\eps)$ a un pôle par la Proposition~\ref{prop:ParityInvariance}. Ensuite, l’existence d’un pôle pour la fonction $L$ partielle en découle de la même manière que l’argument après~\eqref{eq:LpartiellePole}. En fait, $\sigma’$ est cuspidale par l’argument avant l’Équation~\eqref{eq:phi_rho}. En résumé:
\[
a(\rho) = \BC(\sigma’) \quad \textup{avec $\sigma’$ cuspidale et générique}.
\]
Par le Lemme~\ref{lem:PsiPrime}, $\pi_G’$ apparaît dans $\cH_{\psi_{\til a(\pi), a(\rho)}}$. Ainsi, le Théorème~\ref{thm:IsEisenstein} s’applique pour $\psi = \psi_{\til a(\pi), a(\rho)}$, donc $\pi_G’$ apparaît dans l’image du résidu en $s = 1/2$ de la série d’Eisenstein
\[
E(-, s) \colon \ind_P^G (\til a(\pi)|\ |^s \otimes \sigma’) \to \cA(G).
\]
Par la Proposition~\ref{prop:Lquot}, le quotient
\begin{equation}\label{eq:QuotientHasAPole}
\frac {L(s, \til a(\pi) \otimes \BC(\sigma'), R)}{L(s+1, \til a(\pi) \otimes \BC(\sigma'), R)}
\frac {L(2s, \til a(\pi), \Asai^{(-1)^r})} {L(2s+1, \til a(\pi), \Asai^{(-1)^r})}
\end{equation}
a un pôle en $s = 1/2$. 

Nous affirmons que
\begin{equation}\label{eq:ParityInvariance}
L(2s, \til a(\pi), \Asai^{(-1)^r})
\end{equation}
a un pôle en $s = 1/2$. 
Si $\pi$ est cohomologique, alors $\til a(\pi) = a(\pi)$, et comme $L(2s, \pi, \Asai^{(-1)^r})$ a un pôle, nous pouvons appliquer la propriété d’invariance du Proposition~\ref{prop:ParityInvariance} pour obtenir l’affirmation. Si $\pi[1/2]$ est cohomologique, on a $\til a(\pi) = a(\pi[1/2])[-1/2]$. Dans ce cas, nous calculons
\[
\eta(\pi) = \eta(\pi[1/2]) = \eta(a(\pi[1/2])) = \eta(a(\pi[1/2])[-1/2]).
\]
Les première et dernière égalités sont vraies parce que le caractère $[\pm1/2] \colon \A^\times_E/E^\times \to \C^\times$ est trivial sur $\A_F^\times/F^\times$, et donc leur torsion ne change pas la parité. L’égalité du milieu est la Proposition~\ref{prop:ParityInvariance}. Ainsi, \eqref{eq:ParityInvariance} est vrai.

Comme le dénominateur dans \eqref{eq:QuotientHasAPole} est holomorphe autour de $s = 1/2$, nous trouvons alors que $L(s, \til a(\pi) \otimes a(\rho), R) \neq 0$.

\section{Invariance des facteurs \(\epsilon\)}\label{sec:Eps}

\subsection{Cas symplectique}

Soit \(F\) un corps de nombres, \(r,t \geq 1\) des entiers.
Soit \(\pi\) (resp.\ \(\rho\)) une représentation automorphe cuspidale autoduale pour \(\GL_{r,F}\) (resp.\ \(\GL_{t,F}\)).
Rappelons que le ``root number'' \(\epsilon(1/2, \pi \times \rho)\) est dans \(\{\pm 1\}\) car on a
\[ \epsilon(1/2, \pi \times \rho) \epsilon(1/2, \pi^\vee \times \rho^\vee) = 1 \]
et \(\pi\) et \(\rho\) sont auto-duales par hypothèse.
On sait \cite[Theorem 1.5.3 (b)]{ArthurBook} que si \(\pi\) et \(\rho\) sont du même type (symplectique ou orthogonal) alors \(\epsilon(1/2, \pi \times \rho) = +1\).
Supposons donc que \(\pi\) et \(\rho\) ne sont pas du même type.
Supposons également que chacune de ces deux représentations est algébrique régulière ou demi-algébrique régulière et qu'en toute place archimédienne \(v\) de \(F\), soit les poids de \(\pi_v\) sont dans \(\tfrac{1}{2}+\Z\) et ceux de \(\rho_v\) dans \(\Z\), soit vice-versa.

Pour une place non-archimédienne \(v\) de \(F\) on note \(\LLC\) la correspondance de Langlands, bijection entre classes d'isomorphismes de représentations lisses irréductibles de \(\GL_N(F_v)\) à coefficients complexes et classes d'isomorphismes de représentations de Weil-Deligne Frobenius-semi-simples \cite[Definition 4.1.2]{Tate_corvallis} de dimension \(N\) à coefficients complexes, normalisée unitairement (c'est-à-dire que les représentations à caractère central unitaire correspondent aux représentations de Weil-Deligne de déterminant unitaire).
Aux places archimédiennes on note encore \(\LLC\) la bijection entre \((\gfr,K)\)-modules irréductibles (où \((\gfr,K)=(\mathfrak{gl}_N(\C), \mathrm{O}(n))\) si \(F_v \simeq \R\) et \((\gfr,K) \simeq (\mathfrak{gl}_N(\C) \times \mathfrak{gl}_N(\C), \mathrm{U}(N))\) si \(F_v \simeq \C\)) et représentations irréductibles semi-simples de dimension \(N\) de \(W_{F_v}\).

\begin{lemma} \label{lem:equiv_LL_tens}
  \begin{enumerate}
  \item Soit \(\sigma\) une représentation cuspidale pour \(\GL_{N,F}\).
    \begin{enumerate}
    \item On suppose \(\sigma\) L-algébrique régulière, c'est-à-dire que \(\sigma\) (resp.\ \(\sigma[1/2]\)) est algébrique régulière si \(N\) est impair (resp.\ pair).
      Alors pour toute place non-archimédienne \(v\) de \(F\) on a \(a(\LLC(\sigma_v)) = \LLC(\tilde{a}(\sigma)_v)\).
    \item On suppose \(\sigma\) demi-L-algébrique régulière, c'est-à-dire que \(\sigma\) (resp.\ \(\sigma[1/2]\)) est algébrique régulière si \(N\) est pair (resp.\ impair).
      Alors pour toute place non-archimédienne \(v\) de \(F\) on a \(a(|\cdot|_v^{1/2} \LLC(\sigma_v)) = |\cdot|_v^{1/2} \LLC(\tilde{a}(\sigma)_v)\).
    \end{enumerate}
  \item Pour toute place non-archimédienne \(v\) de \(F\) on a
    \[ a(|\cdot|_v^{1/2} \LLC(\pi_v) \otimes \LLC(\rho_v)) = |\cdot|_v^{1/2} \LLC(\tilde{a}(\pi)_v) \otimes \LLC(\tilde{a}(\rho)_v). \]
  \end{enumerate}
\end{lemma}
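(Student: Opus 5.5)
Pour le point (1)(a), on se ramène d'abord aux représentations non ramifiées puis au cas général. On note \(\sigma' = \sigma\) (resp.\ \(\sigma[1/2]\)) selon la parité de \(N\), de sorte que \(\sigma'\) est algébrique régulière et \(\tilde{a}(\sigma) = a(\sigma')[-(w)]\) avec \(w \in \{0, 1/2\}\). L'outil de base est l'équivariance de la correspondance de Langlands locale non normalisée unitairement : si l'on pose \(\LLC'(\tau) := |\cdot|_v^{(1-N)/2} \LLC(\tau)\) (normalisation « de Hecke », ou « arithmétique »), alors \(\LLC'(a(\tau)) = a(\LLC'(\tau))\) pour tout \(a \in \Aut(\C)\) et toute représentation lisse irréductible \(\tau\) de \(\GL_N(F_v)\). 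C'est un résultat connu (Henniart, voir aussi \cite[\S 4]{Clozel_AA} et \cite{BuzzardGee_conj}) : la caractérisation de \(\LLC\) par les facteurs \(L\) et \(\epsilon\) de paires devient \(\Aut(\C)\)-équivariante après ce décalage, puisque \(a\) n'agit sur \(q_v^{1/2}\) que par un signe. On vérifierait ce point d'abord pour les caractères (classe des corps local), puis pour les supercuspidales via l'équivariance des facteurs \(\gamma\) de paires, enfin par l'induction/Zelevinsky. C'est l'étape que je considère comme l'essentiel, mais je compte l'admettre comme résultat standard.

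On écrit alors \(\LLC(\tilde a(\sigma)_v) = |\cdot|^{-w}\LLC(a(\sigma'_v)) = |\cdot|^{-w+(N-1)/2}\, a\bigl(|\cdot|^{(1-N)/2}\LLC(\sigma'_v)\bigr)\). Comme \(\LLC(\sigma'_v) = |\cdot|^{w}\LLC(\sigma_v)\), on obtient \(\LLC(\tilde a(\sigma)_v) = |\cdot|^{e}\, a(|\cdot|^{-e}\LLC(\sigma_v))\) avec \(e = (N-1)/2 - w\). Dans le cas (a), \(e \in \Z\) : en effet, \(w = 0\) si \(N\) est impair et \(w = 1/2\) si \(N\) est pair. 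Or \(|\cdot|^{e}\) est à valeurs rationnelles, donc commute à \(a\), ce qui donne \(a(\LLC(\sigma_v)) = \LLC(\tilde a(\sigma)_v)\).

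Dans le cas (b), on a au contraire \(e \in \tfrac12 + \Z\), et le même calcul donne \(\LLC(\tilde a(\sigma)_v) = |\cdot|^{-1/2} a(|\cdot|^{1/2}\LLC(\sigma_v))\), ce qui est exactement l'énoncé.

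Pour le point (2), on observe que l'hypothèse de poids (les poids de \(\pi_v\) dans \(\tfrac12+\Z\) et ceux de \(\rho_v\) dans \(\Z\), ou l'inverse) force, grâce au lemme de pureté et à la description du type à l'infini, l'une des deux représentations à être L-algébrique régulière et l'autre demi-L-algébrique régulière. Quitte à échanger \(\pi\) et \(\rho\), on suppose \(\pi\) demi-L-algébrique et \(\rho\) L-algébrique. Comme \(a\) agit sur les représentations de Weil–Deligne en commutant au produit tensoriel, on écrit \(a(|\cdot|^{1/2}\LLC(\pi_v)\otimes\LLC(\rho_v)) = a(|\cdot|^{1/2}\LLC(\pi_v)) \otimes a(\LLC(\rho_v))\), puis on applique (1)(b) au premier facteur et (1)(a) au second. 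La seule vérification délicate est cette dichotomie de parité, qui se lit sur les caractères infinitésimaux en une place archimédienne.
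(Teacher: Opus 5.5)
Your proof is correct and is essentially the paper's. The paper also takes as known the $\Aut(\C)$-equivariance of the local Langlands correspondence, in the form $a(\LLC(\delta)) = \LLC(a(\delta))$ for $N$ odd and $a(|\cdot|_v^{1/2}\LLC(\delta)) = |\cdot|_v^{1/2}\LLC(a(\delta))$ for $N$ even; this is your Hecke-normalized statement up to an integral, hence rational-valued, power of $|\cdot|_v$. It then deduces (1) case by case, where your single exponent $e=(N-1)/2-w$ is just a tidier bookkeeping, and deduces (2) exactly as you do, from the fact that one of $\pi,\rho$ is L-algebraic and the other half-L-algebraic.
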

\begin{proof}
  \begin{enumerate}
  \item On sait que pour une représentation lisse irréductible \(\delta\) de \(\GL_N(F_v)\) on a \(a(\LLC(\delta)) = \LLC(a(\delta))\) (resp.\ \(a(|\cdot|_v^{1/2} \LLC(\delta)) = |\cdot|_v^{1/2} \LLC(\delta)\)) si \(N\) est impair (resp.\ pair).
    Les deux points s'en déduisent au cas par cas.
  \item Cela se déduit du point suivant: parmi \(\pi\) et \(\rho\), l'une est L-algébrique régulière et l'autre est demi-L-algébrique régulière.
  \end{enumerate}
\end{proof}

\begin{theorem} \label{thm:inv_epsilon_symp}
  Pour tout \(a \in \Aut(\C)\) on a
  \[ \epsilon(1/2, \tilde{a}(\pi) \times \tilde{a}(\rho)) = \epsilon(1/2, \pi \times \rho). \]
\end{theorem}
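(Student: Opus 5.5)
Je décomposerais le facteur global en produit de facteurs locaux, \(\epsilon(1/2, \pi \times \rho) = \prod_v \epsilon(1/2, \LLC(\pi_v) \otimes \LLC(\rho_v), \psi_v)\), pour un caractère additif \(\psi = \otimes_v \psi_v\) de \(\A_F/F\). J'écrirais de même le facteur pour \((\tilde{a}(\pi), \tilde{a}(\rho))\), puis je comparerais séparément les contributions finies et archimédiennes. Les deux produits valent \(\pm 1\) puisque les représentations sont autoduales. Il suffit donc de montrer que \(a\) envoie le produit des facteurs non archimédiens de \((\pi,\rho)\) sur celui de \((\tilde{a}(\pi),\tilde{a}(\rho))\), et que le quotient des facteurs archimédiens est le même des deux côtés.

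\textbf{Places finies.} Je poserais \(W_v = |\cdot|_v^{1/2}\LLC(\pi_v)\otimes\LLC(\rho_v)\), de sorte que \(\epsilon(1/2, \LLC(\pi_v)\otimes\LLC(\rho_v),\psi_v) = \epsilon(0, W_v, \psi_v)\). La formule de Deligne donne l'équivariance \(a(\epsilon(0, W, \psi_v)) = \epsilon(0, a(W), a\circ\psi_v)\) pour toute représentation de Weil--Deligne \(W\), car le facteur \(\epsilon\) en \(s=0\) ne fait intervenir ni \(q^{1/2}\) ni d'autre irrationalité. Le Lemme~\ref{lem:equiv_LL_tens}(2) donne \(a(W_v) = |\cdot|_v^{1/2}\LLC(\tilde a(\pi)_v)\otimes\LLC(\tilde a(\rho)_v)\). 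Il reste à remplacer \(a\circ\psi_v\) par \(\psi_v\). On a \(a\circ\psi_v = \psi_v(u\,\cdot)\) pour un \(u \in \Zhat^\times\) (le caractère cyclotomique de \(a\)), que l'on peut voir comme un \(u_v\in\cO_{F_v}^\times\). Alors \(\epsilon(0,W,\psi_v(u\cdot)) = \det W(u)\,\epsilon(0,W,\psi_v)\). Le déterminant de \(\LLC(\pi_v)\otimes\LLC(\rho_v)\) est \(\omega_{\pi_v}^{t}\omega_{\rho_v}^{r}\), un caractère quadratique, et le produit sur \(v\) de ses valeurs en \(u_v\) est trivial. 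Pour le voir, je choisirais \(u\) global, c'est-à-dire un élément de \(\Zhat^\times\) approché par un rationnel, en combinant la loi de réciprocité \(\prod_v \chi_v(u)=1\) pour \(u\in F^\times\) avec la continuité ; je traiterais à part la contribution archimédienne de ce caractère, qui n'y dépend que du signe. Cette vérification est l'étape la plus technique, mais elle est standard.

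\textbf{Places archimédiennes.} Par le Lemme de pureté et la description du type à l'infini (Proposition~\ref{pro:AutC_poids_et_type_autodual}), les paramètres de \(\tilde a(\pi)_\infty\) et \(\tilde a(\rho)_\infty\) s'obtiennent en permutant les plongements, \(\iota\mapsto a^{-1}\circ\iota\). Pour un plongement réel \(\iota\), \(W_\iota\) est une somme de représentations induites \(\Ind_{\C^\times}^{W_\R}(z/\li z)^{k}\) avec \(k\in\tfrac12+\Z\) non nul, puisque les poids de \(\pi_v\) et de \(\rho_v\) diffèrent d'un demi-entier. Chacune a un facteur \(\epsilon\) égal à \(i^{2|k|+1}\) à une convention près. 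Aux places complexes, le facteur est de même une puissance de \(i\) déterminée par les multi-ensembles \((p_\iota, p_{\li\iota})\). Le produit archimédien total est donc une fonction symétrique de la famille \(\{p_\iota\}_\iota\), et il est invariant sous la permutation induite par \(a\). Une subtilité : \(a\) peut envoyer un plongement réel sur un plongement complexe. Je vérifierais alors, par la Remarque suivant la Proposition~\ref{pro:AutC_poids_et_type_autodual} (factorisation par \(F_\mathrm{CM}\)), que la contribution s'exprime via un produit sur tous les plongements \(\iota: F\to\C\) d'une quantité ne dépendant que de \(p_\iota\), par exemple \(\prod_\iota i^{\sum_{j,k}|p_{\iota,j}+q_{\iota,k}|\ldots}\) à un facteur bien choisi près, ce qui rend l'invariance évidente. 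C'est l'étape que j'attends comme obstacle principal. Je la traiterais en écrivant explicitement \(\epsilon\) pour chaque caractère de \(\C^\times\) et chaque induite de \(W_\R\) sous la forme \(i^{(\cdot)}\), en utilisant la même normalisation \(\psi_v(x) = e^{2\pi i \Tr x}\).

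Ensuite, en combinant les deux parties, on obtient \(a(\epsilon_{\ff}(\pi,\rho))\,\epsilon_\infty(\pi,\rho) = \epsilon_{\ff}(\tilde a\pi,\tilde a\rho)\,\epsilon_\infty(\tilde a\pi,\tilde a\rho)\), avec \(\epsilon_\infty\) inchangé. Comme \(\epsilon(1/2,\pi\times\rho) = \pm1\) est rationnel, on a \(a(\epsilon(1/2,\pi\times\rho)) = \epsilon(1/2,\pi\times\rho)\), et la combinaison donne \(\epsilon(1/2,\pi\times\rho) = \epsilon(1/2,\tilde a(\pi)\times\tilde a(\rho))\). Cela établit le Théorème~\ref{thm:inv_epsilon_symp}.
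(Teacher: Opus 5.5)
\textbf{Overall route.} Your argument follows the paper's proof. You factor into local root numbers and use Deligne's $\Aut(\C)$-equivariance at finite places together with Lemma~\ref{lem:equiv_LL_tens}(2). You absorb $a\circ\psi_v=\psi_v(u\,\cdot)$ into a factor $\det W_v(u)$. You write the archimedean contribution as a product over all embeddings $\iota\colon F\hookrightarrow\C$, so that $\iota\mapsto a^{-1}\circ\iota$ visibly preserves it.

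\textbf{The gap.} The step you call standard is justified by an argument that is false. For a quadratic Hecke character $\chi$ of $F$ and $u\in\Zhat^\times$, the product $\prod_{v\nmid\infty}\chi_v(u)$ is the value at $u$ of the Dirichlet character attached to $\chi|_{\A^\times/\Q^\times}$. This is not $1$ in general: for $F=\Q$, $\chi$ the character of $\Q(i)$ and $u\equiv 3 \bmod 4$, it equals $-1$. Your approximation argument breaks for the following reason. A rational $x$ close to $u$ at the ramified places of $\chi$ fails to be a unit at some unramified places. The product formula $\prod_v\chi_v(x)=1$ then also involves the Frobenius values $\chi_v(\varpi_v)^{v(x)}$ at those places, not only archimedean signs. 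This is exactly the mechanism that produces the nontrivial factor $a(\sqrt{d_E})/\sqrt{d_E}$ in the proof of Theorem~\ref{thm:inv_epsilon_conjautodual}, where it must be cancelled by the archimedean places. An argument killing it by reciprocity alone would prove too much.

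\textbf{The fix.} What saves you is that the character is actually trivial, place by place. If, say, $\pi$ is the symplectic one, then $r$ is even and $\omega_\pi=1$, so $\omega_\pi^t\omega_\rho^r=(\omega_\rho^2)^{r/2}=1$; the case where $\rho$ is symplectic is symmetric. Equivalently, $\LLC(\pi_v)\otimes\LLC(\rho_v)$ is a tensor product of a symplectic and an orthogonal parameter, so it factors through a symplectic group; this is how the paper argues. Hence $\det W_v(u_v)=1$ for each finite $v$, and no global argument is needed.

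\textbf{Two smaller remarks.} First, if you write $\epsilon(1/2,\pi\times\rho)=\prod_v\epsilon(\ldots,\psi_v)$ with self-dual measures, these give $\cO_v$ volume $N(\mathfrak{D}_v)^{-1/2}$, which may be irrational. So your claim that no irrationality enters is not literally true. It is harmless only because $\epsilon$ is homogeneous of degree $rt$ in the measure and $rt$ is even. The paper instead uses rational measures and the global constant $\vol(\A_F/F)^{-rt}=|d_F|^{-rt/2}\in\Q$.

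Second, for the archimedean step, the identity you need concerns a complex place, where $p_{\ol\iota}=p_\iota$ and $q_{\ol\iota}=q_\iota$. There the factor is $(-1)^{rt/2}$, since exactly $rt/2$ of the half-integers $p_i+q_j$ are positive. Meanwhile the real-place expression $\prod_{p_i+q_j>0}(-1)^{p_i+q_j+1/2}$ squares to $1$. This gives $\prod_{v\mid\infty}\epsilon=(-1)^{c\,rt/2}\prod_{\iota}\prod_{p_{\iota,i}+q_{\iota,j}>0}(-1)^{p_{\iota,i}+q_{\iota,j}+1/2}$, with $c$ the number of complex places of $F$, and this is invariant under $a$.
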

\begin{proof}
  On va montrer l'équivariance pour \(\Aut(\C)\), c'est-à-dire
  \[ \epsilon(1/2, \tilde{a}(\pi) \times \tilde{a}(\rho)) = a \left( \epsilon(1/2, \pi \times \rho) \right) \]
  par voie locale.
  Soit \(\psi: \A_F/F \to \C^\times\) un caractère continu non trivial.
  Pour fixer les idées on prend \(\psi = \psi_\Q \circ \Tr_{F/\Q}\) où \(\psi_\Q: \A/\Q \to \C^\times\) est le caractère trivial sur \(\Zhat\) et de restriction \(x \mapsto \exp(2i\pi x)\) à \(\R\).
  On fixe également une mesure de Haar \(dx_v\) sur \(F_v\) pour toute place \(v\) de \(F\) comme suit.
  \begin{itemize}
  \item Si \(v\) est réelle on prend la mesure de Lebesgue \(dx_v\) sur \(F_v \simeq \R\).
  \item Si \(v\) est complexe on prend la mesure \(2 da_v db_v\) où \(x_v = a_v + ib_v\).
  \item Si \(v\) est non-archimédienne on prend la mesure de Haar telle que \(\int_{\cO_{F_v}} dx_v=1\).
  \end{itemize}
  Cela donne (voir par exemple \cite[Proposition I.5.2]{Neukirch_ANT}) \(\vol(\A_F/F) = |d_F|^{1/2}\) où \(d_F\) est le discriminant de \(F\).
  On a par définition
  \[ \epsilon(1/2, \pi \times \rho) = \vol(\A_F/F)^{-rt} \prod_v \epsilon(1/2, \pi_v \times \rho_v, \psi_v, dx_v). \]
  Le terme \(\vol(\A_F/F)^{-rt}\) est rationnel car \(rt\) est pair.
  Par compatibilité de la correspondance de Langlands locale aux facteurs epsilon on a en toute place \(v\) de \(F\)
  \[ \epsilon(1/2, \pi_v \times \rho_v, \psi_v, dx_v) = \epsilon(1/2, \LLC(\pi_v) \times \LLC(\rho_v), \psi_v, dx_v) \]
  et nous allons calculer en terme des paramètres de Langlands.

  Montrons qu'à toute place non-archimédienne \(v\) de \(F\) on a
  \begin{equation} \label{eq:equiv_eps_nonarch_sympl}
    a (\epsilon(1/2, \pi_v \times \rho_v, \psi_v, dx_v)) = \epsilon(1/2, \tilde{a}(\pi)_v \times \tilde{a}(\rho)_v, a(\psi_v), dx_v).
  \end{equation}
  D'après la fonctorialité en le corps des coefficients pour les facteurs epsilon côté Galois (voir \cite[p.18]{Tate_corvallis} pour le cas où la monodromie est nulle; avec de la monodromie les facteurs epsilon sont définis \cite[p.21]{Tate_corvallis}) on a
  \begin{align*}
    a (\epsilon(1/2, \LLC(\pi_v) \times \LLC(\rho_v), \psi_v, dx_v))
    &= a(\epsilon(|\cdot|_v^{1/2} \LLC(\pi_v) \times \LLC(\rho_v), \psi_v, dx_v)) \\
    &= \epsilon(a(|\cdot|_v^{1/2} \LLC(\pi_v) \times \LLC(\rho_v)), a(\psi_v), dx_v)
  \end{align*}
  et le deuxième point du Lemme \ref{lem:equiv_LL_tens} nous montre que cela est égal à
  \[ \epsilon(|\cdot|_v^{1/2} \LLC(\tilde{a}(\pi)_v) \times \LLC(\tilde{a}(\rho)_v), a(\psi_v), dx_v) = \epsilon(1/2, \tilde{a}(\pi)_v \times \tilde{a}(\rho)_v, a(\psi_v), dx_v). \]
  Soit \(p\) la place de \(\Q\) sous \(v\), et notons \(a_p\) l'élément de \(\Zp^\times\) tel que \(a(\zeta) = \zeta^{a_p}\) pour \(\zeta^{p^N}=1\).
  On a donc \(a(\psi_v(x_v)) = \psi_v(a_p x_v)\) pour tout \(x_v \in F_v\), et \cite[(3.4.4)]{Tate_corvallis} nous donne
  \[ \epsilon(1/2, \tilde{a}(\pi)_v \times \tilde{a}(\rho)_v, a(\psi_v), dx_v) = \det(\LLC(\tilde{a}(\pi)_v) \otimes \LLC(\tilde{a}(\rho)_v))(a_p) \times \epsilon(1/2, \tilde{a}(\pi)_v \times \tilde{a}(\rho)_v, \psi_v, dx_v). \]
  Or \(\LLC(\pi_v) \otimes \LLC(\rho_v)\) se factorise par le groupe symplectique (c'est une conséquence de \cite[Theorem 1.4.2]{ArthurBook}) donc est de déterminant trivial.
  On en déduit
  \[ \epsilon(1/2, \tilde{a}(\pi)_v \times \tilde{a}(\rho)_v, a(\psi_v), dx_v) = \epsilon(1/2, \tilde{a}(\pi)_v \times \tilde{a}(\rho)_v, \psi_v, dx_v). \]

  Considérons maintenant les places archimédiennes.
  Pour \(a \in \frac{1}{2} \Z_{\geq 0}\) on note \(I_a = \Ind_{\C^\times}^{W_\R}(z \mapsto (z/|z|)^{2a})\).
  On déduit des formules \cite[(3.2.4) et (3.2.5)]{Tate_corvallis} pour \(F_v \simeq \R\)
  \[ \epsilon(|\cdot|^s I_a, \psi_v, dx) = i^{2a+1} \]
  et pour \(\iota: F_v \simeq \C\) et \(a,b \in \C\) tels que \(a-b \in \Z\)
  \[ \epsilon(z \mapsto \iota(z)^a \ol{\iota(z)}^b, \psi_v, dx_v) = i^{|a-b|} \]
  d'où l'on tire
  \[ \epsilon(|\cdot|^s I_a|_{F_v^\times}, \psi_v, dx_v) = (-1)^{2a}. \]
  Ces formules dans le cas \(a \in 1/2 + \Z\) nous suffiront pour calculer \(\epsilon(1/2, \pi_v \times \rho_v, \psi_v, dx_v)\).
  Notons la restriction du paramètre de Langlands de \(\pi_v\) (resp.\ \(\rho_v\)) à \(\C^\times\)
  \[ z \mapsto \diag((z/\ol{z})^{p_1}, \dots, (z/\ol{z})^{p_r}) \ \ \text{resp. } \diag((z/\ol{z})^{q_1}, \dots, (z/\ol{z})^{q_t}). \]
  Alors
  \[ \LLC(\pi_v) \otimes \LLC(\rho_v) \simeq \bigoplus_{\substack{1 \leq i \leq r \\ 1 \leq j \leq t \\ p_i+q_j>0}} I_{p_i+q_j}. \]
  Noter que nos hypothèses impliquent \(p_i+q_j \in 1/2 + \Z\).
  On en déduit
  \[ \epsilon(1/2, \pi_v \times \rho_v, \psi_v, dx_v) = \prod_{\substack{1 \leq i \leq r \\ 1 \leq j \leq t \\ p_i+q_j>0}}
    \begin{cases}
      (-1)^{p_i+q_j+1/2} & \text{ si } F_v \simeq \R, \\
      (-1)^{2p_i+2q_j} & \text{ si } F_v \simeq \C.
    \end{cases}
  \]

  Considérons maintenant les places archimédiennes dans leur ensemble.
  Rappelons que l'on identifie le caractère infinitésimal de \(\pi_v\) à une famille à un ou deux éléments \((p_\iota)_\iota\) où
  \begin{itemize}
  \item dans le cas réel où \(v\) correspond à un plongement \(\iota: F \hookrightarrow \C\) d'image contenue dans \(\R\), \(p_\iota = (p_1, \dots, p_r)\),
  \item dans le cas complexe où le choix de \(F_v \simeq \C\) correspond à \(\iota: F \hookrightarrow \C\), \(p_\iota = (p_1, \dots, p_r)\) et \(p_{\ol{\iota}} = p_\iota\) (l'égalité entre \(p_\iota\) et \(p_{\ol{\iota}}\) est particulière au cas des représentations autoduales).
  \end{itemize}
  De même pour \(\rho_v\), définissant \((q_\iota)_\iota\).
  La famille \((p_\iota)_\iota\) où \(\iota\) parcourt tous les plongements de \(F\) dans \(\C\) décrit le caractère infinitésimal de \(\otimes_{v|\infty} \pi_v\).
  On a
  \[ \prod_{v|\infty} \epsilon(1/2, \pi_v \times \rho_v, \psi_v, dx_v) = (-1)^{crt/2} \prod_{\iota: F \hookrightarrow \C} \prod_{\substack{1 \leq i \leq r \\ 1 \leq j \leq t \\ p_{\iota_i}+q_{\iota_j}>0}} (-1)^{p_{\iota,i}+q_{\iota_j}+1/2} \]
  où \(c\) désigne le nombre de places complexes de \(F\), en particulier le produit à gauche est dans \(\{\pm 1\}\).
  On sait que le caractère infinitésimal de \(\otimes_{v|\infty} \tilde{a}(\pi)_v\) est donné par \((p_{a^{-1} \iota})_\iota\), et de même pour \(\rho\), il est clair sur la formule ci-dessus que l'on a
  \begin{align*}
    \prod_{v|\infty} \epsilon(1/2, \tilde{a}(\pi)_v \times \tilde{a}(\rho)_v, \psi_v, dx_v)
    &= \prod_{v|\infty} \epsilon(1/2, \pi_v \times \rho_v, \psi_v, dx_v) \\
    &= a \left( \prod_{v|\infty} \epsilon(1/2, \pi_v \times \rho_v, \psi_v, dx_v) \right).
  \end{align*}
  En combinant avec \eqref{eq:equiv_eps_nonarch_sympl} on conclut
  \[ \epsilon(1/2, \tilde{a}(\pi) \times \tilde{a}(\rho)) = a(\epsilon(1/2, \pi \times \rho)) = \epsilon(1/2, \pi \times \rho). \]
\end{proof}

\begin{corollary} \label{cor:inv_pari_ord_sympl}
  On a
  \[ \mathrm{ord}_{s=1/2} L(s, \pi \times \rho) \equiv \mathrm{ord}_{s=1/2} L(s, \tilde{a}(\pi) \times \tilde{a}(\rho)) \mod 2. \]
\end{corollary}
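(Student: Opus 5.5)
The plan is to combine the functional equation with Théorème~\ref{thm:inv_epsilon_symp}. Let \(\Lambda(s, \pi \times \rho)\) be the completed Rankin--Selberg \(L\)-function, including the archimedean factors. By Jacquet--Piatetski-Shapiro--Shalika it is entire here, since \(\pi\) and \(\rho\) are cuspidal and of different types; in particular \(\pi \not\simeq \rho^\vee\) when \(r = t\). It satisfies
\[ \Lambda(s, \pi \times \rho) = \epsilon(s, \pi \times \rho)\, \Lambda(1-s, \pi^\vee \times \rho^\vee) = \epsilon(s, \pi \times \rho)\, \Lambda(1-s, \pi \times \rho), \]
where the second equality uses self-duality. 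Writing \(\epsilon(s, \pi\times\rho) = \epsilon(1/2, \pi\times\rho)\, e^{c(s-1/2)}\) with \(c\) real, the function \(\Lambda(1/2+u)e^{-cu/2}\) is \(\epsilon(1/2)\) times itself evaluated at \(-u\). Hence its order of vanishing at \(u=0\) is even if \(\epsilon(1/2, \pi\times\rho) = +1\) and odd if it is \(-1\). This gives \((-1)^{\mathrm{ord}_{s=1/2}\Lambda(s, \pi \times \rho)} = \epsilon(1/2, \pi \times \rho)\).

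Next I pass from \(\Lambda\) to \(L\). The archimedean factors \(L(s, \pi_v \times \rho_v)\) are products of Gamma factors, which have no zeros. They have no pole at \(s = 1/2\) either: by the description of \(\LLC(\pi_v) \otimes \LLC(\rho_v)\) as a sum of \(I_{p_i+q_j}\) (and characters in the complex case) with \(p_i + q_j \in 1/2 + \Z\), each Gamma factor is of the form \(\Gamma_\R(s + k)\) or \(\Gamma_\C(s+k)\) with \(k \geq 1/2\). It is worth double-checking these shifts, but they are positive because the parameters are unitary and tempered, \(\pi\) and \(\rho\) being regular algebraic or half-algebraic and hence tempered at infinity by the purity lemma. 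So \(\mathrm{ord}_{s=1/2} L(s, \pi\times\rho) = \mathrm{ord}_{s=1/2} \Lambda(s, \pi\times\rho)\), and therefore \((-1)^{\mathrm{ord}_{s=1/2} L(s, \pi\times\rho)} = \epsilon(1/2, \pi\times\rho)\).

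The pair \((\tilde a(\pi), \tilde a(\rho))\) satisfies the same hypotheses. By Proposition~\ref{pro:AutC_poids_et_type_autodual} (and Proposition~\ref{pro:AutC_SO_reg} in the \(\SO\)-regular case) the two representations are self-dual of the same respective types, cuspidal, and algebraic or half-algebraic regular. Their infinitesimal characters are the \((p_{a^{-1}\iota})\) and \((q_{a^{-1}\iota})\), so the integrality conditions on the weights are preserved. The same parity identity therefore holds for \(\tilde a(\pi) \times \tilde a(\rho)\). Théorème~\ref{thm:inv_epsilon_symp} gives \(\epsilon(1/2, \tilde a(\pi)\times\tilde a(\rho)) = \epsilon(1/2, \pi\times\rho)\), which yields the corollary.

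The only delicate point is confirming the absence of poles of the archimedean factors at \(1/2\), and this reduces to the half-integrality of \(p_i + q_j\) together with temperedness. All other steps are formal.
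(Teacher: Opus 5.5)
Your argument is correct and is essentially the paper's proof: you combine the functional equation, written with \(\epsilon(s,\pi\times\rho)=\epsilon(1/2,\pi\times\rho)f^{1/2-s}\), with self-duality to get \((-1)^{\mathrm{ord}_{s=1/2}\Lambda(s,\pi\times\rho)}=\epsilon(1/2,\pi\times\rho)\), use that the archimedean factors are holomorphic and non-vanishing at \(s=1/2\), and conclude with Théorème~\ref{thm:inv_epsilon_symp}. You also spell out details the paper leaves implicit, namely the Gamma factors and the fact that \((\tilde a(\pi),\tilde a(\rho))\) again satisfies the hypotheses; note that temperedness at infinity already rules out archimedean poles at \(s=1/2\), so the half-integrality of \(p_i+q_j\) is not actually needed for that step.
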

\begin{proof}
  Rappelons qu'il existe \(f \in \R_{>0}\) tel que \(\epsilon(s, \pi \times \rho) = \epsilon(1/2, \pi \times \rho) f^{1/2-s}\), et l'équation fonctionnelle
  \[ \Lambda(s, \pi \times \rho) = \epsilon(s, \pi \times \rho) \Lambda(1-s, \pi^\vee \times \rho^\vee) \]
  où \(\Lambda\) désigne la fonction \(L\) complète (incluant les facteurs archimédiens).
  Comme \(\pi\) et \(\rho\) sont auto-duales, en notant \(g(s) = f^{s/2} \Lambda(s, \pi \times \rho)\) on a
  \[ g(s) = \epsilon(1/2, \pi \times \rho) g(1-s) \]
  et il en résulte que l'ordre d'annulation en \(s=1/2\) de \(\Lambda(s, \pi \times \rho)\) est pair (resp.\ impair) si \(\epsilon(1/2, \pi \times \rho) = +1\) (resp.\ \(-1\)).
  Les facteurs archimédiens sont holomorphes non nuls en \(s=1/2\), donc les ordres d'annulations de \(\Lambda(s, \pi \times \rho)\) et \(L(s, \pi \times \rho)\) en \(s=1/2\) sont égaux.
\end{proof}

\subsection{Cas conjugué autodual}

Soit \(E/F\) une extension quadratique de corps de nombres.
On note \(\{1,c\} = \Gal(E/F)\).
Soient \(\pi\) et \(\rho\) des représentations automorphes cuspidales conjuguées autoduales (\(\pi^\vee \simeq \pi^c\) et \(\rho^\vee \simeq \rho^c\)) pour \(\GL_{r,E}\) et \(\GL_{t,E}\).
On a encore \(\epsilon(1/2, \pi \times \rho^\vee) \in \{\pm 1\}\) car\footnote{Cette égalité est assez formelle: plus généralement pour un isomorphisme de corps de nombres \(a: E_1 \simeq E_2\) et des représentations automorphes cuspidales \(\pi_1\) et \(\pi_2\) pour des groupes linéaires sur \(E_2\) on a \(\epsilon(s, \pi_1^c \times \pi_2^c) = \epsilon(s, \pi_1 \times \pi_2)\).} \(\epsilon(s, \pi^c \times \rho) = \epsilon(s, \pi \times \rho^c)\).
Si \(\pi\) et \(\rho\) sont de même signe on sait \cite[Theorem 2.5.4 (b)]{Mok} que ce signe vaut \(+1\).

Nous aurons besoin du lemme suivant.

\begin{lemma} \label{lem:res_eta_E_F}
  Soit \(\eta_{E/F}: \A_F^\times/F^\times \to \{\pm 1\}\) le caractère d'ordre \(2\) correspondant à \(E/F\), c'est-à-dire de noyau l'image de \(N_{E/F}: \A_E^\times/E^\times \to \A_F^\times/F^\times\).
  Alors sa restriction \(\eta_{E/F}|_{\A^\times/\Q^\times}\) correspond à l'extension (quadratique ou triviale) \(\Q(\sqrt{d_E})/\Q\).
\end{lemma}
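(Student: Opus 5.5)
The plan is to pass through class field theory and the standard compatibility of the Artin map with restriction and norm. First I will identify \(\eta_{E/F}\) with the Galois character \(\Gamma_F \to \{\pm 1\}\) whose kernel is \(\Gamma_E\), via the global Artin map \(\mathrm{art}_F \colon \A_F^\times/F^\times \to \Gamma_F^{\mathrm{ab}}\). I will use the norm compatibility, namely the commutative square relating \(\mathrm{art}_F\) and \(\mathrm{art}_\Q\) through the inclusion \(\A^\times/\Q^\times \hookrightarrow \A_F^\times/F^\times\) on the automorphic side and the transfer (Verlagerung) \(V \colon \Gamma_\Q^{\mathrm{ab}} \to \Gamma_F^{\mathrm{ab}}\) on the Galois side. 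With this square, the restriction \(\eta_{E/F}|_{\A^\times/\Q^\times}\) corresponds to the character \(\eta_{E/F} \circ V\) of \(\Gamma_\Q\).

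The key step is then to recognise \(\eta_{E/F} \circ V\) as a quadratic character attached to a discriminant. This is a classical fact: for a character \(\chi\) of \(\Gamma_F\), the composition \(\chi \circ V\) equals \(\det(\Ind_{\Gamma_F}^{\Gamma_\Q} \chi) \cdot \det(\Ind_{\Gamma_F}^{\Gamma_\Q} 1)^{-1}\) (Gallagher's formula for determinants of induced representations). I apply it with \(\chi = \eta_{E/F}\). By transitivity of induction, \(\Ind_{\Gamma_F}^{\Gamma_\Q} \eta_{E/F} \oplus \Ind_{\Gamma_F}^{\Gamma_\Q} 1 = \Ind_{\Gamma_E}^{\Gamma_\Q} 1\). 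Hence
\[ \eta_{E/F} \circ V = \det(\Ind_{\Gamma_E}^{\Gamma_\Q} 1) \cdot \det(\Ind_{\Gamma_F}^{\Gamma_\Q} 1)^{-2}, \]
and the last factor is trivial because \(\det(\Ind_{\Gamma_F}^{\Gamma_\Q} 1)\) has order dividing \(2\) (it is the sign of the permutation action on embeddings \(F \hookrightarrow \Qbar\)).

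The remaining point is the classical identification of \(\det(\Ind_{\Gamma_E}^{\Gamma_\Q} 1)\). It is the sign character of \(\Gamma_\Q\) acting on the set of embeddings \(E \hookrightarrow \Qbar\). Its kernel fixes \(\sqrt{d_E}\), since \(\sqrt{d_E} = \pm\det(\sigma_i(\omega_j))\) for an integral basis \((\omega_j)\), and the action of \(\Gamma_\Q\) multiplies this determinant by the sign of the induced permutation. Therefore it corresponds to \(\Q(\sqrt{d_E})/\Q\), which is trivial when \(d_E\) is a square. Transporting back by \(\mathrm{art}_\Q\) gives the lemma.

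The main obstacle is only making sure the transfer/restriction compatibility is stated in the right direction: the inclusion of idèle classes corresponds to the transfer \(V\), not to restriction. Gallagher's formula is standard, but if I prefer to avoid it I can check the claim locally place by place. At an unramified prime \(p\), the image of \(p\) under the Artin map is Frobenius, and the transfer of Frobenius is \(\prod_{v \mid p} \mathrm{Frob}_v^{f_v}\). One then compares \(\prod_{v\mid p} \eta_{E/F}(\mathrm{Frob}_v)^{f_v}\) with the sign of the Frobenius permutation on the \([E:\Q]\) embeddings, and a Chebotarev-density argument concludes.
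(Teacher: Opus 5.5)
Your proposal is correct and follows essentially the same route as the paper. Both use the class field theory square with the transfer and the determinant formula for induced representations (the paper cites Deligne, Prop.~1.2, i.e.\ Gallagher's formula), combined with $\Ind 1 = 1 \oplus \eta_{E/F}$ and transitivity of induction, and then identify the sign character on $\Hom(E,\Qbar)$ with $\Q(\sqrt{d_E})/\Q$ via a squared determinant; the only difference is that the paper uses the Vandermonde determinant of a primitive element instead of an integral basis, which gives the same class modulo squares.
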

\begin{proof}
  On a un diagramme commutatif \cite[p.4]{Tate_corvallis}
  \[
    \begin{tikzcd}
      \A^\times/\Q^\times \ar[d, hook] \ar[r, "{\sim}" above, "{r_\Q}" below] & W_{\Q}^{\mathrm{ab}} \ar[d, "{t}"] \\
      \A_F^\times/F^\times \ar[r, "{\sim}" above, "{r_F}" below] & W_F^{\mathrm{ab}}
    \end{tikzcd}
  \]
  où les flèches horizontales sont les isomorphismes de réciprocité de la théorie du corps de classe et la flèche verticale \(t\) est le transfert.
  Notons \(s_{F/\Q}\) le signe de la représentation de permutation de \(W_{\Q}\) (ou \(\Gal(\Qbar/\Q)\)) sur
  \[ W_{\Q}/W_F \simeq \Gal(\Qbar/\Q) / \Gal(\Qbar/F) \simeq \Hom(F,\Qbar). \]
  Ce caractère à valeurs dans \(\{\pm 1\}\) est aussi le déterminant de \(\Ind_{W_F}^{W_\Q} 1\).
  On sait de plus \cite[Proposition 1.2]{Deligne_epsilon}
  \[ \det \left( \Ind_{W_F}^{W_\Q}(\eta_{E/F} \circ r_F^{-1}) \right) = (\eta_{E/F} \circ r_F^{-1} \circ t) \times s_{F/\Q} \]
  et comme \(\Ind_{W_E}^{W_F} 1 \simeq 1 \oplus (\eta_{E/F} \circ r_F^{-1})\) on déduit
  \begin{align*}
    (\eta_{E/F} |_{\A^\times/\Q^\times}) \circ r_\Q^{-1}
    &= \eta_{E/F} \circ r_F^{-1} \circ t \\
    &= s_{E/F} \times \det \left( \Ind_{W_F}^{W_\Q} \eta_{E/F} \right) \\
    &= \det \left( \Ind_{W_E}^{W_\Q} 1 \right) \\
    &= s_{E/\Q}
  \end{align*}
  et il ne reste plus qu'à observer que \(s_{E/Q}\), vu comme caractère de \(\Gal(\Qbar/\Q)\), a pour noyau \(\Gal(\Qbar/\Q(\sqrt{d_E}))\).
  En effet si on choisit \(\alpha \in E\) tel que \(E=\Q(\alpha)\) et un ordre \(\{\iota_1,\dots,\iota_d\} = \Hom(E,\Q)\) alors on a modulo les carrés de \(\Q^\times\)
  \[ d_E \sim \det (\iota_i (\alpha)^{j-1})_{1 \leq i,j \leq d}^2 = \prod_{1 \leq i < j \leq d} (\iota_i(\alpha) - \iota_j(\alpha))^2. \]
\end{proof}

\begin{theorem} \label{thm:inv_epsilon_conjautodual}
 Supposons que \(\pi\) et \(\rho\) sont de signes opposés, qu'elles sont algébriques ou demi-algébriques régulières et qu'en toute place archimédienne \(v\) de \(E\) les poids de \(\LLC(\pi_v) \otimes \LLC(\rho_v)\) sont demi-entiers.
  Alors pour tout \(a \in \Aut(\C)\) on a
  \[ \epsilon(1/2, \tilde{a}(\pi) \times \tilde{a}(\rho)^\vee) = \epsilon(1/2, \pi \times \rho^\vee). \]
\end{theorem}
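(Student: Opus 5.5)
Je suivrai la stratégie de la preuve du Théorème~\ref{thm:inv_epsilon_symp} : établir l'équivariance \(\epsilon(1/2, \tilde{a}(\pi) \times \tilde{a}(\rho)^\vee) = a(\epsilon(1/2, \pi \times \rho^\vee))\) par voie locale. Comme le membre de gauche est dans \(\{\pm 1\}\), cela suffit. On fixe \(\psi = \psi_\Q \circ \Tr_{E/\Q}\) et les mesures \(dx_w\) autoduales aux places \(w\) de \(E\), de sorte que \(\vol(\A_E/E) = |d_E|^{1/2}\). On écrit alors
\[ \epsilon(1/2, \pi \times \rho^\vee) = |d_E|^{-rt/2} \prod_w \epsilon(1/2, \LLC(\pi_w) \otimes \LLC(\rho_w)^\vee, \psi_w, dx_w). \]
Contrairement au cas symplectique, \(rt\) peut être impair. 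Le facteur \(|d_E|^{1/2}\) n'est alors pas rationnel et \(a\) agit dessus par le signe \(a(\sqrt{|d_E|})/\sqrt{|d_E|}\). Ce signe devra être compensé par les autres contributions.

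Aux places finies, on utilise un analogue du Lemme~\ref{lem:equiv_LL_tens}. L'une des deux représentations est L-algébrique, l'autre demi-L-algébrique (c'est l'hypothèse de demi-intégralité des poids). On obtient ainsi
\[ a(|\cdot|_w^{1/2} \LLC(\pi_w) \otimes \LLC(\rho_w)^\vee) = |\cdot|_w^{1/2} \LLC(\tilde{a}(\pi)_w) \otimes \LLC(\tilde{a}(\rho)_w)^\vee, \]
ce qui utilise aussi la compatibilité de \(\tilde{a}\) à la contragrédiente (Proposition~\ref{pro:AutC_poids_et_type_autodual}). La fonctorialité de Tate et la formule \cite[(3.4.4)]{Tate_corvallis} donnent ensuite, en chaque \(w\), un facteur parasite \(\det(\LLC(\tilde{a}(\pi)_w) \otimes \LLC(\tilde{a}(\rho)_w)^\vee)(a_p)\). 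Ici le déterminant n'est plus trivial. La représentation \(\LLC(\pi)\otimes\LLC(\rho)^\vee\) est conjuguée-symplectique, car les signes sont opposés (Mok), donc son déterminant \(\chi\) est un caractère de Hecke de \(E\) conjugué-orthogonal de la forme attendue. Je compte montrer que \(\chi\) vaut essentiellement \(\eta_{E/F}\circ N_{E/F}\), à un caractère trivial près, ou plus précisément que \(\chi|_{\A_F^\times}\) est lié à \(\eta_{E/F}^{rt}\). Dans ce cas, le produit sur toutes les places finies \(w\) au-dessus de \(p\) de \(\chi_w(a_p)\) se regroupe en \(\eta_{E/F}|_{\A^\times/\Q^\times}\) évalué sur l'idèle \((a_p)_p\). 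C'est exactement là qu'intervient le Lemme~\ref{lem:res_eta_E_F}. Cette restriction correspond à \(\Q(\sqrt{d_E})/\Q\), donc le produit des facteurs parasites donne \(a(\sqrt{d_E})/\sqrt{d_E}\), élevé à la puissance \(rt\), à des signes archimédiens près.

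Aux places archimédiennes, on calcule explicitement comme dans la preuve précédente, à l'aide de \cite[(3.2.4), (3.2.5)]{Tate_corvallis}. Selon que \(w\) est au-dessus d'une place réelle de \(F\) ou non, les facteurs sont des puissances de \(i\), indexées par les poids \(p_{\iota,i} - q_{\iota,j}\) (demi-entiers). Le passage à \(\tilde{a}\) permute les plongements \(\iota \mapsto a^{-1}\iota\). Le produit archimédien s'exprime en fonction de \((p_\iota,q_\iota)_\iota\) et change au plus d'un signe sous cette permutation. Ce signe doit compenser la différence entre \(a(\sqrt{d_E})/\sqrt{d_E}\) et \(a(\sqrt{|d_E|})/\sqrt{|d_E|}\), c'est-à-dire un facteur \(a(i)/i\) en puissance du nombre de places complexes.

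L'obstacle principal est cette comptabilité globale des signes : identifier précisément le caractère déterminant \(\chi\) et sa restriction à \(\A^\times/\Q^\times\), puis vérifier que les trois contributions se compensent exactement. Ces trois contributions sont \(a\) appliqué à \(|d_E|^{-rt/2}\), les facteurs \(\chi(a_p)\) via le Lemme~\ref{lem:res_eta_E_F}, et les puissances de \(i\) aux places archimédiennes. Je commencerais par traiter le cas \(rt\) pair, où tout se simplifie comme dans le cas symplectique, puis le cas \(rt\) impair, où \(\chi\) contient \(\eta_{E/F}\circ N\).
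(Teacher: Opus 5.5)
Your plan follows the paper's proof step for step. The ingredients are the same: local $\Aut(\C)$-equivariance with $\psi=\psi_\Q\circ\Tr_{E/\Q}$, a determinant correction at the finite places evaluated through Lemma~\ref{lem:res_eta_E_F}, powers of $i$ at infinity, and the action of $a$ on $|d_E|^{-rt/2}$. However, one step is wrong as you state it.

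The character $\chi=\omega_{\tilde a(\pi)}^t\omega_{\tilde a(\rho)}^{-r}$, i.e.\ the determinant in the correction factor, is \emph{not} conjugate-orthogonal in general. The determinant of a conjugate self-dual representation of sign $b$ and dimension $N$ has sign $b^N$. Here that gives sign $(-1)^{rt}$, i.e.\ $\chi|_{\A_F^\times}=\eta_{E/F}^{rt}$. Also, ``$\chi=\eta_{E/F}\circ N_{E/F}$'' cannot be right, since that character is trivial on $\A_E^\times/E^\times$. Taken literally, your first formulation makes the finite-place correction trivial, and then the case $rt$ odd does not close; that is precisely the case that is new compared to Th\'eor\`eme~\ref{thm:inv_epsilon_symp}.

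The paper gets the correct sign from Mok's Corollary~2.4.11, applied to the central characters at a place of $F$ not split in $E$, together with $\omega_{\tilde a(\pi)}=\tilde a(\omega_\pi)$. Note that you need the sign for $(\tilde a(\pi),\tilde a(\rho))$, not only for $(\pi,\rho)$. With it, the finite places contribute exactly $(a(\sqrt{d_E})/\sqrt{d_E})^{rt}$, as you then say.

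Two smaller points need to be made precise. First, at infinity, $\iota\mapsto a^{-1}\iota$ can exchange real and complex embeddings of $E$. So you must write $\prod_{v|\infty}\epsilon_v$ as a symmetric expression in the data $(p_\iota,q_\iota)$ over \emph{all} embeddings; real places of $E$ come in pairs above split real places of $F$, and each pair contributes $1$. This expression is invariant under the permutation. The discrepancy $(a(i)/i)^{d_\C rt}$ comes from $a$ acting on its value $\pm i^{d_\C rt}$, not from the permutation. Second, you identify the ratio between $a(\sqrt{d_E})/\sqrt{d_E}$ and $a(\sqrt{|d_E|})/\sqrt{|d_E|}$ with $(a(i)/i)^{d_\C}$. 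That is exactly the statement that $d_E$ has sign $(-1)^{d_\C}$ (Brill's theorem), which the paper cites and which you should make explicit.
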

\begin{proof}
  La preuve est similaire à celle du Théorème \ref{thm:inv_epsilon_symp}, mais un peu plus délicate dans le cas où \(rt\) est impair.
  On choisit \(\psi: \A_E/E \to \C^\times\) et les mesures de Haar \(dx_v\) sur les localisés \(E_v\) comme dans cette preuve, de sorte qu'on a encore
  \begin{equation} \label{eq:eps_prod_conjautodual}
    \epsilon(1/2, \pi \times \rho^\vee) = |d_E|^{-rt/2} \prod_v \epsilon(1/2, \pi_v \times \rho_v^\vee, \psi_v, dx_v).
  \end{equation}
  Aux places non-archimédiennes de \(E\) le début du calcul est identique: pour une telle place \(v|p\) on trouve
  \[ a(\epsilon(1/2, \pi_v \times \rho_v^\vee, \psi_v, dx_v)) = \det(\LLC(\tilde{a}(\pi)_v) \otimes \LLC(\tilde{a}(\rho)_v)^\vee)(a_p) \times \epsilon(1/2, \tilde{a}(\pi)_v \times \tilde{a}(\rho)_v^\vee, \psi_v, dx_v) \]
  et on en déduit (utilisant \(\hat{\Z}^\times \simeq \A^\times/\R_{>0} \Q^\times\))
  \begin{equation} \label{eq:equiv_epsilon_conjautodual_nonarch_compl}
    \frac{a \left( \prod_{v \nmid \infty} \epsilon(1/2, \pi_v \times \rho_v^\vee, \psi_v, dx_v) \right)}{\prod_{v \nmid \infty} \epsilon(1/2, \tilde{a}(\pi)_v \times \tilde{a}(\rho)_v^\vee, \psi_v, dx_v)} = \omega_{\tilde{a}(\pi)}^t \omega_{\tilde{a}(\rho)}^{-r} (\ol{r_\Q}^{-1}(a|_{\Q^\mathrm{ab}}))
  \end{equation}
  où \(\omega\) désigne le caractère central et \(\ol{r_\Q}: \A^\times/\R_{>0} \Q^\times \simeq \Gal(\Q^\mathrm{ab}/\Q)\) est l'isomorphisme de réciprocité.
  Le caractère \(\omega_{\tilde{a}(\pi)}^t \omega_{\tilde{a}(\rho)}^{-r}\) de \(\A_E^\times/E^\times\) est conjugué auto-dual de signe \((-1)^{rt}\):
  \begin{itemize}
  \item Soit \(\eta(\pi)\) le signe de \(\pi\), alors le caractère central \(\omega_\pi\) est également conjugué auto-dual, de signe \(\eta(\pi)^r\): cela résulte de \cite[Corollary 2.4.11]{Mok} en choisissant une place de \(F\) non déployée dans \(E\).
  \item On a \(\omega_{\tilde{a}(\pi)} = \tilde{a}(\omega_\pi)\), donc \(\tilde{a}(\omega_\pi)\) est également de signe \(\eta(\pi)^r\).
  \item Les deux premiers points s'appliquent également à \(\rho\) donc \(\omega_{\tilde{a}(\pi)}^t \omega_{\tilde{a}(\rho)}^{-r}\) est de signe \(\eta(\pi)^{rt} \eta(\rho)^{-rt} = (\eta(\pi) \eta(\rho))^{rt} = (-1)^{rt}\).
  \end{itemize}
  Si \(rt\) est pair, cela signifie que la restriction de \(\omega_{\tilde{a}(\pi)}^t \omega_{\tilde{a}(\rho)}^{-r}\) à \(\A_F^\times/F^\times\) est triviale, en particulier le terme de droite dans \eqref{eq:equiv_epsilon_conjautodual_nonarch_compl} vaut \(1\).
  Si \(rt\) est impair sa restriction à \(\A_F^\times/F^\times\) est le caractère \(\eta_{E/F}\) d'ordre \(2\) correspondant à l'extension quadratique \(E/F\) et le Lemme \ref{lem:res_eta_E_F} nous dit que le terme de droite dans \eqref{eq:equiv_epsilon_conjautodual_nonarch_compl} vaut \(a(\sqrt{d_E})/\sqrt{d_E}\).
  On simplifie donc \eqref{eq:equiv_epsilon_conjautodual_nonarch_compl} en
  \begin{equation} \label{eq:equiv_epsilon_conjautodual_nonarch}
    \frac{a \left( \prod_{v \nmid \infty} \epsilon(1/2, \pi_v \times \rho_v^\vee, \psi_v, dx_v) \right)}{\prod_{v \nmid \infty} \epsilon(1/2, \tilde{a}(\pi)_v \times \tilde{a}(\rho)_v^\vee, \psi_v, dx_v)} = (a(\sqrt{d_E})/\sqrt{d_E})^{rt}.
  \end{equation}

  Considérons maintenant les places archimédiennes.
  \begin{itemize}
  \item Soit \(v\) une place complexe de \(E\).
    Alors \(\LLC(\pi_v) \otimes \LLC(\rho_v)\) est isomorphe à
    \[ z \in E_v^\times \mapsto \bigoplus_{\substack{1 \leq i \leq r \\ 1 \leq j \leq t}} \left( \iota(z)/\ol{\iota(z)} \right)^{p_{\iota,i}+q_{\iota,j}} \]
    et on trouve
    \[ \epsilon(1/2, \pi_v \times \rho_v, \psi_v, dx_v) = \prod_{\substack{1 \leq i \leq r \\ 1 \leq j \leq t}} i^{2|p_{\iota,i}+q_{\iota,j}|}. \]
  \item Toute place réelle de \(E\) fait partie d'une paire \(\{v_1,v_2\}\) de place réelles au-dessus d'une même place réelle de \(F\), et \(\LLC(\pi_{v_2}) \otimes \LLC(\rho_{v_2})\) est duale de (donc isomorphe à) \(\LLC(\pi_{v_1}) \otimes \LLC(\rho_{v_1})\).
    Comme \(\LLC(\pi_{v_1}) \otimes \LLC(\rho_{v_1})\) est isomorphe à une somme directe de représentations irréductibles \(I_a\) de \(W_\R\) avec \(a \in 1/2 + \Z_{\geq 0}\), et que \(\epsilon(I_a, \psi_{v_1}, dx_{v_1}) \in \{\pm 1\}\), on en déduit
    \[ \prod_{i=1}^2 \epsilon(1/2, \pi_{v_i} \times \rho_{v_i}, \psi_{v_i}, dx_{v_i}) = 1. \]
    Noter que l'existence d'une place réelle de \(E\) implique que \(rt\) est pair.
  \end{itemize}
  La contribution des places complexes est
  \[ \prod_{v \text{ complexe}} \epsilon(1/2, \pi_v \times \rho_v, \psi_v, dx_v) = \exp \left( \frac{i \pi}{2} \sum_{\iota \text{ complexe}} \sum_{\substack{1 \leq i \leq r \\ 1 \leq j \leq t}} |p_{\iota,i}+q_{\iota,j}| \right) \]
  et si \(d_\C\) est le nombre de places complexes de \(E\) ce nombre vaut \(\pm i^{d_\C rt}\).
  La contribution des places réelles est
  \[ \prod_{v \text{ réelle}} \epsilon(1/2, \pi_v \times \rho_v, \psi_v, dx_v) = 1. \]
  On en déduit que
  \[ \prod_{v|\infty} \epsilon(1/2, \pi_v \times \rho_v, \psi_v, dx_v) = i^{-d_\R rt} \exp \left( \frac{i \pi}{2} \sum_\iota \sum_{\substack{1 \leq i \leq r \\ 1 \leq j \leq t}} |p_{\iota,i}+q_{\iota,j}| \right), \]
  où \(d_\R\) est le nombre de places réelles de \(E\), reste inchangé lorsqu'on remplace \((\pi,\rho)\) par \((\tilde{a}(\pi), \tilde{a}(\rho))\).
  (En fait \(d_\R\) est pair car \(d_\R + 2d_\C = [E:\Q] = 2[F:\Q]\), et \(d_\R>0\) implique \(rt\) pair, donc \(d_\R r t\) est toujours divisible par \(4\).)
  On reformule
  \begin{equation} \label{eq:equiv_epsilon_conjautodual_arch}
    \frac{a \left( \prod_{v|\infty} \epsilon(1/2, \pi_v \times \rho_v^\vee, \psi_v, dx_v) \right)}{\prod_{v|\infty} \epsilon(1/2, \tilde{a}(\pi)_v \times \tilde{a}(\rho)_v^\vee, \psi_v, dx_v)} = (a(\sqrt{-1})/\sqrt{-1})^{d_\C r t}.
  \end{equation}

  Combinant \eqref{eq:eps_prod_conjautodual}, \eqref{eq:equiv_epsilon_conjautodual_nonarch} et \eqref{eq:equiv_epsilon_conjautodual_arch} on obtient
  \[ \frac{a(\epsilon(1/2, \pi \times \rho^\vee))}{\epsilon(1/2, \tilde{a}(\pi) \times \tilde{a}(\rho)^\vee)} = \frac{a(\sqrt{s_E (-1)^{d_\C}})}{\sqrt{s_E (-1)^{d_\C}}} \]
  où \(s_E \in \{\pm 1\}\) est le signe du discriminant \(d_E\).
  Enfin on a \(s_E = (-1)^{d_\C}\) (voir par exemple \cite[Lemma 2.2]{Washington}).
\end{proof}

\begin{corollary} \label{cor:inv_pari_ord_conjautodual}
  Sous les hypothèses du Théorème \ref{thm:inv_epsilon_conjautodual} on a
  \[ \mathrm{ord}_{s=1/2} L(s, \pi \times \rho^\vee) \equiv \mathrm{ord}_{s=1/2} L(s, \tilde{a}(\pi) \times \tilde{a}(\rho)^\vee) \mod 2. \]
\end{corollary}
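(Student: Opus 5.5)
La démarche sera la même que pour le Corollaire~\ref{cor:inv_pari_ord_sympl} : on combine l'équation fonctionnelle de la fonction \(L\) de Rankin--Selberg \(L(s, \pi \times \rho^\vee)\) avec le Théorème~\ref{thm:inv_epsilon_conjautodual}.

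Rappelons l'équation fonctionnelle (Jacquet--Piatetski-Shapiro--Shalika) :
\[ \Lambda(s, \pi \times \rho^\vee) = \epsilon(s, \pi \times \rho^\vee)\, \Lambda(1-s, \pi^\vee \times \rho), \]
avec \(\epsilon(s, \pi \times \rho^\vee) = \epsilon(1/2, \pi \times \rho^\vee) f^{1/2-s}\) pour un conducteur \(f \in \R_{>0}\). Ici \(\pi^\vee \times \rho\) n'est pas égal à \(\pi \times \rho^\vee\), mais à \(\pi^c \times (\rho^c)^\vee\), puisque \(\pi^\vee \simeq \pi^c\) et \(\rho \simeq (\rho^c)^\vee\). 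On utilise alors l'invariance formelle par \(c\) déjà rappelée dans la note de bas de page : la fonction \(L\) complète de \(\pi^c \times \rho^{c,\vee}\) coïncide avec celle de \(\pi \times \rho^\vee\). En effet, les facteurs locaux en une place \(w\) de \(E\) pour la paire conjuguée sont ceux en \(c(w)\) pour la paire d'origine, et le produit sur toutes les places est donc le même. On obtient ainsi \(\Lambda(1-s, \pi^\vee \times \rho) = \Lambda(1-s, \pi \times \rho^\vee)\).

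Posons \(g(s) = f^{s/2}\Lambda(s, \pi \times \rho^\vee)\). L'équation fonctionnelle devient \(g(s) = \epsilon(1/2, \pi\times\rho^\vee)\, g(1-s)\), avec un signe \(\epsilon(1/2, \pi\times\rho^\vee) \in \{\pm 1\}\) comme rappelé avant le Lemme~\ref{lem:res_eta_E_F}. La parité de l'ordre d'annulation de \(\Lambda\) en \(s=1/2\) est donc déterminée par ce signe : l'ordre est pair si le signe vaut \(+1\), impair s'il vaut \(-1\). Les facteurs archimédiens sont des produits de fonctions \(\Gamma\) ; ils sont holomorphes et non nuls en \(1/2\) parce que les poids sont demi-entiers et que les représentations sont tempérées à l'infini (lemme de pureté). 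L'ordre de \(L(s, \pi\times\rho^\vee)\) en \(1/2\) est donc égal à celui de \(\Lambda(s, \pi\times\rho^\vee)\).

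Il faut ensuite vérifier que \((\tilde a(\pi), \tilde a(\rho))\) satisfait encore les hypothèses : conjuguée autodualité, signes opposés, poids demi-entiers. La conjuguée autodualité se déduit comme dans la Proposition~\ref{pro:AutC_poids_et_type_autodual}, par compatibilité de l'action de \(\Aut(\C)\) avec la contragrédiente et avec \(c\), en raisonnant place par place et en concluant par multiplicité un forte. Les signes se conservent par la Proposition~\ref{prop:ParityInvariance} ; dans le cas demi-algébrique, on utilise que la torsion par \([\pm 1/2]\) ne change pas la parité. Le même argument s'applique donc à cette nouvelle paire. Enfin, le Théorème~\ref{thm:inv_epsilon_conjautodual} donne l'égalité des deux signes, d'où l'égalité des parités.

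Le point le plus délicat est l'identification \(\Lambda(s, \pi^\vee \times \rho) = \Lambda(s, \pi\times\rho^\vee)\), c'est-à-dire la vérification soigneuse que la conjugaison par \(c\) permute les facteurs locaux, aux places archimédiennes comme aux places ramifiées. Le reste est formel.
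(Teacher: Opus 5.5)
Your proposal is correct and follows the paper's own proof. The paper also reruns the argument of Corollaire~\ref{cor:inv_pari_ord_sympl}, using the formal identity $\Lambda(s, \pi^c \times \rho^{c,\vee}) = \Lambda(s, \pi \times \rho^\vee)$ to make the functional equation self-dual, and then concludes with Théorème~\ref{thm:inv_epsilon_conjautodual}. Your additional check that $(\tilde{a}(\pi), \tilde{a}(\rho))$ is again conjugate self-dual, so that the same parity argument applies to it, is a step the paper leaves implicit.
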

\begin{proof}
  La preuve est presque identique à celle du Corollaire \ref{cor:inv_pari_ord_sympl}, en utilisant l'égalité (formelle)
  \[ \Lambda(s, \pi^c \times \rho^{c,\vee}) = \Lambda(s, \pi \times \rho^\vee). \]
\end{proof}

\newpage

\appendix

\section{Holomorphie et non-annulation de certains opérateurs d'entrelacement }
\label{sec:appendice}
\subsection{Généralités}\label{generalites}

Soit $F$ un corps local non-archimédien de caractéristique nulle.
On note $\vert.\vert$ sa valeur absolue usuelle.

Soit $n\geq1$ un entier.
Considérons le groupe  $\GL_n$ défini sur $F$.
On note $\theta$ son automorphisme défini par $\theta(g)=J{^tg}^{-1}J^{-1}$ où $J$ est la matrice antidiagonale dont les coefficients non nuls sont les  $J_{k,n+1-k}=(-1)^k$ pour $k=1,\dots,n$.
Il est involutif.
Pour une représentation admissible $\pi$ de $\GL_n(F)$, on note $\theta(\pi)$ la représentation $g\mapsto \pi(\theta(g))$.
On note $\check{\pi}$ la contragrédiente de $\pi$.
Si $\pi$ est irréductible, on sait que $\check{\pi}\simeq \theta(\pi)$.
Pour un nombre complexe $s$, on note $\pi\vert.\vert^s$ la représentation $g\mapsto \vert det(g)\vert^s\pi(g)$.
Fixons la paire de Borel habituelle $(B^{\GL_n},T^{\GL_n})$: $B^{\GL_n}$ est le sous-groupe des matrices triangulaires supérieures et $T^{\GL_n}$ celui des matrices diagonales.
Un groupe de Levi standard $M$ de $\GL_n$ s'écrit $M=\GL_{n_1} \times \dots \times \GL_{n_t}$ avec $n_1+\dots+n_t=n$.
Notons $P$ l'unique sous-groupe parabolique standard de composante de Levi $M$.
Pour des représentations admissibles $\pi_1,\dots,\pi_t$ de $\GL_{n_1}(F),\dots,\GL_{n_t}(F)$, on pose $\pi_1\times\dots \times \pi_t = \Ind_P^{\GL_n}(\pi_1 \otimes \dots \otimes \pi_t)$.
Nous appellerons représentation irréductible quasi-tempérée une représentation admissible irréductible $\pi$ de $\GL_n(F)$ qui, pour un bon choix de $M$, est de la forme $\pi_1\vert.\vert^{r_1}\times\dots \times \pi_t\vert.\vert^{r_t}$ où les $\pi_i$ sont irréductibles et de la série discrète (donc unitaires) et les $r_i$ sont des réels vérifiant $\vert r_i\vert< \frac{1}{2}$.
Signalons qu'inversement, il est connu qu'une telle induite est irréductible et que sa classe d'équivalence ne dépend pas de l'ordre des facteurs.

Soit $n\geq1$ un entier.
On note $G$ le groupe $Sp_{2n}$ défini sur $F$.
On en fixe une paire de Borel $(B,T)$.
Un groupe de Levi standard $M$ de $G$ s'identifie à un produit $M=\GL_{r_1} \times \dots \times \GL_{r_t}\times G_0$, où $G_0=Sp_{2m}$ et $r_1+ \dots + r_t+m=n$.
Pour des représentations admissibles $\pi_1,\dots,\pi_t$ de $\GL_{r_1}(F)$, \dots, $\GL_{r_t}(F)$ et $\sigma$ de $G_0(F)$, on définit comme dans le cas de $\GL_n$ la représentation induite $\Pi = \pi_1 \times \dots \times \pi_t \rtimes \sigma$ de $G(F)$.
Pour un élément $s=(s_1,\dots,s_t)\in {\mathbb C}^t$, on pose $\Pi(s) = \pi_1 \vert.\vert^{s_1} \times \dots \times \pi_t \vert.\vert^{s_t} \rtimes \sigma$.

Conservons le groupe de Levi $M$.
Notons $W_t$ le groupe des permutations $w$ de $\{\pm 1,\dots,\pm t\}$ telles que $w(-i)=-w(i)$ pour tout $i$. Soit $w\in W_t$. On note
$^wM$ le Levi standard tel que $^wM=\GL_{r_{w^{-1}(1)}}\times\dots\times \GL_{r_{w^{-1}(n)}}\times G_0$, où $r_{-i}=r_i$ pour tout $i=1,\dots,t$.
Pour $g=(g_1,\dots,g_t,g_0) \in M(F)$, on pose $^wg=(g_{w^{-1}(1)},\dots,g_{w^{-1}(t)},g_0)$, où $g_{-i}=\theta(g_i)$ pour tout $i=1,\dots,t$; c'est un élément de $^wM(F)$.
On fixe, ainsi qu'il est loisible, un élément $\dot{w}\in G(F)$ qui vérifie les deux conditions suivantes:
\begin{itemize}
\item $\dot{w}M\dot{w}^{-1}={^wM}$;
\item $\dot{w}g\dot{w}^{-1}={^wg}$ pour tout $g\in M(F)$.
\end{itemize}

\begin{remark}
  Notons $\mathcal{N}(M)$ l'ensemble des $g\in G(F)$ tels que $gMg^{-1}$ soit un groupe de Levi standard.
  On vérifie que l'application $w\mapsto \dot{w}$ se quotiente en une bijection de $W_t$ sur $\mathcal{N}(M)/M(F)$.
\end{remark}

Considérons des représentations $\pi_1,\dots,\pi_t,\sigma$ comme ci-dessus, supposons qu'elles sont de longueur finie. Soit $s=(s_1,\dots,s_t)\in {\mathbb C}^t$. Pour $w\in W_t$, posons
\begin{itemize}
\item $^ws=(s_{w^{-1}(1)},\dots,s_{w^{-1}(t)})$ où $s_{-i}=-s_i $ pour tout $i=1,\dots,t$;
\item $^w\pi_i=\pi_{w^{-1}(i)}$ pour $i=1,\dots,t$, où $\pi_{-i}=\theta(\pi_i)$ pour un tel $i$;
\item $^w\Pi = {^w\pi}_1 \times \dots \times {^w\pi}_t \rtimes \sigma$.
\end{itemize}

L'élément $\dot{w} $ associé à $w$ permet de définir un opérateur d'entrelacement non normalisé de $\Pi(s) $ dans  $(^w\Pi)(^ws)$.
Comme on le sait, il n'est défini qu'au sens des opérateurs méromorphes: il est bien défini par une intégrale convergente dans un domaine de la forme $\Re(s_1) \gg \Re(s_2) \gg \dots \gg \Re(s_t) \gg 0$ et se prolonge méromorphiquement à tout ${\mathbb C}^t$.
Cet opérateur dépend du choix de $\dot{w}$, qui n'est pas unique (il l'est seulement modulo le centre de  $M(F)$).
Pour éviter la question technique du choix de $\dot{w}$, nous dirons que deux opérateurs dépendant de variables complexes sont équivalents s'ils diffèrent par multiplication par une fonction holomorphe et sans zéros de ces variables.
Nous noterons $\sim$ cette relation d'équivalence.
L'opérateur ci-dessus est bien défini modulo équivalence, ce qui autorise à le noter simplement $M(w,\Pi,s)$ dans la mesure où seule sa classe d'équivalence nous importe.

Le groupe $W_t$ est un groupe de Coxeter, donc muni d'une fonction longueur $\lng$.
On  a l'équivalence d'opérateurs d'entrelacements
\begin{equation} \label{eq:decomp_op_entrel_long}
  M(w_1w_2,\Pi,s)\sim M(w_1,(^{w_2}\Pi),{^{w_2}s})M(w_2,\Pi,s) \text{ quand } \lng(w_1w_2) = \lng(w_1) \lng(w_2).
\end{equation}

\subsection{Position du problème et énoncé principal}
\label{sec:enonce_appendice}

On s'intéresse à un Levi standard propre maximal $M=\GL_r\times G_0$ de $G$, où $G_0=Sp_{2m}$ et $r+m=n$. On considère des représentations admissibles irréductibles $\pi$ de $\GL_r(F)$ et $\sigma$ de $G_0(F)$. L'origine de notre problème vient d'une situation globale où
\begin{itemize}
\item $F$ est le complété en une place $v$ d'un corps de nombres $\dot{F}$;
\item $\pi$ est la composante en $v$ d'une représentation automorphe irréductible,  cuspidale  et autoduale $\dot{\pi}$ de $\GL_r({\mathbb A}_{\dot{F}})$ (où ${\mathbb A}_{\dot{F}}$ est l'anneau des adèles de $\dot{F}$);
\item $\sigma$ est la composante en $v$  d'une représentation automorphe irréductible et cuspidale $\dot{\sigma}$ de $G_0({\mathbb A}_{\dot{F}})$.
\item par la description due à Arthur du spectre discret des groupes symplectiques, $\dot{\sigma}$ correspond à une représentation automorphe irréductible,  cuspidale  et autoduale $\dot{\rho}$ de $\GL_{2m+1}({\mathbb A}_{\dot{F}})$.
\end{itemize}

De telles données impliquent sur $\pi$ et $\sigma$ les hypothèses suivantes, qui sont les seules que nous retiendrons:

 (Hyp1) $\pi$ est quasi-tempérée;

 (Hyp2) $\sigma$ appartient à un paquet d'Arthur (qui est d'ailleurs un paquet de Langlands) associé à une représentation admissible irréductible $\rho$ de $\GL_{2m+1}(F)$, laquelle est autoduale et quasi-tempérée.

Soit ici $w$ l'unique élément non trivial de $W_1$. Pour $s\in {\mathbb C}$, on a défini l'opérateur $M(w,\pi \rtimes \sigma,s):\pi\vert.\vert^s \rtimes \sigma \to \theta(\pi) \vert.\vert^{-s} \rtimes \sigma$.
On définit un facteur de normalisation
\[ r(w, \pi \rtimes \sigma,s) = \frac{L(s, \pi \times \rho) L(2s, \pi, \wedge^2)}{L(s+1, \pi \times \rho) L(2s+1, \pi, \wedge^2)} \]
et l'opérateur normalisé
\[ N(w, \pi \rtimes \sigma, s) = r(w, \pi \rtimes \sigma, s)^{-1} M(w, \pi \rtimes \sigma, s). \]
Les fonctions $L$ sont définies via la correspondance de Langlands locale pour les groupes linéaires mais on sait qu'elles coïncident avec les fonctions définies antérieurement par Shahidi ou Jacquet, Piatetski-Shapiro et Shalika, cf. \cite{HenniartIMRN2003}.

\begin{remark}
  Pour définir ce que l'on appelle habituellement des opérateurs normalisés, il faudrait  définir plus précisément les relèvements $\dot{w}$ utilisés plus haut et ajouter dans le facteur de normalisation des facteurs $\epsilon$ convenables.
  On sait que c'est possible et de tels opérateurs normalisés sont équivalents aux opérateurs définis ci-dessus.
\end{remark}

Le but de l'appendice est de prouver le résultat suivant.

\begin{proposition} \label{pro:entrelac_hol_nonnul}
  Sous les hypothèses (Hyp1) et (Hyp2), l'opérateur $N(w, \pi \rtimes \sigma,s)$ est holomorphe pour $\Re(s) \geq \frac{1}{2}$, et non identiquement nul pour \(\Re(s) = \frac{1}{2}\).
\end{proposition}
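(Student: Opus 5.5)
Je propose de réduire l'énoncé au cas où toutes les données sont des séries discrètes, puis de décomposer l'opérateur en facteurs de rang un contrôlés par les résultats connus (Shahidi, Mœglin–Waldspurger, Harish-Chandra), les exposants $|r_i|<\tfrac12$ garantissant qu'aucun pôle parasite n'apparaît pour $\Re(s)\geq\tfrac12$.

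\emph{Réduction de $\pi$.} Par (Hyp1), on écrit $\pi=\pi_1\vert.\vert^{r_1}\times\dots\times\pi_t\vert.\vert^{r_t}$ avec $\pi_i$ de carré intégrable et $|r_i|<\tfrac12$. On réalise $\pi\vert.\vert^s\rtimes\sigma$ comme $\Pi(s+r)$, où $\Pi=\pi_1\times\dots\times\pi_t\rtimes\sigma$, pour le Levi $\GL_{r_1}\times\dots\times\GL_{r_t}\times G_0$. L'élément $w$ correspond alors à un élément $w_t\in W_t$ (le plus long parmi ceux qui échangent le bloc $\GL_r$ avec son $\theta$-conjugué). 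Grâce à \eqref{eq:decomp_op_entrel_long}, on décompose $M(w,\pi\rtimes\sigma,s)$, à équivalence près, en un produit d'opérateurs de rang un de deux types :
\begin{itemize}
\item des opérateurs $\GL$ entre $\pi_i\vert.\vert^{s+r_i}$ et $\theta(\pi_j)\vert.\vert^{-s-r_j}$, dont le facteur de normalisation est $L(2s+r_i+r_j,\pi_i\times\pi_j)/L(2s+r_i+r_j+1,\pi_i\times\pi_j)$ ;
\item des opérateurs $\pi_i\vert.\vert^{s+r_i}\rtimes\sigma\to\theta(\pi_i)\vert.\vert^{-s-r_i}\rtimes\sigma$.
\end{itemize}
Le produit des facteurs de normalisation obtenus doit redonner $r(w,\pi\rtimes\sigma,s)$ : cela résulte de la multiplicativité de $L(s,\pi\times\rho)$, de celle de $L(2s,\pi,\wedge^2)$ (produits des $L(2s,\pi_i,\wedge^2)$ et des $L(2s,\pi_i\times\pi_j)$ pour $i<j$), et de la bijectivité des opérateurs entre induites irréductibles qui permutent les facteurs. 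Pour les opérateurs $\GL$ normalisés, le résultat classique (Mœglin–Waldspurger) donne l'holomorphie et la non-annulation dès que $\Re(2s+r_i+r_j)>0$, ce qui est acquis pour $\Re(s)\geq\tfrac12$ puisque $|r_i+r_j|<1$.

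\emph{Réduction de $\sigma$.} Il reste le cas d'une série discrète $\pi_i$ et de $\sigma$. Par (Hyp2), $\sigma$ est dans un $L$-paquet de paramètre $\rho=\bigoplus_k\rho_k\vert.\vert^{e_k}\oplus\rho_0$ avec $|e_k|<\tfrac12$ et $\rho_0$ tempéré. Je réalise $\sigma$ comme sous-quotient (en fait sous-module, puisque c'est un quotient de Langlands) d'une induite $\rho_1\vert.\vert^{e_1}\times\dots\rtimes\sigma_0$, avec $\sigma_0$ tempérée. L'opérateur de rang un s'écrit alors comme une composition : un opérateur $\GL$ entre $\pi_i$ et $\rho_k\vert.\vert^{\pm e_k}$, l'opérateur tempéré $\pi_i\rtimes\sigma_0$, et l'inverse du premier. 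Dans le cas tempéré (série discrète contre tempérée), la propriété d'holomorphie et de non-annulation pour $\Re(s)>0$ est connue, via l'analyse d'Harish-Chandra de la $\mu$-fonction et les travaux de Shahidi ; les facteurs $L(s+r_i\pm e_k,\pi_i\times\rho_k)$ se recombinent en $L(s,\pi\times\rho)$, et l'holomorphie vaut tant que $\Re(s+r_i\pm e_k)>0$, ce qui est garanti pour $\Re(s)\geq\tfrac12$.

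\emph{Principal obstacle.} La difficulté majeure sera de justifier que la restriction au sous-module $\sigma$ préserve à la fois l'holomorphie et la \emph{non-nullité}. L'holomorphie passe au sous-espace, mais un opérateur non nul sur l'induite complète peut s'annuler sur $\sigma$. Je compte traiter ce point en montrant que l'opérateur normalisé total est un isomorphisme générique en $s$, dont on contrôle le rang en $\Re(s)=\tfrac12$ par un argument de dimension sur l'induite irréductible pour $s$ générique. À défaut, je reviendrais à la caractérisation des $L$-paquets par les facteurs $\gamma$ pour comparer avec l'opérateur attaché à la représentation générique du paquet.
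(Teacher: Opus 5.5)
Your overall plan (split off the non-tempered exponents of $\pi$ and of $\rho$, then reduce to rank-one operators controlled by Moeglin--Waldspurger for $\GL$ and by holomorphy in Langlands position) points in the right direction. However, there is a genuine gap exactly where you place the ``principal obstacle'', and neither of your remedies closes it. A holomorphic family of operators that is an isomorphism for generic $s$ can vanish on a given submodule at a specific point, typically a reducibility point such as $\Re(s)=\tfrac12$. A dimension count on $K$-types for generic $s$ cannot detect this. Comparing with the generic member of the packet via $\gamma$-factors does not transfer non-vanishing to the other members either.

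The missing idea is that there is no submodule at all. Write $\rho\simeq\rho'\times\rho_{\mathrm{nt}}\times\check\rho_{\mathrm{nt}}$ with $\rho'$ tempered and $\rho_{\mathrm{nt}}=\rho_1\vert.\vert^{b_1}\times\dots\times\rho_u\vert.\vert^{b_u}$, where $0<b_j<\tfrac12$. Since endoscopic transfer is compatible with induction, $\Sigma(\rho)$ consists of the irreducible subquotients of the $\rho_{\mathrm{nt}}\rtimes\sigma'$ with $\sigma'\in\Sigma(\rho')$. These induced representations are irreducible by Theorem 2.11 of \cite{MoeglinWaldspurger_GGP_SO_gen}, precisely because $0<b_j<\tfrac12$. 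Hence $\sigma\simeq\rho_{\mathrm{nt}}\rtimes\sigma'$, and $\pi\vert.\vert^s\rtimes\sigma$ is the \emph{full} induced module $\pi_1\vert.\vert^{a_1+s}\times\dots\times\pi_t\vert.\vert^{a_t+s}\times\rho_1\vert.\vert^{b_1}\times\dots\times\rho_u\vert.\vert^{b_u}\rtimes\sigma'$.

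Two further points need repair.

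\textbf{The positivity claim is false.} You assert that $\Re(s+r_i\pm e_k)>0$ is guaranteed for $\Re(s)\geq\tfrac12$. In fact $\Re(s+a_i-b_j)$ is only bounded below by $-\tfrac12$. So $L(s+a_i-b_j,\pi_i\times\check\rho_j)$ and the corresponding unnormalized $\GL$ operators may have poles in that region. These are the only factors of $r(w,\pi\rtimes\sigma,s)$ that are not automatically holomorphic and non-zero.

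\textbf{Non-vanishing of the composite is not addressed.} Showing that each rank-one piece is holomorphic and non-zero does not show that their composite is non-zero. Also, in your $\sigma$-step the third operator is not ``the inverse of the first''. It is the swap of $\rho_k\vert.\vert^{e_k}$ with $\theta(\pi_i)\vert.\vert^{-s-r_i}$, a different operator.

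The paper settles both points by the ordering $\underline{w}=\underline{w}_1\underline{w}_2$.
\begin{itemize}
\item $\underline{w}_1$ moves every $\rho_j\vert.\vert^{b_j}$ (with $b_j>0$) to the left of every $\pi_i\vert.\vert^{a_i+s}$. The normalized $\GL$ operators are holomorphic because $\Re(a_i+s-b_j)>-1$. They are isomorphisms for $\Re(s)=\tfrac12$ because then $\Re(a_i+s-b_j)\in\,]-\tfrac12,1[$.
\item All operators in $\underline{w}_2$ have positive exponents: rank one against $\sigma'$, $\pi_i$ against $\theta(\pi_j)$, and $\rho_j\vert.\vert^{b_j}$ against $\theta(\pi_i)\vert.\vert^{-a_i-s}$. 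So they are holomorphic without normalization, and their composite is non-zero at every specialization \cite[\S IV.1]{Waldspurger_Plancherel}.
\item The leftover normalization factors are holomorphic and non-zero for $\Re(s)\geq\tfrac12$.
\end{itemize}
Bijectivity of the first block together with non-vanishing of the second gives non-vanishing of $N(w,\pi\rtimes\sigma,s)$.
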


\begin{remark}
  Dans le cas où \(\pi\) est essentiellement de la série discrète et \(\sigma\) est tempérée la première partie (holomorphie) de la Proposition \ref{pro:entrelac_hol_nonnul} est un cas particulier du théorème 3.2.1 de \cite{Moeglin_holomorphy}.
\end{remark}

\subsection{Rappels concernant $\GL_n$}
\label{sec:rappels_GL}

Revenons à un groupe $\GL_n$ et considérons un Levi standard propre maximal $M=\GL_{n_1}\times \GL_{n_2}$.
Soient $\pi_1$ et $\pi_2$ des représentations admissibles irréductibles de $\GL_{n_1}(F)$ et $\GL_{n_2}(F)$.
On pose $\Pi = \pi_1 \times \pi_2$.
Les opérateurs d'entrelacement se définissent comme dans le cas du groupe symplectique, le  groupe $W_t$ étant remplacé par le groupe $\mathfrak{S}_2$ des permutations de l'ensemble $\{1,2\}$.
En notant ${\bf w}$ l'unique élément non trivial de ce groupe, on définit le Levi standard $^{{\bf w}}M = \GL_{n_2} \times \GL_{n_1}$, la représentation $^{\bf w}\Pi = \pi_2 \otimes \pi_1$ et pour ${\bf s} = (s_1,s_2) \in {\mathbb C}^2$, l'élément $^{\bf w}{\bf s} = (s_2,s_1)$ ainsi que les représentations $\Pi({\bf s}) = \pi_1\vert.\vert^{s_1} \times \pi_2 \vert.\vert^{s_2}$ et $(^{\bf w}\Pi)(^{\bf w}{\bf s}) = \pi_2\vert.\vert^{s_2}\times \pi_1\vert.\vert^{s_1}$.
On a alors un opérateur d'entrelacement $M({\bf w},\Pi,{\bf s}): \Pi({\bf s}) \to (^{{\bf w}}\Pi)(^{{\bf w}}{\bf s})$.
On définit le facteur de normalisation
$$r({\bf w}, \Pi, {\bf s}) = \frac{L(s_1-s_2, \pi_1 \times \check{\pi}_2)}{L(s_1-s_2+1, \pi_1 \times \check{\pi}_2)}$$
et l'opérateur normalisé $N({\bf w}, \Pi, {\bf s}) = r({\bf w}, \Pi, {\bf s})^{-1} M({\bf w}, \Pi, {\bf s})$.

Supposons que $\pi_1$ et $\pi_2$ soient de la série discrète.
Montrons que
\begin{enumerate}
\item \label{it:N_GL_hol_iso}
  l'opérateur $N({\bf w},\Pi,{\bf s})$ est holomorphe quand $\Re(s_1-s_2)> -1$, et est un isomorphisme quand $|\Re(s_1-s_2)| < 1$;
\item \label{it:L_GL_hol}
  la fonction $s\mapsto L(s,\pi_1\times \pi_2)$ définie pour $s\in {\mathbb C}$ n'a pas de pôle pour $\Re(s)>0$.
\end{enumerate}

Pour $i=1,2$ il existe une décomposition $n_i=h_im_i$ et une représentation unitaire cuspidale $\rho_i$ de $\GL_{m_i}(F)$ de sorte qu'en posant $t_i=\frac{h_i-1}{2}$, $\pi_i$ soit une sous-représentation de l'induite $\rho_i \vert.\vert^{t_i} \times  \rho_i \vert.\vert^{t_i-1} \times \dots \times \rho_i \vert.\vert^{-t_i}$.
Le lemme \cite{MoeglinWaldspurgerANENS89} I.4 dit que tout pôle ${\bf s}$ de $N({\bf w},\Pi,{\bf s})$ vérifie $\Re(s_1-s_2) \equiv t_1+t_2\, \mod \, {\mathbb Z}$ et $\Re(s_1-s_2)<-\vert t_1-t_2\vert$. Ces conditions impliquent $\Re(s_1-s_2)\leq -1$, d'où la première partie de l'assertion \eqref{it:N_GL_hol_iso}.
La deuxième partie s'en déduit grâce à la relation de cocycle satisfaite par les opérateurs d'entrelacements normalisés (cf.\ \cite[(1) p.607]{MoeglinWaldspurgerANENS89}) qui implique que la composée
\[ N(\mathbf{w}, {}^\mathbf{w} \Pi, {}^\mathbf{w} \mathbf{s}) \circ N(\mathbf{w}, \Pi, \mathbf{s}) \]
est égale à \(f(s) \mathrm{id}\) où \(f\) est une fonction entière ne s'annulant pas.
D'après \cite{JPSS_GL3_2} théorème 8.2, on a $L(s,\pi_1\times \pi_2)=\prod_{j}L(s+j,\rho_1\times \rho_2)$, où $j$ parcourt les éléments de $\frac{1}{2}{\mathbb Z}$ tels que $j\equiv t_1+t_2\, \mod\, {\mathbb Z}$ et $\vert t_1-t_2 \vert \leq j \leq t_1+t_2$.
Puisque tout pôle de la fonction $ L(s,\rho_1\times \rho_2)$ vérifie $\Re(s)=0$, on en déduit \eqref{it:L_GL_hol}.

\subsection{Preuve de la proposition \ref{pro:entrelac_hol_nonnul}}
\label{preuve}

La représentation $\rho$ étant quasi-tempérée et autoduale, on peut l'écrire sous la forme
\[ \rho = \rho'_1 \times \dots \times \rho'_v \times \rho_1 \vert.\vert^{b_1} \times \check{\rho}_1 \vert.\vert^{-b_1} \times \dots \times \rho_u \vert.\vert^{b_u} \times \check{\rho}_u \vert.\vert^{-b_u} \]
où les $\rho'_i$ et les $\rho_j$ sont des représentations de la série discrète de groupes $\GL_{m_i}(F)$ et $\GL_{m_j}(F)$ et les $b_j$ sont des réels tels que $0 < b_j < \frac{1}{2}$.
On pose \(\rho' = \rho'_1 \times \dots \times \rho'_v\) (on a \(v>0\)) et \(\rho_\mathrm{nt} = \rho_1 \vert.\vert^{b_1} \times \dots \times \rho_u \vert.\vert^{b_u}\) (si \(u=0\) alors \(\rho_\mathrm{nt}\) est la représentation triviale du groupe trivial \(\GL_0(F)\)).
On a donc \(\rho \simeq \rho' \times \rho_\mathrm{nt} \times \check{\rho}_\mathrm{nt}\).
D'après Arthur, il correspond à $\rho$ un $L$-paquet $\Sigma(\rho)$ de représentations de $G_0(F) = Sp_{2m}(F)$, auquel appartient $\sigma$ et il correspond à $\rho'$ un $L$-paquet $\Sigma(\rho')$ d'un groupe $G_0'(F)=Sp_{2m'}(F)$ où $m'=m-m'_{v}$.
Un paquet d'Arthur est caractérisé par le transfert endoscopique de son caractère à un groupe linéaire tordu convenable (ici les groupes $\GL_{2m+1}(F) $  et $\GL_{2m'+1}(F)$ tordus).
Le transfert endoscopique étant compatible à l'induction, il en résulte que $\Sigma(\rho)$ est formé des sous-quotients irréductibles des induites $\rho_\mathrm{nt} \rtimes \sigma'$ quand $\sigma'$ décrit $\Sigma(\rho')$.
Il résulte du théorème 2.11 de \cite{MoeglinWaldspurger_GGP_SO_gen} et de l'hypothèse $0<b_j<\frac{1}{2}$ pour tout $j$ que ces représentations induites sont toutes irréductibles.
Ainsi il existe $\sigma' \in \Sigma(\rho')$ telle que $\sigma \simeq \rho_\mathrm{nt} \rtimes \sigma'$.
La représentation \(\pi\) étant également quasi-tempérée, on peut écrire
\[ \pi \simeq \pi_1 \vert.\vert^{a_1} \times \dots \times \pi_t \vert.\vert^{a_t} \]
où \(1/2 > a_1 \geq \dots \geq a_t > -1/2\) et les \(\pi_i\) sont de la série discrète.
Les opérateurs d'entrelacement étant fonctoriels en la représentation induite on peut pour la preuve supposer que ces isomorphismes sont des égalités.
On a écrit
\[ \Pi \ =\  \pi \rtimes \sigma \ =\  \pi \times \rho_\mathrm{nt} \rtimes \sigma' \ =\  \pi_1 \vert.\vert^{a_1} \times \dots \times \pi_t \vert.\vert^{a_t} \times \rho_1 \vert.\vert^{b_1} \times \dots \times \rho_u \vert.\vert^{b_u} \rtimes \sigma'. \]

Le facteur de normalisation se décompose, en développant le carré extérieur d'une somme directe,
\begin{align*}
  &\ r(w, \pi \rtimes \sigma,s) \\
  = &\ \frac{L(s, \pi \times \rho) L(2s, \pi, \wedge^2)}{L(s+1, \pi \times \rho) L(2s+1, \pi, \wedge^2)} \\
  = &\ \prod_{1 \leq i \leq t} \frac{L(s+a_i, \pi_i \times \rho')}{L(s+1+a_i, \pi_i \times \rho')} \\
  &\ \times \prod_{\substack{1 \leq i \leq t \\ 1 \leq j \leq u}} \frac{L(s+a_i-b_j, \pi_i \times \check{\rho}_j) L(s+a_i+b_j, \pi_i \times \rho_j)}{L(s+1+a_i-b_j, \pi_i \times \check{\rho}_j) L(s+1+a_i+b_j, \pi_i \times \rho_j)} \\
  &\ \times \prod_{1 \leq i<j \leq t} \frac{L(2s+a_i+a_j, \pi_i \times \pi_j)}{L(2s+1+a_i+a_j, \pi_i \times \pi_j)} \\
  &\ \times \prod_{1 \leq i \leq t} \frac{L(2s+2a_i, \pi_i, \wedge^2)}{L(2s+1+2a_i, \pi_i, \wedge^2)}
\end{align*}
et pour \(\Re(s) \geq 1/2\) tous ces facteurs, sauf éventuellement les \(L(s+a_i-b_j, \pi_i \times \check{\rho}_j)\), sont holomorphes non nuls car de la forme \(L(z,\varphi)\) pour une représentation \(\varphi\) du groupe de Weil-Deligne \(W_F \times \SU(2)\) tempérée (i.e.\ d'image bornée) et \(\Re(z) > 0\).
En particulier seul le produit
\[ \frac{L(s, \pi \times \check{\rho}_\mathrm{nt})}{L(s+1, \pi \times \check{\rho}_\mathrm{nt})} = \prod_{\substack{1 \leq i \leq t \\ 1 \leq j \leq u}} \frac{L(s+a_i-b_j, \pi_i \times \check{\rho}_j)}{L(s+1+a_i-b_j, \pi_i \times \check{\rho}_j)} \]
joue un rôle dans la normalisation (on pourrait même se passer des dénominateurs).

\begin{remark}
  Nous avons justifié l'holomorphie des autres facteurs du facteur de normalisation en passant par le côté galoisien.
  On pourrait donner un argument directement du côté automorphe.
  On a rappelé en \ref{sec:rappels_GL} \eqref{it:L_GL_hol} que les fonctions \(L\) de Rankin-Selberg \(L(s, \tau_1 \times \tau_2)\) n'ont pas de pôle pour \(\Re(s)>0\) (pour \(\tau_1\) et \(\tau_2\) de la série essentiellement discrète, donc plus généralement pour \(\tau_1\) et \(\tau_2\) tempérées).
  La factorisation \(L(s, \tau \times \tau) = L(s, \tau, \wedge^2) L(s, \tau, \Sym^2)\) et le fait qu'aucun de ces deux facteurs ne s'annule implique qu'ils sont tous deux holomorphes pour \(\tau\) tempérée et \(\Re(s)>0\).
\end{remark}

Pour $\ul{s} \in {\mathbb C}^{t+u}$ et $\ul{w} \in W_{t+u}$, on a introduit l'opérateur $M(\ul{w}, \Pi, \ul{s}): \Pi(\ul{s})\to (^{\ul{w}}\Pi)(^{\ul{w}}\ul{s})$.
Considérons \(\ul{w} \in W_{t+u}\) défini par \(\ul{w}(i) = -(t+1-i)\) pour \(1 \leq i \leq t\) et \(\ul{w}(t+i) = t+i\) pour \(1 \leq i \leq u\).
Il est de longueur \(t(t-1)/2 + 2tu + t\).
En posant \(\ul{s} = (s, \dots, s, 0, \dots, 0) \in \C^{t+u}\) (\(t\) fois \(s\) et \(u\) fois \(0\)), on voit que $M(w,\Pi,s)$ coïncide avec l'opérateur d'entrelacement
\begin{align*}
  &\ \pi_1 \vert.\vert^{a_1+s} \times \dots \times \pi_t \vert.\vert^{a_t+s} \times \rho_1 \vert.\vert^{b_1} \times \dots \times \rho_u \vert.\vert^{b_u} \rtimes \sigma' \nonumber \\
  \xrightarrow{M(\ul{w}, \Pi, \ul{s})} &\ \theta(\pi_t) \vert.\vert^{-a_t-s} \times \dots \times \theta(\pi_1) \vert.\vert^{-a_1-s} \times \rho_1 \vert.\vert^{b_1} \times \dots \times \rho_u \vert.\vert^{b_u} \rtimes \sigma';
\end{align*}
plus exactement, ces deux opérateurs sont équivalents via l'identification \(\theta(\pi) \simeq \theta(\pi_t) \vert.\vert^{-a_t} \times \dots \times \theta(\pi_1) \vert.\vert^{-a_1}\), \(f \mapsto f \circ \theta^{-1}\).
Décomposons \(\ul{w} = \ul{w}_1 \ul{w}_2\) où
\begin{itemize}
\item \(\ul{w}_1(i) = u+i\) pour \(1 \leq i \leq t\) et \(\ul{w}_1(t+i) = i\) pour \(1 \leq i \leq u\),
\item \(\ul{w}_2(i) = t+i\) pour \(1 \leq i \leq u\) et \(\ul{w}_2(u+i) = -(t+1-i)\) pour \(1 \leq i \leq t\).
\end{itemize}
L'élément \(\ul{w}_1\) est de longueur \(tu\) et \(\ul{w}_2\) est de longueur \(tu + t(t-1)/2 + t\), en particulier \(\lng(\ul{w}) = \lng(\ul{w}_1) + \lng(\ul{w}_2)\).
Ainsi \(M(\ul{w},\Pi,\ul{s})\) se décompose en
\begin{align*}
  &\ \pi_1 \vert.\vert^{a_1+s} \times \dots \times \pi_t \vert.\vert^{a_t+s} \times \rho_1 \vert.\vert^{b_1} \times \dots \times \rho_u \vert.\vert^{b_u} \rtimes \sigma' \\
  \xrightarrow{M(\ul{w}_1, \Pi, \ul{s})} &\ \rho_1 \vert.\vert^{b_1} \times \dots \times \rho_u \vert.\vert^{b_u} \times \pi_1 \vert.\vert^{a_1+s} \times \dots \times \pi_t \vert.\vert^{a_t+s} \rtimes \sigma' \\
  \xrightarrow{M(\ul{w}_2, {}^{\ul{w}_1} \Pi, {}^{\ul{w}_1} \ul{s})} &\ \theta(\pi_t) \vert.\vert^{-a_t-s} \times \dots \times \theta(\pi_1) \vert.\vert^{-a_1-s} \times \rho_1 \vert.\vert^{b_1} \times \dots \times \rho_u \vert.\vert^{b_u} \rtimes \sigma'.
\end{align*}
L'opérateur \(M(\ul{w}_1, \Pi, \ul{s})\) est égal à \(M(\mathbf{w}, \pi \times \rho_\mathrm{nt}, s)\) et se décompose en produit de \(tu\) opérateurs simples obtenus en induisant les opérateurs (pour \(1 \leq i \leq t\) et \(1 \leq j \leq u\))
\[ M(\mathbf{w}, \pi_i \times \rho_j, (a_i+s,b_j)): \pi_i \vert.\vert^{a_i+s} \times \rho_j \vert.\vert^{b_j} \longrightarrow \rho_j \vert.\vert^{b_j} \times \pi_i \vert.\vert^{a_i+s}. \]
Les opérateurs normalisés correspondant
\[ N(\mathbf{w}, \pi_i \times \rho_j, (a_i+s,b_j)) = \frac{L(s+1+a_i-b_j, \pi_i \times \rho_j^\vee)}{L(s+a_i-b_j, \pi_i \times \rho_j^\vee)} M(\mathbf{w}, \pi_i \times \rho_j, (a_i+s,b_j)) \]
sont d'après \ref{sec:rappels_GL} \eqref{it:N_GL_hol_iso} holomorphes pour \(\Re(s) \geq 1/2\) car alors
\[ \Re(a_i+s-b_i) > -1/2 + 1/2 - 1/2 = -1/2. \]
Ce sont même des isomorphismes pour \(\Re(s)=1/2\) car sous cette hypothèse on a également
\[ \Re(a_i+s-b_i) < 1/2+1/2-0 = 1. \]
L'opérateur \(M(\ul{w}_2, {}^{\ul{w}_1} \Pi, {}^{\ul{w}_1} \ul{s})\) se décompose en \(tu + t(t-1)/2 + t\) opérateurs simples obtenus en induisant les opérateurs d'entrelacement suivants:
\begin{itemize}
\item \(\pi_i \vert.\vert^{a_i+s} \rtimes \sigma' \to \theta(\pi_i) \vert.\vert^{-a_i-s} \rtimes \sigma'\) pour \(1 \leq i \leq t\)
\item \(\pi_i \vert.\vert^{a_i+s} \times \theta(\pi_j) \vert.\vert^{-a_j-s} \to \theta(\pi_j) \vert.\vert^{-a_j-s} \times \pi_i \vert.\vert^{a_i+s}\) pour \(1 \leq i < j \leq t\),
\item \(\rho_j \vert.\vert^{b_j} \times \theta(\pi_i) \vert.\vert^{-a_i-s} \to \theta(\pi_i) \vert.\vert^{-a_i-s} \times \rho_j \vert.\vert^{b_j}\).
\end{itemize}
Pour \(\Re(s) \geq 1/2\) on a respectivement \(\Re(a_i+s)>0\), \(\Re(a_i+s+a_j+s)>0\) et \(\Re(b_j+a_i+s)>0\), donc ces opérateurs d'entrelacement sont tous holomorphes (voir \cite[Proposition IV.2.1]{Waldspurger_Plancherel}).
Leur composée \(M(\ul{w}_2, {}^{\ul{w}_1} \Pi, {}^{\ul{w}_1} \ul{s})\) est donc holomorphe pour \(\Re s \geq 1/2\).

Finalement notre opérateur d'entrelacement normalisé \(N(w, \pi \rtimes \sigma, s)\) est composé de
\begin{itemize}
\item une fonction méromorphe
  \[ r(w, \pi \rtimes \sigma, s)^{-1} \frac{L(s, \pi \times \rho_\mathrm{nt}^\vee)}{L(s+1, \pi \times \rho_\mathrm{nt}^\vee)} \]
  qui est holomorphe non nulle pour \(\Re s \geq 1/2\),
\item un opérateur d'entrelacement normalisé induit de \(M(\mathbf{w}, \pi \times \rho_\mathrm{nt}, s)\), qui est holomorphe pour \(\Re s \geq 1/2\) et un isomorphisme pour \(\Re s = 1/2\),
\item un opérateur d'entrelacement (non normalisé) \(M(\ul{w}_2, {}^{\ul{w}_1} \Pi, {}^{\ul{w}_1} \ul{s})\) qui est holomorphe pour \(\Re s \geq 1/2\), et pour toute spécialisation de \(s\) non identiquement nul (voir \cite[\S IV.1]{Waldspurger_Plancherel} (12) et (10)).
\end{itemize}
Ainsi \(N(w, \pi \rtimes \sigma, s)\) est holomorphe pour \(\Re s \geq 1/2\) et non identiquement nul pour \(\Re s = 1/2\).

\begin{remark}
  On peut certainement généraliser la preuve pour montrer la non-nullité de l'opérateur d'entrelacement normalisé pour \(\Re s \geq 1/2\), en décomposant \(\ul{w}_1\) comme produit de deux permutations, la première faisant ``passer à droite des \(\rho_j \vert.\vert^{b_j}\)'' les \(\pi_i \vert.\vert^{a_i+s}\) tels que \(0 < \Re(a_i+s) < 1/2\), la deuxième ceux tels que \(\Re(a_i+s) \geq 1/2\).
  La première donne un opérateur d'entrelacement normalisé holomorphe bijectif, la seconde est en position ``de Langlands'' et en la joignant à \(\ul{w}_2\) donne un facteur de normalisation holomorphe non nul et un opérateur d'entrelacement (non normalisé) holomorphe.
\end{remark}

\subsection {Autres groupes classiques}
\label{sec:app_autres_gpes}

La même preuve marche aussi bien en remplaçant le groupe $G = \Sp_{2n}$ par des groupes $\SO_{2n+1}$ ou $\SO_{2n}$.
On n'a pas besoin que le groupe soit déployé: on n'a fixé de paire de Borel $(B,T)$ que pour définir la notion de Levi standard,  on peut aussi bien fixer une paire parabolique minimale.
Dans le cas de $\SO(2n+1)$, la notion d'opérateur normalisé nécessite de remplacer  les fonctions $L(2s,\pi,\wedge^2)$ par $L(2s,\pi,Sym^2)$, cela ne perturbe pas le raisonnement car on a vu dans la preuve que pour \(\pi\) quasi-tempérée et \(\Re(s) \geq 1/2\) ces deux fonctions \(L\) n'admettent pas de pôle.
Pour le groupe $\SO_{2n}$, il faut prendre garde que la représentation $\sigma$ à laquelle les opérateurs d'entrelacement ne touchent pas dans le cas de $\Sp_{2n}$ ou $\SO_{2n+1}$ peut être modifiée par un automorphisme extérieur de $\SO_{2m}$.
Mais cela ne change rien car les représentations autoduales $\rho$ de $\GL_{2m}(F)$ associées à ces deux représentations sont les mêmes.

Considérons un peu plus en détail le cas d'un groupe unitaire.
On fixe une extension quadratique $E$ de $F$.
On note $x\mapsto \bar{x}$ l'automorphisme galoisien non trivial de $E$.
Soit $n\geq1$ un entier, considérons le groupe $\GL_n$ défini sur $E$.
Le groupe $\GL_n(E)$ est muni de l'automorphisme $g\mapsto {^cg}$: $(^cg)_{i,j}=\bar{g}_{i,j}$  pour tous $i,j\in \{1,\dots,n\}$.
On note $^c{\theta}$ l'automorphisme $g\mapsto \theta(^cg)$.
Pour une représentation admissible $\pi$ de $\GL_n(E)$, on définit les représentations $^c\pi$ et $^c{\theta}(\pi)$ par $^c\pi(g)=\pi(^cg)$ et $^c{\theta}(\pi)(g)=\pi(^c{\theta}(g))$.
Une représentation irréductible $\pi$ est dite $c$-autoduale si $\pi$ est équivalente à $^c{\theta}(\pi)$.

Considérons le groupe unitaire $G$ d'un espace hermitien de dimension $N$ sur $E$ (relatif bien sûr à l'extension $E/F$).
On en fixe une paire parabolique définie sur $F$ et minimale.
Pour un Levi standard $M$, on a une décomposition
\[ M(F) = \GL_{r_1}(E) \times \dots \times \GL_{r_t}(E) \times G_0(F), \]
où $G_0$ est le groupe unitaire d'un espace hermitien de dimension $N_0$ sur $E$ et où $N=N_0+2\sum_{i=1,\dots,t}r_i$.
Fixons des représentations admissibles irréductibles $\sigma$ de $G_0(F)$ et $\pi_i$ de $\GL_{r_i}(E)$ pour $i=1,\dots,t$.
On pose $\Pi = \pi_1 \times \dots \pi_t \rtimes \sigma$.
Soit  $w \in W_t$.
On définit la représentation $^w\Pi$ en remplaçant  $\theta$ par $^c{\theta}$ dans les définitions du paragraphe \ref{generalites}.
On définit alors l'opérateur $M(w,\Pi,s):\Pi(s)\to (^w\Pi)(^ws)$.

Considérons le cas où $M$ est propre maximal.
Alors $\Pi$ s'écrit simplement $\pi \rtimes \sigma$.
On conserve l'hypothèse

 (Hyp1) $\pi$ est quasi-tempérée

\noindent du paragraphe \ref{sec:enonce_appendice}.
En utilisant les résultats de Mok, on modifie la seconde hypothèse en

(Hyp2) $\sigma$ appartient à un paquet d'Arthur (qui est d'ailleurs un paquet de Langlands) associé à une représentation admissible irréductible $\rho$ de $\GL_{N_0}(F)$, laquelle est c-autoduale et quasi-tempérée.

On sait définir les fonctions $L$ d'Asai de $\pi$.
La définition est donnée dans \cite{GanGrossPrasad} p.26 pour les représentations du groupe de Weil-Deligne et on utilise ensuite la correspondance de Langlands locale.
Cette définition coïncide avec celles précédemment connues, cf. \cite{HenniartIMRN2003} théorème 1.5.
En fait, il y a deux fonctions $L$ d'Asai, notées $\Asai^+$ et $\Asai^-$, cf. \cite{GanGrossPrasad} p. 26.
On identifiera les signes $+$ et $-$ à des éléments de $\{\pm 1\}$.
Pour l'unique élément non trivial $w$ de $W_1$, on pose
\[ r(w, \pi \rtimes \sigma, s) = \frac{L(s, \pi \times \check{\rho}) L(2s, \pi, \Asai^{(-1)^{N_0}})}{L(s+1,\pi \times \check{\rho}) L(2s+1, \pi, \Asai^{(-1)^{N_0}})}. \]
On définit ensuite l'opérateur normalisé $N(w, \pi \rtimes \sigma, s) = r(w, \pi \rtimes \sigma, s)^{-1} M(w, \pi \rtimes \sigma, s)$.
Modulo ces modifications, on a la proposition suivante, analogue de la Proposition \ref{pro:entrelac_hol_nonnul}.

\begin{proposition} \label{pro:entrelac_hol_nonnul_unitaire}
    Sous les hypothèses (Hyp1) et (Hyp2), l'opérateur $N(w,\pi \rtimes \sigma,s)$ est holomorphe pour $\Re(s) \geq \frac{1}{2}$, et non identiquement nul pour \(\Re(s) = \frac{1}{2}\).
\end{proposition}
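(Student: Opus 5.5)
Le plan est de calquer la preuve de la Proposition~\ref{pro:entrelac_hol_nonnul} (\S\ref{preuve}) en remplaçant partout \(\theta\) par \({}^c\theta\), les groupes \(\GL_r(F)\) par \(\GL_r(E)\), et la fonction \(L(2s,\pi,\wedge^2)\) par \(L(2s,\pi,\Asai^{(-1)^{N_0}})\).

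\textbf{Étape 1 : écriture de \(\rho\) et de \(\sigma\).} Comme \(\rho\) est \(c\)-autoduale et quasi-tempérée, on l'écrit
\[ \rho \simeq \rho' \times \rho_\mathrm{nt} \times {}^c\theta(\rho_\mathrm{nt}), \qquad \rho_\mathrm{nt} = \rho_1|.|^{b_1}\times\dots\times\rho_u|.|^{b_u}, \]
où \(0<b_j<1/2\), \(\rho'\) est tempérée et \(c\)-autoduale, et les \(\rho_j\) sont de la série discrète de groupes \(\GL_{m_j}(E)\). Le paquet de Mok associé à \(\rho'\) fournit un paquet tempéré \(\Sigma(\rho')\) d'un groupe unitaire \(G_0'\) plus petit. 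Par compatibilité du transfert endoscopique tordu à l'induction, les éléments de \(\Sigma(\rho)\) sont les sous-quotients des \(\rho_\mathrm{nt}\rtimes\sigma'\) pour \(\sigma'\in\Sigma(\rho')\). L'irréductibilité de ces induites (analogue unitaire du théorème 2.11 de \cite{MoeglinWaldspurger_GGP_SO_gen}, qui utilise \(b_j<1/2\)) donne ensuite \(\sigma\simeq\rho_\mathrm{nt}\rtimes\sigma'\).

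\textbf{Étape 2 : décomposition du facteur de normalisation.} On écrit \(\pi=\pi_1|.|^{a_1}\times\dots\times\pi_t|.|^{a_t}\) avec \(|a_i|<1/2\). On développe \(r(w,\pi\rtimes\sigma,s)\) comme dans le cas symplectique. Le facteur d'Asai d'une somme directe se décompose en facteurs \(L(2s+2a_i,\pi_i,\Asai^\pm)\) et en facteurs de Rankin croisés \(L(2s+a_i+a_j,\pi_i\times{}^c\pi_j)\). Le facteur \(L(s,\pi\times\check\rho)\) se décompose selon \(\rho'\), \(\rho_\mathrm{nt}\) et \({}^c\theta(\rho_\mathrm{nt})\). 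Côté galoisien, tous ces facteurs sont de la forme \(L(z,\varphi)\) avec \(\varphi\) tempérée et \(\Re z>0\) lorsque \(\Re s\ge1/2\), à l'exception des facteurs \(L(s+a_i-b_j,\pi_i\times\check\rho_j)\). Les facteurs d'Asai sont bien tempérés, car \(\Asai^\pm\) d'un paramètre borné reste borné.

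\textbf{Étape 3 : factorisation de l'opérateur.} On factorise \(M(w,\pi\rtimes\sigma,s)\) en \(M(\ul w_2)\circ M(\ul w_1)\) comme dans \S\ref{preuve}. L'opérateur \(M(\ul w_1)\) est un produit d'opérateurs de \(\GL(E)\) échangeant \(\pi_i|.|^{a_i+s}\) et \(\rho_j|.|^{b_j}\). Une fois normalisé par le quotient exceptionnel de l'étape 2, il est holomorphe pour \(\Re s\ge1/2\) et bijectif pour \(\Re s=1/2\), d'après \S\ref{sec:rappels_GL}~\eqref{it:N_GL_hol_iso} sur \(E\) (car \(-1/2<\Re(a_i+s-b_j)<1\)). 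L'opérateur \(M(\ul w_2)\) se décompose en opérateurs de rang un dont les paramètres ont des parties réelles \(>0\) (\(a_i+s\), \(a_i+a_j+2s\), \(b_j+a_i+s\)). Ils sont donc holomorphes et non nuls en toute spécialisation, par \cite[\S IV]{Waldspurger_Plancherel}, qui vaut pour tout groupe réductif.

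\textbf{Obstacle principal.} Il se situe à l'étape 1 : il faut garantir, dans le cadre de Mok, que \(\Sigma(\rho)\) est formé d'induites irréductibles \(\rho_\mathrm{nt}\rtimes\sigma'\), c'est-à-dire l'analogue unitaire du résultat de Mœglin--Waldspurger. Je chercherais d'abord à invoquer le critère d'irréductibilité des induites à partir de paquets tempérés (via la réductibilité des \(\rho_j|.|^{x}\rtimes\sigma'\), qui n'a lieu que pour \(x\in\frac12\Z\)). Il faut aussi vérifier soigneusement la convention de signe \((-1)^{N_0}\) de l'Asai, en cohérence avec \S\ref{sec:LanglandsRecette}.
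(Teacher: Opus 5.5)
Your proposal is correct and follows the paper's proof essentially step for step. It uses the same decomposition $\sigma\simeq\rho_\mathrm{nt}\rtimes\sigma'$, the same Asai decomposition of the normalizing factor (with only the $L(s+a_i-b_j,\pi_i\times\check\rho_j)$ exceptional), and the same factorization $\ul w=\ul w_1\ul w_2$. For the irreducibility of $\rho_\mathrm{nt}\rtimes\sigma'$, which you flag as the main obstacle, the paper simply notes that the Jacquet-module proof of \cite[Théorème 2.11]{MoeglinWaldspurger_GGP_SO_gen} carries over to unitary groups, or alternatively combines Propositions 9.1 and B.1 of \cite{GanIchino}.

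One small precision concerns non-vanishing at $\Re(s)=1/2$. What you need there is non-vanishing of the composite $M(\ul w_2,{}^{\ul w_1}\Pi,{}^{\ul w_1}\ul s)$, and non-vanishing of each rank-one factor would not suffice on its own. The paper gets it directly from \cite[\S IV.1]{Waldspurger_Plancherel}, since in this range the composite is itself given by a convergent integral.
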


La preuve est essentiellement la même que dans le cas d'un groupe symplectique, et nous nous contentons de pointer les différences.
Notre représentation $\rho$ s'écrit maintenant
\[ \rho = \rho'_1 \times \dots \times \rho'_v \times \rho_1 \vert.\vert^{b_1} \times {}^c \theta(\rho_1) \vert.\vert^{-b_1} \times \dots \times \rho_u \vert.\vert^{b_u} \times {}^c \theta(\rho_u) \vert.\vert^{-b_u}. \]
On doit prouver l'irréductibilité de l'induite \(\rho_\mathrm{nt} \rtimes \sigma'\).
Il n'y a pas de doute que la preuve de \cite[Théorème 2.11]{MoeglinWaldspurger_GGP_SO_gen} dans le cas symplectique ou spécial orthogonal vaut aussi pour les groupes unitaires (elle repose seulement sur des calculs de modules de Jacquet).
Mais l'irréductibilité dans le cas unitaire résulte aussi de la combinaison des propositions 9.1 et B.1 de \cite{GanIchino} (qui reposent en partie sur des résultats d'Heiermann).

Pour décomposer le facteur de normalisation on a utilisé dans le cas d'un groupe symplectique l'égalité
$$ L(2s,\pi,\wedge^2) = \big( \prod_{i=1}^t L(2s + 2a_i, \pi_i, \wedge^2) \big) \big(\prod_{1\leq i<j \leq t} L(2s + a_i + a_j, \pi_i \times \pi_j) \big).$$
On utilise maintenant l'égalité
$$ L(2s, \pi, \Asai^{\epsilon}) = \big( \prod_{i=1}^t L(2s + 2a_i, \pi_i, \Asai^{\epsilon}) \big) \big(\prod_{1\leq i<j\leq t} L(2s + a_i + a_j, \pi_i \times {^c\pi}_j)\big) $$
pour $\epsilon=\pm$.
On conclut comme dans le cas symplectique.

\bibliographystyle{plain}
\bibliography{biblio.bib}



\end{document}